\documentclass[11pt]{amsart}
\usepackage{amsmath}
\usepackage{amssymb}
\usepackage{amscd}

\usepackage{url}
\usepackage[all]{xypic}

\usepackage{pstricks}
\usepackage{pstricks-add}
\usepackage{multido}
\usepackage{pst-solides3d}

\usepackage{graphicx}




\newcommand{\modfrac}[2]{\genfrac{|}{|}{}{1}{#1}{#2}}

\newcommand{\bp}{\begin{pmatrix}}
\newcommand{\ep}{\end{pmatrix}}
\newcommand{\be}{\begin{equation}}
\newcommand{\ee}{\end{equation}}
\newcommand{\ol}[1]{\overline{#1}}
\newcommand{\e}{\varepsilon}

\numberwithin{equation}{section}

\theoremstyle{plain}
\newtheorem{theorem}[equation]{Theorem}
\newtheorem{lemma}[equation]{Lemma}
\newtheorem{proposition}[equation]{Proposition}

\newtheorem{corollary}[equation]{Corollary}

\theoremstyle{definition}
\newtheorem{example}[equation]{Example}
\newtheorem{remark}[equation]{Remark}

\newtheorem{definition}[equation]{Definition}

\numberwithin{equation}{section}

\oddsidemargin 0pt \evensidemargin 0pt \marginparsep 10pt
\topmargin 0pt \baselineskip 14pt \textwidth 6in \textheight 9in

\newtheorem*{acknowledgements}{Acknowledgements}

\def\Z{\mathbb Z}
\def\R{\mathbb R}

\def\O{\Omega}
\def\wt#1{\widetilde{#1}}

\def\p{\partial}

\def\vu{\vec{u}}

\def\S{\Sigma}
\def\DU{U_{21}}
\def\Hr{H_\textit{\!right}}
\def\Hl{H_\textit{\!left}}
\DeclareMathOperator\Int{Int}

\DeclareMathOperator\ind{ind}
\newrgbcolor{lightgreen}{0.95 1.0 0.95}
\newrgbcolor{lightgreen}{0.95 1.0 0.95}
\newrgbcolor{lightgreenbutnotsolight}{0.85 1.0 0.85}
\newrgbcolor{darkgreen}{0.2 0.6 0.2}
\newrgbcolor{lightblue}{0.8 0.8 1.0}
\newrgbcolor{moderateblue}{0.5 0.5 1.0}
\newrgbcolor{orange}{1.0 0.7 0.0}
\def\purple{\color{purple}}
\title[Morse theory for manifolds with boundary]{Morse theory for manifolds with boundary}
\author{Maciej Borodzik}
\address{Institute of Mathematics, University of Warsaw, ul. Banacha 2,
02-097 Warsaw, Poland}
\email{mcboro@mimuw.edu.pl}
\thanks{The first author is supported by Polish MNiSzW Grant No N201 397937.
The second author is partially supported by OTKA Grant K100796. }
\author{Andr\'as N\'emethi}
\address{A. R\'enyi Institute of Mathematics, 1053 Budapest,  Re\'altanoda u. 13-15,  Hungary.}
\email{nemethi@renyi.hu}
\author{Andrew Ranicki}
\address{School of Mathematics, University of Edinburgh, Edinburgh EH9 3JZ, Scotland UK.}
\email{a.ranicki@ed.ac.uk}

\date{\today}
\makeatletter
\@namedef{subjclassname@2010}{\textup{2010} Mathematics Subject Classification}
\makeatother
\subjclass[2010]{primary: 57R19, secondary: 58E05, 58A05}
\keywords{Morse theory, manifold with boundary, cobordism, bifurcation of singular points}

\begin{document}
\begin{abstract}
We develop Morse theory for manifolds with boundary. Beside standard and expected facts like the handle cancellation theorem and
the  Morse lemma for manifolds with boundary, we prove that under suitable connectedness assumptions
a critical point in the interior of a Morse function can be moved to the boundary, where it splits into a pair of boundary critical points. As an application,
we prove that every cobordism of connected manifolds with boundary splits as a union of left product cobordisms and right product cobordisms.
\end{abstract}
\maketitle

\section{Introduction}

For some time now, Morse theory has been a very fruitful tool in
the topology of manifolds.  One of the milestones was the
$h$-cobordism theorem of Smale \cite{Sm2}, and its Morse-theoretic
exposition by Milnor \cite{Mi-morse,Mi-hcob}. Recently, Morse
theory has become even more popular, for two reasons.  In the
first instance, on account of its connections with Floer homology,
 see e.g. \cite{Sa,Wi,Nic,KM}. Secondly, the stratified Morse theory
developed by Goresky and MacPherson \cite{GMcP}. In the last 20
years Morse theory has also had an enormous impact on the singularity
theory of complex algebraic and analytic varieties.

Morse theory for manifolds with boundary was studied in the seventies by \cite{Br, JR, Haj}.
Recently it was developed by Kronheimer and Mrowka in \cite{KM}.
Since then the theory has experienced a very fast development, as witnessed by the papers of Bloom \cite{Blo} and Laudenbach \cite{Lau}. Our paper is another
contribution.

In this paper we prove some new results in the Morse theory for
manifolds with boundary.  Besides some standard and expected
results, like the boundary handle cancellation theorem
(Theorem~\ref{thm:elementarycanc}) and the topological description
of passing critical points on the boundary (using the notions of
right and left half-handles introduced in Section~\ref{s:hh-sec})
we describe another phenomenon; see Theorem~\ref{thm:handsplit}.
An interior critical point can be
moved to the boundary and there split into two boundary critical
points. A related result was stated in \cite[Theorem 5]{Haj}; we provide
a rigorous proof under much weaker assumptions.

In particular, if we have a cobordism of manifolds with
boundary, then under a natural topological assumption we can find a
Morse function which has only boundary critical points.  We use
this result to prove a structure theorem for connected cobordisms
of connected manifolds with connected non-empty boundary: such a
cobordism splits as a union of left and right product cobordisms.
This is a topological counterpart to the
algebraic splitting of cobordisms obtained in \cite[Main Theorem 1]{BNR2}: an
algebraic splitting of the chain complex cobordism of a geometric
cobordism can be realized topologically by a geometric splitting.
This algebraic splitting is used to study the algebraic properties of
the Seifert matrices of isotopic non-spherical
$2n-1$ dimensional links in $S^{2n+1}$. This will provide the algebraic background
to our proof that the semicontinuity of mod 2 spectra of hypersurface singularities is a purely topological phenomenon (see \cite{BNR3}, especially the paragraph before proof of Theorem 2.1.8).

The structure of the paper is as follows.  After preliminaries
in Section~\ref{ss:prelim} we study in Section~\ref{s:hh-sec} the changes
in the topology of the level sets when crossing a boundary critical point.
Theorem~\ref{thm:morsehalfhandle} is
the main result of this section: passing a boundary stable (unstable) critical
point produces a left (right) half-handle attachment. In Section~\ref{S:sih} we
prove Theorem~\ref{thm:handsplit}, which
moves interior critical points to the boundary. This is the first main result of
this article. Then we pass to
some more standard results, namely rearrangements of critical
points in Section~\ref{S:rear}. We finish the section with our
most important --- up to now --- application,
Theorem~\ref{thm:cobsplit}, about the splitting of a cobordism into
left product and right product cobordisms.
Finally, in Section~\ref{S:bhc} we discuss the possibility of cancelling a pair
of critical points. We include this part for completeness, the main result of this
section was proved in \cite[Theorem 1]{Haj}.

\begin{acknowledgements}
The first author wishes to thank R\'enyi Institute for hospitality, the first two authors
are grateful for Edinburgh Mathematical Society for a travel grant to Edinburgh and Glasgow in March 2012.
The authors thank Andr\'as Juh\'asz, Mark Powell and Andr\'as Stipsicz for fruitful discussions and to Rob Kirby for drawing their
attention to \cite{KM}. We are grateful to Bogus\l{}aw Hajduk for his comments on the first version of the paper and for
drawing our attention to \cite{JR,Haj}.
\end{acknowledgements}

\subsection{Notes on gradient vector fields}\label{ss:prelim}

To fix the notation, let us recall what a cobordism of manifolds with boundary is.

\begin{definition}
Let $\S_0$ and $\S_1$ be compact oriented, $n$-dimensional
manifolds with non-empty boundary $M_0$ and $M_1$. We shall say
that $(\O,Y)$ is a \emph{cobordism} between $(\S_0,M_0)$ and
$(\S_1,M_1)$, if $\O$ is a compact, oriented $(n+1)$-dimensional
manifold with boundary $\p \O=Y\cup \S_0\cup \S_1$, where $Y$ is
nonempty, $\S_0\cap\S_1=\emptyset$, and
$Y\cap \S_0=M_0$, $Y\cap\S_1=M_1$.
\end{definition}
\begin{remark}\label{rem:corners}
Strictly speaking, $\O$ is a manifold with corners, so around a point $x\in M_0\cup M_1$
it is locally modelled by $\R^{n-1}\times\R_{\geqslant 0}^2$.
Accordingly, sometimes
we write that $\S_0$, $\S_1$ and $Y$, as manifolds with boundary, have tubular neighbourhoods in $\O$ of the form $\S_0\times[0,1)$,
$\S_1\times[0,1)$, or $Y\times[0,1)$, respectively. Nevertheless,
in most cases it is safe (and more convenient) to assume that
$\O$ is a manifold with boundary, i.e. that the corners are smoothed along $M_0$ and $M_1$. Whenever possible we
make this simplification in order to avoid unnecessary technicalities.
\end{remark}
\begin{example}
Given a manifold with boundary $(\S,M)$, we call
$(\S,M)\times[0,1]$ a trivial cobordism, with $\O=\S\times[0,1]$,
$Y=M\times[0,1]$, $\S_{i}=\S\times\{i\}$, $M_i=M\times\{i\}$ for
$i=0,1$.
\end{example}

We recall the notion of a Morse function (in \cite{Haj} they are called $m$-functions). For this it is convenient to
fix a Riemannian metric $g$ on $\Omega$.

\begin{definition}\label{def:Morse}
Let $F\colon\O\to[0,1]$ be a smooth function. A critical point $z$ of $F$ is called \emph{Morse}, if
the Hessian of $F$ at $z$ is non--degenerate.
The function $F\colon \O\to[0,1]$ is called a \emph{Morse function}
on the cobordism $(\O,Y)$, if $F(\S_0)=0$, $F(\S_1)=1$, $F$
has only Morse critical points, the  critical points are not situated on $\S_0\cup\S_1$,
and $\nabla F$ is everywhere tangent to $Y$.
\end{definition}

There are two ways of doing Morse theory on manifolds. One can
either consider the gradient flow of $\nabla F$ associated with $F$ and
the Riemannian metric  (in the Floer
theory, one often uses $-\nabla F$), or, the so-called
gradient-like vector field.

\begin{definition}(See \expandafter{\cite[Definition 3.1]{Mi-hcob}}.)
Let $F$ be a Morse function on a cobordism $(\O,Y)$. Let
$\xi$ be a vector field on $\O$. We shall say that $\xi$ is
\emph{gradient-like with respect to $F$}, if the following
conditions are satisfied:
\begin{itemize}
\item[(a)] $\xi\cdot F>0$ away from the set of critical points
    of $F$;
\item[(b)] if $p$ is a critical point of $F$ of index $k$, then
    there exist local coordinates $x_1,\dots,x_{n+1}$ in a
    neighbourhood of $p$, such that
\[F(x_1,\dots,x_{n+1})=F(p)-(x_1^2+\dots+x_k^2)+(x_{k+1}^2+\dots+x_{n+1}^2)\]
and
\[\xi=(-x_1,\dots,-x_k,x_{k+1},\dots,x_{n+1})\text{ in $U$};\]
\item[(b')] furthermore, if $p$ is a boundary critical point,
    then the above coordinate system can be chosen so that
    $Y=\{x_j=0\}$ and $U=\{x_j\geqslant 0\}$  for some $j\in\{1,\dots,n+1\}$.
\item[(c)] $\xi$ is everywhere tangent to $Y$;
\end{itemize}
\end{definition}

The conditions (a) and (b) are the same as in the classical case.
Condition (b') is an analogue of condition (b) in the
boundary case, compare Lemma~\ref{lem:boundarymorselemma}.

Smale in \cite{Sm1} noticed that for any gradient-like vector
field $\xi$ for a function $F$ there exists a choice of the
Riemannian metric such that $\xi=\nabla F$ in that metric.  The
situation is identical in the boundary case. This is stated explicitly
in the following lemma, whose proof is straightforward and will be omitted.

\begin{lemma}\label{lem:linktwo}
Let $U$ be a paracompact $k$-dimensional manifold and $F\colon
U\to\R$ a Morse function without critical points. Assume that
$\xi$ is a gradient--like vector field on $U$. Then there exists
a metric $g$ on $U$ such that $\xi=\nabla F$ in that metric.

A similar  statement holds if $U$ has boundary and $\xi$ is everywhere tangent to the boundary.
\end{lemma}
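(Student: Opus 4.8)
The plan is to exploit the single structural fact we are handed: since $F$ has no critical points, $dF_p\neq 0$ for every $p\in U$, so $\ker dF_p\subset T_pU$ is a hyperplane, and since $\xi\cdot F>0$ the vector $\xi(p)$ is transverse to it, giving a smooth splitting $T_pU=\R\,\xi(p)\oplus\ker dF_p$. I would then build the desired metric $g$ by decreeing that this splitting be $g$-orthogonal, that $\xi$ have the one length that forces $\nabla_gF=\xi$, and that $g$ restricted to $\ker dF$ be anything at all; a background metric, which exists by paracompactness (also when $U$ has boundary), supplies the last ingredient. This is the standard Smale/Milnor argument, and no partition-of-unity patching is needed because the formula below is already global.

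Concretely, I would fix an arbitrary Riemannian metric $g_0$ on $U$, define the smooth bundle endomorphism $\pi\colon TU\to TU$ by
\[
\pi(v)=v-\frac{dF(v)}{\xi\cdot F}\,\xi ,
\]
which is the projection onto $\ker dF$ along $\xi$ and is smooth because $\xi\cdot F$ is a positive smooth function, and then set
\[
g(v,w)=\frac{dF(v)\,dF(w)}{\xi\cdot F}+g_0(\pi v,\pi w).
\]
This is manifestly smooth, symmetric and bilinear. For positive definiteness, write $v=a\,\xi+u$ with $u\in\ker dF$, so that $\pi v=u$ and $dF(v)=a(\xi\cdot F)$; then $g(v,v)=a^2(\xi\cdot F)+g_0(u,u)\geqslant 0$, with equality only when $a=0$ and $u=0$, using $\xi\cdot F>0$.

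It then remains to check that $\xi=\nabla_gF$, i.e. $g(\xi,v)=dF(v)$ for all $v$: since $\pi\xi=\xi-\xi=0$ and $dF(\xi)=\xi\cdot F$, one gets $g(\xi,v)=(\xi\cdot F)\,dF(v)/(\xi\cdot F)+g_0(0,\pi v)=dF(v)$, as wanted. For the boundary case the very same formula is used verbatim, with $g_0$ a metric on the manifold-with-boundary $U$; then $\nabla_gF=\xi$ is tangent to $\partial U$ for free, by the hypothesis on $\xi$. The only point that warrants any care is the smoothness of $\pi$, and hence of $g$, up to the boundary, but this is immediate from the explicit formula and the positivity of $\xi\cdot F$ on all of $U$; I do not expect a genuine obstacle here, which is consistent with the assertion that the proof is straightforward.
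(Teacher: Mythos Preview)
Your argument is correct: the global formula $g(v,w)=\dfrac{dF(v)\,dF(w)}{\xi\cdot F}+g_0(\pi v,\pi w)$ with $\pi(v)=v-\dfrac{dF(v)}{\xi\cdot F}\,\xi$ does exactly what is needed, and the boundary case requires no modification. The paper does not prove this lemma at all---it explicitly says the proof ``is straightforward and will be omitted''---so there is nothing to compare; your write-up simply supplies the omitted details.
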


Hence the two approaches -- gradients and gradient-like vector fields --
 are equivalent. However, we shall need both approaches. In
Section~\ref{S:sih} we use gradients of functions and a specific
choice of a metric, because the argument becomes slightly simpler.
In Section~\ref{S:bhc} we follow \cite{Mi-hcob} very closely; as he
uses gradient-like vector fields, we use them as well.

The next result shows that the condition from Definition~\ref{def:Morse} that $\nabla F$ is
everywhere tangent to $Y$ can be relaxed. We shall use this result in
Proposition~\ref{prop:rearii}.

\begin{lemma}\label{lem:altermet}
Let $\O$ be a compact, Riemannian manifold of dimension $(n+1)$
and let $Y\subset\p\O$ be compact as well. Let $g$ denote the metric.
Suppose that there exists a function $F\colon\O\to\R$, and a
relative open subset $U\subset Y$ such that $\nabla F$ is tangent
to $Y$ at each point $y\in U$. Suppose furthermore, that for any
$y\in Y\setminus U$ we have
\begin{equation}\label{eq:kerdfnot}
T_yY\not\subset\ker dF.
\end{equation}
Then, for
any open neighbourhood $W\subset\O$ of $Y\setminus U$, there exist a metric
$h$ on $\O$, agreeing with $g$ away from $W$, such that $\nabla_h
F$ (the gradient in the new metric) is everywhere tangent to $Y$.
\end{lemma}
\begin{proof}
Let us fix a point $y\in Y\setminus U$ and consider a small open
neighbourhood $V_y$ of $y$ in $W$, in which we choose local
coordinates $x_1,\dots,x_{n+1}$ such that $Y\cap
V_y=\{x_{n+1}=0\}$ and $V_y\subset\{x_{n+1}\geqslant 0\}$. In
these coordinates we have $dF=\sum_{i=1}^{n+1} f_i(x)\,dx_i$ for
some smooth functions $f_1,\dots,f_{n+1}$. By \eqref{eq:kerdfnot}, for
each $x\in V_y$, there exists  $i\leqslant n$ such that $f_i(x)\neq 0$.
Shrinking $V_y$ if needed, we may assume that the index $i$ is the same for each $x\in V_y$.
In the following we suppose that $i=1$, that is for every $x\in V_y$ we have
$\pm f_1(x)>0$ and the sign $\pm$ is the same for every $x$. Let us choose a symmetric positive definite
matrix $A_y=\{a_{ij}(x)\}_{i,j=1}^{n+1}$ so that $a_{11}=\pm
f_1(x)$ and for $i>1$, $a_{1i}=a_{i1}=f_i(x)$. $A_y$ defines a
metric $h_y$ on $V_y$ such that $\nabla_{h_y}F=(\pm
1,0,\dots,0)\subset TY$ in that metric.

Now let us choose an open subset $V$ of $\O\setminus (Y\setminus U)$ such that
$V\cup\bigcup_{y\in Y\setminus U} V_y$
is a covering of $\O$. Let $\{\phi_V\}\cup\{\phi_y\}_{y\in Y\setminus U}$
be a partition of unity subordinate to this covering. Define
\[h=\phi_V\cdot g+\sum_{y\in Y\setminus U}\phi_yh_y.\]
Then $h$ is a metric, which agrees with $g$ away from $W$. Moreover, as for each
metric $h_y$, and $x\in V_y\cap Y$ we have $\nabla_{h_y}F(x)\in T_xY$
by construction, the same holds for a convex linear combination of metrics.
\end{proof}

\section{Boundary stable and unstable critical points}\label{s:hh-sec}

Most of the results of this section appeared previously in \cite{Br,JR,Haj}. We provide them for completeness of exposition
and for the convenience of the reader. In what follows, we use notation and terminology of \cite{KM}.

\subsection{Morse function for manifolds with boundary}

The first question concerns the existence of Morse functions. While the condition that the function has
only critical points of Morse type is open--dense, it requires a little argument to show that
there are many functions generic in the interior such that their gradient, when restricted to $Y$ is tangent to $Y$.

\begin{lemma}\label{lem:morseexist}
Morse functions exist. In fact, for any Morse function $f\colon Y\to[0,1]$ with
$f(M_0)=0$, $f(M_1)=1$ there exists a Morse function $F\colon \O\to[0,1]$ whose restriction to $Y$
is $f$.
\end{lemma}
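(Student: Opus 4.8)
The plan is to construct $F$ in two stages: first build it on a collar neighbourhood of $Y$ so that it agrees with $f$ and has $\nabla F$ tangent to $Y$, then extend it over the rest of $\O$ using classical (boundaryless) Morse theory, and finally rescale to land in $[0,1]$.

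First I would fix a collar $c\colon Y\times[0,\e)\hookrightarrow\O$ of $Y$ in $\O$, compatible with the product structures $M_i\times[0,1)$ near the corners as in Remark~\ref{rem:corners}. On the image of this collar, define $\widetilde F(y,t)=f(y)$. This function has no critical points on the collar except those of $f$ (which all lie in $\Int Y$ since $f$ is Morse with $f(M_0)=0$, $f(M_1)=1$), it restricts to $f$ on $Y=Y\times\{0\}$, and with respect to the product metric on $Y\times[0,\e)$ its gradient is $\nabla f$, which is tangent to $Y\times\{t\}$ for every $t$; in particular it is tangent to $Y$. Near a critical point $p$ of $f$ in $Y$ of index $k$, I can apply the ordinary Morse lemma on $Y$ to get coordinates $x_1,\dots,x_n$ with $f=f(p)-(x_1^2+\dots+x_k^2)+(x_{k+1}^2+\dots+x_n^2)$; adjoining the collar coordinate $t$ (and relabelling $t=x_{n+1}$ or inserting it among the positive squares as appropriate) gives exactly the normal form in Definition~\ref{def:Morse}, so these collar critical points are genuine Morse critical points of $\widetilde F$ as a function on $\O$, with $Y$ cut out by $x_{n+1}=0$ and $\O$ locally $\{x_{n+1}\ge 0\}$.

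Next I extend $\widetilde F$ from a slightly smaller collar $c(Y\times[0,\e/2])$ to all of $\O$. This is the step where one must be a little careful but it is not deep: choose a smooth function on $\O$ that equals $\widetilde F$ near $Y$, is strictly greater than the supremum of $\widetilde F$ on $\S_1$-side pieces and strictly less on $\S_0$-side pieces so that $\S_0$ and $\S_1$ become regular level sets at the correct values, then perturb it \emph{rel a neighbourhood of $Y\cup\S_0\cup\S_1$} to be Morse in the interior; such a perturbation exists by the standard genericity of Morse functions and the fact that $\widetilde F$ is already Morse on the region where we do not perturb (the perturbation is supported in the complement of the collar, which is an honest manifold with boundary on which ordinary Morse theory applies, with the boundary kept as a regular level set). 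Because the perturbation is supported away from $Y$, the new critical points produced are all interior, and near each of them we can choose a gradient-like vector field in the standard normal form; the resulting function still has $\nabla F$ tangent to $Y$ since nothing changed on the collar. Finally, rescale the target affinely (or by a diffeomorphism of $\R$ fixing the relevant endpoints) so that $F(\S_0)=0$, $F(\S_1)=1$ and the range is $[0,1]$; since $\S_0,\S_1$ are regular levels this does not create or destroy critical points, and tangency of $\nabla F$ to $Y$ is preserved up to the metric rescaling, which is harmless, or one reappeals to Lemma~\ref{lem:linktwo}.

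The main obstacle is the compatibility at the corners $M_0\cup M_1$: the collar of $Y$, the collars of $\S_0,\S_1$, and the local model $\R^{n-1}\times\R^2_{\ge0}$ all have to be chosen coherently so that $\widetilde F=f$ on the $Y$-collar genuinely extends to a function with $\S_0,\S_1$ as regular level sets at $0$ and $1$ without introducing spurious critical behaviour where the two collars overlap. This is handled by working in the corner model $\R^{n-1}\times\R^2_{\ge0}$ with coordinates $(z,s,t)$ where $s\ge0$ is the $\S_0$- (or $\S_1$-)direction and $t\ge0$ the $Y$-direction, and taking $\widetilde F$ of the form $f$ composed with projection plus a function of $s$ alone that increases from $0$; after smoothing the corner as permitted by Remark~\ref{rem:corners} this is routine, which is why I would relegate it to a short paragraph rather than a detailed computation.
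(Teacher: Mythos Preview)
Your collar function $\widetilde F(y,t)=f(y)$ does not produce Morse critical points on $Y$. Since $\widetilde F$ is constant in the normal direction $t$, at any critical point $p$ of $f$ the entire segment $\{p\}\times[0,\e)$ consists of critical points of $\widetilde F$, and even at $(p,0)$ the Hessian $D^2\widetilde F$ has a zero block in the $t$-direction, hence is degenerate. Your sentence about ``adjoining the collar coordinate $t$ \dots\ inserting it among the positive squares'' is where the error surfaces: there is no $\pm t^2$ term to insert, because $\widetilde F$ simply does not depend on $t$. Your later perturbation step is taken rel a neighbourhood of $Y$, so it cannot repair these degenerate boundary critical points either.

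The paper's proof avoids exactly this by taking $\widetilde F(x,t)=f(x)-f(x)\bigl(1-f(x)\bigr)t^2$ on the collar. The quadratic $-t^2$ term (whose coefficient $f(x)(1-f(x))$ is strictly positive at every critical point of $f$, since critical values lie in $(0,1)$) makes the Hessian nondegenerate in the normal direction and forces each boundary critical point to be isolated and Morse; in fact all of them come out boundary stable. The factor $f(x)(1-f(x))$ also keeps the range inside $[0,1]$ and the preimages of $0,1$ inside $\S_0,\S_1$, which handles the corner compatibility you worry about at the end without any further work. Once you correct your collar formula to include a genuine $\pm t^2$ term, the rest of your outline (extend smoothly, perturb to Morse rel the collar) is essentially the paper's argument.
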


\begin{proof}
Let $f\colon Y\to[0,1]$ be a Morse function on the boundary,
such that $f(M_0)=0$ and $f(M_1)=1$. We want to extend $f$ to a Morse function on $\O$.

First, let us choose a small tubular neighbourhood $U$ of $Y$ and a
diffeomorphism $U\cong Y\times[0,\varepsilon)$ for some $\varepsilon>0$.
Let $\tilde{F}\colon U\to[0,1]$ be given by the formula
\begin{equation}\label{eq:choiceofsign}
U\cong Y\times[0,\varepsilon)\ni (x,t)\to \tilde{F}(x,t)=f(x)-f(x)(1-f(x))t^2.
\end{equation}
The factor $f(x)(1-f(x))$ ensures that $\tilde{F}$ attains values in the interval
$[0,1]$ and $\tilde{F}^{-1}(i)\subset \S_i$ for $i\in\{0,1\}$.
It is obvious that there exists a smooth function $F\colon \O\to[0,1]$, which agrees on $Y\times[0,\varepsilon/2)$ with $\tilde{F}$, and it
satisfies the Morse condition on the whole of $\O$. The gradient $\nabla F$ is everywhere tangent to $Y$.

\end{proof}
\begin{remark}\label{rem:allarestab}
The above construction yields a function with the property that  all its
boundary critical points are boundary stable (see
Definition~\ref{def:bstab} below). This is due to the choice of
sign $-1$ in front of $f(x)(1-f(x))t^2$ in
\eqref{eq:choiceofsign}. If we change the sign to $+1$, we obtain
a function with all boundary critical points boundary unstable.
\end{remark}

We fix a Morse function $F\colon \O\to [0,1]$ and we start to
analyze its critical points. Let $z$ be such a point. If $z\in \O\setminus Y$,
we shall call it an \emph{interior} critical point. If $z\in Y$, it will be called a \emph{boundary} critical point.
There are two types of  boundary critical points.

\begin{definition}\label{def:bstab}
Let $z$ be a boundary critical point. We shall call it
\emph{boundary stable}, if the tangent space to the unstable
manifold of $z$ lies entirely in $T_zY$, otherwise it is called
\emph{boundary unstable}.

\end{definition}

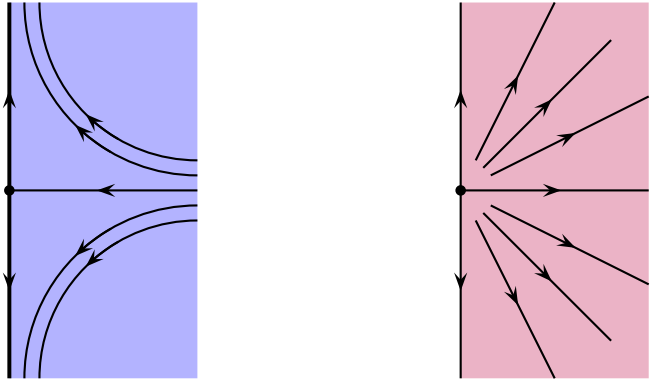
\begin{figure}
\begin{pspicture}(-5,-3)(5,3)
\pspolygon[linestyle=none,fillstyle=solid,fillcolor=blue,opacity=0.3](-4,-2.5)(-4,2.5)(-1.5,2.5)(-1.5,-2.5)
\psline[ArrowInside=->,arrowsize=5pt,linewidth=1.5pt](-4,0)(-4,2.5)
\psline[ArrowInside=->,arrowsize=5pt,linewidth=1.5pt](-4,0)(-4,-2.5)
\psline[ArrowInside=->,arrowsize=5pt](-1.5,0)(-4,0)
\psarc[arrowsize=5pt]{->}(-1.5,-2.5){2.3}{90}{135}
\psarc(-1.5,-2.5){2.3}{120}{180}
\psarc[arrowsize=5pt]{->}(-1.5,-2.5){2.1}{90}{135}
\psarc(-1.5,-2.5){2.1}{120}{180}
\psarc[arrowsize=5pt]{<-}(-1.5,2.5){2.3}{225}{270}
\psarc(-1.5,2.5){2.3}{180}{240}
\psarc[arrowsize=5pt]{<-}(-1.5,2.5){2.1}{225}{270}
\psarc(-1.5,2.5){2.1}{180}{240}
\pscircle[fillstyle=solid,fillcolor=black](-4,0){0.07}
\pspolygon[linestyle=none,fillstyle=solid,fillcolor=purple,opacity=0.3](2,-2.5)(2,2.5)(4.5,2.5)(4.5,-2.5)
\psline[ArrowInside=->,arrowsize=5pt](2,0)(2,2.5)
\psline[ArrowInside=->,arrowsize=5pt](2,0)(2,-2.5)
\psline[ArrowInside=->,arrowsize=5pt](2,0)(4.5,0)
\pscircle[fillstyle=solid,fillcolor=black](2,0){0.07}
\psline[ArrowInside=->,arrowsize=5pt](2.4,0.2)(4.5,1.25)
\psline[ArrowInside=->,arrowsize=5pt](2.3,0.3)(4,2)
\psline[ArrowInside=->,arrowsize=5pt](2.2,0.4)(3.25,2.5)
\psline[ArrowInside=->,arrowsize=5pt](2.4,-0.2)(4.5,-1.25)
\psline[ArrowInside=->,arrowsize=5pt](2.3,-0.3)(4,-2)
\psline[ArrowInside=->,arrowsize=5pt](2.2,-0.4)(3.25,-2.5)
\end{pspicture}
\caption{Boundary stable (on the left) and unstable critical points.}\label{fig:boundstab}
\end{figure}

The index of the boundary critical point $z$ is defined as the dimension of the stable manifold $W^s_z$. If $z$ is boundary unstable, this is the same
as the index of $z$ regarded as a critical point of the restriction $f$ of  $F$ on $Y$.
If $z$ is boundary stable, we have $\ind_F z=\ind_f z+1$. In particular, there
are no boundary stable critical point with index $0$, nor boundary unstable critical points of index $n+1$.

\begin{remark}
We point out that we use the flow of $\nabla F$ and not of
$-\nabla F$ as Kronheimer and Mrowka \cite{KM} do, hence our
definitions and formulae are slightly different from theirs.
\end{remark}

We finish this subsection with three standard results.

\begin{lemma}[Boundary Morse Lemma]\label{lem:boundarymorselemma}
Assume that $F$ has a critical point $z\in Y$ such
that the Hessian $D^2F(z)$ at $z$ is non-degenerate, and $\nabla F$ is
everywhere tangent to $Y$. Then there are local coordinates
$(x_1,\dots,x_{n+1})$  in an open neighbourhood $U\ni z$
such that
$U=\{x_1^2+\dots+x_{n+1}^2\leqslant\e^2\}\cap\{x_1\geqslant
0\}$ and $U\cap Y=\{x_1=0\}$  for some $\e>0$,
and
$F$ in these coordinates has the form $\pm x_1^2\pm x_2^2\pm \dots\pm
x_{n+1}^2+F(z)$.
\end{lemma}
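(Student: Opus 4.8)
The plan is to reduce the Boundary Morse Lemma to the classical Morse lemma by exploiting the condition that $\nabla F$ is tangent to $Y$. First I would work in a collar of $Y$: choose a tubular neighbourhood of $Y$ near $z$ of the form $Y\times[0,\e_0)$, with coordinate $t$ the distance to $Y$, so that $Y=\{t=0\}$ and $\Omega$ is locally $\{t\ge 0\}$. The key observation is that tangency of $\nabla F$ to $Y$ forces $\partial F/\partial t$ to vanish on $Y$, so along $\{t=0\}$ the function $F$ restricted to $Y$ has a critical point at $z$, and moreover the full Hessian $D^2F(z)$ is block-diagonal with respect to the splitting $T_zY\oplus\R\partial_t$: the mixed second derivatives $\partial^2 F/\partial t\,\partial x_i$ vanish at $z$. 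Indeed, since $\partial F/\partial t\equiv 0$ on $Y$, differentiating in the directions $x_i\in T_zY$ kills the mixed terms.

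Next I would diagonalize. Since $D^2F(z)$ is non-degenerate and block-diagonal, its restriction to $T_zY$ is a non-degenerate quadratic form, and the $(\partial_t,\partial_t)$ entry $a:=\partial^2 F/\partial t^2(z)$ is non-zero. Apply the classical Morse lemma to the restriction $f=F|_Y$ at the non-degenerate critical point $z$: this produces coordinates $(x_2,\dots,x_{n+1})$ on $Y$ near $z$ in which $f=\pm x_2^2\pm\dots\pm x_{n+1}^2+F(z)$. Extend these coordinates into the collar by keeping them constant along the $t$-lines, and set $x_1$ to be a suitable rescaling of $t$. The remaining work is a one-variable (parametrized) Morse-lemma argument in the $t$-direction: writing $F(x_2,\dots,x_{n+1},t)=f(x_2,\dots,x_{n+1})+t^2\,g(x_2,\dots,x_{n+1},t)$ with $g$ smooth — valid because $F-f$ vanishes to second order in $t$ along $\{t=0\}$, as $\partial F/\partial t$ vanishes there (using Hadamard's lemma twice) — and noting $g(z,0)=a/2\ne 0$, so $g$ has constant sign near $z$. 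Then $x_1:=t\sqrt{|g|}$ is a legitimate new coordinate (the change of variables is a diffeomorphism preserving $\{t=0\}$ and $\{t\ge 0\}$, since $t\mapsto x_1$ is monotone for fixed transverse coordinates), and in these coordinates $F=\pm x_1^2\pm x_2^2\pm\dots\pm x_{n+1}^2+F(z)$ with $U\cap Y=\{x_1=0\}$, $U\subset\{x_1\ge 0\}$. Shrinking to a ball of radius $\e$ gives the stated form of $U$.

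The main obstacle I expect is the bookkeeping around the corner/collar: one must check that the collar coordinate $t$ can be chosen so that $\nabla F$ being tangent to $Y$ really does give $\partial_t F\equiv 0$ on $Y$ (rather than merely at $z$), which is what makes the mixed Hessian terms vanish and lets the $t^2$ factor come out cleanly; and one must verify that the composite change of coordinates — classical Morse lemma on $Y$, then the fibrewise rescaling $t\mapsto t\sqrt{|g|}$ — is a genuine diffeomorphism of a half-neighbourhood sending $Y$ to $\{x_1=0\}$. Everything else is a routine application of Hadamard's lemma and the classical Morse lemma. An alternative, slightly slicker route is to invoke the classical Morse lemma on a double $\widehat{\Omega}$ obtained by gluing two copies of $\Omega$ along $Y$, extending $F$ by the reflection $t\mapsto -t$ (which is smooth precisely because $\partial_t F$ vanishes on $Y$); the resulting coordinates are automatically symmetric under the reflection and restrict to the desired half-space coordinates, but one still has to justify smoothness of the doubled function, which is essentially the same computation.
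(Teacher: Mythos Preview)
Your proposal is correct and follows essentially the same approach as the paper: both use the tangency of $\nabla F$ to $Y$ to conclude that the normal derivative $\partial F/\partial t$ vanishes identically on $Y$, then apply Hadamard's lemma to factor out $t^2$ and the classical Morse lemma for the tangential directions. The only cosmetic difference is the order of operations---the paper first isolates the $y_1^2 H_{11}$ term via Hadamard and then applies the classical Morse lemma to the remaining $(y_2,\dots,y_{n+1})$-part, whereas you diagonalize $F|_Y$ first and then rescale the normal coordinate; both yield the same normal form.
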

\begin{proof}
We choose a
coordinate system $y_1,\dots,y_{n+1}$ in a neighbourhood
$U\subset\O$ of $z$ such that  $z=(0,\dots,0)$, $Y=\{y_1=0\}$,
$U=\{y_1\geqslant 0\}$, and the vector field $\frac{\p}{\p y_1}$ is
orthogonal to $Y$. We may and will assume  $F(z)=0$.
The tangency of $\nabla F$ to $Y$ implies
that at each point of $Y$
\begin{equation}\label{eq:dF}
\frac{\p F}{\p y_1}(0,y_2,\dots,y_{n+1})=0.
\end{equation}
The Hadamard Lemma applied to $F$ gives smooth functions $K_1,\ldots,K_{n+1}$ such that.
\begin{equation}\label{eq:feqkj}
F=y_1K_1(y_1,\ldots,y_{n+1})+\sum_{j=2}^{n+1}y_jK_j(y_1,y_2,\dots,y_{n+1}),
\end{equation}
We can assume that for $j>1$, $K_j$ does not depend on $y_1$. Indeed, if it does depend, we write (again using the Hadamard Lemma)
\[K_j(y_1,\dots,y_{n+1})=K_j(0,y_2,\dots,y_{n+1})+y_1L_{1j}(y_1,\dots,y_{n+1})\]
for some functions smooth $L_{12},\ldots,L_{1,n+1}$,
and then replace $K_j$ by $K_j(0,y_2,\dots,y_{n+1})$ and $K_1$ by $K_1+\sum y_jL_{1j}$.
The condition \eqref{eq:dF} implies now that $K_1(0,y_2,\dots,y_{n+1})=0$, hence
\[K_1(y_1,\ldots,y_{n+1})=y_1H_{11}(y_1,\dots,y_{n+1})\]
for some function $H_{11}$.
By the Hadamard Lemma  applied to $K_2,\dots,K_{n+1}$ we infer that there exist functions $H_{jk}$ for $j,k=2,\ldots,n+1$ such that
\begin{equation}\label{eq:dF2}
F=y_1^2H_{11}(y_1,\dots,y_{n+1})+\sum_{j,k=2}^n y_jy_k H_{jk}(y_2,\dots,y_{n+1}).
\end{equation}

Notice that the functions $H_{11}$ and $H_{jk}$ for $j,k=2,\ldots,n+1$ evaluated at $z$ correspond to the second derivatives of $F$ at $z$.
The non-degeneracy of $D^2F(z)$
implies that $H_{11}(z)\neq 0$; by continuity $H_{11}$ does not vanish in a neighbourhood of $z$.
After replacing $y_1\sqrt{\pm H_{11}}$ by $x_1$, we can assume that $H_{11}=\pm 1$.
Finally, the sum in \eqref{eq:dF2} can be written as $\sum_{j\geqslant2}\epsilon_j x_j^2$ ($\epsilon_j=\pm 1$)
by the classical Morse lemma
\cite[Lemma~2.2]{Mi-morse}.
\end{proof}

The next result is completely standard by now.
\begin{lemma}\label{lem:novanishnoproblem}
Assume that $F$ is a Morse function on a cobordism $(\O,Y)$ between $(\S_0,M_0)$ and $(\S_1,M_1)$. If $F$ has no critical points then
$(\O,Y)\cong (\S_0,M_0)\times[0,1]$.
 Furthermore, we can choose the diffeomorphism to map the level set $F^{-1}(t)$ to the set $\S_0\times\{t\}$.
\end{lemma}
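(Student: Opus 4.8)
The plan is the classical normalised-gradient-flow argument (cf. the analogous fact in the closed case in \cite{Mi-hcob}), keeping track of the distinguished boundary piece $Y$. Since $F$ has no critical points, $\nabla F$ is nowhere zero on the compact manifold $\O$; hence $|\nabla F|$ is bounded below by a positive constant, and the vector field $v:=\nabla F/|\nabla F|^2$ is smooth on $\O$ and satisfies $dF(v)=1$ everywhere. Because $\nabla F$ is everywhere tangent to $Y$ (part of Definition~\ref{def:Morse}), so is $v$, and therefore the flow of $v$ maps $Y$ into $Y$.

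Next I would study the flow $\phi_t$ of $v$. Along any trajectory $\frac{d}{dt}F(\phi_t(x))=dF(v)=1$, so $F(\phi_t(x))=F(x)+t$ wherever it is defined; in particular a trajectory issuing from a point of $\S_0=F^{-1}(0)$ satisfies $F(\phi_t(x))=t$, hence stays in $F^{-1}([0,1])$ and, for $t\in(0,1)$, avoids $\S_0\cup\S_1$. Completeness of this trajectory on the whole interval $[0,1]$ follows from compactness: $|v|$ is bounded, so the trajectory cannot escape in finite time; it cannot leave $\O$ across $Y$ since $v$ is tangent to $Y$; and by the identity $F(\phi_t(x))=t$ it meets $\S_1=F^{-1}(1)$ only at $t=1$ and $\S_0$ only at $t=0$. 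Thus $\phi_t$ is defined on $\S_0$ for all $t\in[0,1]$, and $\phi_1(\S_0)=F^{-1}(1)=\S_1$.

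I would then set $\Phi\colon\S_0\times[0,1]\to\O$, $\Phi(x,t)=\phi_t(x)$. This map is smooth and bijective, with smooth inverse $y\mapsto(\phi_{-F(y)}(y),F(y))$ — here $\phi_{-F(y)}(y)$ is defined by the backward flow for time $F(y)$ (possible by the same compactness argument) and lies in $F^{-1}(0)=\S_0$ — so $\Phi$ is a diffeomorphism. By construction $\Phi(\S_0\times\{t\})=\phi_t(\S_0)=F^{-1}(t)$, which gives the assertion about level sets. Finally, since $v$ is tangent to $Y$ the flow preserves $Y$; combined with $M_0=\S_0\cap Y$ this shows that $\Phi$ restricts to a diffeomorphism $M_0\times[0,1]\to Y$ (the inverse being the restriction of the above formula, which maps $Y$ into $M_0\times[0,1]$ since the backward flow likewise preserves $Y$). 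Hence $\Phi$ is a diffeomorphism of pairs $(\S_0,M_0)\times[0,1]\cong(\O,Y)$.

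The one point needing a little care is the behaviour of $\Phi$ near the corners $M_0$ and $M_1$, where $\O$ is modelled on $\R^{n-1}\times\R_{\geqslant 0}^2$: one must check smoothness of the flow up to and along these corners. This is clear once one combines the collar structure $\S_i\times[0,1)$ of $\S_i$ with the tangency of $v$ to $Y$; alternatively, as permitted by Remark~\ref{rem:corners}, one smooths the corners along $M_0$ and $M_1$ at the outset, so that $\O$ is an honest manifold with boundary and the argument above goes through without modification.
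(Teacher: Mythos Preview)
Your proof is correct and is precisely the classical normalised-gradient-flow argument the paper has in mind when it says ``the proof is identical to the classical case'' and refers to \cite[Theorem~3.4]{Mi-hcob}; you have simply written out the details and verified that the tangency condition on $\nabla F$ along $Y$ makes the pair structure go through. There is nothing to add.
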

\begin{proof}
The proof is identical to the classical case, see e.g. \cite[Theorem 3.4]{Mi-hcob}.
\end{proof}

\subsection{Half-handles}\label{ss:halfhandles}

For any $k$ we consider the $k$--dimensional disk
$D^k=\{x_1^2+\dots+x_{k}^2\leqslant 1\}$.
In the classical theory, an $n$--dimensional handle of index $k$ is the
$n$--dimensional manifold $H=D^k \times D^{n-k}$ with boundary
\[\p H~=~\left(\partial D^k \times D^{n-k}\right) \cup \left(D^k \times \partial D^{n-k}\right)~=~B_0 \cup B'_0~.\]

Given an $n$--manifold with boundary $(\S,\p \S)$ and a
distinguished embedding $\phi\colon B_0\to \p \S$, the effect of a
classical handle attachment is the $n$-dimensional manifold with
boundary
$$(\S',\p \S')~=~(\S\cup H,(\p \S \setminus B_0) \cup B'_0),$$
where we glue along  $\phi(B_0)$ identified with $B_0$. The
boundary $\p \S'$ is the effect of surgery on $\phi(B_0) \subset
\p \S$. We now extend this construction to relative cobordisms of
manifolds with boundary, using `half-handles'. Since our ambient
space $\O$ is $(n+1)$--dimensional, $(n+1)$ is the dimension of
the handles, and they induce $n$--dimensional handle attachments
on $Y$.

In order to do this, for any $k\geqslant1$ we distinguish the following subsets of $D^k$:
the `half-disk' $D^{k}_+:= D^k\cap \{x_1\geqslant0\}$, and its boundary subsets
$S^{k-1}_+:=\partial D^k\cap \{x_1\geqslant0\}$, $S^{k-2}_0:=\partial D^k\cap \{x_1= 0\}$
and $D^{k-1}_0:=D^k\cap \{x_1= 0\}$. Clearly, $S^{k-2}_0$ is a boundary the two $(k-1)$--disks
 $S^{k-1}_+$ and $D^{k-1}_0$; see Figure~\ref{fig:halfdisks}. We will call $x_1$ the {\it cutting coordinate}.

\begin{figure}
\begin{pspicture}(-3,-1.5)(3,1.5)
\psarc[fillcolor=blue,opacity=0.2,fillstyle=solid,linestyle=none](0,0){1.3}{270}{90}%
\rput(0.6,0){\psscalebox{0.7}{$D^2_+$}}%
\psarc[linewidth=1.5pt, linestyle=dashed,linecolor=blue](0,0){1.3}{270}{90}\rput(1.55,0){\psscalebox{0.7}{$S^1_+$}}%
\psline[linewidth=1.5pt, linestyle=solid,linecolor=purple](0,-1.3)(0,1.3)\rput(-0.25,0){\psscalebox{0.7}{$D^1_0$}}%
\pscircle[fillstyle=solid,fillcolor=black,linestyle=none](0,1.3){0.1}\rput(0,1.6){\psscalebox{0.7}{$S^0_0$}}%
\pscircle[fillstyle=solid,fillcolor=black,linestyle=none](0,-1.3){0.1}\rput(0,-1.6){\psscalebox{0.7}{$S^0_0$}}%
\end{pspicture}
\caption{Various parts of a `half-disk'. We explain the notation introduced before Definition~\ref{def:rhh}.}\label{fig:halfdisks}
\end{figure}
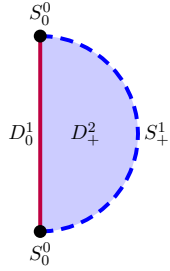
\begin{definition}\label{def:rhh}
Let $0\leqslant k \leqslant n$. An \emph{$(n+1)$-dimensional right
half-handle of index $k$} is the $(n+1)$-dimensional manifold
$\Hr=D^k \times D^{n+1-k}_+ $, with boundary subdivided into
three pieces  $\partial \Hr=B \cup C \cup N$, where
$$B:=\partial D^k\times D_+^{n+1-k},\ \ \ C:=D^k\times D_0^{n-k},
\ \ \ N:=D^k\times S^{n-k}_+.$$ One has the following
intersections too
$$
B_0:=C \cap B=\partial D^k \times D^{n-k}_0,\ \ \
N_0:=C \cap N=D^k \times S^{n-k-1}_0.$$
Hence the handle $H$ is cut along $C$ into two pieces, one of them is the half-handle $\Hr$.
Note that $(C,B_0)$ is a $n$--dimensional handle of index $k$. See Figure~\ref{fig:righthandle} for an example of a right half-handle.
\end{definition}

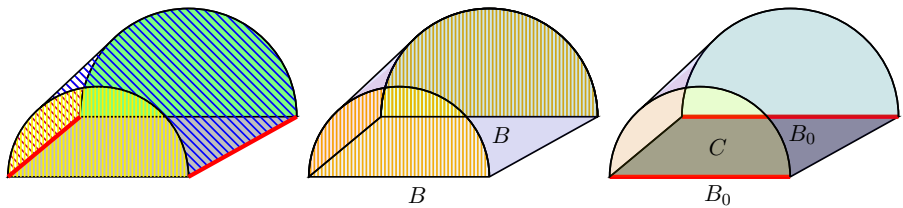
\begin{figure}
\begin{pspicture}(-7,-1.2)(7,1.7)
\rput(-4,0){\psscalebox{0.8}{%
\psarc[fillcolor=green,opacity=0.5,fillstyle=solid](0.5,0){1.8}{0}{180}%
\pspolygon[fillcolor=gray,fillstyle=solid,opacity=0.5](-2.5,-1)(0.5,-1)(2.3,0)(-1.3,0)%
\pscustom[fillcolor=purple,fillstyle=vlines,opacity=0.3, hatchcolor=purple, hatchsep=2pt]{%
\psarc(-1,-1){1.5}{130}{180}%
\psarcn[liftpen=1](0.5,0){1.8}{180}{150}}%
\pscustom[fillcolor=blue,fillstyle=vlines,opacity=0.3, hatchcolor=blue, hatchsep=2pt]{%
\psarc(-1,-1){1.5}{0}{130}%
\psarcn[liftpen=1](0.5,0){1.8}{130}{0}}%
\psarc[fillstyle=vlines,hatchcolor=yellow, opacity=0.7, hatchsep=1pt,hatchangle=0](-1,-1){1.5}{0}{180}
\psline[linecolor=red,linewidth=2pt](-2.5,-1)(-1.3,0)%
\psline[linecolor=red,linewidth=2pt](0.5,-1)(2.3,0)%
}}
\rput(0,0){\psscalebox{0.8}{%
\psarc[fillstyle=vlines,hatchcolor=orange, opacity=0.9, hatchsep=1pt,hatchangle=0](-1,-1){1.5}{0}{180}
\psarc[fillstyle=vlines,hatchcolor=orange, opacity=0.9, hatchsep=1pt,hatchangle=0](0.5,0){1.8}{0}{180}
\psarc[fillstyle=none,opacity=1](-1,-1){1.5}{0}{180}
\psarc[fillstyle=none,opacity=1](0.5,0){1.8}{0}{180}
\psarc[fillcolor=green,opacity=0.5,fillstyle=solid,opacity=0.1](0.5,0){1.8}{0}{180}%
\pspolygon[fillcolor=gray,fillstyle=solid,opacity=0.1](-2.5,-1)(0.5,-1)(2.3,0)(-1.3,0)%
\pscustom[fillcolor=purple,fillstyle=solid,opacity=0.1, hatchcolor=blue, hatchsep=2pt]{%
\psarc(-1,-1){1.5}{130}{180}%
\psarcn[liftpen=1](0.5,0){1.8}{180}{150}}%
\pscustom[fillcolor=blue,fillstyle=solid,opacity=0.1, hatchcolor=blue, hatchsep=2pt]{%
\psarc(-1,-1){1.5}{0}{130}%
\psarcn[liftpen=1](0.5,0){1.8}{130}{0}}%
\rput(-0.7,-1.3){$B$}
\rput(0.7,-0.3){$B$}}}
\rput(4,0){\psscalebox{0.8}{%
\pspolygon[fillcolor=gray,fillstyle=solid,opacity=0.8](-2.5,-1)(0.5,-1)(2.3,0)(-1.3,0)%
\psline[linecolor=red,linewidth=2pt](-2.5,-1)(0.5,-1)%
\psline[linecolor=red,linewidth=2pt](-1.3,0)(2.3,0)%
\psarc[fillcolor=green,opacity=0.5,fillstyle=solid,opacity=0.1](0.5,0){1.8}{0}{180}%
\pscustom[fillcolor=purple,fillstyle=solid,opacity=0.1, hatchcolor=blue, hatchsep=2pt]{%
\psarc(-1,-1){1.5}{130}{180}%
\psarcn[liftpen=1](0.5,0){1.8}{180}{150}}%
\pscustom[fillcolor=blue,fillstyle=solid,opacity=0.1, hatchcolor=blue, hatchsep=2pt]{%
\psarc(-1,-1){1.5}{0}{130}%
\psarcn[liftpen=1](0.5,0){1.8}{130}{0}}%
\psarc[fillstyle=solid,fillcolor=yellow, opacity=0.1, hatchsep=1pt,hatchangle=0](-1,-1){1.5}{0}{180}
\rput(0.7,-0.3){$B_0$}
\rput(-0.7,-1.3){$B_0$}
\rput(-0.7,-0.5){$C$}
}}
\end{pspicture}

\caption{A right half-handle of index $1$. The picture on the left is the handle, the two other pictures explain the notation. The two half-circles form $B$, $C$ is the bottom rectangle.}\label{fig:righthandle}
\end{figure}

Symmetrically, we define the  left half-handles by cutting the handle $H$ along the left--component  disk $D^k$; see Figure~\ref{fig:lhh}.
\begin{definition}\label{def:lhh} Fix $k$ with $1\leqslant k \leqslant n+1$.
\emph{An $(n+1)$--dimensional left half-handle of index $k$} is
the $(n+1)$--dimensional disk $\Hl:=D^k_+\times D^{n+1-k}$ with
boundary subdivided into three pieces $\partial \Hl=B\cup C\cup
N$, where
$$
B:=S^{k-1}_+\times D^{n+1-k}, \ \ \
C:=D^{k-1}_0\times D^{n+1-k}, \ \ \
N:=D^{k}_+\times \partial D^{n+1-k}.
$$
Furthermore, we specify $B_0:=C\cap B=S^{k-2}_0\times D^{n+1-k}$ and $N_0:=N\cap C=D^{k-1}_0\times \partial D^{n+1-k}$.
\end{definition}
\begin{remark}The right half-handle and left-half handle are abstractly diffeomorphic to an $n+1$ dimensional disk. The right half-handle and left-half handle are each abstractly diffeomorphic to an $n+1$ dimensional disk. The difference is that the boundary is
split into several components and this splitting is different for right half-handles and left half-handles.
\end{remark}

\begin{figure}
\begin{pspicture}(-7,-1.2)(7,1.7)
\rput(-4,0){\psscalebox{0.8}{%
\psarc[fillcolor=green,opacity=0.5,fillstyle=solid](0.5,0){1.8}{0}{180}%
\pspolygon[fillcolor=gray,fillstyle=solid,opacity=0.5](-2.5,-1)(0.5,-1)(2.3,0)(-1.3,0)%
\pscustom[fillcolor=purple,fillstyle=vlines,opacity=0.3, hatchcolor=purple, hatchsep=2pt]{%
\psarc(-1,-1){1.5}{130}{180}%
\psarcn[liftpen=1](0.5,0){1.8}{180}{150}}%
\pscustom[fillcolor=blue,fillstyle=vlines,opacity=0.3, hatchcolor=blue, hatchsep=2pt]{%
\psarc(-1,-1){1.5}{0}{130}%
\psarcn[liftpen=1](0.5,0){1.8}{130}{0}}%
\psarc[fillstyle=vlines,hatchcolor=yellow, opacity=0.7, hatchsep=1pt,hatchangle=0](-1,-1){1.5}{0}{180}
\psline[linecolor=red,linewidth=2pt](-2.5,-1)(-1.3,0)%
\psline[linecolor=red,linewidth=2pt](0.5,-1)(2.3,0)%
}}
\rput(0,0){\psscalebox{0.8}{%
\psarc[fillcolor=green,opacity=0.5,fillstyle=solid,opacity=0.1](0.5,0){1.8}{0}{180}%
\pspolygon[fillcolor=gray,fillstyle=solid,opacity=0.1](-2.5,-1)(0.5,-1)(2.3,0)(-1.3,0)%
\pscustom[fillcolor=purple,fillstyle=solid,opacity=0.8, hatchcolor=blue, hatchsep=2pt]{%
\psarc(-1,-1){1.5}{130}{180}%
\psarcn[liftpen=1](0.5,0){1.8}{180}{150}}%
\pscustom[fillcolor=blue,fillstyle=solid,opacity=0.8, hatchcolor=blue, hatchsep=2pt]{%
\psarc(-1,-1){1.5}{0}{130}%
\psarcn[liftpen=1](0.5,0){1.8}{130}{0}}%
\psarc[fillstyle=solid,fillcolor=yellow, opacity=0.1, hatchsep=1pt,hatchangle=0](-1,-1){1.5}{0}{180}
\rput(-1.4,0.9){$B$}}}
\rput(4,0){\psscalebox{0.8}{%
\psarc[fillcolor=green,opacity=0.5,fillstyle=solid,opacity=0.1](0.5,0){1.8}{0}{180}%
\pspolygon[fillcolor=gray,fillstyle=solid,opacity=0.8](-2.5,-1)(0.5,-1)(2.3,0)(-1.3,0)%
\pscustom[fillcolor=purple,fillstyle=solid,opacity=0.1, hatchcolor=blue, hatchsep=2pt]{%
\psarc(-1,-1){1.5}{130}{180}%
\psarcn[liftpen=1](0.5,0){1.8}{180}{150}}%
\pscustom[fillcolor=blue,fillstyle=solid,opacity=0.1, hatchcolor=blue, hatchsep=2pt]{%
\psarc(-1,-1){1.5}{0}{130}%
\psarcn[liftpen=1](0.5,0){1.8}{130}{0}}%
\psarc[fillstyle=solid,fillcolor=yellow, opacity=0.1, hatchsep=1pt,hatchangle=0](-1,-1){1.5}{0}{180}
\psline[linecolor=red,linewidth=2pt](-2.5,-1)(-1.3,0)%
\psline[linecolor=red,linewidth=2pt](0.5,-1)(2.3,0)%
\rput(1.5,-0.8){$B_0$}
\rput(-1.6,-0.7){$B_0$}
\rput(-0.6,-0.5){$C$}
}}
\end{pspicture}
\caption{A left half-handle of dimension $3$ and index $k=2$.
The two lines are $B_0$, the bottom rectangle is $C$. $B$ is
the surface between the two half circles on the picture.
}\label{fig:lhh}
\end{figure}
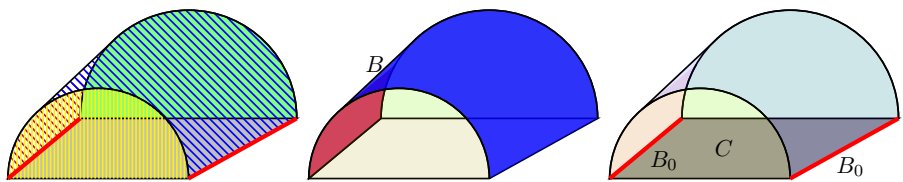
A \emph{half-handle} will from now on refer to either a right half-handle or left half-handle.
We pass to half-handle attachments. We will attach a half-handle along $B$. The definitions of the right half-handle
attachment and the left half-handle attachment are formally very similar, but there are significant differences in the properties of
the two operations.

\begin{definition}\label{def:rha}
Let $(\Omega,Y;\Sigma_0,M_0,\Sigma_1,M_1)$ be an $(n+1)$-dimensional
relative cobordism. Given an embedding $\Phi\colon (B,B_0)\hookrightarrow (\Sigma_1, M_1)$
define the relative cobordism $(\Omega',Y';\Sigma_0,M_0,\Sigma'_1,M'_1)$
obtained from $(\Omega,Y;\Sigma_0,M_0,\Sigma_1,M_1)$ by {\it attaching a (right or left)
 half-handle of index $k$} by
\begin{align*}
\Omega'&=\Omega\cup _BH,& Y'&=Y \cup_{B_0} C,\\
\Sigma'_1&=(\Sigma_1 \setminus B) \cup N,& M'_1&=(M_1 \setminus B_0) \cup N_0.
\end{align*}
See Figure~\ref{fig:rha} and Figure~\ref{fig:lha} for  right,
respectively left half-handle attachments.
\end{definition}

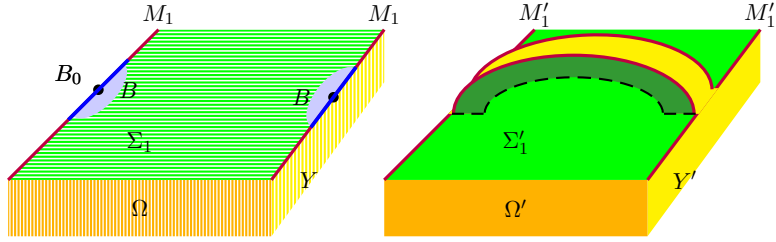
\begin{figure}
\begin{pspicture}(-5,-2)(5,1.5)
\begin{psclip}{%
\pspolygon[linestyle=none,fillstyle=hlines,hatchcolor=green, hatchangle=0, hatchsep=0.6pt](-5,-1)(-3,1)(0,1)(-1.5,-1)%
\rput(-3.25,-0.5){\psscalebox{0.8}{$\Sigma_1$}}%
}
\rput{45}(-3.8,0.2){\psellipse[fillstyle=solid,fillcolor=lightblue,linestyle=none](0.0,0.0)(0.5,0.25)}\rput(-3.4,0.2){\psscalebox{0.8}{$B$}}
\rput{50}(-0.675,0.1){\psellipse[fillstyle=solid,fillcolor=lightblue,linestyle=none](0.0,0.0)(0.5,0.25)}\rput(-1.1,0.1){\psscalebox{0.8}{$B$}}
\end{psclip}
\pspolygon[linestyle=none,fillstyle=vlines,hatchcolor=yellow, hatchangle=0, hatchsep=0.6pt](-1.5,-1.75)(-1.5,-1)(0,1)(0,0.3)\rput(-1,-1){\psscalebox{0.8}{$Y$}}
\pspolygon[linestyle=none,fillstyle=vlines,hatchcolor=orange, hatchangle=0, hatchsep=0.5pt](-1.5,-1.75)(-1.5,-1)(-5,-1)(-5,-1.75)%
\rput(-3.25,-1.375){\psscalebox{0.8}{$\Omega$}}
\psline[linecolor=purple,linewidth=1.2pt](-5,-1)(-3,1)\rput(-3,1.2){\psscalebox{0.8}{$M_1$}}
\psline[linecolor=purple,linewidth=1.2pt](-1.5,-1)(0,1)\rput(0,1.2){\psscalebox{0.8}{$M_1$}}
\pscircle[fillcolor=black,fillstyle=solid](-3.8,0.2){0.07}
\pscircle[fillcolor=black,fillstyle=solid](-0.675,0.1){0.07}
\psline[linewidth=1.4pt,linecolor=blue](-4.2,-0.2)(-3.4,0.6)\rput(-4.2,0.4){\psscalebox{0.8}{$B_0$}}
\psline[linewidth=1.4pt,linecolor=blue](-0.975,-0.3)(-0.375,0.5)\rput(-4.2,0.4){\psscalebox{0.8}{$B_0$}}
\rput(5,0){%
\pspolygon[linestyle=none,fillstyle=solid, fillcolor=yellow](-1.5,-1.75)(-1.5,-1)(0,1)(0,0.3)\rput(-1,-1){\psscalebox{0.8}{$Y'$}}
\pspolygon[linestyle=none,fillstyle=solid, fillcolor=orange](-1.5,-1.75)(-1.5,-1)(-5,-1)(-5,-1.75)%
\rput(-3.25,-1.375){\psscalebox{0.8}{$\Omega'$}}
\pspolygon[linestyle=none,fillstyle=solid, fillcolor=green](-5,-1)(-3,1)(0,1)(-1.5,-1)\rput(-3.25,-0.5){\psscalebox{0.8}{$\Sigma_1'$}}%
\psline[linecolor=purple,linewidth=1.2pt](-5,-1)(-3,1)\rput(-3,1.2){\psscalebox{0.8}{$M_1'$}}
\psline[linecolor=purple,linewidth=1.2pt](-1.5,-1)(0,1)\rput(0,1.2){\psscalebox{0.8}{$M_1'$}}
\pspolygon[linestyle=solid,linecolor=yellow,fillstyle=solid, fillcolor=yellow, linewidth=1.2pt](-4.1,-0.1)(-0.825,-0.1)(-0.6,0.2)(-3.85,0.15)
\rput{2}(-2.26,0.15){\psellipticarc[fillcolor=yellow, fillstyle=solid ,linecolor=purple,linewidth=1.2pt](0,0)(1.64,0.8){0}{180}}
\rput(-2.48,-0.12){\psellipticarc[fillcolor=darkgreen, fillstyle=solid, linecolor=purple, linewidth=1.2pt](0,0)(1.62,0.8){0}{180}%
\psellipticarc[linestyle=dashed,linecolor=black, fillstyle=solid,fillcolor=green](0,0)(1.2,0.5){0}{180}%
\psline[linecolor=green](-1.2,0)(1.2,0)%
\psline[linecolor=black, linestyle=dashed](1.62,0)(1.2,0)%
\psline[linecolor=black, linestyle=dashed](-1.62,0)(-1.2,0)%
}
}
\end{pspicture}
\caption{Right half-handle attachment. Here $k=1$, $n=2$. On the right, the two black points represent a sphere $S^0$ with a
neighbourhood $B_0$ in $M_1$ and $B$ in $\Sigma_1$. In the picture
on the right the dark green coloured part of the handle belongs to $\Sigma_1$, the dashed lines belong to $\Sigma_1$ and are
drawn only to  make the picture look more `three-dimensional'.}\label{fig:rha}
\end{figure}

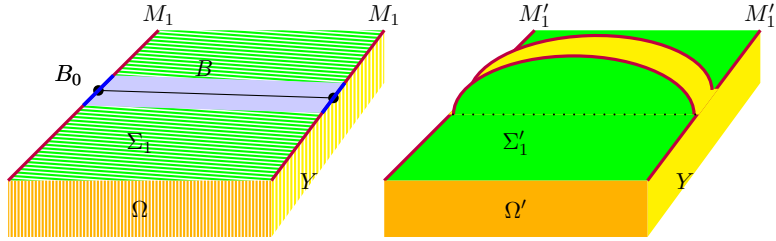
\begin{figure}
\begin{pspicture}(-5,-2)(5,1.5)
\pspolygon[linestyle=none,fillstyle=hlines,hatchcolor=green, hatchangle=-2.5, hatchsep=0.6pt](-5,-1)(-3,1)(0,1)(-1.5,-1)%
\rput(-3.25,-0.5){\psscalebox{0.8}{$\Sigma_1$}}%
\pspolygon[linestyle=none,fillstyle=vlines,hatchcolor=orange, hatchangle=0, hatchsep=0.5pt](-1.5,-1.75)(-1.5,-1)(-5,-1)(-5,-1.75)%
\rput(-3.25,-1.375){\psscalebox{0.8}{$\Omega$}}
\pspolygon[fillstyle=solid,fillcolor=lightblue,linestyle=none](-4.0,0)(-0.825,-0.1)(-0.525,0.3)(-3.6,0.4)\rput(-2.4,0.5){\psscalebox{0.8}{$B$}}
\pspolygon[linestyle=none,fillstyle=vlines,hatchcolor=yellow, hatchangle=0, hatchsep=0.6pt](-1.5,-1.75)(-1.5,-1)(0,1)(0,0.3)\rput(-1,-1){\psscalebox{0.8}{$Y$}}
\psline[linecolor=purple,linewidth=1.2pt](-5,-1)(-3,1)\rput(-3,1.2){\psscalebox{0.8}{$M_1$}}
\psline[linecolor=purple,linewidth=1.2pt](-1.5,-1)(0,1)\rput(0,1.2){\psscalebox{0.8}{$M_1$}}
\pscircle[fillcolor=black,fillstyle=solid](-3.8,0.2){0.07}
\pscircle[fillcolor=black,fillstyle=solid](-0.675,0.1){0.07}
\psline[linewidth=0.4pt](-3.8,0.2)(-0.675,0.1)
\psline[linewidth=1.4pt,linecolor=blue](-4.0,0)(-3.6,0.4)\rput(-4.2,0.4){\psscalebox{0.8}{$B_0$}}
\psline[linewidth=1.4pt,linecolor=blue](-0.825,-0.1)(-0.525,0.3)\rput(-4.2,0.4){\psscalebox{0.8}{$B_0$}}
\rput(5,0){%
\pspolygon[linestyle=none,fillstyle=solid, fillcolor=yellow](-1.5,-1.75)(-1.5,-1)(0,1)(0,0.3)\rput(-1,-1){\psscalebox{0.8}{$Y$}}
\pspolygon[linestyle=none,fillstyle=solid, fillcolor=green](-5,-1)(-3,1)(0,1)(-1.5,-1)\rput(-3.25,-0.5){\psscalebox{0.8}{$\Sigma_1'$}}%
\psline[linecolor=purple,linewidth=1.2pt](-5,-1)(-3,1)\rput(-3,1.2){\psscalebox{0.8}{$M_1'$}}
\psline[linecolor=purple,linewidth=1.2pt](-1.5,-1)(0,1)\rput(0,1.2){\psscalebox{0.8}{$M_1'$}}
\pspolygon[linestyle=none,fillstyle=solid, fillcolor=orange,hatchcolor=orange](-1.5,-1.75)(-1.5,-1)(-5,-1)(-5,-1.75)%
\rput(-3.25,-1.375){\psscalebox{0.8}{$\Omega'$}}
\pspolygon[linestyle=solid,linecolor=yellow,fillstyle=solid, fillcolor=yellow, linewidth=1.2pt](-4.1,-0.1)(-0.825,-0.1)(-0.6,0.2)(-3.85,0.15)
\rput{2}(-2.26,0.15){\psellipticarc[fillcolor=yellow, fillstyle=solid ,linecolor=purple,linewidth=1.2pt](0,0)(1.64,0.8){0}{180}}
\rput(-2.48,-0.12){\psellipticarc[fillcolor=green, fillstyle=solid, linecolor=purple, linewidth=1.2pt](0,0)(1.62,0.8){0}{180}\psline[linestyle=dotted](-1.62,0)(1.62,0)}
}
\end{pspicture}
\caption{Left half-handle attachment with $k=2$ and $n=2$. This time the sphere on the left (denoted by two points) bounds a disk in $\Sigma_1$.}\label{fig:lha}
\end{figure}

We point out that in the case of the right half-handle attachment,
 any embedding of $B_0$ into $M_1$ determines (up to an isotopy) an embedding of pairs
 $(B,B_0)\hookrightarrow (\Sigma_1, M_1) $.  Indeed, as $(B,B_0)=\partial D^k\times(D_+^{n+1-k},D_0^{n-k})$,
a map $\phi\colon B_0\hookrightarrow M_1$ extends to a map
$\Phi\colon B\hookrightarrow \S_1$ in a collar  neighbourhood of $M_1$ in $\S_1$.
(This is not the case in the left half-handle attachment.)

In particular, {\it in the case of right attachments, we specify only the embedding $B_0\hookrightarrow M_1$}.

\begin{example}
(a) The  right half-handle attachment of index $0$  is the
disconnected sum $\O\sqcup D^{n+1}_+$ with boundary $\p
D^{n+1}_+=S^{n}_+\cup D^n_0$. We think of the first disk $S^{n}_+$  as
a part of $\S'_1$, while the second disk as a part of $Y'$, and
$M'_1=M_1\cup S^{n-1}_0$.

(b) We exemplify the left half-handle attachment for $k=1$. In
this case $B_0$ is empty. If we are given an embedding of $B\cong
\{1\}\times D^{n}$ into $\S_1\setminus M_1$, we glue $[0,1]\times
D^n$ to $\O$ along $B$. Then we set
 $Y'=Y\sqcup \{0\}\times D^{n}$, $\S'_1=(\S_1\setminus B)\cup  [0,1]\times \p B$ and $M'=M\sqcup \{0\}\times \p B$.

\end{example}

\begin{example}\label{ex:clay}
There is another way of looking at left half-handle attachments. Suppose we are given a model of $\Omega$ (in Figure~\ref{fig:clay}) made of clay.
The height function is the Morse function $F$. On the top of $\Omega$, that is on $\Sigma_1$ we specify an arc $\gamma$ with boundary in $M_1$ (the arc is a $1$ dimensional
disk, that is, in our situation $k=1+1=2$).  We press down slightly a tubular neighbourhood of the arc as on the right side of Figure~\ref{fig:clay}. The resulting manifold is
a result of a left half--handle attachment of index $2$. Notice that for index $1$ left half--handle attachment 
we should have specified a disk inside $\Sigma_1$ (with boundary disjoint from $M_1$).
\end{example}

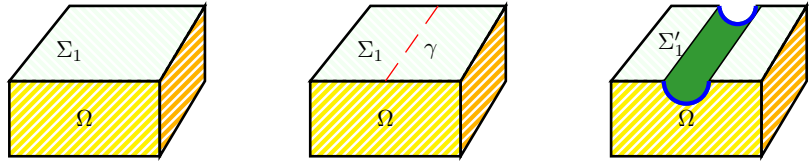
\begin{figure}
\begin{pspicture}(-5,0)(5,3)
\rput(-4,0){\psscalebox{2}{%
\pspolygon[fillstyle=hlines,hatchcolor=yellow,hatchsep=0.5pt,linewidth=0.5pt](0,0.5)(0,1)(1,1)(1,0.5)%
\pspolygon[fillstyle=hlines,hatchcolor=orange,hatchsep=0.5pt,linewidth=0.5pt](1,0.5)(1,1)(1.3,1.5)(1.3,1.0)%
\pspolygon[hatchcolor=lightgreen,hatchsep=0.5pt,fillstyle=vlines,linewidth=0.5pt](0,1)(1,1)(1.3,1.5)(0.4,1.5)%
\rput(0.5,0.75){\psscalebox{0.4}{$\Omega$}}%
\rput(0.4,1.2){\psscalebox{0.4}{$\Sigma_1$}}%
}}%
\rput(0,0){\psscalebox{2}{%
\pspolygon[fillstyle=hlines,hatchcolor=yellow,hatchsep=0.5pt,linewidth=0.5pt](0,0.5)(0,1)(1,1)(1,0.5)%
\pspolygon[fillstyle=hlines,hatchcolor=orange,hatchsep=0.5pt,linewidth=0.5pt](1,0.5)(1,1)(1.3,1.5)(1.3,1.0)%
\pspolygon[hatchcolor=lightgreen,hatchsep=0.5pt,fillstyle=vlines,linewidth=0.5pt](0,1)(1,1)(1.3,1.5)(0.4,1.5)%
\rput(0.5,0.75){\psscalebox{0.4}{$\Omega$}}%
\rput(0.4,1.2){\psscalebox{0.4}{$\Sigma_1$}}%
\psline[linecolor=red,linestyle=dashed,linewidth=0.25pt](0.5,1)(0.85,1.5)%
\rput(0.80,1.2){\psscalebox{0.4}{$\gamma$}}%
}}%
\rput(4,0){\psscalebox{2}{%
\pspolygon[fillstyle=hlines,hatchcolor=yellow,hatchsep=0.5pt,linewidth=0.5pt](0,0.5)(0,1)(1,1)(1,0.5)%
\pspolygon[fillstyle=hlines,hatchcolor=orange,hatchsep=0.5pt,linewidth=0.5pt](1,0.5)(1,1)(1.3,1.5)(1.3,1.0)%
\pspolygon[hatchcolor=lightgreen,hatchsep=0.5pt,fillstyle=vlines,linewidth=0.5pt](0,1)(1,1)(1.3,1.5)(0.4,1.5)%
\rput(0.5,0.75){\psscalebox{0.4}{$\Omega$}}%
\rput(0.4,1.25){\psscalebox{0.4}{$\Sigma_1'$}}%
\psarc[fillcolor=darkgreen,fillstyle=solid,linecolor=darkgreen](0.5,1){0.15}{180}{360}%
\pspolygon[fillstyle=solid,fillcolor=darkgreen,linestyle=none](0.35,1)(0.65,1)(1,1.5)(0.72,1.5)%
\psarc[fillcolor=white,fillstyle=solid,linecolor=darkgreen](0.841,1.5){0.121}{180}{360}%
\psline[linecolor=darkgreen](0.35,1)(0.65,1)%
\psline[linecolor=white](0.722,1.5)(0.960,1.5)%
\psline[linewidth=0.3pt](0.35,1)(0.72,1.5)%
\psline[linewidth=0.3pt](0.65,1)(1,1.5)%
\psarc[linecolor=blue](0.841,1.5){0.121}{180}{360}%
\psarc[linecolor=blue](0.5,1){0.15}{180}{360}%
}}%
\end{pspicture}
\caption{A `clay' variant of a left half-handle attachment as explained in Example~\ref{ex:clay}. There is only a little difference between this picture
and Figure~\ref{fig:lha}. Here the handle is `pushed down inside $\Omega$', in the formal definition it is glued on top of $\Sigma_1$.}\label{fig:clay}
\end{figure}
\begin{remark}
In the next subsection we shall see that crossing a boundary
stable critical points corresponds to left half-handle attachment,
while a boundary unstable critical point corresponds to a right
half-handle attachment. Theorem~\ref{thm:handsplit} can be
interpreted informally, as splitting a handle into a right
half-handle and left half-handle. This also motivates  the name
`half-handle'.
\end{remark}

\subsection{Elementary properties of half-handle
attachments}\label{ss:elem}
The following results are trivial consequences of the definitions.

\begin{lemma}\label{lem:rightislikenormal}
Let $\O'$ be the result of a right half-handle attachment to $\O$ along $(B,B_0)\hookrightarrow(\S_1,M_1)$.
Let $B'$ be $B$ pushed slightly off $M_1$ into the interior of $\S_1$.
Let $\tilde{\O}$ be the result of attaching a (standard) handle of index $k$ to $\O$
along $B'$. Then  $\O'$ and $\tilde{\O}$ are diffeomorphic.
\end{lemma}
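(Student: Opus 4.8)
The plan is to observe that, once corners are smoothed as in Remark~\ref{rem:corners}, a right half-handle $\Hr$ is just a standard handle $H$, and that the only difference between the two attachments is that the attaching region of the half-handle attachment straddles the corner $M_1$ whereas that of an ordinary handle attachment lies in $\Int\S_1$; the two are related by a collar push-off. For the first point, note that smoothing the corner $S^{n-k-1}_0$ of the half-disc $D^{n+1-k}_+=D^{n+1-k}\cap\{x_1\geqslant0\}$ yields a diffeomorphism $\theta\colon D^{n+1-k}_+\to D^{n+1-k}$, whence $\mathrm{id}_{D^k}\times\theta$ is a diffeomorphism $\Hr=D^k\times D^{n+1-k}_+\to D^k\times D^{n+1-k}=H$ carrying the attaching region $B=\p D^k\times D^{n+1-k}_+$ of $\Hr$ exactly onto $\p D^k\times D^{n+1-k}$, the attaching region of the standard $(n+1)$-dimensional handle of index $k$. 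Since gluing is natural, $\O'=\O\cup_B\Hr$ is then diffeomorphic to $\O$ with a standard handle $H$ glued along the embedding $\p D^k\times D^{n+1-k}\hookrightarrow\p\O$ obtained from $\Phi|_B$ via $\theta$, whose image is the subset $\Phi(B)\subset\S_1$.

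For the second point, I would move $\Phi(B)$ off the corner. As $\Phi(B_0)\subset M_1$, the discussion following Definition~\ref{def:rha} shows that $\Phi(B)$ lies in a collar of $M_1$ in $\S_1$; combining that collar with a collar of $\S_1$ in $\O$ (cf. Remark~\ref{rem:corners}) and smoothing, one sees that the copy $B'$ of $B$ pushed slightly off $M_1$ into $\Int\S_1$ is the image of $\Phi(B)$ under a small ambient isotopy of $\O$ that is the identity away from a neighbourhood of $M_1$ and flows, near $M_1$, in the collar direction from $Y$ towards $\Int\S_1$; one can arrange this isotopy to carry the parametrisation of $\Phi(B)$ (pushed through $\theta$) to the chosen standard parametrisation of $B'$. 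Since the diffeomorphism type of a handle attachment depends only on the isotopy class of the attaching embedding --- extend the isotopy to a self-diffeomorphism of $\O$ and glue --- we conclude $\O'\cong\O\cup_{B'}H=\tilde{\O}$.

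The step needing the most care is the second one: one must check that the corner of $\O$ along $M_1$ can be smoothed compatibly with the corner of $\Hr$ along $B_0$, so that after smoothing the attaching region is a genuinely smoothly embedded copy of $\p D^k\times D^{n+1-k}$ in $\p\O$ and the push-off proceeds through smooth embeddings. This is routine if one works in explicit coordinates $(m,u,v)\in M_1\times[0,1)\times[0,1)$ near a point of $M_1$, with $\S_1=\{v=0\}$ and $Y=\{u=0\}$, and then globalises by a partition of unity over $M_1$; I would present it in that form. The exceptional case $k=0$, where $B=\emptyset$ and both $\O'$ and $\tilde{\O}$ equal $\O\sqcup D^{n+1}$, is immediate.
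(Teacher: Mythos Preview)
Your proof is correct and follows the same idea as the paper, which dispatches the lemma in a single sentence: ``This is obvious, since when we forget about $C$ and $B_0$, the pair $(\Hr,B)$ is a standard $(n+1)$-dimensional handle of index $k$.'' Your first paragraph is exactly this observation made explicit, and your second paragraph (the collar push-off and isotopy invariance of handle attachment) spells out what the paper leaves to the reader; the authors evidently regard the corner-smoothing and isotopy as routine in light of Remark~\ref{rem:corners}.
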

\begin{proof}
When we forget about $C$ and $B_0$, the
pair $(\Hr,B)$ is a standard $(n+1)$-dimensional handle of index
$k$.
\end{proof}
For instance, the effect of a right half-handle attachment on $\O$ is the same as the effect of a standard handle attachment
of the same index.

The situation is completely different in the case of left half-handle attachments.
\begin{lemma}\label{lem:leftistrivial}
If $\O'$ is the result of a left half-handle attachment,
then $\O'$ is diffeomorphic to $\O$.
\end{lemma}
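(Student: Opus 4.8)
The plan is to show that attaching a left half-handle does not change the diffeomorphism type of $\O$, by recognizing the half-handle as a collar that can be absorbed. Recall from Definition~\ref{def:lhh} that $\Hl=D^k_+\times D^{n+1-k}$, and that the attaching region is $B=S^{k-1}_+\times D^{n+1-k}\subset \p\Hl$, while $C=D^{k-1}_0\times D^{n+1-k}$ is the part of $\p\Hl$ that becomes new boundary $Y'$. The key observation is that the pair $(D^k_+,S^{k-1}_+)$ is diffeomorphic, as a manifold with a distinguished piece of boundary, to $(D^k_+, S^{k-1}_+)$ viewed as a collar: writing $D^k_+=\{x\in D^k:x_1\geqslant 0\}$, the subset $S^{k-1}_+=\p D^k\cap\{x_1\geqslant 0\}$ is a face, and there is a diffeomorphism of $D^k_+$ onto $S^{k-1}_+\times[0,1]$ carrying $S^{k-1}_+$ to $S^{k-1}_+\times\{1\}$, with $D^{k-1}_0=D^k\cap\{x_1=0\}$ going to a subset of $S^{k-1}_+\times\{0\}\cup(\p S^{k-1}_+\times[0,1])$. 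In other words, $D^k_+$ is just a (bent) collar on its face $S^{k-1}_+$.

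Concretely, I would first exhibit the diffeomorphism $\psi\colon D^k_+ \xrightarrow{\ \cong\ } S^{k-1}_+\times[0,1]$ sending the attaching face $S^{k-1}_+$ to $S^{k-1}_+\times\{1\}$. (For $k=1$ this degenerates: $D^1_+=[0,1]$, $S^0_+$ is a point, so $\Hl\cong[0,1]\times D^n$ attached along $\{1\}\times D^n$, which is literally a collar, consistent with Example~(a).) Taking the product with $D^{n+1-k}$ gives a diffeomorphism $\Hl=D^k_+\times D^{n+1-k}\cong (S^{k-1}_+\times D^{n+1-k})\times[0,1]=B\times[0,1]$, under which the attaching region $B\subset\p\Hl$ corresponds to $B\times\{1\}$. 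Thus $\O'=\O\cup_B \Hl$ is obtained from $\O$ by gluing an external collar $B\times[0,1]$ onto the codimension-zero submanifold-with-corners $B=\Phi(B)\subset\S_1\subset\p\O$ along $B\times\{1\}$.

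Second, I would invoke the standard fact that gluing an external collar to a manifold with boundary (or corners) along a codimension-zero piece of the boundary yields a manifold diffeomorphic to the original one: one uses the collar neighborhood theorem to find $\S_1\supset \Phi(B)$ sitting inside a collar $\p\O\times[0,1)\subset\O$, and then a diffeomorphism of $\O\cup_{B\times\{1\}}(B\times[0,1])$ with $\O$ which is the identity outside this collar and reparametrizes $[0,1]\cup[0,1)\cong[0,1)$ in the $B$-direction. Concretely, the new boundary $Y'=Y\cup_{B_0}C$ is isotoped back onto the old $Y$, since $C=D^{k-1}_0\times D^{n+1-k}$ becomes $\psi(D^{k-1}_0)\times D^{n+1-k}$, a subset of $\p(B\times[0,1])$ lying over the region where the collar meets the ambient boundary.

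The main obstacle is purely bookkeeping: one is working with manifolds with corners (the triple intersections $B_0=C\cap B$ and $N_0=C\cap N$ are corner loci), so the collar-absorption argument must be run in the category of manifolds with corners, keeping track of the faces $\S_1$, $Y$, and their common corner $M_1$. One must check that $\psi$ can be chosen to respect the decomposition of $\p D^k_+$ into $S^{k-1}_+$, $D^{k-1}_0$, and their corner $S^{k-2}_0$, and that the resulting absorption diffeomorphism carries $(Y',\S'_1,M'_1)$ to $(Y,\S_1,M_1)$ — or, adopting Remark~\ref{rem:corners}, one smooths the corners first and runs the ordinary collar argument, which I expect to be the cleanest route and the one I would write up.
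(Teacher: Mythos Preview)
Your proposal is correct and follows essentially the same approach as the paper: both arguments rest on the observation that the pair $(\Hl,B)$ is diffeomorphic to $(D^n\times[0,1],D^n\times\{0\})$ (equivalently, to $(B\times[0,1],B\times\{1\})$, since $B\cong D^n$), so that attaching $\Hl$ along $B$ is just gluing on an external collar. The paper states this in two lines, while you spell out the collar-absorption and the corner bookkeeping in more detail; your factorization through $D^k_+\cong S^{k-1}_+\times[0,1]$ is a clean way to see the identification.
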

\begin{proof}
By definition the pair $(\Hl,B)$ is diffeomorphic to the pair $(D^n\times[0,1],D^n\times\{0\})$.
Attaching $\Hl$ along $B$ to $\O$
does not change the diffeomorphism type of $\O$.
\end{proof}

The effect on $Y$ of a right/left half-handle attachments are almost the same, the only difference is
the index shift by $1$.

\begin{lemma}\label{lem:whathappensonboundary}
If $(\O',Y';\S_0,M_0,\S_1',M_1')$ is the result of a left  (respectively, right) half-handle attachment to
$(\Omega,Y;\Sigma_0,M_0,\Sigma_1,M_1)$ along $(B,B_0)\hookrightarrow (\S_1,M_1)$,
then $Y'$ is the result of a classical handle attachment of index $k-1$ (respectively $k$) along $B_0$.
\end{lemma}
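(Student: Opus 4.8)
The plan is to read the statement off directly from Definitions~\ref{def:rhh}, \ref{def:lhh} and \ref{def:rha}; the only content is the bookkeeping of the disc factors. By Definition~\ref{def:rha} the effect of any half-handle attachment on the side boundary is $Y'=Y\cup_{B_0}C$, and the gluing is performed along the restriction $\Phi|_{B_0}\colon B_0\hookrightarrow M_1\subset\p Y$. So in each case it suffices to recognise the pair $(C,B_0)$ as a classical $n$-dimensional handle whose attaching tube is $B_0$; then $Y'=Y\cup_{B_0}C$ is, by definition, the corresponding classical handle attachment along $\Phi|_{B_0}$.

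For the right half-handle of index $k$, Definition~\ref{def:rhh} gives $C=D^k\times D^{n-k}_0$ and $B_0=\p D^k\times D^{n-k}_0$. Identifying the $(n-k)$-disc $D^{n-k}_0$ with $D^{n-k}$ turns $(C,B_0)$ into $(D^k\times D^{n-k},\p D^k\times D^{n-k})$, which is an $n$-dimensional handle of index $k$ together with its attaching tube --- this identification is already recorded in Definition~\ref{def:rhh}. Hence $Y'=Y\cup_{B_0}C$ is a classical index-$k$ handle attachment along $B_0$.

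For the left half-handle of index $k$, Definition~\ref{def:lhh} gives $C=D^{k-1}_0\times D^{n+1-k}$ and $B_0=S^{k-2}_0\times D^{n+1-k}$. Identifying $D^{k-1}_0$ with $D^{k-1}$ and $S^{k-2}_0$ with $\p D^{k-1}$ turns $(C,B_0)$ into $(D^{k-1}\times D^{n+1-k},\p D^{k-1}\times D^{n+1-k})$, an $n$-dimensional handle of index $k-1$ with its attaching tube (the count $(k-1)+(n+1-k)=n$ confirms that $C$ has the right dimension). Hence $Y'=Y\cup_{B_0}C$ is a classical index-$(k-1)$ handle attachment along $B_0$.

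There is no real obstacle: the lemma is purely an exercise in unwinding the definitions, which is why it belongs among the ``trivial consequences''. The one point worth a line of comment is that in the left case the full embedding $\Phi\colon(B,B_0)\hookrightarrow(\S_1,M_1)$ carries strictly more data than $\Phi|_{B_0}$ (unlike in the right case, where $\Phi$ is determined by $\Phi|_{B_0}$ up to isotopy), yet the formula $Y'=Y\cup_{B_0}C$ shows that only $\Phi|_{B_0}$ enters the construction of $Y'$; so the extra data is invisible on the boundary, and corner-smoothing along the new $M_1'$ is handled exactly as in the classical case.
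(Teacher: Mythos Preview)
Your proof is correct and takes essentially the same approach as the paper: the paper's proof is the single sentence ``This follows immediately from Definition~\ref{def:rha}'', and you have simply spelled out the bookkeeping that makes this immediate. Your added remark about the left case (that $\Phi|_{B_0}$ alone determines $Y'$ even though $\Phi$ carries more data) is a nice clarification not present in the original.
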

\begin{proof}
This follows immediately from Definition~\ref{def:rha}.
\end{proof}

The effects of half handle attachment on $\Sigma$ are also easily described.
The next lemma is a  direct consequence of the definitions; its
proof is omitted. We refer to Figures~~\ref{fig:rha} and
\ref{fig:lha}.

\begin{lemma}\label{lem:leftonsigma}
(a) \ If $(\O',Y';\S_0,M_0,\S_1',M_1')$ is the result of index $k$
right half-handle attachment to
$(\Omega,Y;\Sigma_0,M_0,\Sigma_1,M_1)$ along $B_0\hookrightarrow
M_1$, then $\S_1'$ is diffeomorphic to $\S_1\cup_{B_0} N$,
where $N$ is an $n$-dimensional disk $D^k\times D^{n-k}$ and $B_0=S^{k-1}\times D^{n-k}$.

(b) \ If $(\O',Y';\S_0,M_0,\S_1',M_1')$ is the result of left
half-handle attachment to $(\Omega,Y;\Sigma_0,M_0,\Sigma_1,M_1)$
along $(B,B_0)\hookrightarrow (\S_1,M_1)$, then $\S_1'$ is diffeomorphic to $\S_1\setminus B$.
\end{lemma}

\begin{example}\label{ex:n=3}
Suppose $n=3$, so $\Sigma_1$ and $\Sigma_1'$ are three dimensional manifolds with boundary. The effects on $\Sigma_1$ of left and right half-handle attachments
to $\O$ are the following (the number $0,1,2,3,4$ is the index of a handle, `l' and `r' stays for `left' and `right'):
\begin{itemize}
\item[(0r)] $\Sigma_1'$ is a disjoint union of $\Sigma_1$ and a $3$-ball;
\item[(1l)] A $3$--ball is removed from the interior of $\Sigma_1$;
\item[(1r)] A $1$--handle (that is a thickened arc) is added to $\Sigma_1$. The attaching region is formed by thickening two point on 
$\partial\Sigma_1$;
\item[(2l)] An arc $\gamma$ is chosen inside $\Sigma_1$ such that $\partial\gamma\subset\partial\Sigma_1$. The manifold $\Sigma_1'$ is then $\Sigma_1$
with a tubular neighbourhood of $\gamma$ removed;
\item[(2r)] A $2$--handle (a thickened two-disk) is added to $\Sigma_1$. The attaching region is formed by thickening a circle belonging to $\partial
\Sigma_1$;
\item[(3l)] A disk $D$ is specified inside $\Sigma_1$ such that $\partial D\subset \partial\Sigma_1$. Then a tubular neighbourhood of $D$ is removed;
\item[(3r)] A $3$--handle (a ball) is added to $\Sigma_1$. Notice that adding a $3$-ball destroys one component of the boundary;
\item[(4l)] A connected component of $\Sigma_1$ that is a ball, is removed from $\Sigma_1$. This is the opposite of the (0r) move.
\end{itemize}
\end{example}

Lemma~\ref{lem:leftonsigma} and Example~\ref{ex:n=3} emphasize  that right half-handle attachments and
left half-handle attachments are somehow dual operations on $\S$.
This can be seen also at the Morse function level: changing a
Morse function $F$ to $-F$ changes all right half-handles to
left-half handles and conversely, see Section~\ref{ss:bcpahh} and
\ref{s:larpc} below. But the above lemma shows another aspect as
well: a right half handle attachment consists of gluing a disk,
a left half-handle attachment consists of removing a disk. Indeed,
in the case of right attachment,  $(\S_1',M_1')~=~(\S_1\cup
D^{k}\times D^{n-k},\p \S_1')$ associated with an embedding
$\Phi\colon \p D^{k}\times D^{n-k}\to M_1$. On the other hand,
 by definition,  for an embedding $\Phi':(D^{k-1}\times D^{n+1-k},\p D^{k-1}\times D^{n+1-k})\to (\S_1,M_1)$
the pair
$$(\S'_1,M'_1)~=~({\rm closure \ of}(\S_1\setminus D^{k-1}\times D^{n+1-k}),\p\S'_1)$$
is obtained from $(\S_1,M_1)$ by a \emph{handle detachment} of
index $k-1$. We formulate this observation as a rephrasing of Lemma~\ref{lem:leftonsigma}.

\begin{corollary}\label{ex:cobordismistrace}
The effect on $(\S_1,M_1)$ of a right half--handle attachment of
index $k$ is  a handle attachment  of
index $k$ to $(\S_1,M_1)$. Likewise, the effect on
$(\S_1,M_1)$ of a left half-handle attachment of index $k$
is a handle detachment  of index $k-1$. In particular, $M_1'$ is obtained from
$M_1$ as the result of a $k$ surgery in the first case, and
$(k-1)$ surgery in the second.
\end{corollary}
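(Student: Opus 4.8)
The plan is to deduce the statement from Lemma~\ref{lem:leftonsigma}, the point being that the ``half-spheres'' occurring in Definitions~\ref{def:rhh} and~\ref{def:lhh} are themselves discs, so that the operations on $\S_1$ described there literally coincide with the classical handle attachment recalled at the beginning of Subsection~\ref{ss:halfhandles} and the handle detachment recalled in the paragraph preceding the corollary.

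First I would treat the \emph{right} half-handle of index $k$. By Lemma~\ref{lem:leftonsigma}(b) we have $\S_1'\cong\S_1\cup_{B_0}N$, where $N$ is the $n$-disc $D^k\times D^{n-k}$ glued along $B_0=S^{k-1}\times D^{n-k}\subset M_1$ via the given embedding. Comparing with the classical handle attachment $(\S',\p\S')=(\S\cup H,(\p\S\setminus B_0)\cup B_0')$ with $H=D^k\times D^{n-k}$, attaching region $B_0=\p D^k\times D^{n-k}$ and $B_0'=D^k\times\p D^{n-k}$, we see immediately that $\S_1'$ is exactly the effect of an index $k$ handle attachment on $(\S_1,M_1)$. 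Reading off the boundary, $M_1'=\p\S_1'=(M_1\setminus(S^{k-1}\times D^{n-k}))\cup(D^k\times S^{n-k-1})$; since $\dim M_1=n-1$ this is, by definition, a $k$-surgery on $M_1$ (one checks $S^{k-1}\times D^{n-k}=S^{k-1}\times D^{(n-1)-k+1}$ and $D^k\times S^{n-k-1}=D^k\times S^{(n-1)-k}$).

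Next I would treat the \emph{left} half-handle of index $k$. The key observation is that $S^{k-1}_+=\p D^k\cap\{x_1\geqslant0\}$ is a $(k-1)$-disc whose boundary is the equatorial sphere $S^{k-2}_0$; hence in Definition~\ref{def:lhh} the pieces $B=S^{k-1}_+\times D^{n+1-k}$ and $B_0=S^{k-2}_0\times D^{n+1-k}$ are canonically identified with $D^{k-1}\times D^{n+1-k}$ and $\p D^{k-1}\times D^{n+1-k}$. Thus the embedding $\Phi\colon(B,B_0)\hookrightarrow(\S_1,M_1)$ is precisely the datum of an embedded thickened $(k-1)$-disc in $\S_1$ meeting $M_1$ along $\p D^{k-1}\times D^{n+1-k}$, and by Lemma~\ref{lem:leftonsigma}(a) the manifold $\S_1'\cong\S_1\setminus B$ is the closure of the complement of such a neighbourhood, i.e.\ exactly the handle detachment of index $k-1$ from $(\S_1,M_1)$. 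Computing its boundary from $\p B=B_0\cup(S^{k-1}_+\times\p D^{n+1-k})$, the part of $\p B$ lying in the interior of $\S_1$ is $S^{k-1}_+\times S^{n-k}\cong D^{k-1}\times S^{n-k}$, so $M_1'=(M_1\setminus(S^{k-2}\times D^{n+1-k}))\cup(D^{k-1}\times S^{n-k})$, which is a $(k-1)$-surgery on $M_1$. (The same answer is consistent with Lemma~\ref{lem:whathappensonboundary} applied to $Y'$, since $M_1'=Y'\cap\S_1'$.)

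The whole argument is essentially bookkeeping: everything follows from Lemma~\ref{lem:leftonsigma} together with the definitions of classical handle attachment, handle detachment, and surgery. The only step requiring any care is verifying that the two identifications $S^{k-1}_+\cong D^{k-1}$ and $S^{k-2}_0\cong\p D^{k-1}$ (and their right-half counterparts) are compatible with the product and boundary structure, so that $B_0$ becomes precisely the boundary-facing face $\p D^{k-1}\times D^{n+1-k}$ of the detached (resp.\ $\p D^k\times D^{n-k}$ of the attached) handle; this is the one potential obstacle and it is routine.
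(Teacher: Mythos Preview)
Your proof is correct and follows essentially the same approach as the paper: the corollary is stated there as an immediate consequence (``Hence we obtain'') of Lemma~\ref{lem:leftonsigma} together with the identifications $S^{k-1}_+\cong D^{k-1}$, $S^{k-2}_0\cong\partial D^{k-1}$ spelled out in the paragraph preceding it. You have simply made the bookkeeping more explicit, including the verification of the surgery description of $M_1'$.
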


The duality can also be seen as follows:   we can cancel any
handle attachment by a suitably defined handle detachment, and
conversely.

The following definition introduces a terminology which is rather self-explanatory. We include it for completeness of the exposition.

\begin{definition}\label{def:hha}
We shall say that a cobordism $(\O',Y')$ between $(\S,M)$ and $(\S',M')$
is a \emph{right (respectively left) half-handle attachment} of index $k$, if $(\O',Y',\S',M')$ is a result of
right (respectively  left)
half-handle attachments of index $k$ (in the sense of Definition~\ref{def:rha})
to $(\S\times[0,1],M\times[0,1],\S\times\{0\},M\times\{0\},\S\times\{1\},M\times\{1\})$.
\end{definition}

We conclude this section by studying homological properties of handle attachment. These
properties will be used in \cite{BNR2}. The proofs are standard and are left
to the reader.

Let $(H_+,C,B,N)$ be a half-handle of index $k$.

\begin{lemma}\label{lem:natiso}
If $(\Hr,C,B,N)$ is a right half-handle, then the pair $(C,B_0)$ is a strong deformation
 retract of $(H_+^r,B)$, while $(D^k,\p D^k)$ is a strong deformation
 retract of $(C,B_0)$. In particular,  $H_j(H_+^r,B)\cong H_j(C,B_0)=\Z$ for $j=k$, and it is zero otherwise.
\end{lemma}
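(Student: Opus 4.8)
The plan is to establish the two deformation retractions separately and then read off the homology from the second one, which is entirely elementary. Recall that for a right half-handle of index $k$ we have $\Hr = D^k\times D^{n+1-k}_+$, with $B=\partial D^k\times D^{n+1-k}_+$, $C=D^k\times D^{n-k}_0$ and $B_0=C\cap B=\partial D^k\times D^{n-k}_0$, where $D^{n+1-k}_+=D^{n+1-k}\cap\{x_1\geqslant 0\}$ and $D^{n-k}_0=D^{n+1-k}\cap\{x_1=0\}$.

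First I would treat the retraction of $(\Hr,B)$ onto $(C,B_0)$. The point is that $(D^{n+1-k}_+,S^{n-k}_+)$ deformation retracts onto $(D^{n-k}_0,S^{n-k-1}_0)$: writing coordinates $(x_1,x_2,\dots,x_{n+1-k})$ on $D^{n+1-k}$ with $x_1\geqslant 0$, the straight-line homotopy $r_s(x_1,x_2,\dots)=((1-s)x_1,x_2,\dots)$ pushes $x_1$ to $0$, fixes $D^{n-k}_0=\{x_1=0\}$ throughout, and carries the half-sphere $S^{n-k}_+$ into $D^{n-k}_0$ at time $s=1$ while never leaving $D^{n+1-k}_+$ (one must check $\sum_{i\geqslant 2}x_i^2\leqslant 1-x_1^2\leqslant 1-(1-s)^2x_1^2$, so the retracted point still lies in the disc — this is immediate). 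Taking the product of this homotopy with $\mathrm{id}_{D^k}$ gives a strong deformation retraction of $\Hr=D^k\times D^{n+1-k}_+$ onto $D^k\times D^{n-k}_0=C$ which restricts, on $B=\partial D^k\times D^{n+1-k}_+$, to a strong deformation retraction onto $\partial D^k\times D^{n-k}_0=B_0$. Hence $(C,B_0)$ is a strong deformation retract of $(\Hr,B)$.

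Next, for the retraction of $(C,B_0)=(D^k\times D^{n-k}_0,\partial D^k\times D^{n-k}_0)$ onto $(D^k,\partial D^k)$, I would simply use the linear contraction of the disc factor $D^{n-k}_0$ to its centre: $H_s(u,v)=(u,(1-s)v)$ for $u\in D^k$, $v\in D^{n-k}_0$. This fixes $D^k\times\{0\}\cong D^k$, is a homotopy through maps of pairs (it sends $\partial D^k\times D^{n-k}_0$ into itself, and at $s=1$ onto $\partial D^k\times\{0\}\cong\partial D^k$), and is stationary on $D^k\times\{0\}$; so it is a strong deformation retraction of pairs. Composing the two retractions shows $(D^k,\partial D^k)$ is a strong deformation retract of $(\Hr,B)$ as well.

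Finally, homotopy invariance of relative homology gives $H_j(\Hr,B)\cong H_j(C,B_0)\cong H_j(D^k,\partial D^k)$, and the last group is computed from the long exact sequence of the pair $(D^k,\partial D^k)=(D^k,S^{k-1})$ together with the contractibility of $D^k$: it is $\widetilde H_{j-1}(S^{k-1})$, hence $\Z$ for $j=k$ (using the convention $\widetilde H_{-1}(S^0)$ arising when $k=0$ should be handled by noting $H_0(D^0,\partial D^0)=H_0(\mathrm{pt},\emptyset)=\Z$) and $0$ otherwise. Since every step is a standard straight-line homotopy, there is no real obstacle here; the only mildly delicate point is checking that the first homotopy stays inside $D^{n+1-k}_+$ and is compatible with the corner structure of the half-disc, which the explicit formula above makes transparent.
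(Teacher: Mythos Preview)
Your argument is correct and is exactly the standard one the paper has in mind; the paper itself omits the proof, writing only that ``the proofs are standard and are left to the reader.'' One small wobble worth noting: you phrase the first step as a deformation retraction of the \emph{pair} $(D^{n+1-k}_+,S^{n-k}_+)$ onto $(D^{n-k}_0,S^{n-k-1}_0)$, but your straight-line homotopy does not carry $S^{n-k}_+$ into $S^{n-k-1}_0$ (a point with $x_1>0$ on the half-sphere lands in the interior of $D^{n-k}_0$), nor does it preserve $S^{n-k}_+$ during the homotopy. Fortunately this is irrelevant: the subspace $B=\partial D^k\times D^{n+1-k}_+$ involves only the $D^k$ factor, so the product homotopy $\mathrm{id}_{D^k}\times r_s$ automatically sends $B$ into $B$ and, at $s=1$, onto $B_0$, exactly as you conclude. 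You can simply drop the mention of $S^{n-k}_+$ and state directly that $r_s$ retracts $D^{n+1-k}_+$ onto $D^{n-k}_0$.
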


The situation is completely different for left half-handles.
\begin{lemma}
If $(\Hl,C,B,N)$ is a left half-handle, then the pair $(\Hl,B)$ retracts
 onto the trivial pair $(\mathrm{point},\mathrm{point})$. In particular, all the relative homologies
$H_*(\Hl,B)$ vanish. On the other hand, $(D_0^{k-1},S_0^{k-2})$ is a strong deformation retract of
$(C,B_0)$, hence $H_j(C,B_0)=\Z$ for $j=k-1$, and it is zero otherwise.
Therefore,  the inclusion $(C,B_0)\hookrightarrow (\Hl,B)$ induces a surjection on homologies.
\end{lemma}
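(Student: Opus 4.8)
The plan is to deduce both homological statements from elementary strong deformation retractions of the two pairs onto simple models, and then read everything off from the long exact sequence of a pair. Nothing in the argument is deep; the only places needing a little care are the explicit diffeomorphism used to model $(\Hl,B)$ and the verification that each homotopy really restricts to the relevant subspace.

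For the first assertion, recall from Definition~\ref{def:lhh} that $\Hl=D^k_+\times D^{n+1-k}$ and $B=S^{k-1}_+\times D^{n+1-k}$. As already observed in the proof of Lemma~\ref{lem:leftistrivial}, the pair $(D^k_+,S^{k-1}_+)$ is diffeomorphic to $(D^{k-1}\times[0,1],\,D^{k-1}\times\{0\})$ — straighten the half-disc so that its flat face $D^{k-1}_0$ becomes $D^{k-1}\times\{1\}$ and its round face $S^{k-1}_+$ becomes $D^{k-1}\times\{0\}$ — whence $(\Hl,B)\cong(D^n\times[0,1],\,D^n\times\{0\})$ with $D^n=D^{k-1}\times D^{n+1-k}$. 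Projecting out the last coordinate is a strong deformation retraction of $\Hl$ onto $B$, and $B\cong D^n$ contracts onto a point $b_0\in B$; composing gives a strong deformation retraction of $(\Hl,B)$ onto $(\{b_0\},\{b_0\})$. In particular the inclusion $B\hookrightarrow\Hl$ is a homotopy equivalence, so the long exact sequence of the pair yields $H_*(\Hl,B)=0$.

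For the second assertion, $(C,B_0)=(D^{k-1}_0\times D^{n+1-k},\,S^{k-2}_0\times D^{n+1-k})$, and the homotopy that linearly shrinks the factor $D^{n+1-k}$ to its centre is a strong deformation retraction of this pair onto $(D^{k-1}_0\times\{0\},\,S^{k-2}_0\times\{0\})=(D^{k-1}_0,S^{k-2}_0)$: it fixes $D^{k-1}_0\times\{0\}$ throughout and, since the first coordinate is untouched, it carries $B_0$ into $B_0$ at every time. As $(D^{k-1}_0,S^{k-2}_0)$ is a $(k-1)$-disc together with its boundary sphere, the standard computation $H_j(D^m,\p D^m)=\Z$ for $j=m$ and $0$ otherwise — with the convention $\p D^0=\emptyset$ covering the degenerate case $k=1$ — gives $H_j(C,B_0)=\Z$ for $j=k-1$ and $H_j(C,B_0)=0$ for $j\neq k-1$. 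Finally, the inclusion $(C,B_0)\hookrightarrow(\Hl,B)$ induces a homomorphism into the zero group $H_*(\Hl,B)$, so it is surjective trivially; this is where "the inclusion induces a surjection on homologies" comes from, and there is no further obstacle.
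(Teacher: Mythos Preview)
Your proof is correct and complete. The paper itself leaves this lemma to the reader as standard (``The proofs are standard and are left to the reader''), so there is no authors' proof to compare against; your argument supplies exactly the kind of elementary retraction reasoning the authors evidently had in mind.
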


\subsection{Boundary critical points and half-handles}\label{ss:bcpahh}

Consider a Morse function $F$ on a cobordism $(\O,Y)$ and assume that it has a single boundary critical point $z$ of index $k$ with critical value $c$
and no interior critical points.

\begin{theorem}\label{thm:morsehalfhandle}
If $z$ is boundary stable (unstable),
then the cobordism is a left (right) half-handle attachment of index $k$ respectively.
\end{theorem}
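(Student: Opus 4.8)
The plan is to analyze the flow of $\nabla F$ near the single boundary critical point $z$ using the normal form provided by the Boundary Morse Lemma (Lemma~\ref{lem:boundarymorselemma}), and then identify the descending/ascending regions with a half-handle. First I would pick two regular values $a<c<b$ with no other critical value in $[a,b]$, and restrict attention to $\O_{[a,b]}:=F^{-1}[a,b]$; by Lemma~\ref{lem:novanishnoproblem} the cobordism $(\O,Y)$ is, away from $[a,b]$, a product, so it suffices to show that $(\O_{[a,b]},Y\cap\O_{[a,b]})$ is a left (resp.\ right) half-handle attachment in the sense of Definition~\ref{def:hha}. In the chosen Morse coordinates $(x_1,\dots,x_{n+1})$ around $z$ with $Y=\{x_1=0\}$, $U=\{x_1\geqslant 0\}$ and $F=F(z)\mp x_1^2 - (x_2^2+\dots+x_\ell^2)+(x_{\ell+1}^2+\dots+x_{n+1}^2)$, the sign in front of $x_1^2$ is exactly what distinguishes the two cases: the minus sign means $x_1$-direction is descending, so the unstable manifold has its tangent leaving $T_zY$, i.e.\ $z$ is boundary unstable, whereas the plus sign gives a boundary stable point. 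In the boundary stable case the stable manifold has dimension $k$ and lies inside $\{x_1\text{-direction descending indices}\}$ with $x_1$ ascending; in the boundary unstable case the stable manifold has dimension $k$ with $x_1$ one of the descending directions.

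The core step is the standard ``handle = neighbourhood of the descending disc'' construction of Milnor (\cite[\S3]{Mi-hcob}), carried out equivariantly with respect to the boundary $Y$. Concretely I would take the descending disc $D^s_z=W^s_z\cap \O_{[a,b]}$ together with a tubular neighbourhood of it inside $F^{-1}(b)$, flow it down, and thicken; the point is that because $\nabla F$ is tangent to $Y$, the boundary $Y$ is a union of flow lines, so $Y$ cuts the resulting handle precisely along a coordinate slice. In the normal form, $Y=\{x_1=0\}$ is invariant, and a neighbourhood of $z$ in $\O_{[a,b]}$ is diffeomorphic to either $D^k\times D^{n+1-k}_+$ (when $x_1$ belongs to the ascending directions, i.e.\ boundary stable, so $x_1\geqslant0$ lives on the $D^{n+1-k}$ factor after relabelling $x_1\leftrightarrow x_{\ell+1}$ — giving $D^{n+1-k}_+$) or $D^k_+\times D^{n+1-k}$ (when $x_1$ is one of the descending directions, i.e.\ boundary unstable). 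Matching this against Definitions~\ref{def:rhh} and \ref{def:lhh}: the boundary stable case yields $H_{\!\mathit{right}}=D^k\times D^{n+1-k}_+$? — here one must be careful with the labelling convention, so I would instead directly check which of $B,C,N$ gets identified with $F^{-1}(b)\cap(\text{handle})$, with $Y\cap(\text{handle})$, and with the part of $F^{-1}(b)$ that survives. The boundary stable point has $\ind_F z=\ind_f z+1=k$ where $f=F|_Y$, matching the index shift built into Definition~\ref{def:lhh} ($1\leqslant k\leqslant n+1$, effect on $Y$ is a handle of index $k-1$, cf.\ Lemma~\ref{lem:whathappensonboundary}); the boundary unstable point has $\ind_F z=\ind_f z=k$, matching Definition~\ref{def:rhh} ($0\leqslant k\leqslant n$).

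Once the local identification is in place, I would globalize: use the gradient flow of $\nabla F$ from time $a$ to time $b$ to build a diffeomorphism between $\O_{[a,b]}$ and $(F^{-1}(a)\times[0,1])\cup(\text{half-handle})$, exactly as in the product-neighbourhood theorem, observing at each stage that the flow preserves $Y$ so the construction descends to $Y\cap\O_{[a,b]}$, where by Lemma~\ref{lem:whathappensonboundary} it restricts to an ordinary handle attachment of the predicted index. Finally I would invoke Lemma~\ref{lem:novanishnoproblem} on $F^{-1}[0,a]$ and $F^{-1}[b,1]$ to reduce to the model situation $(\S\times[0,1],\dots)$ of Definition~\ref{def:hha}.

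\textbf{Main obstacle.} The delicate point is not the flow argument — that is verbatim classical Morse theory — but the bookkeeping of \emph{which} half-handle appears and with \emph{which} index, i.e.\ correctly tracking the cutting coordinate $x_1$ through the normal form and through the asymmetric conventions in Definitions~\ref{def:rhh}–\ref{def:lhh} (note $\Hl$ is indexed $1\leqslant k\leqslant n+1$ while $\Hr$ is indexed $0\leqslant k\leqslant n$, consistent with the remark after Definition~\ref{def:bstab} that there is no boundary stable index-$0$ point and no boundary unstable index-$(n+1)$ point). I expect the cleanest way to pin this down is to compute the effect on $Y$ first via $f=F|_Y$ (an ordinary Morse function), get the handle index on $Y$ unambiguously, and then use Lemma~\ref{lem:whathappensonboundary} in reverse to read off which half-handle it must have come from.
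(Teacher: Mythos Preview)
Your approach is essentially the paper's: use the boundary Morse lemma to get local half-space coordinates, carve out a half-handle neighbourhood of $z$, and use the gradient flow (tangent to $Y$) to retract the rest of $F^{-1}[a,b]$ onto $F^{-1}(a)$ glued to that half-handle. The paper carries this out explicitly by taking the region $\widetilde H=\{-a^2+b^2\in[-\e^2,\e^2],\ a^2b^2\leqslant(\rho^4-\e^4)/4,\ x_1\geqslant0\}$ whose lateral wall is flow-invariant, and then writing down a concrete diffeomorphism $\Psi$ to the model half-handle; your ``thicken the descending disc and flow'' sketch is the same argument in Milnor's idiom.

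One correction to the bookkeeping you flagged as the main obstacle: you have the dictionary reversed. If $F=F(z)-x_1^2+\cdots$ then the $x_1$-component of $\nabla F$ is $-2x_1$, so $x_1$ is a \emph{stable} direction; the unstable manifold is then tangent to $\{x_1=0\}=T_zY$, and $z$ is boundary \emph{stable}, not unstable. Thus boundary stable puts the cutting coordinate in the $D^k$ factor and gives $D^k_+\times D^{n+1-k}=\Hl$, while boundary unstable puts it in the $D^{n+1-k}$ factor and gives $D^k\times D^{n+1-k}_+=\Hr$ (compare the paper's \eqref{eq:abstable}--\eqref{eq:abunstable} and \eqref{eq:leftmodel}--\eqref{eq:rightmodel}). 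Your proposed sanity check --- compute the ordinary handle on $Y$ via $f=F|_Y$ and read back the half-handle type from Lemma~\ref{lem:whathappensonboundary} --- is exactly the right way to pin this down and would have caught the slip.
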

\begin{proof} We can assume that $c=F(z)=0$.
Let us chose a neighbourhood $U$ of $z$ in $\Omega$. Shrinking $U$ if necessary, we can assume that
there are Morse coordinates $x_1,\dots,x_{n+1}$ on $U$ (see Lemma~\ref{lem:boundarymorselemma}) and
in these coordinates $U$ is a half-ball of radius $2\rho$ for some positive number $\rho$:
\[U=\{x_1^2+\dots+x_{n+1}^2\leqslant 4\rho^2\}\cap\{x_1\geqslant 0\}.\]
The intersection $Y\cap U$ defined by $\{x_1= 0\}$, and
\[F(x_1,\dots,x_{n+1})=-a^2+b^2,\]
where if $z$ is boundary stable we set
\begin{equation}\label{eq:abstable}
a^2=x_1^2+x_2^2+\dots+x_k^2,\ \ \ b^2=x_{k+1}^2+\dots+x_{n+1}^2 \ \ \ \ \ (k\geqslant 1),
\end{equation}
and if $z$ is boundary unstable
\begin{equation}\label{eq:abunstable}
a^2=x_2^2+\dots+x_{k+1}^2,\ \ \ b^2=x_1^2+x_{k+2}^2+\dots+x_{n+1}^2\ \ \ \ \ (k\geqslant 0).
\end{equation}
We also assume that  $x_1,\dots,x_{n+1}$
is an  Euclidean orthonormal coordinate system.

Next, we consider  $\e>0$ such that $\e\ll\rho$, and
we define the space $\widetilde{H}$ bounded by the following conditions (see Figure~\ref{fig:Hprim})
\[\widetilde{H}:=\{-a^2+b^2\in [-\e^2,\e^2],\ \ a^2b^2\leqslant \rho^4-\e^4, \ \ x_1\geqslant 0\}.\]
Observe that
\[\widetilde{H}\subset U.\]
Let us now define the following parts of the boundary of $\widetilde{H}$
\begin{equation}\label{eq:BPKC}
\begin{split}
\widetilde{B}&=\p \widetilde{H}\cap \{-a^2+b^2=-\e^2\}\subset F^{-1}(-\e^2),\\
\widetilde{P}&=\p \widetilde{H}\cap \{-a^2+b^2=\e^2\}\subset F^{-1}(\e^2),\\
\widetilde{K}&=\p \widetilde{H}\cap \{a^2b^2=\rho^4-\e^4\},\\
\widetilde{C}&=\p \widetilde{H}\cap \{x_1=0\}\subset Y.
\end{split}
\end{equation}
We have
$\widetilde{B}\cup \widetilde{P}\cup \widetilde{K}\cup \widetilde{C}=\p \widetilde{H}$ (in Figure~\ref{fig:Hprim} we do not
see $\widetilde{C}$, because this would require one more dimension).
If $z$ is boundary unstable  and $k=0$ in (\ref{eq:abunstable}) then the term $a^2$ is missing
and $\widetilde{B}=\emptyset$. Otherwise  $\widetilde{B}\not=\emptyset$.
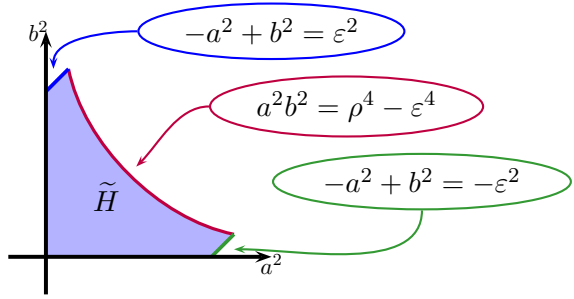
\begin{figure}
\begin{pspicture}(-1,-1)(5,4)

\rput(3,-0.1){\psscalebox{0.8}{$a^2$}}%
\rput(-0.1,3){\psscalebox{0.8}{$b^2$}}
\pscustom[linestyle=none, fillstyle=solid, fillcolor=blue, opacity=0.3]{%
\psbezier(0.3,2.5)(0.5,1.5)(1.5,0.5)(2.5,0.3)%
\psline[liftpen=1](2.2,0.0)(0,0)(0,2.2)%
}
\psbezier[linecolor=purple,linewidth=1.2pt](0.3,2.5)(0.5,1.5)(1.5,0.5)(2.5,0.3)
\psline[linecolor=darkgreen,linewidth=1.4pt](2.5,0.3)(2.2,0)
\psline[linecolor=blue,linewidth=1.4pt](0.3,2.5)(0,2.2)
\rput(4,2){\ovalnode[linecolor=purple]{E1}{$a^2b^2=\rho^4-\e^4$}}
\rput(1.2,1.2){\rnode{E2}{}}
\rput(5,1){\ovalnode[linecolor=darkgreen]{F1}{$-a^2+b^2=-\e^2$}}
\rput(2.5,0.1){\rnode{F2}{}}
\rput(3,3){\ovalnode[linecolor=blue]{G1}{$-a^2+b^2=\e^2$}}
\rput(0.1,2.5){\rnode{G2}{}}
\nccurve[linecolor=purple,angleA=180,angleB=45]{->}{E1}{E2}
\nccurve[linecolor=darkgreen,angleA=270,angleB=0]{->}{F1}{F2}
\nccurve[linecolor=blue,angleA=180,angleB=90]{->}{G1}{G2}
\rput(0.8,0.8){$\widetilde{H}$}
\psline[linewidth=1.5pt]{->}(-0.5,0)(3,0)%
\psline[linewidth=1.5pt]{->}(0,-0.5)(0,3)%
\end{pspicture}
\caption{A schematic presentation of $\widetilde{H}$,
 $\widetilde{B}$, $\widetilde{P}$, $\widetilde{K}$
 from the proof of Theorem~\ref{thm:morsehalfhandle}.
 To each point $(a^2,b^2)$ in $\widetilde{H}$ on the picture, correspond
all those points $(x_1,\dots,x_{n+1})$ for which \eqref{eq:abstable} or \eqref{eq:abunstable} holds and $x_1\geqslant 0$.}\label{fig:Hprim}
\end{figure}

\begin{lemma}\label{lem:tanga3}
The flow of $\nabla F$ is tangent to $\widetilde{K}$.
\end{lemma}
\begin{proof}
Assume the critical point is boundary stable. The differential equation
\[\frac{d\mathbf{x}}{dt}=\nabla F=(-2x_1,\ldots,-2x_k,2x_{k+1},\ldots,2x_{n+1})\]
has solution
\[(x_1,\dots,x_{n+1})\to (e^{-2t}x_1,\dots,e^{-2t}x_{k},e^{2t}x_{k+1},\dots,e^{2t}x_{n+1}).\]
It follows that $a^2\to e^{-4t}a^2$ and $b^2\to e^{4t}b^2$, and  the hypersurface $a^2b^2=\textrm{const}$ is preserved by the flow of $\nabla F$.
\end{proof}

\begin{lemma}\label{lem:retracts} The inclusion of pairs of spaces
$$\left(\
F^{-1}(-\e^2)\cup_{\widetilde{B}} \widetilde{H},Y\cap \left(F^{-1}(-\e^2)\cup_{\widetilde{B}}\widetilde{H}\right)\ \right)\subset (\O,Y)$$
admits a strong deformation retract.
\end{lemma}
\begin{proof}
By Lemma~\ref{lem:novanishnoproblem} we can assume that
$(\O,Y) $ is $( F^{-1}([-\e^2,\e^2]),Y\cap F^{-1}([-\e^2,\e^2])$.

First we assume that $\widetilde{B}$ is not empty, and it is given by the equation \eqref{eq:BPKC} in $U$.
Set $\S_-=\textrm{closure of } (F^{-1}(-\e^2)\setminus \widetilde{B})$ and let $T_-$ be the part of the boundary of $\S_-$ given by
\[T_-=\textrm{closure of } (\, \p\S_-\setminus \p F^{-1}(-\e^2)\,).\]
We have $T_-\subset \widetilde{B}$, see Figure~\ref{fig:retractsone}. Let us choose a collar
of $T_-$ in $\S_-$, that is a subspace $U_-\subset \S_-$ diffeomorphic to $T_-\times[0,1]$,
$T_-$ identified with $T_-\times\{0\}$ and $\p T_-\times[0,1]\subset \p \S_-
\cap \p F^{-1}(-\e^2)$.
Let $T_-'$ be the space identified with $T_-\times\{1\}$ by this diffeomorphism.

Similarly, let  $\S_+=\textrm{closure of }(F^{-1}(\e^2)\setminus \widetilde{P})$,  and
 $T_+=\textrm{closure of }(\p\S_+\setminus\p F^{-1}(\e^2))$.
 We also define $\O_{0}$ as the closure of $\O \setminus \widetilde{H}$.
Clearly $F$ has no critical points in $\O_0$ and $\nabla F$ is everywhere tangent to $\p\O_0\setminus(\S_-\cup\S_+)=(Y\cap\O_{0})\cup \widetilde{K}$ by
Lemma~\ref{lem:tanga3}. In particular, by Lemma~\ref{lem:novanishnoproblem},
 the flow of $\nabla F$ on $\O_0$ yields a diffeomorphism between $\S_-$ and $\S_+$,
mapping $T_-$ to $T_+$. We define $V\subset\O$ as the closure of the set
of points $v$ such that a trajectory going through $v$ hits $U_-$.
Lemma~\ref{lem:novanishnoproblem}
implies that there is a diffeomorphism $V \cong T_-\times[0,1]\times[-\e^2,\e^2]$ such that for $(x,t,s)\in V$ we have $F(x,t,s)=s$.
Finally, we also define $V^*:=\{(x,t,s)\in V\,:\, s\leqslant\e^2(1-2t)\}$.

\begin{figure}
\begin{pspicture}(-5,-2)(5,2)
\pspolygon[fillcolor=yellow,opacity=0.1,fillstyle=solid,linewidth=1.2pt](-5,-1.5)(-1,-1.5)(-1,1.5)(-5,1.5)
\psarc[fillstyle=solid,fillcolor=blue,opacity=0.05,linewidth=0.5pt](-5,0){1.2}{270}{90}
\psarc[fillstyle=solid,opacity=0.5,fillcolor=purple,linewidth=1.1pt](-5,0){0.7}{270}{90}
\rput(-4.8,0){\psscalebox{0.8}{$\widetilde{B}$}}
\rput(-4.5,0.8){\psscalebox{0.8}{$U_-$}}
\rput(-5.2,0.7){\psscalebox{0.8}{$T_-$}}
\rput(-5.2,-1.2){\psscalebox{0.8}{$T_-'$}}
\rput(-3,0){\psscalebox{0.8}{$\S_-$}}
\pspolygon[fillcolor=green,opacity=0.1,fillstyle=solid](1,-1.5)(5,-1.5)(5,1.5)(1,1.5)
\pscustom[fillstyle=solid,fillcolor=blue,opacity=0.3]{%
\psbezier(2,1.5)(1.5,1)(1.5,-1)(2,-1.5)%
\psline[liftpen=1](1,-1.5)(1,1.5)}
\psline[linewidth=1.5pt](1,1.5)(5,1.5)\rput(4.8,1.7){\psscalebox{0.8}{$\S_+$}}
\psline[linewidth=1.5pt](1,-1.5)(5,-1.5)\rput(4.8,-1.7){\psscalebox{0.8}{$\S_-$}}
\rput(1.3,0.05){\psscalebox{0.8}{$\widetilde{H}$}}
\rput(0.7,1.2){\psscalebox{0.8}{$Y$}}
\pscircle[fillstyle=solid,fillcolor=black](2,-1.5){0.07}\rput(2,-1.8){\psscalebox{0.8}{$T_-$}}
\pscircle[fillstyle=solid,fillcolor=black](2,1.5){0.07}\rput(2,1.8){\psscalebox{0.8}{$T_+$}}
\pscustom[linestyle=none,fillstyle=solid,fillcolor=purple,opacity=0.1]{%
\psbezier(2.5,1.5)(2,1)(2,-1)(2.5,-1.5)%
\psbezier[liftpen=1](2,-1.5)(1.5,-1)(1.5,1)(2,1.5)%
}
\psbezier[linestyle=dashed](2.5,1.5)(2,1)(2,-1)(2.5,-1.5)%
\pscircle[fillstyle=solid,fillcolor=black](2.5,-1.5){0.07}\rput(2.5,-1.8){\psscalebox{0.8}{$T_-'$}}
\rput(1.9,0){\psscalebox{0.8}{$V$}}
\end{pspicture}
\caption{Notation used in Lemma~\ref{lem:retracts}. Please note that the left picture is drawn on $\S_-$, while the right
one is on $\Omega$.}\label{fig:retractsone}
\end{figure}
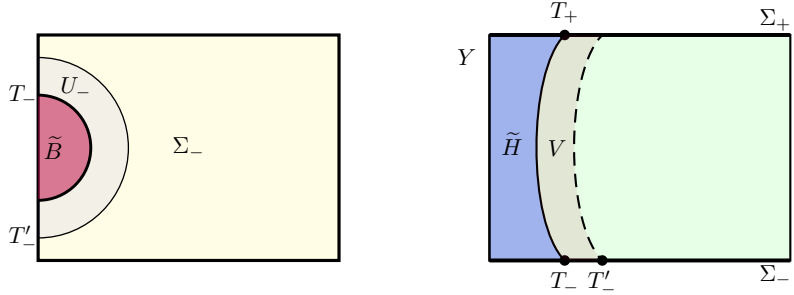

We define the contraction in two steps: vertical and horizontal.
The vertical contraction is defined as follows. For $v\in \widetilde{H}\cup V^*$  we define
$\Pi_V(v)=v$. For a point
$v\in\O_0\setminus V$ we take for $\Pi_V(v)$ the unique point $s\in\S_-$ such
that a trajectory of $\nabla F$ goes from $s$ to $v$.
 Finally if $v=(x,t,s)\in V\setminus V^*$
 we define $\Pi_V(v)=(x,t,\e^2(1-2t))$.

By construction, the image of $\Pi_V$ is $\widetilde{H}\cup V^*\cup F^{-1}(-\e^2)$.
Next,  we define $\Pi_H$. Note, that $\Pi_H$ will be defined only on the image of $\Pi_V$.

It is an identity on $\widetilde{H}\cup
F^{-1}(-\e^2)$, and maps $(x,t,s)\in V^*$ to
$(x,t-(\e^2+s)/(2\e^2),-\e^2)$ if $s\leqslant
\e^2(2t-1)$, and to $(x,0,s-2\e^2t)$ otherwise. Note that the expressions
agree for any $(x,t,s)$ with $s=\e^2(2t-1)$ and these points are sent to $(x,0,-\e^2)$.
Both $\Pi_H$ and $\Pi_V$ are
continuous retractions, by  smoothing corners  we can
modify them into smooth retractions; also they can be extended in a natural way to strong
deformation retracts. By construction, the retracts preserve $Y$ too.  See also Figure~\ref{fig:contractions}.

If $\widetilde{B}$ is empty, then $\widetilde{H}$ is necessarily a unstable (right) half-handle of index $0$,
$F^{-1}([-\e^2,\e^2])$ is a disconnected sum of $\widetilde{H}$ and the manifold $F^{-1}(-\e^2)\times [-\e^2,\e^2]$.
\end{proof}

\begin{figure}
\begin{pspicture}(-6,-2)(6,2)
\pspolygon[fillstyle=solid,fillcolor=green,opacity=0.1](-6,-1)(-3,-1)(-3,-1.5)(-6,-1.5)\rput(-4.5,-1.25){\psscalebox{0.8}{$F^{-1}(-\e^2)$}}
\psline(-6,-1.5)(-6,-1)(-3,-1)(-3,-1.5)\rput(-6.2,-0.6){\psscalebox{0.8}{$Y$}}
\pspolygon[fillstyle=solid,fillcolor=blue,opacity=0.1](-6,-1)(-5,-1)(-5,1)(-6,1)\rput(-5.5,0){\psscalebox{0.8}{$\widetilde{H}$}}
\pspolygon[fillstyle=solid,fillcolor=purple,opacity=0.1](-5,-1)(-4,-1)(-4,1)(-5,1)\rput(-4.5,-0.5){\psscalebox{0.8}{$V$}}
\pspolygon[fillstyle=solid,fillcolor=yellow,opacity=0.1](-4,-1)(-3,-1)(-3,1)(-4,1)
\rput(-4.5,1.2){\psscalebox{0.8}{$F^{-1}(\e^2)$}}
\psline{->}(-3.66,0.8)(-3.66,0.1)\psline{->}(-3.33,0.8)(-3.33,0.1)
\psline{->}(-3.66,-0.1)(-3.66,-0.8)\psline{->}(-3.33,-0.1)(-3.33,-0.8)
\psline{->}(-4.66,0.8)(-4.66,0.1)\psline{->}(-4.33,0.8)(-4.33,0.1)
\psline[linewidth=2pt,arrowsize=8pt]{->}(-2.7,-0.5)(-1.7,-0.5)\rput(-2.2,-0.2){\psscalebox{0.8}{$\Pi_V$}}%
\pspolygon[fillstyle=solid,fillcolor=green,opacity=0.1](-1.5,-1)(1.5,-1)(1.5,-1.5)(-1.5,-1.5)
\psline(-1.5,-1.5)(-1.5,-1)(1.5,-1)(1.5,-1.5)
\pspolygon[fillstyle=solid,fillcolor=blue,opacity=0.1](-1.5,-1)(-0.5,-1)(-0.5,1)(-1.5,1)\rput(-1,0){\psscalebox{0.8}{$\widetilde{H}$}}
\pspolygon[fillstyle=solid,fillcolor=purple,opacity=0.1](-0.5,-1)(0.5,-1)(-0.5,1)\rput(1,0.9){\ovalnode[linecolor=purple]{Z1}{\psscalebox{0.8}{$V^*=\Pi_V(V)$}}}
\rput(-0.1,-0.6){\rnode{Z2}{}}
\psline{->}(-0.25,0.4)(-0.4,0.1)\psline{->}(0,-0.1)(-0.3,-0.7)\psline{->}(0.25,-0.6)(0.1,-0.9)
\nccurve[linecolor=purple,angleA=270,angleB=45]{->}{Z1}{Z2}
\psline[linewidth=2pt,arrowsize=8pt]{->}(1.7,-0.5)(2.7,-0.5)\rput(2.2,-0.2){\psscalebox{0.8}{$\Pi_H$}}%
\pspolygon[fillstyle=solid,fillcolor=green,opacity=0.1](3,-1)(6,-1)(6,-1.5)(3,-1.5)
\psline(3,-1.5)(3,-1)(6,-1)(6,-1.5)
\pspolygon[fillstyle=solid,fillcolor=blue,opacity=0.1](3,-1)(4,-1)(4,1)(3,1)\rput(3.5,0){\psscalebox{0.8}{$\widetilde{H}$}}

\end{pspicture}
\caption{Contractions $\Pi_H$ and $\Pi_V$ from the proof of Lemma~\ref{lem:retracts}. The set $V$ is now drawn as a rectangle.}\label{fig:contractions}
\end{figure}
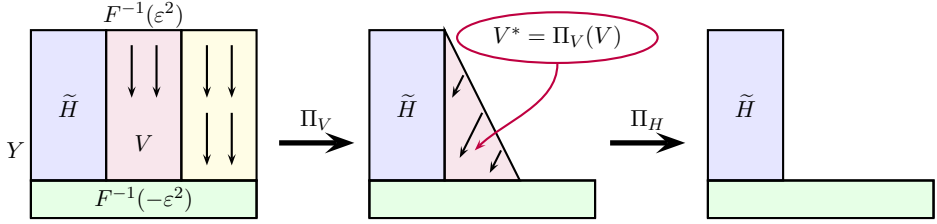

\emph{Continuation of the proof of
Theorem~\ref{thm:morsehalfhandle}.} We want to show that $\widetilde{H}$ is a
half-handle.

By subsection \ref{ss:halfhandles}
we have the following description in local coordinates of the left half-handle (\ref{eq:leftmodel})
and right half-handle (\ref{eq:rightmodel})  with cutting coordinate $x_1$:
\begin{equation}\label{eq:leftmodel}
\Hl=\{x_1^2+\dots+x_{k}^2\leqslant 1\}\cap \{x_{k+1}^2+\dots+x_{n+1}^2\leqslant 1\}\cap\{x_1\geqslant 0\}
\end{equation}
\begin{equation}\label{eq:rightmodel}
\Hr=\{x_2^2+\dots+x_{k+1}^2\leqslant 1\}\cap \{x_1^2+x_{k+2}^2+\dots+x_{n+1}^2\leqslant 1\}\cap\{x_1\geqslant 0\}.
\end{equation}

We consider the subsets $R$ and $S$  of $\R^2$ given by
\[R=\{(u,v)\in\R^2\colon u\geqslant 0,\ v\geqslant 0,\ uv\leqslant \rho^4-\e^4, \ -u+v\in[-\e^2,\e^2]\},\]
\[S\colon \{(u,v)\in\R^2\colon u\in[0,\e],\ v\in[0,\e]\}.\]
(Note
that $R$ can be seen in Figure~\ref{fig:Hprim} if we replace $a^2$ by $u$ and $b^2$ by $v$.)
These subsets are clearly diffeomorphic. We choose a diffeomorphism $\psi$ that maps
the edge of $R$ given by $\{-u+v=-\e^2\}$ to the edge $\{u=\e\}$ of $S$ and the images of coordinate axes are the corresponding coordinate axes.

We use $\psi$ to construct a diffeomorphism $\Psi$ between $\widetilde{H}$ and $\Hr$ (respectively $\Hl$) as follows. First let us
write $\psi(u,v)=(\psi_1(u,v),\psi_2(u,v))$. As $\psi$ maps axes to axes, we have $\psi_1(0,v)=0$ and $\psi_2(u,0)=0$. Furthermore $\psi_1,\psi_2\geqslant 0$.
By Hadamard's lemma there exist smooth functions $\xi$ and $\eta$ such that
\[\psi(u,v)=(u\xi(u,v)^2,v\eta(u,v)^2).\]
We define now
\[\Psi(x_1,\dots,x_{n+1})=(\xi(a,b)x_1,\dots,\xi(a,b)x_{k},\eta(a,b)x_{k+1},\dots,\eta(a,b)x_{n+1})\]
if $z$ is boundary stable, and
\[\Psi(x_1,\dots,x_{n+1})=(\eta(a,b)x_1,\xi(a,b)x_2,\dots,\xi(a,b)x_{k},\eta(a,b)x_{k+1},
\dots,\eta(a,b)x_{n+1})\]
if $z$ is boundary unstable. Here $a$ and $b$ are given by \eqref{eq:abstable} or \eqref{eq:abunstable}.
By construction, $\Psi$ maps $(\widetilde{H},\widetilde{B},\widetilde{C})$ diffeomorphically to the
 triple $(H,B,C)$, where
\begin{align*}
H&=\{a^2\in[0,\e^2],\ b^2\in[0,\e^2],\ x_1\geqslant 0\}\\
B&=\{a^2=-\e^2,\ b^2\in[0,\e^2],\ x_1\geqslant 0\}\\
C&=\{a^2\in[0,\e^2],\ b^2\in[0,\e^2],\ x_1=0\}.
\end{align*}
After substituting for $a$ and $b$ the values from \eqref{eq:abstable} or \eqref{eq:abunstable} (depending on whether $z$ is
boundary stable or unstable), we recover the model \eqref{eq:rightmodel} of a right half-handle   if $z$
is boundary unstable; or the model \eqref{eq:leftmodel} of a left half-handle (both of index $k$).
\end{proof}

The fact that each half-handle can be presented in a left or right
model will be now used to show the following converse to
Theorem~\ref{thm:morsehalfhandle}. The result for non--boundary case
can be found in \cite[Theorem 3.12]{Mi-hcob}.

\begin{proposition}\label{prop:handleiscrit}
Let $(\O,Y)=(\S_0\times[0,1],M_0\times[0,1])$ be a product
cobordism between $(\S_0,M_0)$ and $(\S_1,M_1)\cong(\S_0,M_0)$.
Let us be given a half-handle $(H,C,B)$ of index $k$ and an
embedding of $B_0=C\cap B$ into $M_1$ $($respectively an embedding
of $(B,B_0)$ into $(\S_1,M_1)\,)$, and let $(\O',Y')$ be the result
of a right half-handle attachment along $B_0$ $($respectively, a
left half-handle attachment along $(B,B_0)$$)$  of index $k$. Then,
there exists a Morse function $F\colon(\O',Y')\to\R$, which has a
single boundary unstable critical point  $($respectively, a single
boundary stable critical point$)$ of index $k$ on $H$ and no other
critical points. In particular, $F$ is a Morse function on
a cobordism $(\O',Y')$.
\end{proposition}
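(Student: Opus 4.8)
The plan is to exhibit the Morse function directly: on the half-handle it will be the non-degenerate quadratic model from the proof of Theorem~\ref{thm:morsehalfhandle}, on the product part $\S_0\times[0,1]$ it will be a (suitably re-foliated) height function, and the two will be joined through a collar. Normalise the prescribed critical value to $1/2$. Recall from the proof of Theorem~\ref{thm:morsehalfhandle} that the half-handle $(H,B,C)$ of index $k$ is diffeomorphic to $(\widetilde H,\widetilde B,\widetilde C)$, where, in Euclidean coordinates $x_1,\dots,x_{n+1}$ on the half-space $\{x_1\geqslant0\}$,
\[\widetilde H=\bigl\{-a^2+b^2\in[-\e^2,\e^2],\ a^2b^2\leqslant(\rho^4-\e^4)/4\bigr\}\cap\{x_1\geqslant0\},\]
with $\widetilde B=\widetilde H\cap\{-a^2+b^2=-\e^2\}$, $\widetilde C=\widetilde H\cap\{x_1=0\}$, and $a,b$ given by \eqref{eq:abstable} in the left case and by \eqref{eq:abunstable} in the right case. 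Identify $(H,B,C)$ with $(\widetilde H,\widetilde B,\widetilde C)$ and put $F:=\tfrac12-a^2+b^2$ on it. Then $z=0$ is the only critical point of $F$ here, with $F(z)=\tfrac12$; the Hessian of $F$ at $z$ has exactly $k$ negative eigenvalues, so $\ind_F z=k$; the flow of $\nabla F$ in the Euclidean metric contracts the $a$-coordinates and expands the $b$-coordinates, so the unstable manifold of $z$ is $\{a=0\}$, which lies inside $\{x_1=0\}$ when $a$ is as in \eqref{eq:abstable} (so $z$ is boundary stable, as needed for the left half-handle) and contains $\p/\p x_1\notin T_zY$ when $a$ is as in \eqref{eq:abunstable} (so $z$ is boundary unstable); finally $\nabla F$ is tangent to $\widetilde C\subset\{x_1=0\}$ and, by Lemma~\ref{lem:tanga3}, to the face $\widetilde K=\p\widetilde H\cap\{a^2b^2=(\rho^4-\e^4)/4\}$, and $F$ is constantly $\tfrac12-\e^2$ on $\widetilde B$.

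The remaining task is to build $F$ on the product part so that it matches the handle along $\widetilde B$. The attaching data of the half-handle attachment give an embedding $\widetilde B\hookrightarrow\S_1=\S_0\times\{1\}$ carrying $B_0=\widetilde B\cap\widetilde C$ into $M_1$ --- in the right case the canonical extension into a collar of $M_1$ of the given embedding $B_0\hookrightarrow M_1$ (cf.\ the remark after Definition~\ref{def:rha}), in the left case the given embedding of $(B,B_0)$ --- which I would thicken to an embedding into $\S_0\times[0,1]$ of a one-sided collar of $\widetilde B$ in $\widetilde H$. On $\S_0\times[0,1]$ take a height function equal to $0$ on $\S_0\times\{0\}$, equal to the constant $\tfrac12-\e^2$ on a neighbourhood of the image of $\widetilde B$ in $\S_0\times\{1\}$, and with nowhere-vanishing vertical derivative (for instance $t\,(1-\lambda(p)(\tfrac12+\e^2))$ for a bump function $\lambda$ supported near that neighbourhood, with metric and $\lambda$ of product type near $M_0$); glue $\widetilde H$ in along $\widetilde B$ --- by construction this recovers $(\O',Y')$ --- and absorb trivial product collars, which does not change the cobordism up to diffeomorphism, so that after an affine rescaling $F\colon\O'\to[0,1]$ has $F^{-1}(0)=\S_0$, $F^{-1}(1)=\S_1'=(\S_1\setminus B)\cup N$, and no critical points other than $z$. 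The flow-invariance of $\widetilde K$ (Lemma~\ref{lem:tanga3}) together with the product structure near $M_0$ is used to keep $\nabla F$ tangent to $Y'=Y\cup_{B_0}C$ throughout; in effect this is the decomposition ``product cobordism $\cup$ half-handle'' appearing in the proof of Theorem~\ref{thm:morsehalfhandle}, read backwards.

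The step I expect to be the real obstacle is this last gluing: producing a genuinely smooth $F$ with no spurious critical points and with $\nabla F$ tangent to $Y'$ across the seam $\widetilde B\cup\widetilde K$, while arranging that $\S_1'$ is an honest regular level set. Conceptually nothing new is required: it is the same interpolation carried out for an ordinary handle attachment in \cite{Mi-hcob}, the single extra ingredient being the boundary face $C$, whose behaviour is controlled by the two tangency statements of the first paragraph. With this in place, $F$ is the desired Morse function on $(\O',Y')$, with its unique critical point $z$ of index $k$, boundary unstable in the right-handle case and boundary stable in the left-handle case --- precisely the converse of Theorem~\ref{thm:morsehalfhandle}.
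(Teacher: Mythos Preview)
Your proposal is correct and follows essentially the same approach as the paper: define the quadratic model $-a^2+b^2$ on the half-handle $\widetilde H$ and a height-type function on the product part, then glue. The paper's execution is slightly cleaner on exactly the point you flag as the obstacle. Rather than bending the height function with a bump $\lambda$ so that it hits the constant value $\tfrac12-\e^2$ on the image of $\widetilde B$ and then ``absorbing product collars'' afterwards, the paper invokes Lemma~\ref{lem:retracts} \emph{back to front} to replace $\O'$ by the diffeomorphic $\O''=\O'\cup(\S_-\times[-1,1])$, where $\S_-=\overline{\S_1\setminus B}$ and the gluing identifies $T_-\times[-1,1]$ with $\widetilde K$. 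On $\O''$ one can then write a single piecewise formula: $t$ on $\S_0\times[0,1]$, $2+t$ on $\S_-\times[-1,1]$, and $2-\sum_{j\leqslant k}x_j^2+\sum_{j>k}x_j^2$ on $H$. The only seam is $\widetilde B$, and the smoothing reduces to the one-line observation that the normal derivative of $F$ across $\widetilde B$ is positive from both sides, so a standard mollification introduces no new critical points. Your collar-absorption step is doing the same job implicitly; the paper just names the extra product piece first, which turns the interpolation you describe into a trivial matching of constants.
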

\begin{proof}
We shall prove the result for right half-handle attachment, the
other case is completely analogous. The proof consists mostly
on reading `back to front' the proof of Theorem~\ref{thm:morsehalfhandle};
we shall use notation from this theorem, with $\e=1$ and $\rho=2$.

In the case of a right half-handle $B_0$ is
embedded into $M_1$ and we extend this embedding to an embedding
of $B$ into $\S_1$ (see Definition~\ref{def:rha}) and the remark
just after it.

The manifold $\O'$ is constructed in two steps. First,
we glue a handle $\widetilde{H}$ to $\Omega=\Sigma_0\times[0,1]$ along $B$ obtaining a manifold $\O''$. The
result is as in Figure~\ref{fig:contractions} (the figure on the right).
After this gluing, a vertical component of $\partial\widetilde{H}$ appears (in notation of \eqref{eq:BPKC}
this vertical component is $\widetilde{K}$).

We glue now $\Sigma_-\times[-1,1]$ to $\Omega''$ so as to obtain $\Omega'$ as in Figure~\ref{fig:contractions} on the left
and in the way that $\Omega''$ is diffeomorphic to $\Omega'$. The way we do that is the following. The boundary of
$\Sigma_-\times[-1,1]$ decomposes into three parts. The first part is $\Sigma_-\times\{-1\}$; we glue it to
$\Sigma_0\times\{1\}$ (notice that $\Sigma_-$ is $\Sigma_0$ with $B$ removed). The second part is $\partial\Sigma_-\times[-1,1]$.
This part is identified with $\widetilde{K}$, in fact, the flow of $\nabla F$ studied in Lemma~\ref{lem:tanga3}, induces a diffeomorphism
of $\widetilde{K}$ with $\partial\Sigma_-\times[-1,1]$. We glue together $\widetilde{K}$ and $\partial\Sigma_-\times[-1,1]$ using this identification.
The third part of the boundary, that is, $\Sigma_-\times\{1\}$ is not glued. It follows from an argument as in Lemma~\ref{lem:retracts} that $\O'$
is diffeomorphic to $\O''$.

The manifold $\O'$ consists of three components: $\Sigma_0\times[0,1]$, $H$ and $\Sigma_-\times[-1,1]$. We define a function $F$ on each component
separately, namely.

\[F(x)=\begin{cases}
t& \mbox{if} \ x=(v,t)\in\Sigma_0\times\{t\}\subset \Sigma_0\times[0,1]=\O\\
2+t& \mbox{if} \ x=(v,t)\in\Sigma_-\times\{t\}\subset \Sigma_-\times[-1,1]\\
2-\sum\limits_{j=1}^k x_j^2+\sum\limits_{j=k+1}^{n+1}x_j^2&\mbox{if} \ x=(x_1,\dots,x_{n+1})\in H.
\end{cases}
\]
As defined, $F$ is smooth on each of the three components. It is also globally continuous. In fact, the identification of $\widetilde{K}$
with $\Sigma_-\times[-1,1]$ can be done so that $F$ is continuous on $\widetilde{K}$. On the part $\Sigma_0\times\{1\}\subset\O'$, all the three components
give the same value, that is $1$.

Given the construction of $F$, it remains to perturb $F$ (that is, to approximate it uniformly near $\wt{K}\cup\S_0\times\{1\}$)
to a smooth function and in the way that $F$ does not get any new critical points. For a general piecewise smooth
function this is impossible, we can consider the real valued function $x\mapsto |x|$: any smooth approximation must have a critical point near $x=0$. The reason for this
is that near the non--smooth point the topology of level sets of $|x|$ changes. This is essentially the main obstruction

In our situation, the topology of the level sets of $F$ does not change near $\widetilde{K}$, nor near $\Sigma_0\times\{1\}$, that is, near
any gluing region. This is enough to show that $F$ can be approximated near its non--smooth locus by a smooth function without
introducing additional critical points. The proof of this fact is standard, but technical.
Instead of giving all the details, we sketch a proof of a weaker result, Lemma~\ref{lem:approx}. This result takes care of approximating the function $F$ near
$\S_0\times\{1\}$. Approximation near the whole of $\wt{K}\cup\S_0\times\{1\}$ follows essentially the same pattern and is left to the reader.

Given the approximation result, the proof of Proposition~\ref{prop:handleiscrit} is finished.
\end{proof}

\begin{lemma}[Approximating piecewise smooth functions by smooth functions]\label{lem:approx}
Suppose that $N$ is a smooth, compact manifold. Let $\pi\colon N\times[-1,1]\to[-1,1]$ be the projection
onto the second factor. Let $N_0=N\times\{0\}$, $N_+=N\times[0,1]$ and $N_-=N\times[-1,0]$.
Let $f\colon N\times[-1,1]$ be a continuous function. Let $f_+$ and $f_-$ be the restrictions to $N_+$ and $N_-$ respectively.
Suppose that
\begin{itemize}
\item[(a)] $f_+$ and $f_-$ are smooth and have no critical points on $N\times[-1,1]$;
\item[(b)] $f_+^{-1}(0)=f_-^{-1}(0)=N_0$;
\item[(c)] the image of $f_+$ is contained in $\R_{\ge 0}$ and the image of $f_-$ is contained in $\R_{\le 0}$;
\item[(d)] the scalar product $\langle f_{\pm},\nabla\pi\rangle$ is positive on $N_{\pm}\setminus N_0$.
\end{itemize}
Then for any $\theta>0$ there exist $\varepsilon,\delta\in(0,\theta)$ and a smooth function $g\colon N\times[-1,1]\to\R$ such that
\begin{itemize}
\item[(i)] $g$ agrees with $f_-$ on $N\times[-1,-\delta]$ and with $f_+$ on $N\times[\delta,1]$;
\item[(ii)] $g$ takes values in $[-\varepsilon,\varepsilon]$ on $N\times[-\delta,\delta]$;
\item[(iii)] $g$ has no critical points on $N\times[-1,1]$.
\end{itemize}
\end{lemma}
\begin{proof}[Sketch of proof]
By compactness, the continuity of $f_\pm$ and assumptions (b), (c)
there exists $\delta'>0$ such that $f_+(N\times[0,\delta'])\subset[0,\theta/2]$ and $f_-(N\times[-\delta',0])\subset[-\theta/2,0]$.
We set $\varepsilon=\theta$ and
$\delta=\min(\delta',\theta/2)$.

Choose a partition of unity subordinate to the covering $[-1,1]=[-1,-\delta/2)\cup(-\delta,\delta)\cup(\delta/2,1]$. The three
functions corresponding to this partition are denoted by $\phi_-$, $\phi_0$ and $\phi_+$ respectively.

Define $\Phi_\bullet\colon N\times[-1,1]\to[0,1]$ as compositions $\phi_{\bullet}\circ\pi$, where $\bullet$ is any of `+', `-' and `$0$'.
Consider the vector field
\[v=\Phi_+\nabla f_++\Phi_0\nabla\pi+\Phi_-\nabla f_-.\]
By point (a) of the assumptions $v$ is a smooth vector field. Assumption~(d) implies that
$\langle \nabla\pi,v\rangle>0$ everywhere on $N\times[-1,1]$, that is, $v$ is a gradient--like vector field for $\pi$.
In particular, $v$ does not vanish on $N\times[-1,1]$.

Let $h$ be a positive $C^\infty$ function. Set $v_h=hv$. Then $v_h$ is also a gradient--like vector field for $\pi$.
The trajectories of $v$ coincide with those of $v_h$: multiplication by $h$ changes only the speed of going along a trajectory.

We integrate the vector field $v_h$ to a function $g_h\colon N\times[-1,1]\to\R$. This means
we first set $g_h\equiv f_-$ on $N\times\{-1\}$. Next, suppose $x\in N\times(-1,1]$.
Let $\gamma\colon U\to N\times[-1,1]$ (here $U$ is a closed interval) be a trajectory of $v_h$  such that $\gamma(0)=x$.
Since $v_h$ is gradient--like for $\pi$,  $\gamma$
must have come from $N\times\{-1\}$ in the past, more precisely, there exist $t_x<0$ and $y\in N\times\{-1\}$ such that $\gamma(t_x)=y$. We set
\[g_h(x)=g_h(y)-t_x.\]
Since $v_h$ is smooth, by the implicit function theorem $g_h$ is a smooth function.

Choosing the normalizing function $h$ appropriately we can guarantee that $g:=g_h$ satisfies (i).
Namely, we set $h=||\nabla f_-||^{-2}$ so that the directional derivative $\langle v_h,\nabla f_-\rangle\equiv 1$
on $N\times[-1,-\delta]$. This implies that $g=f_-$ on $N\times[-1,-\delta]$.
The choice of $h$ on $N\times[-\delta,\delta]$ is such that the time the trajectory
goes from a point $x_-\in N\times\{-\delta\}$ to some $x_+\in N\times\{\delta\}$ is equal to $f_+(x_+)-f_-(x_-)$. The
latter expression is positive by assumption (b). This implies that
$g=f_+$ on $N\times\{\delta\}$ and condition (ii) is satisfied automatically.
Finally
we set $h=||\nabla f_+||^{-2}$ on $N\times[\delta,1]$. The verification of condition
(i) is straightforward.

As $g_h$ is strictly increasing on trajectories of $v_h$, it cannot have any critical points.
\end{proof}

\subsection{Left and right product cobordisms and traces of handle attachments}\label{s:larpc}

In this subsection we create  a dictionary between surgery theoretical notions
(traces of handle attachments and detachments) and Morse
theoretical (additions of half-handles). The main result of
this subsection, Proposition~\ref{prop:lpc}, is a direct consequence of the results
proved earlier in the article.

To begin with,
let $(\O,Y)$ be a cobordism between $(\S_0,M_0)$ and $(\S_1,M_1)$.

\begin{definition}
We shall say that $\O$ is a \emph{left product cobordism} if
$\O\cong \S_0\times[0,1]$. Similarly, if $\O\cong\S_1\times[0,1]$,
then we shall say that $\O$ is a \emph{right product cobordism}.
\end{definition}
\begin{proposition}\label{prop:lpc}(a)
If $(\O,Y)$ is a cobordism between $(\S_0,M_0)$ and $(\S_1,M_1)$
consisting only of left half-handle attachments, then it is a
left-product cobordism. Likewise, if it consists only of right
half-handle attachments, then it is a right product cobordism.

(b) Let $F\colon \O\to[0,1]$ be a Morse function in the sense of
Definition~\ref{def:Morse}. Assume that $F$ has no critical points
in the interior of $\O$. If all critical points on the boundary
are boundary stable, then $F$ is a left-product cobordism. If all
critical points are boundary unstable, then $F$ is a right product
cobordism.
\end{proposition}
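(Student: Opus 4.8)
The plan is to prove part (a) first and then deduce part (b) from it using Proposition~\ref{prop:handleiscrit} together with Lemma~\ref{lem:linktwo} (to pass freely between gradients and gradient-like vector fields). For part (a), I would argue by induction on the number of half-handles. The base case (no half-handles) is Lemma~\ref{lem:novanishnoproblem}. For the inductive step in the left case, suppose $(\O,Y)$ is obtained from a cobordism $(\O_1,Y_1)$ between $(\S_0,M_0)$ and $(\S_1',M_1')$ — which by induction satisfies $\O_1\cong\S_0\times[0,1]$ — by a single left half-handle attachment of index $k$ along $(B,B_0)\hookrightarrow(\S_1',M_1')$. By Lemma~\ref{lem:leftistrivial}, attaching a left half-handle does not change the diffeomorphism type, so $\O\cong\O_1\cong\S_0\times[0,1]$, and $(\O,Y)$ is a left-product cobordism. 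The right case is the mirror statement: by Lemma~\ref{lem:rightislikenormal} a right half-handle attachment on $\O$ is the same as an ordinary handle attachment, so this does not immediately give triviality; instead I would run the induction from the \emph{top}, peeling off half-handles starting at $\S_1$. Equivalently — and this is cleaner — I would apply the already-proven left statement to the reversed cobordism: reversing $(\O,Y)$ (formally, replacing $F$ by $1-F$, or just swapping the roles of $\S_0$ and $\S_1$) turns right half-handle attachments into left half-handle attachments by the duality noted after Lemma~\ref{lem:leftonsigma}, hence $\O\cong\S_1\times[0,1]$, i.e. a right product cobordism.

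For part (b), I would first reduce to the single-critical-point situation by the usual rearrangement/ordering argument: choose regular values $0=c_0<c_1<\dots<c_m=1$ separating the critical values, so that $(\O,Y)=\bigcup_{i} (\O_i,Y_i)$ with $\O_i=F^{-1}([c_{i-1},c_i])$ containing exactly one boundary critical point $z_i$. (Strictly, one should note here that there is at most one critical point in each $F^{-1}(c_{i-1},c_i)$, which is automatic once the $c_i$ separate critical values; no nontrivial rearrangement is actually needed.) If every $z_i$ is boundary stable, then by Theorem~\ref{thm:morsehalfhandle} each $(\O_i,Y_i)$ is a left half-handle attachment, so $(\O,Y)$ consists only of left half-handle attachments, and part~(a) gives $\O\cong\S_0\times[0,1]$. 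The boundary-unstable case is identical with ``left'' replaced by ``right'', yielding $\O\cong\S_1\times[0,1]$.

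The main obstacle is a bookkeeping point rather than a deep one: verifying that the concatenation of left half-handle attachments in the sense of Definition~\ref{def:rha}/Definition~\ref{def:hha} really does match the successive pieces $(\O_i,Y_i)$ produced by Theorem~\ref{thm:morsehalfhandle}, i.e. that the outgoing boundary $(\S_1',M_1')$ of one piece is correctly identified (via Lemma~\ref{lem:novanishnoproblem}, which supplies product structures on the ``collar'' regions $F^{-1}([c_{i-1}+\delta, c_i-\delta])$) with the incoming boundary of the next, so that the glued-up cobordism is genuinely a chain of half-handle attachments to which the induction of part~(a) applies. Once the product structures between consecutive critical levels are fixed consistently, the identification is routine and the inductive conclusion $\O\cong\S_0\times[0,1]$ (resp.\ $\S_1\times[0,1]$) follows; I would also remark, via Remark~\ref{rem:allarestab}, that such Morse functions (with only boundary-stable, resp.\ only boundary-unstable, critical points) genuinely occur, so the statement is not vacuous.
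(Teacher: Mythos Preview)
Your proposal is correct and follows essentially the same route as the paper: reduce everything to the left/stable case via the duality $F\mapsto 1-F$, use Theorem~\ref{thm:morsehalfhandle} (and its converse Proposition~\ref{prop:handleiscrit}) to pass between (a) and (b), and then invoke Lemma~\ref{lem:leftistrivial} for the key step that a left half-handle attachment does not change the diffeomorphism type of $\O$. The paper compresses all of this into three sentences, whereas you spell out the induction and the slicing of $\O$ by regular values explicitly; your ``bookkeeping'' worry is legitimate but, as you suspect, entirely routine.
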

\begin{proof}
The two statements (a) and (b) are equivalent via
Theorem~\ref{thm:morsehalfhandle} and
Proposition~\ref{prop:handleiscrit}. The stable-unstable
(right-left) statements are also equivalent by replacing the
Morse function $F$ by $-F$. The stable case follows from
Lemma~\ref{lem:leftistrivial}.
\end{proof}

The next results of this subsection will be not used in this paper, but we insert them
because they bridge surgery techniques and applications,
e.g. with  \cite{Ra} or \cite{BNR2}.

In order to clarify what we wish, let us recall that by
Theorem~\ref{thm:morsehalfhandle} if  a Morse function $F$ defined
on a cobordism $(\O,Y)$ has only one   critical point of boundary
type then $(\O,Y)$ is a half-handle attachment.
Proposition~\ref{prop:handleiscrit} is the converse of this; the
(total) space of a half-handle attachment can be thought as a
cobordism  with a Morse function on it with only one critical
point.

We wish to establish the analogues of these statements `at the level
of $\Sigma$'. In Subsection \ref{ss:elem} we proved that the
output of a right/left half-handle attachment induces
a handle attachment/detachment at the level of $\Sigma$. The next lemma is
the converse of this statement. (In fact, the output cobordism
provided by it can be identified with the cobordism constructed in
 Proposition~\ref{prop:handleiscrit}.)

\begin{lemma}\label{lem:traceconstruction}
Assume that $(\S_1,M_1)$ is the result of a handle attachment
(respectively detachment) to $(\S_0,M_0)$. Then, there exists a
cobordism $(\O,Y;\S_0,M_0,\S_1,M_1)$ such that
$\O\cong\S_1\times[0,1]$ (respectively $\O\cong\S_0\times[0,1]$).
\end{lemma}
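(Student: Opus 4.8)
The plan is to reduce Lemma~\ref{lem:traceconstruction} to the constructions already in hand, namely Proposition~\ref{prop:handleiscrit} together with the elementary description of the effect of half-handle attachments on $\S_1$ worked out in Corollary~\ref{ex:cobordismistrace} and Lemma~\ref{lem:leftonsigma}. First I would treat the \emph{handle attachment} case. Suppose $(\S_1,M_1)$ is obtained from $(\S_0,M_0)$ by attaching a handle of index $k$, i.e. there is an embedding $\Phi\colon \p D^k\times D^{n-k}\hookrightarrow M_0$ and $(\S_1,M_1)\cong(\S_0\cup_{\Phi} D^k\times D^{n-k},\p\S_1)$. By Corollary~\ref{ex:cobordismistrace} (or Lemma~\ref{lem:leftonsigma}(b)), this is exactly the effect on $\Sigma$ of a right half-handle attachment of index $k$: take the trivial cobordism $(\S_0\times[0,1],M_0\times[0,1])$, regard the embedding $\Phi$ as $B_0=S^{k-1}\times D^{n-k}\hookrightarrow M_0\times\{1\}=M_1$, and attach a right half-handle of index $k$ along $B_0$ as in Definition~\ref{def:rha}. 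Denote the resulting cobordism $(\O,Y;\S_0,M_0,\S_1',M_1')$. Then Lemma~\ref{lem:leftonsigma}(b) gives $\S_1'\cong\S_0\cup_{B_0}N\cong\S_1$ (and $M_1'\cong M_1$), so $(\O,Y)$ is a cobordism between $(\S_0,M_0)$ and $(\S_1,M_1)$ as required. Finally, Proposition~\ref{prop:handleiscrit} (applied with the same data) produces a Morse function on $(\O,Y)$ with a single boundary unstable critical point of index $k$ and no others; by Proposition~\ref{prop:lpc}(b) the cobordism consisting only of a boundary-unstable critical point is a right product cobordism, so $\O\cong\S_1\times[0,1]$. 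Equivalently one can cite Lemma~\ref{lem:rightislikenormal}: $\O$ is diffeomorphic to the trace of a standard handle attachment pushed into the interior of $\S_1$, which is the standard fact that such a trace is diffeomorphic to $\S_1\times[0,1]$.

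The \emph{handle detachment} case is handled symmetrically, using left half-handles. Suppose $(\S_1,M_1)$ is obtained from $(\S_0,M_0)$ by a handle detachment of index $k-1$, i.e. there is an embedding $\Phi'\colon(D^{k-1}\times D^{n+1-k},\p D^{k-1}\times D^{n+1-k})\hookrightarrow(\S_0,M_0)$ and $(\S_1,M_1)=(\mathrm{closure\ of\ }(\S_0\setminus D^{k-1}\times D^{n+1-k}),\p\S_1)$. By Corollary~\ref{ex:cobordismistrace} this is precisely the effect on $\Sigma$ of a left half-handle attachment of index $k$: starting from the trivial cobordism $(\S_0\times[0,1],M_0\times[0,1])$, use $\Phi'$ to obtain the embedding $(B,B_0)\hookrightarrow(\S_1,M_1)$ demanded by Definition~\ref{def:rha} for a left half-handle (here $B\cong D^{k-1}\times D^{n+1-k}$ with $B_0\cong S^{k-2}_0\times D^{n+1-k}$ — so one must be a little careful matching the disc factors, but this is a routine identification), and attach a left half-handle of index $k$ along $(B,B_0)$. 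The resulting cobordism $(\O,Y;\S_0,M_0,\S_1',M_1')$ satisfies $\S_1'\cong\S_1\setminus B\cong\S_1$ by Lemma~\ref{lem:leftonsigma}(a), so it is a cobordism between $(\S_0,M_0)$ and $(\S_1,M_1)$. By Lemma~\ref{lem:leftistrivial} attaching a left half-handle does not change the diffeomorphism type of the total space, so $\O\cong\S_0\times[0,1]$; alternatively Proposition~\ref{prop:handleiscrit} gives a Morse function with one boundary stable critical point, and Proposition~\ref{prop:lpc}(b) then identifies $(\O,Y)$ as a left product cobordism.

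The main obstacle — and really the only non-formal point — is the careful bookkeeping of indices and of which disc factor plays which role in the two conventions. A handle attachment of index $k$ on $\S$ matches a \emph{right} half-handle of index $k$ on the cobordism, whereas a handle detachment of index $k-1$ on $\S$ matches a \emph{left} half-handle of index $k$ on the cobordism (note the index shift); this is exactly the content of Corollary~\ref{ex:cobordismistrace}, so once that correspondence is invoked correctly the rest is immediate. A secondary subtlety is that in the right half-handle case one only gets to prescribe the embedding $B_0\hookrightarrow M_1$ and must extend it over a collar to get $B\hookrightarrow\S_1$ — but as remarked after Definition~\ref{def:rha} this extension exists and is unique up to isotopy, so it causes no trouble. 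Beyond these two points the argument is a direct concatenation of results already established.
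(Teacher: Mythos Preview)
Your proof is correct, but it takes a genuinely different route from the paper's own argument. The paper proceeds by direct construction: since $\S_1=\S_0\cup D^k\times D^{n-k}$, one simply \emph{defines} $\O=\S_1\times[0,1]$ and then re-reads the boundary as
\[
\partial\O=\bigl(\S_0\times\{0\}\bigr)\cup\bigl(D^k\times D^{n-k}\cup M_1\times[0,1]\bigr)\cup\bigl(\S_1\times\{1\}\bigr),
\]
declaring the middle piece to be $Y$ (after pushing the disc slightly into the interior and smoothing corners). The detachment case is the symmetric statement with $\O=\S_0\times[0,1]$. No half-handle machinery is invoked at all.

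Your approach instead constructs $\O$ as a right (resp.\ left) half-handle attachment to the trivial cobordism and then cites Proposition~\ref{prop:lpc} (or Lemma~\ref{lem:leftistrivial}) to identify the diffeomorphism type. This is perfectly valid and has the virtue of making the connection with the half-handle formalism explicit---indeed the paper remarks just before the lemma that the two constructions coincide. The paper's route is shorter and more self-contained; yours is more conceptual but relies on the chain Corollary~\ref{ex:cobordismistrace} $\Rightarrow$ Proposition~\ref{prop:handleiscrit} $\Rightarrow$ Proposition~\ref{prop:lpc}.

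Two small points to clean up. First, in the detachment paragraph you write $\S_1'\cong\S_1\setminus B\cong\S_1$; this should read $\S_1'\cong\S_0\setminus B\cong\S_1$, since the top of the trivial cobordism before attachment is $\S_0\times\{1\}$, not $\S_1$. Second, your aside invoking Lemma~\ref{lem:rightislikenormal} and a ``standard fact'' that the trace of a classical handle attachment is $\S_1\times[0,1]$ is not actually standard---the trace of an index-$k$ surgery on a closed manifold is not in general a product. It happens to be true here precisely because the attaching region sits in a collar of $\partial\S_0$, but that requires an argument (essentially the one you have already given via Proposition~\ref{prop:lpc}), so I would drop that sentence.
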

\begin{proof}
Assume that $(\S_1,M_1)$
arises from a handle attachment to $(\S_0,M_0)$, i.e.
$\S_1=\S_0\cup D^k\times D^{n-k}$. Let us define
$\O=\S_1\times[0,1]$.
The boundary $\p\O$ can be split as
\begin{align*}
\partial\Omega=&\left(\S_0\cup D^k\times D^{n-k}\right)\times\{0\}\cup \left(M_1\times[0,1]\right)\cup\left(\S_1\times\{1\}\right)\\
&= \S_0\times\{0\}\cup Y\cup \S_1\times\{1\},
\end{align*}
where $Y= D^k\times D^{n-k}\cup \left(M_1\times[0,1]\right)$. Its
$D^k\times D^{n-k}$ part can be `pushed inside' $\O$ transforming
(diffeomorphically) $\O$ into a cobordism, see Figure~\ref{fig:pushedinside}.

An analogous construction can be used in the case of a handle detachment.
If $(\S_1',M_1')$ is  the result of a handle detachment from $(\S_0,M_0)$, then the
trace of the handle detachment is the
cobordism between $(\S_0,M_0)$ and $(\S'_1,M'_1)$ such that
$$(\O',Y')=(\S_0\times[0,1],M_0\times[0,1]\cup D^{k}\times
D^{n-k}).$$
\end{proof}

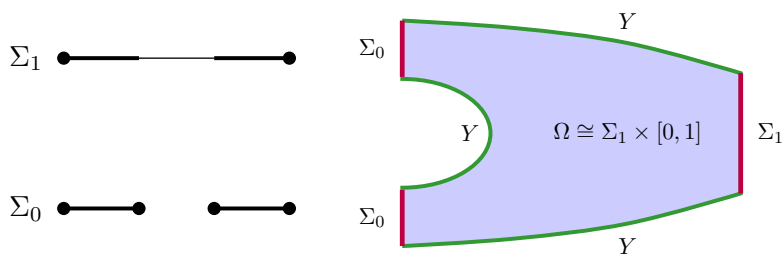
\begin{figure}
\begin{pspicture}(-5,-2)(5,2)
\psline[linewidth=1.5pt]{*-*}(-4.5,-1)(-3.5,-1)
\psline[linewidth=1.5pt]{*-*}(-2.5,-1)(-1.5,-1)\rput(-5,-1){$\S_0$}
\psline[linewidth=1.5pt]{*-}(-4.5,1)(-3.5,1)\psline[linestyle=solid,linewidth=0.5pt](-3.5,1)(-2.5,1)\psline[linewidth=1.5pt]{-*}(-2.5,1)(-1.5,1)\rput(-5,1){$\S_1$}
\pscustom[fillstyle=solid,fillcolor=blue,opacity=0.2,linecolor=darkgreen,linewidth=1.5pt]{%
\pscurve(0,1.5)(1.5,1.4)(3,1.2)(4.5,0.8)
\pscurve[liftpen=1](4.5,-0.8)(3,-1.2)(1.5,-1.4)(0,-1.5)}
\psellipticarc[fillcolor=white,fillstyle=solid,linecolor=darkgreen,linewidth=1.5pt](0,0)(1.2,0.75){268}{92}
\psline[linecolor=purple,linewidth=1.8pt](0,1.5)(0,0.75)
\psline[linecolor=purple,linewidth=1.8pt](0,-1.5)(0,-0.75)
\rput(-0.4,1.125){\psscalebox{0.8}{$\S_0$}}
\rput(-0.4,-1.125){\psscalebox{0.8}{$\S_0$}}
\psline[linecolor=purple,linewidth=1.8pt](4.5,-0.8)(4.5,0.8)\rput(4.9,0){\psscalebox{0.8}{$\S_1$}}
\rput(3,0){\psscalebox{0.8}{$\O\cong\S_1\times[0,1]$}}
\rput(3,1.5){\psscalebox{0.8}{$Y$}}
\rput(3,-1.5){\psscalebox{0.8}{$Y$}}
\rput(0.9,0){\psscalebox{0.8}{$Y$}}
\end{pspicture}
\caption{Lemma~\ref{lem:traceconstruction}. On the left a 1-handle is attached to $\S_0$. On the right there is a cobordism between $\S_0$
and $\S_1$, which is a right product cobordism.}\label{fig:pushedinside}
\end{figure}

\begin{definition}\label{def:trace}
The cobordism $(\O,Y;\S_0,M_0,\S_1,M_1)$ determined by the
Lemma~\ref{lem:traceconstruction} is called the \emph{trace} of a
handle attachment of $(\S_0,M_0)$ (respectively the \emph{trace} of a handle detachment).
\end{definition}

\section{Splitting interior handles}\label{S:sih}
We prove here the theorem about moving critical points to the boundary.

\begin{theorem}\label{thm:handsplit}
Assume that on a cobordism $(\O,Y)$ between $(\S_0,M_0)$ and
$(\S_1,M_1)$ we have a Morse function $F$ with a single
critical point $z$ of index $k\in\{1,\dots,n\}$ in the interior of
$\O$ situated on the  level set $\Sigma_{1/2}=F^{-1}(F(z))$. If
\begin{equation}\label{eq:connectwithboundary}
\textrm{\emph{the connected component of $\Sigma_{1/2}$ containing $z$ has non-empty intersection with $Y$}},
\end{equation}
then there exists a function $G\colon\O\to[0,1]$, such that
\begin{itemize}
\item $G$ agrees with $F$ in a neighbourhood of $\S_0\cup\S_1$;
\item $\nabla G$ is everywhere tangent to $Y$;
\item $G$ has exactly two critical points $z^s$ and $z^u$, which are both on the boundary and of index $k$. The point $z^s$
is boundary stable and $z^u$ is boundary unstable.
\item There exists a Riemannian metric such that there is a single trajectory of $\nabla G$ from $z^s$ to $z^u$ inside $Y$.
\end{itemize}
\end{theorem}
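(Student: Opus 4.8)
The plan is to use the hypothesis to join $z$ to the boundary by an arc lying in the level set $\Sigma_{1/2}$, straighten a neighbourhood of that arc into a standard half-ball model on which $F$ has a simple form compatible with tangency to $Y$, and then replace $F$ on this model by an explicit function in which the single interior critical point has already bifurcated into a boundary stable/boundary unstable pair. Write $c=F(z)$. Applying Lemma~\ref{lem:novanishnoproblem} to $F^{-1}([0,c-\e^2])$ and $F^{-1}([c+\e^2,1])$ with $\e>0$ small, I may assume $\O=F^{-1}([c-\e^2,c+\e^2])$ and only need $G=F$ near $\S_0\cup\S_1$. By the classical Morse lemma there are coordinates $(x_1,\dots,x_{n+1})$ near $z$, with the metric Euclidean there, in which $F=c-\sum_{j\le k}x_j^2+\sum_{j>k}x_j^2$. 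By \eqref{eq:connectwithboundary} the component of $\Sigma_{1/2}$ through $z$ meets $Y$, so there is a path inside it from $z$ to some $p\in Y$; smoothing away from $z$ and perturbing, I obtain an embedded arc $\gamma\colon[0,1]\to\O$ with $\gamma(0)=z$, $\gamma(1)=p\in Y$, $\gamma((0,1])\subset\mathrm{int}\,\O$ except at $p$, $F\circ\gamma\equiv c$, $\gamma$ transverse to $Y$ at $p$, and $\gamma$ leaving $z$ tangent to a ray of the Morse cone with nonzero components in both the stable and the unstable subspaces.

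Next I would build a compact neighbourhood $\mathcal N$ of $\{z\}\cup\gamma([0,1])$, a half-ball with $\mathcal N\cap Y$ an $n$-disc $\Delta$ about $p$ and $\p\mathcal N\setminus\Delta$ contained in $F^{-1}(c-\e^2)\cup F^{-1}(c+\e^2)$ together with walls to which $\nabla F$ is tangent, such that on $\mathcal N$ the function $F$ has the single critical point $z$, is a submersion near $\p\mathcal N\setminus\Delta$, and $\nabla F$ is tangent to $\Delta$. This is constructed exactly in the spirit of the proof of Theorem~\ref{thm:morsehalfhandle}: near $z$ one uses a region of the type $\wt H$ there, along $\gamma|_{[q,p]}$ (with $q=\gamma\cap\p\wt H$) one uses Lemma~\ref{lem:novanishnoproblem} to get flow-box coordinates, and one glues. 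Then one fixes a diffeomorphism of $\mathcal N$ onto a standard model
\[
\mathbb B\subset\{(x_1,\dots,x_{n+1})\colon x_{n+1}\ge 0\},\qquad Y\cap\mathcal N\leftrightarrow\{x_{n+1}=0\},
\]
carrying $F$ to $F_0:=c+h_{-a_0^2}(x_1,x_{n+1})+Q(x_2,\dots,x_n)$, where $Q=-\sum_{j=2}^{k}x_j^2+\sum_{j=k+1}^{n}x_j^2$, $h_\mu(u,v):=-\tfrac{u^3}{3}+\mu u+uv^2$, and $a_0>0$. Note $\p h_\mu/\p v=2uv$ vanishes on $\{v=0\}$, and for $\mu<0$ the only critical point of $h_\mu$ is the nondegenerate saddle at $(0,\sqrt{-\mu})$, so $F_0$ has precisely one interior critical point, of index $k$, and $\nabla F_0$ is tangent to $\{x_{n+1}=0\}$, matching the local picture of $F$ near $z$. \emph{This straightening, together with the bookkeeping putting the round part of $\p\mathcal N$ into level sets of $F$ and $\nabla F$-invariant walls, is the technical heart of the argument and the step I expect to cost the most work.}

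On $\mathbb B$ I would then set $G_0:=c+h_{\mu(x)}(x_1,x_{n+1})+Q(x_2,\dots,x_n)$, where $\mu(x):=-a_0^2\bigl(1-2\chi(x)\bigr)$ and $\chi$ is a cutoff with $\nabla\chi$ tangent to $\{x_{n+1}=0\}$, equal to $1$ on a box containing $\{|x_1|\le a_0,\ x_2=\dots=x_{n+1}=0\}$ and the point $(0,\dots,0,a_0)$, and equal to $0$ near $\p\mathbb B\setminus\{x_{n+1}=0\}$; then $G_0=F_0$ near $\p\mathbb B\setminus\{x_{n+1}=0\}$. Using $\p h_\mu/\p\mu=u$, the critical point equations read $\p_{x_{n+1}}G_0=x_1\bigl(2x_{n+1}+\p_{x_{n+1}}\mu\bigr)=0$, $\p_{x_j}G_0=\pm 2x_j+x_1\p_{x_j}\mu=0$ $(2\le j\le n)$, $\p_{x_1}G_0=-x_1^2+\mu+x_{n+1}^2+x_1\p_{x_1}\mu=0$; where $\chi\equiv1$ these force $x_2=\dots=x_{n+1}=0$ and $x_1=\pm a_0$, and a short check (a critical point with $x_1=0$ would need $\chi=(a_0^2-x_{n+1}^2)/(2a_0^2)$, which is excluded if $\chi$ decreases slowly enough away from the box; the case $x_1\ne 0$ is handled directly on $Y$ and, off $Y$, by taking the $x_{n+1}$-transition spread out) shows there are no other critical points. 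Thus $G_0$ has exactly $z^u=(a_0,0,\dots,0)$ and $z^s=(-a_0,0,\dots,0)$; expanding $h_{a_0^2}$ near $x_1=\pm a_0$ gives $G_0-G_0(z^s)=(x_1+a_0)^2-x_{n+1}^2+Q+\cdots$ and $G_0-G_0(z^u)=-(x_1-a_0)^2+x_{n+1}^2+Q+\cdots$, so, with $x_{n+1}$ the cutting coordinate, $z^s$ is boundary stable and $z^u$ boundary unstable, both of index $k$; and on $\{x_2=\dots=x_{n+1}=0\}\subset Y$ one has $\dot x_1=-x_1^2+a_0^2>0$ on $(-a_0,a_0)$, giving a unique trajectory from $z^s$ to $z^u$ inside $Y$.

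Finally, define $G:=F$ on $\O\setminus\mathcal N$ and $G:=G_0$ (pulled back) on $\mathcal N$; the two definitions agree near $\p\mathcal N$, so $G$ is smooth, $G=F$ near $\S_0\cup\S_1$ (since $G_0=F_0$ near the level-set parts of $\p\mathcal N$, and $G$ maps $\mathbb B$ into $[c-\e^2,c+\e^2]$ for $a_0$ small), and $G$ has exactly the two boundary critical points $z^s,z^u$ of index $k$ described above. For the metric, rather than interpolating metrics I would build a gradient-like vector field $\xi$ for $G$ directly: the Euclidean gradient of $G_0$ on $\mathcal N$, the old $\nabla_g G$ outside, and in a collar of $\p\mathcal N$ (where $G=F$ has no critical points and both fields are positively transverse to the level sets and tangent to $Y$) the convex combination of the two; positivity against $G$ and tangency to $Y$ are preserved by convex combinations of vector fields, and conditions (b), (b$'$) hold near $z^s,z^u$ by the explicit model. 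Then Lemma~\ref{lem:linktwo} (or Lemma~\ref{lem:altermet}) provides a metric with $\nabla G=\xi$, for which the only trajectory between $z^s$ and $z^u$ is the one found in the model. The main obstacle remains the straightening/neighbourhood construction in the second paragraph and the verification that the cutoff $\chi$ introduces no spurious critical points.
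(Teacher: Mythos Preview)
Your outline is essentially the paper's proof: both pick a path in $\Sigma_{1/2}$ from $z$ to $Y$, put $F$ into a two--variable cubic model of $D_4^-$ type (the paper uses $D(x,y)=y^3-yx^2+ay$ with $Y=\{x=0\}$, you use $h_\mu(u,v)=-u^3/3+\mu u+uv^2$ with $Y=\{v=0\}$; these are the same family up to a linear change), and then push the bifurcation parameter through zero via a cutoff to trade the interior saddle for a boundary stable/unstable pair of the same index, with the obvious segment on $Y$ as the unique connecting trajectory.

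Where you and the paper diverge is precisely the step you flag as the obstacle. The paper's Proposition~\ref{prop:localcoordinates} produces the global chart $U\cong[0,3+\eta)\times(-\eta,\eta)^n$ on which $F=y^3-yx^2+y+\tfrac12+\vec u^{\,2}$, and the device that makes this work is the holomorphic trick of Section~\ref{sec:Z1Z2}: writing $B(x,y)=y^3-yx^2+y$ as the imaginary part of a cubic polynomial so that its ``harmonic conjugate'' $A$ is a first integral of $\nabla B$, one passes to coordinates $(A,B,\vec u)$ in which $F$ is simply $b+\tfrac12$, extends these flow--box coordinates along the path $\gamma$ (Lemma~\ref{lem:neighofcurve}), and then inverts $(A,B)\mapsto(x,y)$ on the two sheets $Z_1$, $Z_2$ to recover the cubic form all the way out to $Y$. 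Your ``flow boxes along $\gamma$ glued to the Morse chart near $z$'' is the right intuition, but without an analogue of this first--integral mechanism it is not clear how you force the \emph{specific} cubic normal form on the whole tube rather than merely a submersion; your $h_\mu$ is also the imaginary part of a cubic, so the same trick is available to you.

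Two smaller points. First, your cutoff $\chi$ must vanish near \emph{all} of $\partial\mathbb B$, including the rim $\partial\Delta\subset\{x_{n+1}=0\}$, since $G_0-F_0=2a_0^2\chi\,x_1$; as written, ``$\chi=0$ near $\partial\mathbb B\setminus\{x_{n+1}=0\}$'' would leave $G_0\neq F_0$ on part of $Y$ and break the gluing. Once $\chi$ varies on $\Delta$, the clause ``the case $x_1\ne0$ is handled directly on $Y$'' needs an actual estimate; the paper's Lemmas~\ref{lem:nocritU11}--\ref{lem:nocritU12} show what is required, namely explicit control of the size of $\partial\chi$ relative to $\e$ on the transition region. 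Second, your uniqueness--of--trajectory argument should also rule out orbits that leave the model region before reaching $z^u$; the paper does this by a short energy estimate (the lemma immediately after Remark~\ref{lem:carefulreading2}).
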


\begin{remark}\label{rem:carefulreading}
A careful reading of the proof shows that we can in fact construct a smooth homotopy $G_t$ such that $F=G_0$, $G=G_1$ and
there exists $t_0\in(0,1)$ such that $G_t$ has a single interior critical point for $t<t_0$, two boundary critical points for $t>t_0$ and a degenerate critical
point on the boundary for $t=t_0$. See Remark~\ref{rem:carefulreading2}.
\end{remark}

The proof of Theorem~\ref{thm:handsplit} occupies
Sections~\ref{ss:withhyp} to \ref{ss:proofhyp}. We make a detailed
discussion of  Condition~\eqref{eq:connectwithboundary} in
Section~\ref{ss:step1}.

\subsection{About the proof}\label{sec:motivation}

The argument is based on  the following two-dimensional picture.
Consider the set $Z=\{(x,y)\in\R^2\colon x\geqslant 0\}$ and the
function $D\colon Z\to\R$ given by
\[D(x,y)=y^3-yx^2+ay,\]
where $a\in\R$ is a parameter. Observe that the boundary of $Z$ given by $\{x=0\}$ is invariant under the gradient flow of $D$ (see Figure~\ref{fig:stream}).

\begin{lemma} For $a>0$, $D$ has a single Morse critical point in the interior of $Z$. For $a<0$, $D$ has two Morse critical points on the boundary of $Z$.
\end{lemma}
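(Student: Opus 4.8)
The plan is to analyze the critical points of $D(x,y)=y^3-yx^2+ay$ directly by computing $\nabla D$ and examining where it vanishes, keeping track of the constraint $x\geqslant 0$ and the fact that the boundary $\{x=0\}$ is flow-invariant. First I would compute
\[
\frac{\partial D}{\partial x}=-2yx,\qquad \frac{\partial D}{\partial y}=3y^2-x^2+a.
\]
An \emph{interior} critical point must have $x>0$ and both partials zero; from $\partial D/\partial x=0$ with $x>0$ we get $y=0$, and then $\partial D/\partial y=0$ forces $x^2=a$. Hence for $a>0$ there is exactly one interior critical point, namely $(\sqrt{a},0)$, and for $a\leqslant 0$ there is none.

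Next I would treat the boundary case. Since $\nabla D$ is tangent to $\{x=0\}$ (indeed $\partial D/\partial x=-2yx$ vanishes identically there), a boundary critical point is a point $(0,y)$ at which the restriction $D(0,y)=y^3+ay$ has a critical point, i.e. $3y^2+a=0$. For $a<0$ this gives the two points $(0,\pm\sqrt{-a/3})$, while for $a\geqslant 0$ there is none. So for $a<0$ we obtain exactly two boundary critical points. It remains to check that all the critical points found are Morse (nondegenerate): at the interior point $(\sqrt a,0)$ the Hessian is $\begin{pmatrix}0 & -2\sqrt a\\ -2\sqrt a & 0\end{pmatrix}$, which is nondegenerate for $a>0$ (of index $1$); at a boundary point $(0,y_0)$ with $3y_0^2+a=0$ one computes the Hessian of $D$ to be $\begin{pmatrix}-2y_0 & 0\\ 0 & 6y_0\end{pmatrix}$ in the coordinates $(x,y)$, which is nondegenerate since $y_0\neq 0$ when $a<0$; note the two boundary points have opposite signs of $y_0$, hence one is boundary stable and the other boundary unstable, matching the picture.

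There is essentially no hard step here; the only mild subtlety is making sure one does not miss critical points lying \emph{on} the boundary in the case $a>0$ (there are none, since $3y^2+a=0$ has no real solution) and, conversely, that the interior computation does not accidentally produce a boundary point when $a<0$ (it does not, since $x^2=a<0$ is impossible). Thus the case analysis on the sign of $a$ cleanly separates ``one interior critical point'' from ``two boundary critical points,'' and the Hessian computations confirm the Morse condition and the stable/unstable dichotomy, completing the proof.
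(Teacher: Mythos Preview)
Your proof is correct and is exactly the straightforward computation the authors had in mind; in fact the paper omits the proof entirely, stating only that it ``is completely straightforward and will be omitted.'' Your gradient and Hessian calculations are accurate, and the case split on the sign of $a$ is the natural (and essentially only) way to argue.
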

\begin{proof}
Critical points of $D$ are given by $\frac{\partial D}{\partial x}=\frac{\partial D}{\partial y}=0$, that is, $xy=0$ and $3y^2-x^2+a=0$. The first
equation means that $y=0$ or $x=0$ and then we get solutions $(\pm \sqrt{a},0)$ and $(0,\pm\sqrt{-a/3})$. In the case $a>0$ we consider only first
two solutions (and only one of them belongs to $Z$), while if $a<0$, only the last two solutions are real and they correspond to boundary critical points.
Checking that these critical points are Morse is straightforward and is left to the reader.
\end{proof}
For $a=0$, $D$ acquires a $D_4^{-}$ singularity at the origin (see e.g.  \cite[Section 17.1]{AGV}).

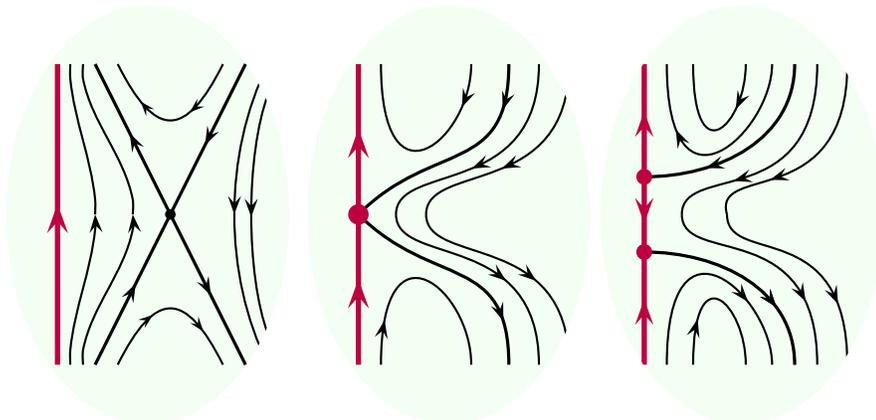
\begin{figure}
\begin{pspicture}(-5,-3)(5,3)
\rput(-2,-3){\begin{psclip}{\psellipse[linestyle=none,fillcolor=green,opacity=0.05,fillstyle=solid](-2.3,3)(1.9,2.8)}
\psline[linewidth=2pt,ArrowInside=->,arrowsize=8pt,linecolor=purple](-3.5,1)(-3.5,5)
\psline[linewidth=1.2pt, ArrowInside=->,arrowsize=5pt](-3,1)(-2,3)
\psline[linewidth=1.2pt, ArrowInside=->,arrowsize=5pt](-1,5)(-2,3)
\psline[linewidth=1.2pt, ArrowInside=->,arrowsize=5pt](-2,3)(-3,5)
\psline[linewidth=1.2pt, ArrowInside=->,arrowsize=5pt](-2,3)(-1,1)
\pscircle[fillstyle=solid,fillcolor=black](-2,3){0.07}
\pscurve[arrowsize=5pt]{->}(-3.16,1)(-3.16,1.2)(-2.5,2.8)(-2.5,3)
\pscurve(-2.5,3)(-2.5,3.2)(-3.16,4.8)(-3.16,5)
\pscurve[arrowsize=5pt]{->}(-3.33,1)(-3.33,1.2)(-3.0,2.8)(-3.0,3)
\pscurve(-3.0,3)(-3.0,3.2)(-3.33,4.8)(-3.33,5)
\psbezier[ArrowInside=->, arrowsize=5pt](-0.7,5)(-1.3,3.8)(-1.3,2.2)(-0.7,1)
\psbezier[ArrowInside=->, arrowsize=5pt](-0.7,4.6)(-1.0,3.8)(-1.0,2.2)(-0.7,1.4)
\psbezier[ArrowInside=->, arrowsize=4pt, ArrowInsidePos=23](-2.7,1)(-2.2,2)(-1.8,2)(-1.3,1)
\psbezier[ArrowInside=->, arrowsize=4pt, ArrowInsidePos=22](-1.3,5)(-1.8,4)(-2.2,4)(-2.7,5)
\end{psclip}}
\rput(2,3){\begin{psclip}{\psellipse[linestyle=none,fillcolor=green,opacity=0.05,fillstyle=solid](-2.3,-3)(1.9,2.8)}
\psline[linewidth=2pt,ArrowInside=->,arrowsize=8pt,linecolor=purple](-3.5,-5)(-3.5,-3)
\psline[linewidth=2pt,ArrowInside=->,arrowsize=8pt,linecolor=purple](-3.5,-3)(-3.5,-1)
\psbezier[ArrowInside=->, arrowsize=5pt, ArrowInsidePos=0.8, linewidth=1.2pt](-3.5,-3)(-2.5,-4)(-1.5,-3.5)(-1.5,-5)
\psbezier[ArrowInside=->, arrowsize=5pt, ArrowInsidePos=0.2, linewidth=1.2pt](-1.5,-1)(-1.5,-2.5)(-2.5,-2)(-3.5,-3)
\psbezier[ArrowInside=->, arrowsize=5pt, ArrowInsidePos=0.2](-3.2,-5)(-3.2,-3)(-2,-4)(-2,-5)
\psbezier[ArrowInside=->, arrowsize=5pt, ArrowInsidePos=0.2](-2,-1)(-2,-2)(-3.2,-3)(-3.2,-1)
\psbezier[ArrowInside=->, arrowsize=5pt, ArrowInsidePos=50](-1.1,-1)(-1.1,-2.8)(-3,-2.3)(-3,-3)
\psbezier[ArrowInside=->, arrowsize=5pt, ArrowInsidePos=50](-3,-3)(-3,-3.7)(-1.1,-3.2)(-1.1,-5)
\psbezier[ArrowInside=->, arrowsize=5pt, ArrowInsidePos=50](-0.7,-1)(-0.7,-2.8)(-2.6,-2.3)(-2.6,-3)
\psbezier[ArrowInside=->, arrowsize=5pt, ArrowInsidePos=50](-2.6,-3)(-2.6,-3.7)(-0.7,-3.2)(-0.7,-5)
\pscircle[fillstyle=solid,fillcolor=purple,linestyle=none](-3.5,-3){0.15}
\end{psclip}}
\rput(1.3,3){\begin{psclip}{\psellipse[linestyle=none,fillcolor=green,opacity=0.05,fillstyle=solid](2.3,-3)(1.9,2.8)}
\psline[linewidth=2pt,ArrowInside=->,arrowsize=7pt,linecolor=purple](1,-5)(1,-3.5)
\psline[linewidth=2pt,ArrowInside=->,arrowsize=7pt,linecolor=purple](1,-2.5)(1,-1)
\psline[linewidth=2pt,ArrowInside=->,arrowsize=7pt,linecolor=purple](1,-2.5)(1,-3.5)
\psbezier[ArrowInside=->, arrowsize=5pt, ArrowInsidePos=0.7, linewidth=1.2pt](3,-1)(3,-2)(2,-2.5)(1,-2.5)
\psbezier[ArrowInside=->, arrowsize=5pt, ArrowInsidePos=0.7, linewidth=1.2pt](1,-3.5)(2,-3.5)(3,-4)(3,-5)
\psbezier[ArrowInside=->, arrowsize=5pt, ArrowInsidePos=0.7](1,-3.5)(2,-3.5)(3,-4)(3,-5)
\psbezier[ArrowInside=->, arrowsize=5pt, ArrowInsidePos=0.7](1.3,-5)(1.3,-2.8)(2.7,-4.1)(2.7,-5)
\psbezier[ArrowInside=->, arrowsize=5pt, ArrowInsidePos=0.3](1.65,-5)(1.65,-3.5)(2.35,-4.2)(2.35,-5)
\psbezier[ArrowInside=->, arrowsize=5pt, ArrowInsidePos=0.7](2.7,-1)(2.7,-1.9)(1.3,-3.2)(1.3,-1)
\psbezier[ArrowInside=->, arrowsize=5pt, ArrowInsidePos=0.3](2.35,-1)(2.35,-1.8)(1.65,-2.5)(1.65,-1)
\psbezier[ArrowInside=->, arrowsize=5pt, ArrowInsidePos=0.7](3.3,-1)(3.3,-3)(1.5,-2.3)(1.5,-3)
\psbezier[ArrowInside=->, arrowsize=5pt, ArrowInsidePos=0.7](1.5,-3)(1.5,-3.7)(3.3,-3)(3.3,-5)
\psbezier[ArrowInside=->, arrowsize=5pt, ArrowInsidePos=0.7](3.7,-1)(3.7,-3)(1.9,-2.3)(2.1,-3)
\psbezier[ArrowInside=->, arrowsize=5pt, ArrowInsidePos=0.7](2.1,-3)(2.1,-3.7)(3.7,-3)(3.7,-5)
\pscircle[fillstyle=solid,fillcolor=purple,linestyle=none](1,-3.5){0.12}
\pscircle[fillstyle=solid,fillcolor=purple,linestyle=none](1,-2.5){0.12}
\end{psclip}}
\end{pspicture}
\caption{The trajectories of the gradient vector field of $D$ for values of $a>0$, $a=0$ and $a<0$.}\label{fig:stream}
\end{figure}

The proof of Theorem~\ref{thm:handsplit} starts with introducing
`local/global' coordinates $(x,y,u_1,\dots,u_{n-1})$ at $z$, in
which $F$ has the form $D(x,y)\pm u_1^2\pm\dots\pm u_{n-1}^2$,
hence it also parametrizes a neighbourhood of a path  connecting
$z$  with a point of $Y$. Then we change the parameter $a$ (which
we originally assume to be equal to $1$) to $-\delta$, where
$\delta$ is very small positive number (which corresponds to
moving the critical point to the boundary along the chosen path).

\subsection{Proof of Theorem~\ref{thm:handsplit} under additional assumption}\label{ss:withhyp}

We first give the proof assuming the existence of such coordinate
system as in \ref{sec:motivation}, described explicitly
 in the next  proposition (which is proved in Section~\ref{ss:proofhyp}).
 We use the hypotheses and notation of Theorem~\ref{thm:handsplit}.

\begin{proposition}\label{prop:localcoordinates}
There exists $\eta>0$, $\eta\ll 1$ and an open `half-disk' $U\subset\O$,
intersecting $Y$ along a disk, and  coordinates
$x,y,u_1,\dots,u_{n-1}$ such that in these coordinates $U$ is
given by
\[0\leqslant x<3+\eta,\ |y|<\eta,\ \sum_{j=1}^{n-1}u_j^2<\eta^2,\]
$U\cap Y$ is given by $\{x=0\}$, and in these coordinates $F$ is
given by
\[y^3-yx^2+y+\frac12+\sum_{j=1}^{n-1}\epsilon_ju_j^2,\]
where $\epsilon_1,\dots,\epsilon_{n-1}\in\{\pm 1\}$ are choices of signs. In particular $\#\{j\colon \epsilon_j=-1\}=k-1$, where $k=\ind_zF$.
\end{proposition}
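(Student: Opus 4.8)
The goal of Proposition~\ref{prop:localcoordinates} is to produce, starting from the given Morse function $F$ with a single interior critical point $z$ of index $k$, coordinates near a path from $z$ to the boundary $Y$ in which $F$ has the normal form $y^3-yx^2+y+\tfrac12+\sum\epsilon_ju_j^2$. I would build this in three stages: first the Morse-lemma normal form at $z$ itself; then the choice of an embedded path $\gamma$ from $z$ to $Y$ with a tubular neighbourhood carrying product-like coordinates; and finally a reparametrisation that interpolates the two and absorbs the ``gradient-like'' collar behaviour near $Y$.

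\textbf{Step 1: Normal form at $z$.} By the classical Morse lemma (e.g.\ \cite[Lemma~2.2]{Mi-morse}), there is a chart centred at $z$ in which $F=F(z)-(v_1^2+\dots+v_k^2)+(v_{k+1}^2+\dots+v_{n+1}^2)$, and $F(z)=\tfrac12$ because $z$ lies on $\Sigma_{1/2}$. Relabel one negative-direction coordinate as $y$ (so $v_1=y$, requiring $k\ge 1$, which holds since $k\in\{1,\dots,n\}$) and leave $k-1$ negative directions and $n+1-k$ positive directions, which will become the $u_j$'s with $\#\{j:\epsilon_j=-1\}=k-1$. The point of singling out $y$ is that the desired model $y^3-yx^2+y$ reduces near the origin, after dropping higher order terms, to $y\cdot(x^2\to\text{const})+\dots$; more precisely one checks that $y^3-yx^2+y$ has at the origin a nondegenerate critical point whose Hessian is $\mathrm{diag}(2,-2)$ up to the irrelevant $x$-scaling — wait, the Hessian of $y^3-yx^2+y$ at $(0,0)$ is $\begin{pmatrix}0&0\\0&0\end{pmatrix}+$\,\dots\,; actually the quadratic part is just $+y\cdot 1$, linear, so the origin is \emph{not} critical. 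The correct reading is that the critical point of $F$ is \emph{not} at $x=0$ for the parameter value $a=1$: solving $\partial_y(y^3-yx^2+y)=3y^2-x^2+1=0$, $\partial_x=-2xy=0$ gives $y=0$, $x=1$ (on $x\ge 0$). So the coordinate frame is set up so that $z$ sits at $(x,y)=(1,0)$, not at the boundary; I would therefore translate the Morse chart so that $z\leftrightarrow(1,0,0,\dots,0)$ and match the Hessian there (which for the model is $\mathrm{diag}(-2,2)$ in $(x,y)$, i.e.\ $x$ is a negative direction and $y$ a positive one at $z$), adjusting signs accordingly.

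\textbf{Step 2: The path and its neighbourhood.} Using hypothesis \eqref{eq:connectwithboundary}, the component of $\Sigma_{1/2}$ through $z$ meets $Y$; since $F$ has no critical points on $\Sigma_{1/2}\setminus\{z\}$ I can choose a smooth embedded arc $\gamma\colon[0,3+\eta]\to\overline{U_0}$ lying in $\Sigma_{1/2}$ (so $F\equiv\tfrac12$ along it), starting at $z$ at parameter value $1$, continuing away from $z$ until it hits $Y$ transversally at parameter value $0$, and extending slightly past $z$ in the other direction to parameter $3+\eta$; I can take the arc to be $\nabla F$-horizontal so that it survives in a level-set-preserving way. Then a tubular-neighbourhood theorem for $\gamma$ inside $\O$, respecting the corner structure near the one boundary point $\gamma(0)\in Y$ (cf.\ Remark~\ref{rem:corners}), gives a diffeomorphism of a neighbourhood $U$ of $\gamma([0,3+\eta])$ with $\{0\le x<3+\eta\}\times\{|y|<\eta\}\times\{\sum u_j^2<\eta^2\}$, with $U\cap Y=\{x=0\}$ and $x=$ the arc parameter, $(y,u)$ normal coordinates. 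Shrinking $\eta$, I can arrange that near $x=1$ this chart agrees with the Morse chart of Step~1.

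\textbf{Step 3: Interpolation and the main obstacle.} On the overlap $F$ equals the model $y^3-yx^2+y+\tfrac12+\sum\epsilon_ju_j^2$ up to higher-order terms; away from $x\approx 1$, $F\equiv\tfrac12$ on $\{y=0,u=0\}$ but is not literally the model. I would first straighten the $u$-directions by a relative Morse lemma with parameters $(x,y)$ to get $F=\phi(x,y)+\tfrac12+\sum\epsilon_ju_j^2$ for some function $\phi$ with $\phi\equiv 0$ along $\{y=0\}$, $\partial_y\phi\ne 0$ except at $(1,0)$ (the only critical point), and $\phi$ matching $y^3-yx^2+y$ near $x=1$; then I would construct a fibrewise (in $x$) diffeomorphism of the $y$-interval carrying $\phi(x,\cdot)$ to $y^3-yx^2+y$, which is possible precisely because both are, for each fixed $x$, functions of $y$ with the same number and type of critical points on $\{|y|<\eta\}$ — one (a maximum or minimum) for $x<1$, a degenerate one at $x=1$, none for $x>1$ — and the same boundary behaviour; a Moser-type or direct monotone-rearrangement argument does this smoothly in $x$. \emph{The main obstacle} is doing this last interpolation smoothly and simultaneously near the corner $\{x=0\}\cap Y$ while keeping $U\cap Y=\{x=0\}$ and $\nabla F$ tangent to $Y$ (so the new coordinates are compatible with the boundary collar): one must ensure the fibrewise diffeomorphisms extend smoothly to $x=0$ and do not destroy the tangency condition, which forces a careful choice of the collar in Step~2 and of the $x$-dependence in Step~3. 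Once this is arranged the proposition follows, with $\#\{j:\epsilon_j=-1\}=k-1$ recorded from the index bookkeeping in Step~1.
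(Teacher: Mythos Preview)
Your plan has the right shape, but Step~3 contains two real gaps. First, the ``relative Morse lemma with parameters $(x,y)$'' splitting off $\sum\epsilon_ju_j^2$ would require the $u$-Hessian of $F$ to be nondegenerate at every point of $\gamma$, and away from $z$ there is no reason for this to hold; you have only arranged $\partial_{u_j}F=0$ along $\gamma$. Second, your fibrewise matching of $\phi(x,\cdot)$ to $y^3-yx^2+y$ has the slice structure inverted---since $\partial_y(y^3-yx^2+y)=3y^2-x^2+1$ vanishes at $y^2=(x^2-1)/3$, the model is \emph{monotone} in $y$ for $x<1$ and has \emph{two} $y$-critical points for $x>1$---and a Moser or monotone-rearrangement argument does not pass smoothly through the cubic degeneration at $x=1$ without additional structure. (Relatedly, the Hessian of the model at $(1,0)$ is $\bigl(\begin{smallmatrix}0&-2\\-2&0\end{smallmatrix}\bigr)$, not $\mathrm{diag}(-2,2)$; this is why the paper first puts $F$ in the form $x'y+\vec u^{\,2}+\tfrac12$ and only then substitutes $x'=y^2+1-x^2$.) You also misplace the main obstacle: the boundary end $x=0$ is the easy part; what is genuinely delicate is manufacturing $x$-values all the way out to $3+\eta$ on the far side of $z$, which your ``extending slightly past $z$'' does not address.

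The paper avoids all of this with one device. It introduces $A(x,y),B(x,y)$ as the real and imaginary parts of the holomorphic cubic $(x/\sqrt3-iy)^3-(x/\sqrt3-iy)+2/(3\sqrt3)$, so that $B=y^3-yx^2+y$ and, by Cauchy--Riemann, $A$ is a first integral of $\nabla B$. The map $\Psi(x,y,\vec u)=(A,B+\vec u^{\,2},\vec u)$ is a diffeomorphism on each of $\{x<1\}$ and $\{x>1\}$ separately, and in the image $F=b+\tfrac12$ is \emph{linear}. Extending to $Y$ is then a flow-box construction: build a chart on a tubular neighbourhood of $\gamma$ inside $\Sigma_{1/2}$ and push off using the flow of $\nabla F$ (Lemma~\ref{lem:neighofcurve}). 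Extending past $z$ to $x>3$ is done not by prolonging $\gamma$ but by staying inside the small Morse chart on the $\{x>1\}$ side and stretching the first-integral coordinate via a diffeomorphism $h_4\colon[\delta,\theta']\to[\delta,M]$ with $M>3$, then pulling back through $\Psi^{-1}$. No parametric Morse lemma and no matching through a fold are needed.
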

 Assuming the proposition, we prove Theorem~\ref{thm:handsplit}.
Let us introduce some abbreviations.
\be\label{eq:notationonu}
\vu=(u_1,\dots,u_{n-1}),\ \ \ \ \ \
\vu^2=\sum_{j=1}^{n-1}\epsilon_ju_j^2,\ \ \ \ \ \
||\vu||^2=\sum_{j=1}^{n-1}u_j^2.
\ee
We fix a small real number $\e>0$ such that $\e\ll \eta$ and
two subsets $U_{1}\subset U_2$ of $U$ by
\begin{align*}
U_1&=\{|y|\leqslant\e,\ x\leqslant 3\}\cup\{(x-3)^2+y^2\leqslant\e^2\}\\
U_2&=\{|y|\leqslant 2\e,\ x\leqslant 3\}\cup\{(x-3)^2+y^2\leqslant4\e^2\}.
\end{align*}
The difference $\DU:=\overline{U_2\setminus U_1}$ splits into two subsets $S_1\cup S_2$
(see Figure~\ref{fig:subsets}), where
\[S_1=\DU\cap\{x\leqslant 3\}, \ \ \ \ \ S_2=\DU\cap\{x\geqslant 3\}.\]
\begin{figure}
\begin{pspicture}(-5,-3)(5,3)
\rput(0,0){\psscalebox{0.7}{%
\pspolygon[linestyle=none,hatchcolor=green,hatchsep=1.5pt, fillstyle=vlines,opacity=0.3](-5,-3)(2,-3)(2,3)(-5,3)
\psarc[linewidth=1pt,hatchcolor=orange,hatchsep=1.5pt, fillstyle=hlines,opacity=0.3](1.95,0){3}{270}{90}
\pspolygon[linestyle=none,fillcolor=lightblue,fillstyle=solid](-5,-1.5)(2,-1.5)(2,1.5)(-5,1.5)
\psarc[linewidth=1pt,fillcolor=lightblue,fillstyle=solid](1.95,0){1.5}{270}{90}
\psline[linewidth=1.3pt,arrowsize=7pt]{->}(-5,-4)(-5,4)\rput(-5.2,4){\psscalebox{1.2}{$y$}}%
\psline[linewidth=1.3pt,arrowsize=7pt]{->}(-5,0)(6,0)\rput(6,0.2){\psscalebox{1.2}{$x$}}%
\psline[linewidth=1pt](-5,-3)(2,-3)%
\psline[linewidth=1pt](-5,3)(2,3)%
\psline[linewidth=1pt](-5,1.5)(2,1.5)%
\psline[linewidth=1pt](-5,-1.5)(2,-1.5)%
\rput(1,0.7){\psscalebox{1.2}{$U_1$}}
\rput(0,2){\psscalebox{1.2}{$S_1$}}
\rput(4,0.3){\psscalebox{1.2}{$S_2$}}
\rput(2,2.25){\psscalebox{1.2}{$\DU=S_1\cup S_2$}}
\pscircle[fillstyle=solid,fillcolor=black](-3,0){0.1}\rput(-3,-0.35){\psscalebox{1.4}{$z$}}
\pscircle[fillstyle=solid,fillcolor=black](2,0){0.1}\rput(2,-0.35){\psscalebox{1.2}{$(3,0)$}}}}
\end{pspicture}
\caption{Sets $U_1,\DU,S_1$ and $S_2$ in two dimensions (coordinates $x$ and $y$).}\label{fig:subsets}
\end{figure}
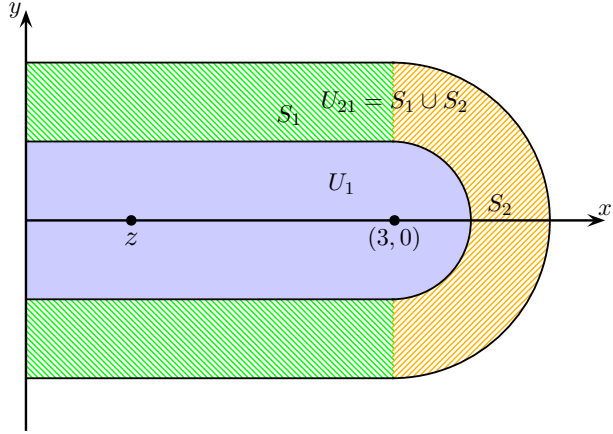
For a point $v=(x,y,u_1,\dots,u_{n-1})\in U$, let us define:
\[
\widetilde{s}(v)=\begin{cases}
1&\mbox{if} \  v \in U_1,\\
0&\mbox{if} \ v\in \overline{U \setminus  U_2},\\
2-\frac{|y|}{\e}&\mbox{if} \  v\in S_1,\\
2-\frac{\sqrt{(x-3)^2+y^2}}{\e}&\mbox{if} \  v\in S_2.
\end{cases}
\]
The above formula defines a continuous function $\widetilde{s}\colon U_2\to[0,1]$. It is smooth away of $\partial S_1\cup\partial S_2$.
We can perturb it to a $C^{\infty}$ function $s\colon U_2\to[0,1]$, with the following
properties:
\begin{itemize}
\item[(S1)] $s^{-1}(1)=U_1$, $s^{-1}(0)=\{|y|\geqslant 2\e-\e^2\}\cup \{(x-3)^2+y^2\geqslant 4\e^2-\e^3, x\ge 3\}$ (this is a thin region near
the boundary of $U_2$).
\item[(S2)] $\frac{\partial s}{\partial u_j}=0$ for any $j=1,\dots,n-1$;
\item[(S3)] $\frac{\partial s}{\partial x}=0$, and $\left|\frac{\partial s}{\partial y}\right|<\frac{2}{\e}$ at all points of $S_1$. Furthermore
$y\frac{\p s}{\p y}<0$ at all points of $S_1$;
\item[(S4)] if $v\in S_2$ and we choose radial coordinates $x=3+r\cos\theta$, $y=r\sin\theta$ (where $r\in[\e,2\e]$ and $\theta\in[-\frac{\pi}{2},\frac{\pi}{2}]$),
then $\left|\frac{\partial
s}{\partial r}\right|<\frac{2}{\e}$ and $\left|\frac{\partial s}{\partial \theta}\right|<\e$
\end{itemize}
Observe that $\wt{s}$ satisfies (S1)--(S4) at every point for which
it is smooth; the only issue is that on $S_1\cap S_2$, $\wt{s}$ fails to be $C^2$.

Now let us choose a smooth decreasing function $\phi\colon[0,\eta^2]\to[0,1]$, which is equal to $0$ on $[\frac34\eta^2,\eta^2]$ and $\phi(0)=1$.
We define now a new function $b\colon U_2\to[0,1]$ by the formula

\be\label{eq:defofb}
b(x,y,\vu)=s(x,y,\vu)\cdot\phi(||\vu||^2).
\ee

Let us finally define the function $G\colon\O\to[0,1]$ by
\be\label{eq:defG}
G(w)=\begin{cases} F(w)& \mbox{if} \ w\not\in U_2\\
y^3-yx^2+y-(\delta+1)b(x,y,\vu)y+\frac12+\vu^2& \mbox{if} \  w=(x,y,\vu)\in U_2,
\end{cases}
\ee
where $\delta>0$ is a very small number. Later we shall show that it is enough to take $\delta<\e^2/2$.
In the following lemmas we shall prove that $G$ satisfies the conditions of Theorem~\ref{thm:handsplit}.

\begin{lemma}
The function $G$ is smooth.
\end{lemma}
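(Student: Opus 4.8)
The plan is to verify smoothness of $G$ by checking it on each region of the three-part description of $\O$: away from $U_2$, where $G=F$ and there is nothing to prove; on the open set $U_2$, where $G$ is given by the explicit formula \eqref{eq:defG}; and on the overlap, where one must confirm that the two descriptions glue smoothly. The only real content is at the overlap, i.e. near $\partial U_2$.

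First I would observe that $b$ is a smooth function on $U_2$: indeed $s$ is smooth by construction (it was obtained by perturbing $\wt s$ precisely to remove the $C^2$-failure along $S_1\cap S_2$) and $\phi(\|\vu\|^2)$ is smooth since $\phi$ is smooth and $\|\vu\|^2$ is a polynomial; hence the product $b=s\cdot\phi(\|\vu\|^2)$ is smooth, and therefore the formula $y^3-yx^2+y-(\delta+1)b(x,y,\vu)y+\tfrac12+\vu^2$ defines a smooth function on all of $U_2$. So $G$ is smooth on the open set $\O\setminus U_2$ (where it equals the smooth function $F$) and on the open set $U_2$ (by the displayed formula).

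Next I would handle the overlap. By property (S1), $s\equiv 0$ on the set $\{|y|\geqslant 2\e-\e^2\}\cup\{(x-3)^2+y^2\geqslant 4\e^2-\e^3\}$, which is a neighbourhood (inside $U_2$) of $\overline{U_2\setminus U_1}$'s outer part; more precisely there is an open neighbourhood $W$ of $\partial U_2\cap U$ on which $s\equiv 0$, hence $b\equiv 0$, hence on $W$ the formula \eqref{eq:defG} reduces to $y^3-yx^2+y+\tfrac12+\vu^2$, which is exactly $F$ by Proposition~\ref{prop:localcoordinates}. Thus the two local descriptions of $G$ agree on the overlap $W$ together with all their derivatives, so $G$ is well-defined and smooth across $\partial U_2$. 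Since $\{\O\setminus U_2,\ U_2\}$ together with this matching cover $\O$, we conclude $G\in C^\infty(\O)$.

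The only point that needs care — and the step I would flag as the main obstacle — is making sure the perturbation of $\wt s$ to $s$ can be performed so that $s$ is genuinely smooth while still vanishing identically near the outer boundary of $U_2$ and equalling $1$ on $U_1$; this is what lets the two formulas for $G$ overlap on an honest open set rather than merely matching to finite order along a hypersurface. This is a standard mollification argument: $\wt s$ is already smooth away from the codimension-one set $\partial S_1\cup\partial S_2$ and locally Lipschitz, and it is locally constant ($=0$ or $=1$) near the boundary pieces where we need exact agreement, so the smoothing can be taken to be supported away from those pieces; properties (S1)--(S4) are preserved because they are either equalities on regions where $\wt s$ is already locally constant or open inequalities on the derivatives, stable under small $C^1$-perturbation. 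With that in hand the smoothness of $G$ is immediate.
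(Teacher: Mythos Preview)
Your argument is correct and is in fact considerably more detailed than the paper's own proof, which reads in its entirety ``It is a routine checking and we leave it for the reader.'' Your approach is exactly the expected one.

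One small omission: you treat the part of $\partial U_2$ lying inside $U$ (where property (S1) makes $s$, hence $b$, vanish on an open neighbourhood), but you do not address the part of $\overline{U_2}$ that meets $\partial U$ in the $\vu$-directions, i.e.\ where $\|\vu\|^2\to\eta^2$. There the coordinate chart is no longer available, so one cannot appeal to (S1); instead the vanishing of $b$ near those points comes from the factor $\phi(\|\vu\|^2)$ in \eqref{eq:defofb}, which is identically zero for $\|\vu\|^2\geqslant\tfrac34\eta^2$. With that remark added, $b\cdot y$ has support in a compact subset of $U$ and hence extends by zero to a smooth function on all of $\O$, giving $G=F-(\delta+1)by$ smooth. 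This completes the routine check the paper omits.
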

\begin{proof}
It is a routine checking and we leave it for the reader.
\end{proof}

In the next two lemmas we show that $G$ has no critical points in $\DU$.

\begin{lemma}\label{lem:nocritU11}
$G$ has no critical points on $\DU\cap\{y=0\}$.
\end{lemma}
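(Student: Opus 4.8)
The plan is to locate the set $\DU\cap\{y=0\}$ precisely and then observe that one particular partial derivative of $G$ is strictly negative there, so that $\nabla G$ cannot vanish.

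First I would unwind the definitions of $U_1$ and $U_2$ to see where the slice $\{y=0\}$ can meet $\DU=\overline{U_2\setminus U_1}$. For $x\leqslant 3$ one has $U_1\cap\{x\leqslant 3\}=\{|y|\leqslant\e,\ x\leqslant 3\}$ and $U_2\cap\{x\leqslant 3\}=\{|y|\leqslant 2\e,\ x\leqslant 3\}$, so the part of $\DU$ with $x\leqslant 3$ lies in $\{\e\leqslant|y|\leqslant 2\e\}$ and is therefore disjoint from $\{y=0\}$; in other words $\DU\cap\{y=0\}\subset S_2$. On $S_2$ we have $x\geqslant 3$ and $\e^2\leqslant(x-3)^2+y^2\leqslant 4\e^2$, and setting $y=0$ forces $3+\e\leqslant x\leqslant 3+2\e$. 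Hence
\[\DU\cap\{y=0\}=\{(x,0,\vu)\ :\ 3+\e\leqslant x\leqslant 3+2\e,\ ||\vu||^2<\eta^2\},\]
and in particular $x^2\geqslant(3+\e)^2>9$ at every point of this set.

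Next I would differentiate the expression for $G$ on $U_2$ given by \eqref{eq:defG}. Since $\vu^2$ does not involve $y$, the product rule gives
\[\frac{\p G}{\p y}=3y^2-x^2+1-(\delta+1)\Big(y\,\frac{\p b}{\p y}+b\Big),\]
where $b$ is as in \eqref{eq:defofb} and is of class $C^\infty$. Restricting to $\{y=0\}$ this collapses to $\dfrac{\p G}{\p y}=1-x^2-(\delta+1)\,b(x,0,\vu)$. Because $b=s\cdot\phi(||\vu||^2)\geqslant 0$, $\delta>0$, and $x^2>9$ on $\DU\cap\{y=0\}$ by the previous paragraph, we obtain $\dfrac{\p G}{\p y}<1-9=-8<0$ there. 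Consequently $\nabla G\neq 0$ at every point of $\DU\cap\{y=0\}$, which is the assertion of the lemma.

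The only step needing any care is the first one, namely the bookkeeping that $\{y=0\}$ misses $S_1$ altogether and intersects $S_2$ only in the band $3+\e\leqslant x\leqslant 3+2\e$; once that geometric fact is in hand the rest is the one-line derivative computation above, and no real obstacle remains. (Note also that the bound $\delta<\e^2/2$ mentioned after \eqref{eq:defG} is not needed for this particular lemma, since $x^2>9>1$ already dominates.)
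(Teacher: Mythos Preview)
Your proof is correct and follows essentially the same approach as the paper: both localize $\DU\cap\{y=0\}$ to the region $x>3$ and then show that $\dfrac{\partial G}{\partial y}\big|_{y=0}=1-x^2-(\delta+1)b<0$ there. Your version is slightly more explicit in pinning down the exact interval $3+\e\leqslant x\leqslant 3+2\e$, but the core argument is identical.
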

\begin{proof}
If $(x,0,u_1,\dots,u_{n-1})\in \DU$ then $x>3$. Consider the derivative over $y$ of $G$:
\be\label{eq:dgdy}
\frac{\p G}{\p y}=3y^2-x^2+1-(\delta+1)b-(\delta+1)\phi(u_1^2+\dots+u_{n-1}^2)\frac{\partial s}{\partial y}y.
\ee
Taking $y=0$ we get
$-x^2+1-(\delta+1)b.$
Since $b$ takes values in $[0,1]$ and $x>3$, one gets $\frac{\p G}{\p y}<0$.
\end{proof}

\begin{lemma}\label{lem:nocritU12}
If $\delta<3\e^2$, then $G$ has no critical points on $\DU\cap\{y\neq 0\}$.
\end{lemma}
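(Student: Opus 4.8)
The plan is to compute the gradient of $G$ on $\DU\cap\{y\neq 0\}$ and show at least one partial derivative is nonzero. First I would dispose of the $u$-directions: by property (S2) the function $s$ is independent of $u_j$, so $\frac{\p G}{\p u_j}=2\epsilon_j u_j-(\delta+1)s(x,y)\phi'(\|\vu\|^2)\cdot 2u_j\cdot y$. If some $u_j\neq 0$ I would argue this cannot vanish for all $j$ simultaneously together with the other equations — more simply, on $\DU$ one has $\|\vu\|^2<\frac34\eta^2$ is not guaranteed, so I should instead observe that if $b\equiv 0$ (i.e. $\phi(\|\vu\|^2)=0$) then $G=F$ on that slice and $F$ has no critical point in $U\setminus\{z\}$, done; hence I may assume $\phi(\|\vu\|^2)>0$, so $\|\vu\|^2<\eta^2$ and in particular $\vu$ is small, and then $\frac{\p G}{\p u_j}=0$ for all $j$ forces, when $y\neq 0$, a relation that (for $\e,\delta,\eta$ small) is only compatible with $\vu=0$. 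So it suffices to rule out critical points on $\DU\cap\{y\neq0,\ \vu=0\}$, where $b(x,y,0)=s(x,y)$.

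The heart of the matter is then the two-variable estimate on $\frac{\p G}{\p y}$ using \eqref{eq:dgdy} with $\vu=0$:
\[
\frac{\p G}{\p y}=3y^2-x^2+1-(\delta+1)s-(\delta+1)\frac{\p s}{\p y}\,y.
\]
I would split into the two regions $S_1$ and $S_2$. On $S_1$ we have $x\leqslant 3$ but $\e<|y|\leqslant 2\e$; by property (S3), $y\frac{\p s}{\p y}<0$, so $-(\delta+1)\frac{\p s}{\p y}y>0$, and also $|\frac{\p s}{\p y}|<2/\e$. The terms to control are $3y^2-x^2+1$; since $x$ can be as large as $3$ this is roughly $-8+3y^2$, very negative, while $-(\delta+1)s\in[-(\delta+1),0]$ and the last term is bounded by $(\delta+1)(2/\e)(2\e)=4(\delta+1)$; for $\e$ small and $\delta<3\e^2$ these corrections are small compared to the dominant negative $-x^2+1$, unless $x$ is small. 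When $x$ is small, however, the point lies near the $y$-axis strip where I should instead use $\frac{\p G}{\p x}$: from \eqref{eq:defG} with $\vu=0$, $\frac{\p G}{\p x}=-2xy-(\delta+1)\frac{\p s}{\p x}y=-2xy$ on $S_1$ since $\frac{\p s}{\p x}=0$ there (property (S3)); this is nonzero whenever $x\neq0$ and $y\neq0$. On $S_1$ with $x=0$ we are on $Y$, and there $\frac{\p G}{\p y}=3y^2+1-(\delta+1)s-(\delta+1)\frac{\p s}{\p y}y\geqslant 1-(\delta+1)>-\delta$... here I would be careful: actually I want this to be nonzero, and the sum $3y^2+1-(\delta+1)s$ with the positive term $-(\delta+1)\frac{\p s}{\p y}y$ added should be bounded below by a positive constant for $\e,\delta$ small, since $s\leqslant 1$ and $\delta$ is tiny. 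On $S_2$ I would switch to the radial coordinates $(r,\phi)$ from (S4), where $r\in[\e,2\e]$, and note $G$ restricted there is $F$ plus an $O(\e^2)$ perturbation in $C^1$ (using $|\frac{\p s}{\p r}|<2/\e$, $|\frac{\p s}{\p\phi}|<\e$, and $|y|\leqslant 2\e$), while $F=y^3-yx^2+y+\frac12$ has no critical point on the punctured half-disc $\{(x-3)^2+y^2\leqslant 4\e^2\}\setminus\{(3,0)\}$ with a gradient bounded below in norm by a constant times the distance to $(3,0)$, i.e.\ $\gtrsim r\geqslant \e$; so the perturbation, being $o(\e)$ relative to this, cannot create a zero.

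The main obstacle I anticipate is the bookkeeping of constants in the region $S_1$ near $x=0$ and near $x=3$: one must choose $\e$ small enough (and then $\delta<3\e^2$, as in the hypothesis) so that the perturbation terms $(\delta+1)s$ and $(\delta+1)\frac{\p s}{\p y}y$ never overcome the dominant term, and one has to use $\frac{\p G}{\p x}=-2xy$ (valid precisely because of (S3)'s condition $\frac{\p s}{\p x}=0$ on $S_1$) to handle the strip where $x$ is too small for the $\frac{\p G}{\p y}$ estimate to bite. The $u$-direction reduction is routine once one makes the observation that $\phi(\|\vu\|^2)=0$ reduces to $F$. I would organize the write-up as: (1) reduce to $\vu=0$; (2) handle $S_1$ via $\frac{\p G}{\p x}$ for small $x$ and $\frac{\p G}{\p y}$ for $x$ bounded away from $0$; (3) handle $S_2$ by the $C^1$-perturbation argument in radial coordinates; in each case exhibiting an explicit nonvanishing partial derivative.
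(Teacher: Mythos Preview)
Your outline is structurally the same as the paper's proof: split $\DU$ into $S_1$ and $S_2$; on $S_1$ use $\partial G/\partial x=-2xy$ (valid by (S3)) to force $x=0$, then show $\partial G/\partial y>0$; on $S_2$ show $\partial G/\partial y<0$ directly. However, several steps in your execution are either unnecessary or quantitatively wrong.

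\textbf{The reduction to $\vu=0$ is a detour.} The paper never does this: it simply carries the factor $\phi(\|\vu\|^2)\in[0,1]$ through every bound. Your reduction requires $(\delta+1)\,y\,s\,\phi'(\|\vu\|^2)\neq\pm1$, and since $\phi$ drops from $1$ to $0$ over an interval of length $\tfrac34\eta^2$ one only knows $|\phi'|=O(\eta^{-2})$, forcing $\e\ll\eta^2$ rather than the paper's $\e\ll\eta$. Just keep $\phi$ as a harmless factor in $[0,1]$ and the $\vu$-variables never need separate treatment.

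\textbf{At $x=0$ in $S_1$ you miss exactly where the hypothesis $\delta<3\e^2$ enters.} You compute $\partial G/\partial y\geq 1-(\delta+1)=-\delta$, realize this is not yet positive, and then hand-wave. The missing line is: keep the $3y^2$ term, use that $-(\delta+1)\phi\,\tfrac{\partial s}{\partial y}\,y\geq 0$ by (S3), and that $b\leq 1$, to get $\partial G/\partial y\geq 3y^2-\delta$; since $|y|\geq\e$ on $S_1$, this is $\geq 3\e^2-\delta>0$. That inequality \emph{is} the lemma's hypothesis.

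\textbf{Your $S_2$ estimates are wrong in both directions.} The point $(3,0)$ is \emph{not} a critical point of $F$ (the only one is $z=(1,0,\vec 0)$), so there is no reason for $|\nabla F|$ to scale like $r$ there; in fact $\partial F/\partial y=3y^2-x^2+1\leq -8+O(\e^2)$ on $S_2$. And the perturbation is not $O(\e^2)$ in $C^1$: from (S4) one gets
\[
\Bigl|y\,\frac{\partial s}{\partial y}\Bigr|
=\Bigl|\tfrac{y^2}{r}\,\partial_r s+\tfrac{(x-3)y}{r^2}\,\partial_\phi s\Bigr|
< r\cdot\tfrac{2}{\e}+\e<5,
\]
so $\bigl|(\delta+1)\phi\,y\,\partial_y s\bigr|$ is $O(1)$, not $O(\e^2)$. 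The paper therefore argues directly: $|3y^2|<1$, $|1-(\delta+1)b|\leq 1$, the perturbation term is bounded by $5(\delta+1)$, and $-x^2\leq -9$, giving $\partial G/\partial y<0$ on $S_2$. Your perturbation heuristic happens to land on the right conclusion, but only because both of your errors are off by the same order of magnitude; as written it does not constitute a proof.
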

\begin{proof}
Assume that $\frac{\partial G}{\partial x}=0$ for some $(x,y,\vu)$. Then
\[y\left(-2x-(\delta+1)\frac{\p s}{\p x}\,\phi\right)=0.\]
As $y\neq 0$, the expression in parentheses should be zero.
If $0<x\leqslant 3$, then by (S3) we have $\frac{\p s}{\p x}=0$. Hence the above equality can not hold. Assume that $x=0$.
In the derivative over $y$ (see equation \eqref{eq:dgdy}), the expression
$-(\delta+1)\phi\frac{\p s}{\p y}y$
is non-negative by (S3).  Furthermore $b<1$, hence
\[\frac{\p G}{\p y}\geqslant 3y^2-\delta.\]
Now if $\delta<3\e^2$ then there are no critical points with $x=0$.
It remains to deal with the case $(x,y,u_1,\dots,u_{n-1})\in S_2$.
Consider the derivative $\frac{\p G}{\p y}$. By (S4) and the chain rule we have
\[
\left|y\frac{\partial s}{\partial y}\right|=\left|y\frac{\partial r}{\partial y}\frac{\partial s}{\partial r}+y\frac{\partial\theta}{\partial y}\frac{\partial s}{%
\partial\theta}\right|\le
\left|\frac{y^2}{r}\cdot\frac{\partial s}{\partial r}\right|+
\left|\frac{(x-3)y}{r^2}\cdot\frac{\partial s}{\partial\theta}\right|<
r\frac{2}{\e}+\e<5.
\]
Furthermore $|1-(\delta+1)b|\leqslant 1$, and $|3y^2|<1$ because
$\e$ is small. As $x\geqslant 3$, we have $\frac{\p G}{\p y}<0$ on
$S_2$.
\end{proof}

On $U_1$ the function $G$ is given by
\be\label{eq:gonu1}
G(x,y,\vu)=y^3-yx^2-\delta y+\vu^2+\frac12.
\ee

As in Section~\ref{sec:motivation} we study the critical points in $U_1$.
\begin{lemma}
$G$ has two critical points on $U_1$ at
\begin{align*}
z^s&:=(0,\sqrt{\delta/3},0,\dots,0)\\
z^u&:=(0,-\sqrt{\delta/3},0,\dots,0).
\end{align*}
Both critical points are boundary, both of Morse index $k$,
$z^s$ is stable, while $z^u$ is unstable.
\end{lemma}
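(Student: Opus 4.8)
The plan is to read everything off the explicit formula \eqref{eq:gonu1} for $G$ on $U_1$. Since $\vu^2=\sum_{j=1}^{n-1}\epsilon_ju_j^2$, in the coordinates $(x,y,\vu)$ one has on $U_1$
\[
\frac{\partial G}{\partial x}=-2xy,\qquad \frac{\partial G}{\partial y}=3y^2-x^2-\delta,\qquad \frac{\partial G}{\partial u_j}=2\epsilon_ju_j .
\]
Since $\partial G/\partial x$ vanishes identically on $\{x=0\}=Y\cap U_1$, the vector field $\nabla G$ is tangent to $Y$ there, so a critical point in the interior $\{x>0\}$ must annihilate all of $dG$, while a boundary critical point (a point of $Y$) need only annihilate $\partial G/\partial y$ and every $\partial G/\partial u_j$. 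In the interior, $\partial G/\partial x=0$ forces $y=0$, and then $\partial G/\partial y=-x^2-\delta<0$, a contradiction; so there are no interior critical points on $U_1$. On $\{x=0\}$ the equations $2\epsilon_ju_j=0$ give $\vu=0$ and $3y^2-\delta=0$ gives $y=\pm\sqrt{\delta/3}$; since $\delta$ is small ($\delta<\e^2/2$, hence $\sqrt{\delta/3}<\e$) both resulting points lie in the slab $\{|y|\le\e,\ x\le 3\}\subset U_1$ and on $Y$. Thus $z^s=(0,\sqrt{\delta/3},0,\dots,0)$ and $z^u=(0,-\sqrt{\delta/3},0,\dots,0)$ are the only critical points of $G$ in $U_1$.

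To finish I would compute the Hessian of $G$. From \eqref{eq:gonu1} one gets $\partial^2 G/\partial x^2=-2y$, $\partial^2 G/\partial x\partial y=-2x$, $\partial^2 G/\partial y^2=6y$, $\partial^2 G/\partial u_j\partial u_k=2\epsilon_j\delta_{jk}$, and all remaining mixed second derivatives vanish. Hence at $z^s$ the Hessian is the diagonal matrix $\mathrm{diag}(-2\sqrt{\delta/3},\,6\sqrt{\delta/3},\,2\epsilon_1,\dots,2\epsilon_{n-1})$ and at $z^u$ its first two entries change sign. All entries are nonzero, so both are nondegenerate critical points, and being on $Y$ they are (Morse) boundary critical points. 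Counting negative eigenvalues and using $\#\{j:\epsilon_j=-1\}=k-1$ (Proposition~\ref{prop:localcoordinates}) gives $\ind_{z^s}G=\ind_{z^u}G=1+(k-1)=k$. For the boundary type I would apply the Boundary Morse Lemma (Lemma~\ref{lem:boundarymorselemma}): it brings $G$ to the form $\pm x^2\pm\dots$ near each point, and the sign of the $x^2$-coefficient---the sign of $\partial^2 G/\partial x^2$, namely $-\sqrt{\delta/3}<0$ at $z^s$ and $+\sqrt{\delta/3}>0$ at $z^u$---decides whether the normal direction $\partial_x$ is a stable or an unstable direction of $\nabla G$. At $z^s$ it is stable, so the unstable manifold lies in $T_{z^s}Y$ and $z^s$ is boundary stable (Definition~\ref{def:bstab}); at $z^u$ it is unstable, so $z^u$ is boundary unstable. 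As a check, the restriction of $G$ to $Y$ has a critical point of index $k$ at $z^u$ and of index $k-1$ at $z^s$, consistent with $\ind_F=\ind_f$ (unstable) and $\ind_F=\ind_f+1$ (stable) recorded after Definition~\ref{def:bstab}.

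The whole argument is a routine computation; the only point needing a moment's care is verifying that $z^s$ and $z^u$ actually land inside $U_1$, not merely inside $U$---this is exactly where the smallness of $\delta$ enters---together with keeping straight that on the boundary face $\{x=0\}$ one imposes vanishing only of the tangential derivatives when locating boundary critical points.
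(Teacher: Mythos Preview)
Your proof is correct and follows the same approach as the paper: compute the partial derivatives of $G$ from the explicit formula \eqref{eq:gonu1}, locate the zeros, and read off indices and stability type from the Hessian (the paper phrases the last step as ``$-yx^2$ is negative at $z^s$ so the normal direction is attracting''; you phrase it via the sign of $\partial^2G/\partial x^2$, which is the same thing). The paper's argument is just a terse summary of exactly the computation you have written out in full.
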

\begin{proof}
The derivative of $G$  vanishes only at $z^s$ and $z^u$. Indices are immediately computed from \eqref{eq:gonu1}.
The point $z^s$ is boundary stable, because for $z^s$ the expression $-yx^2$ is negative and the boundary
is given by $x=0$, hence it is attracting in the normal direction. Similarly we prove for $z^u$. See also Figure~\ref{fig:stream} for
the two-dimensional picture.
\end{proof}
\begin{remark}\label{rem:carefulreading2}
If we define $G_t=y^3-yx^2+y-t(\delta+1)b\cdot y+\frac12+\vu^2$ for $t\in[0,1]$, then the same argument as in Lemmas~\ref{lem:nocritU11} and \ref{lem:nocritU12}
shows that $G_t$ has no critical points in $U_2\setminus U_1$. As for critical points in $U_1$, observe that on $U_1$ we have
\[G_t=y^3-yx^2+(1-t(1+\delta))y+\frac12+\vu^2.\]
Let $t_0=\frac{1}{1+\delta}$. If $t>t_0$, the function $G_t$
has two critical points on the boundary $Y$, while for $t<t_0$, $G_t$ has a single critical point in the interior $U_1\setminus Y$. If $t=t_0$, $G_t$ has
a single degenerate critical point on $Y$. In this way we construct an `isotopy'
between $F$ and $G$.
\end{remark}

Let us now choose a Riemannian metric $g'$ on
\[U_1':=U_1\cap \{||\vu||<\e\}\]
by the condition that $(x,y,u_1,\dots,u_{n-1})$ be orthonormal coordinates (compare Remark~\ref{rem:metric2} below).
Clearly, any metric $g$ on $\O$ can be changed near $U_1$ so as to agree with $g'$ on $U_1'$. In this metric the gradient of $G$ is
\begin{equation}\label{eq:gradG}
(-2xy,3y^2-x^2-\delta,2\epsilon_1u_1,\dots,2\epsilon_{n-1}u_{n-1}).
\end{equation}
We want to show that there is a single trajectory starting from $z^s$ and terminating at $z^u$. Clearly, there is one
trajectory from $z^s$ to $z^u$ which stays in $U_1'$ (having $y=0$ and $\vu=0$). In order to eliminate the others,
we need the following lemma.
\begin{lemma}\label{lem:nablaGtraj}
Let $\gamma$ be a trajectory of $\nabla G$ starting from $z^s$. Let $w$ be the point, where $\gamma$ hits $\p U_1'$ for the first time.
If $\delta$ is sufficiently small, then $G(w)>G(z^u)$.
\end{lemma}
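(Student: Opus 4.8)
The plan is to push the whole problem onto the boundary and then integrate an essentially decoupled flow. First I would record that, since $z^s$ is boundary stable, its unstable manifold $W^u_{z^s}$ lies in $Y$. Indeed, in the coordinates of Proposition~\ref{prop:localcoordinates} (valid on $U_1'$) the gradient of $G$ for the metric $g'$ is $(-2xy,\,3y^2-x^2-\delta,\,2\epsilon_1u_1,\dots,2\epsilon_{n-1}u_{n-1})$, so its linearization at $z^s=(0,\sqrt{\delta/3},0,\dots,0)$ is the diagonal matrix with diagonal entries $-2\sqrt{\delta/3}$, $6\sqrt{\delta/3}$, $2\epsilon_1,\dots,2\epsilon_{n-1}$; the $x$-direction is an eigendirection with negative eigenvalue, hence it is \emph{not} in the unstable subspace, which therefore lies in $T_{z^s}Y=\{x=0\}$. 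As $\nabla G$ is tangent to $Y$, the whole of $W^u_{z^s}$ stays in $Y$. Consequently $\gamma$ (which has $\alpha$-limit $z^s$) stays in $Y$, and up to the first time it meets $\p U_1'$ it stays in $Y\cap U_1'=\{x=0,\ |y|\leqslant\e,\ ||\vu||\leqslant\e\}$, where by \eqref{eq:gonu1} we have $G=y^3-\delta y+\vu^2+\tfrac12$ and the flow of $\nabla_{g'}G$ reduces to the decoupled system $\dot y=3y^2-\delta$, $\dot u_j=2\epsilon_j u_j$.

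Next I would classify the trajectories issuing from $z^s$. Writing $u_j(t)=c_je^{2\epsilon_j t}$, the condition $\gamma(t)\to z^s$ as $t\to-\infty$ forces $c_j=0$ whenever $\epsilon_j=-1$; hence on $\gamma$ only the coordinates with $\epsilon_j=+1$ are active, so $\vu^2=\sum_{\epsilon_j=+1}u_j^2=||\vu||^2\geqslant 0$ all along $\gamma$ --- this is the observation that makes the estimate clean, since the quadratic term in $G$ can now only help. The $y$-component solves $\dot y=3y^2-\delta$ with $y(-\infty)=\sqrt{\delta/3}$, so it either stays equal to $\sqrt{\delta/3}$, increases monotonically, or decreases monotonically towards $-\sqrt{\delta/3}$; in every case $y(t)\in(-\sqrt{\delta/3},\e]$ while $\gamma$ remains in $U_1'$, so (since $\sqrt{\delta/3}<\e$) $\gamma$ can never leave $U_1'$ through the face $\{y=-\e\}$. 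The unique trajectory with $y$ decreasing and all $c_j=0$ is precisely the segment limiting to $z^u$; it stays in $U_1'$ and never reaches $\p U_1'$, so it is excluded. Every other trajectory issuing from $z^s$ meets $\p U_1'$ at a point $w$ lying, inside $Y$, on the face $\{y=\e\}$ or on the face $\{||\vu||=\e\}$.

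Then I would bound $G(w)$ in the two cases. A direct computation gives $G(z^u)=\tfrac12+\tfrac{2}{3\sqrt3}\,\delta^{3/2}$. If $w\in\{y=\e\}$ then $G(w)=\e^3-\delta\e+||\vu(w)||^2+\tfrac12\geqslant\tfrac12+\e^3-\delta\e$. If $w\in\{||\vu||=\e\}$ then $\vu^2=\e^2$ at $w$, and since $y(w)\in(-\sqrt{\delta/3},\e]$ while $t\mapsto t^3-\delta t$ attains its minimum over $[-\sqrt{\delta/3},\infty)$ at $t=\sqrt{\delta/3}$ with value $-\tfrac{2}{3\sqrt3}\delta^{3/2}$, we get $G(w)\geqslant\tfrac12+\e^2-\tfrac{2}{3\sqrt3}\delta^{3/2}$. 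Both lower bounds exceed $G(z^u)$ once $\delta$ is small enough: for instance, for $\delta\leqslant\e^2/2$ one has $\e^3-\delta\e\geqslant\tfrac12\e^3>\tfrac{2}{3\sqrt3}\delta^{3/2}$ and $\e^2-\tfrac{2}{3\sqrt3}\delta^{3/2}>\tfrac{2}{3\sqrt3}\delta^{3/2}$, both using $\e\ll1$. This yields $G(w)>G(z^u)$.

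The substance of the argument is the reduction to the boundary; after that, everything is the elementary integration of a scalar equation coupled to linear equations plus a one-variable inequality. The step I expect to require the most care is the bookkeeping: isolating and discarding the single trajectory that limits to $z^u$ (the one we want to keep), verifying that no trajectory escapes through $\{y=-\e\}$, and correctly tracking through which face of $\p U_1'$ each remaining trajectory exits. The identity $\vu^2=||\vu||^2$ along $\gamma$ is what collapses the face-$\{||\vu||=\e\}$ case to the scalar estimate.
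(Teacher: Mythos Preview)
Your proof is correct and follows essentially the same route as the paper: reduce to $x=0$ and to the $u_j$-coordinates with $\epsilon_j=+1$ (so that $\vu^2=||\vu||^2$ along $\gamma$), then do a case analysis on which face of $\partial U_1'$ the trajectory hits. Your ODE analysis of $\dot y=3y^2-\delta$ to exclude the exit through $\{y=-\e\}$ is slightly more direct than the paper's argument (which instead compares $G(w)$ with $G(z^s)$), but the substance is the same.
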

\begin{proof}
Assume that $\gamma(t)$ is such trajectory.  Assume that among numbers $\epsilon_i$,
we have $\epsilon_i=-1$ for $i\leqslant k-1$ and
 $\epsilon_i=1$ otherwise. As $z^s$ is a critical point of the vector field $\nabla G$ with a non-degenerate linear part,
we conclude that the limit
\[\lim_{t\to-\infty}\frac{\gamma'(t)}{||\gamma'(t)||}=:v=(x_0,y_0,u_{01},\dots,u_{0,n-1})\]
exists. The vector $v$ is the tangent vector to the curve $\gamma$ at the point $z^s$,
 and it lies in the unstable space.
Hence  $x_0=0$ as $(1,0,\dots,0)$ is a stable direction; similarly
  $u_{01}=\dots=u_{0,k-1}=0$. Therefore, until $\gamma$
hits the boundary of $U_1'$ for the first time, we have
\[
x=u_1=\dots=u_{k-1}=0.
\]
Set also $g(y)=y^3-\delta y$. One has the following  cases,
depending the position of $w$, where $\gamma$ hits $\p U_1'$ for
the first time: (a)  $y=-\e$, (b) $y=\e$, or  (c) $||\vu||^2=\eta^2$.
The case (a) cannot happen since $G$ is increasing along the
trajectory, hence $G(w)>G(z^s)$, a fact which contradicts
$g(-\e)<g(\sqrt{\delta/3})$ valid for $2\delta<\e^2$. In case (b),
$G(w)>G(z^u)$ follows from $g(\e)>g(-\sqrt{\delta/3})$.
Finally, assume the case (c).
Then, as $u_{01}=\dots=u_{0,k-1}=0$, we obtain $\vu^2=||\vu||^2=\eta^2$. Then $G(w)-G(z^s)\geqslant \eta^2$, because
the contribution to $G$ from  $y^3-\delta y$ increases along $\gamma$.
Hence $G(w)>G(z^u)$ follows again since $\e\ll\eta$.
\end{proof}
Given the above lemma it is clear that if a trajectory $\gamma$ leaves $U_1'$, then $G$ becomes bigger than $G(z^u)$. As $G$ increases along any trajectory, it
is impossible that such trajectory limits in $z^u$. The proof of Theorem~\ref{thm:handsplit},
up to Proposition~\ref{prop:localcoordinates}, is accomplished.

\begin{remark}\label{rem:metric2}
The metric $g'$ defined below Remark~\ref{rem:carefulreading2} can be chosen so that $(x,y,\vec{u})$ forms an \emph{orthogonal}, but not necessarily
\emph{orthonormal} coordinate system. Each component of the vector field \eqref{eq:gradG} is then multiplied by a positive constant, the statement of
Lemma~\ref{lem:nablaGtraj} still holds with essentially the same proof. However, $g'$ cannot be just any metric; we can choose a metric $g'$ in a way that
there is an arbitrary number of trajectories from $z^s$ to $z^u$ (topologically changing the metric can produce a pair of mutually cancelling
intersection points between the unstable manifold of $z^s$ and the stable manifold of $z^u$).
\end{remark}

\subsection{An auxiliary construction.}\label{sec:Z1Z2}
The following construction  is  a crucial ingredient in the proof of Proposition~\ref{prop:localcoordinates}; see the next section. Set
\[Z=\{(x,y)\in\R^2\colon x\geqslant 0\},\]
and define the two functions
\begin{equation}\label{eq:AB}
A(x,y)=\frac{x^3}{3\sqrt{3}}-\sqrt{3}xy^2-\frac{x}{\sqrt{3}}+\frac{2}{3\sqrt{3}},
\ \ \ \  B(x,y)=y^3-yx^2+y.
\end{equation}
Observe that
\[A+iB=\left(\frac{x}{\sqrt{3}}-iy\right)^3-\left(\frac{x}{\sqrt{3}}-iy\right)+\frac{2}{3\sqrt{3}}.\]
Up to a  linear transformation, the map $(x,y)\mapsto A+iB$ is a holomorphic map. Thus it shares several geometric properties of a holomorphic
map. For example, it is an open map, and the singular points are precisely the points where the gradient of $B$ vanishes.

Let us choose $\delta>0$ smaller than $\frac{2}{3\sqrt{3}}$. Consider two sets
\begin{equation}\label{eq:Z1Z2}
Z_1=\{(x,y)\in Z,\ x<1,\ A(x,y)\ge\delta\},\ \ \ \ Z_2=\{(x,y)\in
Z,\ x>1,\ A(x,y)\ge\delta\}.
\end{equation}

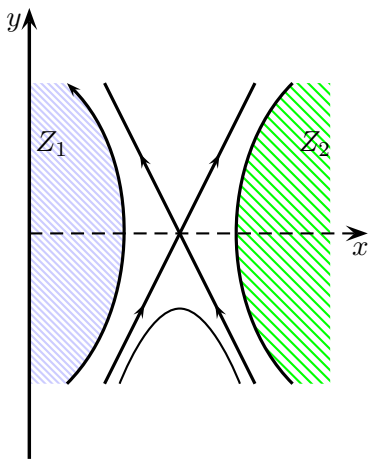
\begin{figure}
\begin{pspicture}(-10,-3)(5,3)
\pscustom[linestyle=none,hatchcolor=lightblue,hatchsep=1pt,fillstyle=vlines]{%
\psline(-5,-2)(-5,2)%
\psbezier[liftpen=1](-4.5,2)(-3.5,1)(-3.5,-1)(-4.5,-2)%
}\rput(-4.7,1.2){$Z_1$}
\pscustom[linestyle=none,hatchcolor=green,hatchsep=2pt,fillstyle=vlines]{%
\psline(-1,-2)(-1,2)%
\psbezier[liftpen=1](-1.5,2)(-2.5,1)(-2.5,-1)(-1.5,-2)%
}\rput(-1.2,1.2){$Z_2$}
\psbezier[linewidth=1.2pt]{->}(-4.5,-2)(-3.5,-1)(-3.5,1)(-4.5,2)
\psbezier[linewidth=1.2pt](-1.5,2)(-2.5,1)(-2.5,-1)(-1.5,-2)%
\psline[linewidth=1.2pt,ArrowInside=->](-4,-2)(-3,0)%
\psline[linewidth=1.2pt,ArrowInside=->](-2,-2)(-3,0)%
\psline[linewidth=1.2pt,ArrowInside=->](-3,0)(-4,2)%
\psline[linewidth=1.2pt,ArrowInside=->](-3,0)(-2,2)%
\pscurve[ArrowInside=->,ArrowInsidePos=0.3](-3.8,-2)(-3,-1)(-2.2,-2)%
\psline[linewidth=1.3pt, arrowsize=6pt]{->}(-5,-3)(-5,3)\rput(-5.2,2.8){$y$}
\psline[linewidth=0.8pt, arrowsize=6pt, linestyle=dashed]{->}(-5,0)(-0.5,0)\rput(-0.6,-0.2){$x$}
\end{pspicture}
\caption{Sets $Z_1$ and $Z_2$ from Section~\ref{sec:Z1Z2}. There is also drawn the singular level set $A^{-1}(0)$.}\label{fig:Z1Z2}
\end{figure}

We have the following result.
\begin{lemma}\label{lem:psiisiso}
The map $\psi(x,y)=(A(x,y),B(x,y))$ maps $Z_1$ and $Z_2$ diffeomorphically onto $E_1$ and $E_2$ respectively, where
\[E_1=\Big\{(a,b)\in\R^2\colon a\in\left[\delta,\frac{2}{3\sqrt{3}}\right]\Big\},
\ E_2=\Big\{(a,b)\in\R^2\colon a\ge\delta\Big\}.\]
\end{lemma}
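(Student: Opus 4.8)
The plan is to reduce everything to the holomorphic picture pointed out just before the statement. The substitution $\zeta=\tfrac{x}{\sqrt3}-iy$ is a linear isomorphism of $Z=\{x\geqslant 0\}$ onto the closed right half-plane $H=\{\mathrm{Re}\,\zeta\geqslant 0\}$, and after identifying $a+ib$ with $(a,b)$ the map $\psi$ becomes $\zeta\mapsto P(\zeta):=\zeta^{3}-\zeta+\tfrac{2}{3\sqrt3}$. The key algebraic fact I would record first is
\[ P(\zeta)=\Bigl(\zeta-\tfrac1{\sqrt3}\Bigr)^{2}\Bigl(\zeta+\tfrac2{\sqrt3}\Bigr), \]
so $P'(\zeta)=3\zeta^{2}-1$ vanishes in $H$ only at $\zeta_{0}=\tfrac1{\sqrt3}$, corresponding to the point $(1,0)\in Z$. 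Since $A,B$ are polynomials, $\psi$ is globally defined and smooth on $\R^{2}$, and since $P$ is holomorphic, $\psi$ is a local diffeomorphism on $Z\setminus\{(1,0)\}$.

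Next I would collect the elementary identities
\[ A(x,y)-\tfrac{2}{3\sqrt3}=\tfrac{x}{3\sqrt3}\bigl(x^{2}-9y^{2}-3\bigr),\qquad B(x,y)=y\bigl(y^{2}-x^{2}+1\bigr),\qquad A(1,y)=-\sqrt3\,y^{2}, \]
together with $A(0,y)\equiv\tfrac2{3\sqrt3}$. From $A(1,y)\leqslant 0<\delta$ and continuity, $\{A\geqslant\delta\}$ stays away from the line $\{x=1\}$; hence $Z_{1}$ and $Z_{2}$ are \emph{closed} in $\R^{2}$, their closures avoid $(1,0)$ (so $\psi$ is a local diffeomorphism near each of them), and $\{A\geqslant\delta\}\cap Z=Z_{1}\sqcup Z_{2}$. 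The first identity gives $A\leqslant\tfrac2{3\sqrt3}$ on $\{0\leqslant x<1\}$, so $\psi(Z_{1})\subseteq E_{1}$; and $\psi(Z_{2})\subseteq E_{2}$ trivially, the first coordinate of $\psi$ being $A$. That same observation shows $\psi$ carries the boundary arc $\{x=0\}$ of $Z_{1}$ into $\{a=\tfrac2{3\sqrt3}\}$ and the boundary arc $\{A=\delta\}$ of $Z_{1}$ (resp.\ of $Z_{2}$) into $\{a=\delta\}$, so once $\psi|_{Z_{i}}$ is known to be a bijection onto $E_{i}$ it is automatically a diffeomorphism of manifolds with boundary, and likewise the trajectories of $\nabla B$, being the connected level sets of the first integral $A$ (Cauchy--Riemann), map onto the vertical lines of $E_{i}$.

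I would then prove $\psi|_{Z_{i}}\colon Z_{i}\to E_{i}$ is proper. On $\{|x|\leqslant 1\}$ one has $|B|\geqslant|y|^{3}$, so for compact $K\subset E_{1}$ the set $\psi^{-1}(K)\cap Z_{1}$ is closed (as $Z_{1}$ is closed) and bounded, hence compact. For $Z_{2}$: if $x_{n}\to\infty$ along a sequence on which $A$ stays bounded, the first identity forces $9y_{n}^{2}=x_{n}^{2}-3+o(1)$, whence $|B(x_{n},y_{n})|=|y_{n}|\,|y_{n}^{2}-x_{n}^{2}+1|\to\infty$; so $x$, and then (again by cubic growth of $|B|$ in $y$ for bounded $x$) also $y$, stays bounded on $\psi^{-1}(K)\cap Z_{2}$, giving compactness.

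Finally, a proper local diffeomorphism onto a connected manifold (with $Z_{i}\neq\emptyset$) has a constant finite fibre cardinality $N_{i}$ and image all of $E_{i}$; I would pin down $N_{i}=1$ by one computation. Over $(\tfrac2{3\sqrt3},0)\in E_{1}$, $A=\tfrac2{3\sqrt3}$ with $0\leqslant x<1$ forces $x=0$ and then $B=y(y^{2}+1)=0$ forces $y=0$. Over $(a_{0},0)\in E_{2}$ with $a_{0}\geqslant\delta$, $B=y(y^{2}-x^{2}+1)=0$ with $x>1$ gives either $y=0$, leaving the single root of $(x-1)^{2}(x+2)=3\sqrt3\,a_{0}$ in $(1,\infty)$, or $y^{2}=x^{2}-1$, which forces $A=\tfrac1{3\sqrt3}(-8x^{3}+6x+2)<0$, impossible. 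Hence $\psi|_{Z_{i}}$ is a bijection onto $E_{i}$, so a diffeomorphism, and the ``moreover'' follows as noted. The local-diffeomorphism property and the boundary bookkeeping come for free from the holomorphic picture; the real work, and the main obstacle, is the global analysis of the degree-$3$ map on the half-plane — the properness estimates for $Z_{2}$ and the preimage count that fixes $N_{i}=1$.
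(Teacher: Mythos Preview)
Your argument is correct and follows precisely the route the paper sketches: the paper's proof is two sentences, appealing to ``covering theory of one-variable complex functions'' and noting that $A$ is a first integral of $\nabla B$; you have carried out exactly that covering-theory argument in full (holomorphic factorisation of $P$, properness on each piece, degree-one check via an explicit fibre). The only cosmetic point is that your degree computation for $Z_1$ is done over the boundary point $(\tfrac{2}{3\sqrt3},0)\in\partial E_1$; since $E_1$ is simply connected this is harmless, but it would be cleaner either to invoke simple connectivity directly or to count the fibre over an interior point.
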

\begin{proof}
One readily checks that $\psi\colon Z_1\to V_1$ and $\psi\colon Z_2\to V_2$ are bijections. As the derivative $D\psi$ is non-degenerate on $Z_1\cup Z_2$,
$\psi$ is a diffeomorphism between the two pairs of sets.
\end{proof}

\subsection{Proof of Proposition~\ref{prop:localcoordinates}}\label{ss:proofhyp}
First, as $z$ is a critical point of index $k\in\{1,\dots,n\}$, by the Morse lemma~\ref{lem:boundarymorselemma} we can find a neighbourhood $\wt{V}$ of $z$ and a chart
$h_1\colon\wt{V}\to\R^{n+1}$, with coordinates $(x',y,\vu)$
such that
\[F\circ h_1^{-1}(x',y,\vu)=x'y+\vu^2+\frac12.\]
\begin{remark}
The term $x'y$ (corresponding to a hyperbolic quadratic form) is the moment when the assumption that $k\neq 0,n+1$ is used.
\end{remark}
Let us define a map $h_2(x,y,\vu)=(x',y,\vu)$, where
$x'=y^2+1-x^2$.
By the inverse function theorem, $h_2$ is a local diffeomorphism near $(1,0,\dots,0)$. Shrinking $\wt{V}$ if needed, and considering
$h_3=h_2^{-1}\circ h_1$, we obtain  $h_3(z)=(1,0,\dots,0)$ and
\begin{equation}\label{eq:Fcirch3}
F\circ h_3^{-1}(x,y,\vu)=y^3-yx^2+y+\vu^2+\frac12=B(x,y)+\vu^2+\frac12.\end{equation}
Let us pick now $\xi>0$ such that the cylinder
\[V=\{|x-1|<\xi,\ |y|<\xi,\ ||\vu||<\xi\}\subset\R^{n+1}\]
lies entirely in $h_3(\wt{V})$. By shrinking $\wt{V}$ we may in fact assume that $h_3(\wt{V})=V$.
If $0<\delta\ll \frac{2}{3\sqrt{3}}$ is sufficiently small then $A(x,0)< \delta$ implies $|x-1|<\xi$. Choose such a
$\delta$, and set
\begin{figure}
\begin{pspicture}(-6,-6)(6,6)
\pscustom[fillstyle=solid,linestyle=solid,linecolor=orange,opacity=0.2,fillcolor=orange]{%
\pscurve(-3.2,4)(-3,4.8)(-2,4.9)(-1,5)(0,4.9)(1,4.8)(1.2,4)%
\pscurve[liftpen=1](1.2,4)(1,3.2)(0,3.1)(-1,3)(-2,3.1)(-3,3.2)(-3.2,4)}\rput(-1,5.2){\psscalebox{0.8}{{$\wt{V}$}}}
\pscurve[linecolor=purple](-6,4)(-5.3,3.8)(-4.6,4.2)(-3.9,3.9)(-3.2,4)(-2,4)\rput(-5.2,4){\psscalebox{0.8}{{\purple
$\gamma$}}}
\pscurve[linewidth=1.2pt,linecolor=blue](-2.5,3.1)(-2.1,3.7)(-2,4)(-2.1,4.3)(-2.5,4.9)
\rput(-2.7,5.2){\psscalebox{0.8}{$\wt{V}_1$}}
\psline[linewidth=1.2pt](-2.1,4.3)(-2.8,4.3)(-2.8,3.7)(-2.1,3.7)%
\rput(-2.5,4.5){\psscalebox{0.8}{$\wt{D}_1$}}
\pscircle[fillcolor=blue,fillstyle=solid](-1,4){0.1}\rput(-1,4.3){\psscalebox{0.8}{$z$}}
\pscurve[linewidth=1.2pt,linecolor=blue](0.5,3.1)(0.1,3.7)(0,4)(0.1,4.3)(0.5,4.9)
\begin{psclip}{\psccurve[linestyle=none,linecolor=black](0.1,3.7)(0,4)(0.1,4.3)(0.1,4.3)(0.8,4.3)(0.8,3.7)(0.1,3.7)}
\pspolygon[linestyle=none,fillstyle=vlines,hatchcolor=gray,hatchsep=2pt,hatchangle=45](-1.1,4.3)(0.8,4.3)(0.8,3.7)(-1.1,3.7)%
\end{psclip}
\psline[linewidth=1.2pt](0.1,4.3)(0.8,4.3)(0.8,3.7)(0.1,3.7)%
\rput(0.7,5.2){\psscalebox{0.8}{$\wt{V}_2$}}
\rput(0.5,4.5){\psscalebox{0.8}{$\wt{D}_2$}}
\pscustom[fillcolor=green,opacity=0.2,linecolor=green,fillstyle=solid]{%
\pscurve(-6,4.3)(-5.3,4.1)(-4.6,4.5)(-3.9,4.2)(-3.2,4.3)(-2.1,4.3)(-2.1,4.3)(-2,4)(-2.1,3.7)%
\pscurve[liftpen=1](-2.1,3.7)(-3.2,3.7)(-3.9,3.8)(-4.6,3.9)(-5.3,3.5)(-6,3.7)%
}\rput(-4,4.7){\psscalebox{0.8}{{$\wt{W}$}}}%
\psline[linewidth=1.2pt](-6,2.5)(-6,5.5)\rput(-6,5.8){$Y$}
\begin{psclip}{\psccurve[linestyle=none,linecolor=black](-2.1,3.7)(-2,4)(-2.1,4.3)(-2.1,4.3)(-2.8,4.3)(-2.8,3.7)(-2.1,3.7)}
\pspolygon[fillstyle=vlines,hatchcolor=gray,hatchsep=2pt,hatchangle=45](-1.1,4.3)(-2.8,4.3)(-2.8,3.7)(-1.1,3.7)%
\end{psclip}
\psline[arrowsize=6pt,linewidth=1.2pt,linestyle=dashed]{->}(-1,2.7)(-1,1.2)\rput(-0.7,2.1){\psscalebox{0.8}{$h_3$}}
\pspolygon[fillcolor=orange,fillstyle=solid,linecolor=orange,opacity=0.2](-3,1)(1,1)(1,-1)(-3,-1)\rput(-1,0.7){\psscalebox{0.8}{$V$}}%
\pscircle[fillcolor=blue,fillstyle=solid](-1,0){0.1}\rput(-1,-0.3){\psscalebox{0.7}{$(1,0,\dots,0)$}}%
\pscustom[fillcolor=green,opacity=0.2,linecolor=green,fillstyle=solid]{%
\pscurve(-6,0.3)(-5,0.3)(-4,0.3)(-3,0.3)(-2.1,0.3)(-2.1,0.3)(-2,0)%
\pscurve[liftpen=1](-2,0)(-2.1,-0.3)(-2.1,-0.3)(-3,-0.3)(-4,-0.3)(-5,-0.3)(-6,-0.3)}\rput(-4,0.5){\psscalebox{0.8}{$\Psi_1^{-1}(W)$}}
\pscustom[fillcolor=green,opacity=0.2,linecolor=green,fillstyle=solid]{%
\pscurve(3,0.3)(2,0.3)(1,0.3)(0.1,0.3)(0.1,0.3)(0,0)%
\pscurve[liftpen=1](0,0)(0.1,-0.3)(0.1,-0.3)(1,-0.3)(2,-0.3)(3,-0.3)}\rput(2.7,0.5){\psscalebox{0.8}{$\Psi_2^{-1}(h_4(D_2))=h_3'(\wt{D}_2)$}}
\pscurve[linecolor=blue,linewidth=1.2pt](-2.5,-1)(-2.1,-0.3)(-2,0)(-2.1,0.3)(-2.5,1)\rput(-2.7,1.2){\psscalebox{0.8}{$V_1$}}
\pscurve[linecolor=blue,linewidth=1.2pt](0.5,-1)(0.1,-0.3)(0,0)(0.1,0.3)(0.5,1)\rput(0.7,1.2){\psscalebox{0.8}{$V_2$}}
\psline[linestyle=dashed](-6,-1)(-6,1)\rput(-5.7,1.2){\psscalebox{0.8}{$x=0$}}
\psbezier{->}(-2.7,-1.2)(-2.7,-1.4)(-3.2,-2.7)(-3.5,-3)\rput(-2.6,-2){\psscalebox{0.8}{$\Psi_1$}}
\psbezier{->}(0.7,-1.2)(0.7,-1.4)(-0.2,-2.7)(-0.5,-3)\rput(0.6,-2){\psscalebox{0.8}{$\Psi_2$}}
\pscustom[fillcolor=orange,opacity=0.2,fillstyle=solid,linecolor=orange]{%
\pscurve(-2.75,-3.25)(-4.25,-3.25)(-4.5,-4)%
\pscurve[liftpen=1](-4.5,-4)(-4.25,-4.75)(-2.75,-4.75)}\rput(-3.7,-5.2){\psscalebox{0.8}{$C_1=\Psi_1(V_1)$}}
\pscustom[fillcolor=orange,opacity=0.2,fillstyle=solid,linecolor=orange]{%
\pscurve(-1.5,-3.25)(0,-3.25)(0.25,-4)%
\pscurve(0.25,-4)(0,-4.75)(-1.5,-4.75)}\rput(0,-5.2){\psscalebox{0.8}{$C_2=\Psi_2(V_2)$}}
\psline[linecolor=orange](-2.75,-3.25)(-2.75,-4.75)
\psline[linecolor=orange](-1.5,-3.25)(-1.5,-4.75)
\pspolygon[linewidth=0.6pt,linecolor=green,fillstyle=solid,%
fillcolor=green,opacity=0.2](-6,-4.3)(-2.75,-4.3)(-2.75,-3.7)(-6,-3.7)\rput(-4.5,-3.5){\psscalebox{0.8}{$W$}}%
\pspolygon[linewidth=1.2pt,fillstyle=vlines,hatchcolor=black,hatchsep=2pt](-2.75,-4.3)(-3.5,-4.3)(-3.5,-3.7)(-2.75,-3.7)
\rput(-3,-3.5){\psscalebox{0.8}{$D_1$}}
\psline[linewidth=0.5pt,linestyle=dashed](-2.75,-5.2)(-2.75,-3)\rput(-2.75,-2.7){\psscalebox{0.8}{$a=\delta$}}
\psline[linewidth=0.5pt,linestyle=dashed](-1.5,-5.2)(-1.5,-3)\rput(-1.5,-2.7){\psscalebox{0.8}{$a=\delta$}}
\pspolygon[linewidth=1.2pt,fillstyle=vlines,hatchcolor=black,hatchsep=2pt](-1.5,-4.3)(-0.75,-4.3)(-0.75,-3.7)(-1.5,-3.7)
\rput(-1,-3.5){\psscalebox{0.8}{$D_2$}}
\psbezier{->}(-0.7,-4)(-0.45,-3.8)(1,-3.8)(1.5,-4)\rput(0.4,-3.7){\psscalebox{0.8}{$h_4$}}
\pspolygon[linewidth=1.2pt,linecolor=black,fillcolor=gray,opacity=0.2,fillstyle=solid](1.5,-4.3)(4.5,-4.3)(4.5,-3.7)(1.5,-3.7)%
\rput(3,-4.6){\psscalebox{0.8}{$h_4(D_2)$}}
\psbezier{->}(3,-3.5)(2.5,-3)(1.7,-1)(1.7,-0.2)\rput(1.8,-2){\psscalebox{0.8}{$\Psi_2^{-1}$}}
\psline[linestyle=dashed](-6,-5)(-6,-3)\rput(-5.5,-2.7){\psscalebox{0.8}{$a=\frac{2}{3\sqrt{3}}$}}
\rput(-5.5,-2.7){\psscalebox{0.8}{$a=\frac{2}{3\sqrt{3}}$}}
\rput(-5.5,-2.7){\psscalebox{0.8}{$a=\frac{2}{3\sqrt{3}}$}}
\rput(-5.2,-4.7){\psscalebox{0.8}{$E_1$}}
\end{pspicture}
\caption{Notation used in Section~\ref{ss:proofhyp}. The top line is the picture on $\Omega$, the middle line is in coordinates such that $F$
is equal to $y^3-yx^2+y+\frac12+\vu^2$. The bottom line is in coordinates such that $F=b+\frac12$. There is no mistake, the line $a=\delta$ appears
twice on the picture, in coordinates on $C_1$ and on $C_2$.}\label{fig:proofhyp}
\end{figure}
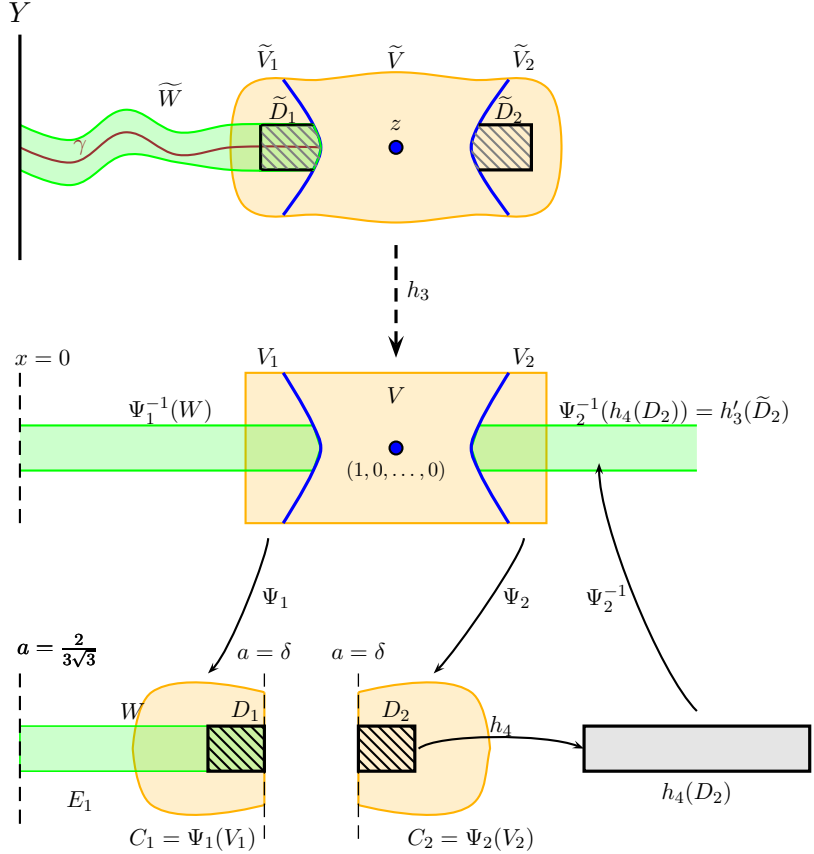
\[V_1:=V\cap\{x<1,\ A(x,y)\ge\delta\}\]
(compare \eqref{eq:Z1Z2}). By Lemma~\ref{lem:psiisiso} the map
\be\label{eq:wtPsi} \Psi_1(x,y,\vu)=(A(x,y),B(x,y)+\vu^2,\vu), \ee
is a diffeomorphism  (being the composition of $\psi\oplus Id_{\R^{n+1}}$ and a
`triangular' map). Set $C_1:=\Psi_1(V_1)$ and
$\wt{V}_1:=h_3^{-1}(V_1)$.
Finally, let
\[h=\Psi_1\circ h_3.\]
Using \eqref{eq:Fcirch3} we obtain that
\[F\circ h^{-1}(a,b,\vu)=b+\frac12.\]
Let  $\theta >\delta$ be sufficiently close to $\delta$ satisfying the inclusion
\[D_1:=[\delta,\theta]\times (-\theta,\theta)\times(-\theta,\theta)^{n-1}\subset C_1.\]
Let $\wt{D}_1=h^{-1}(D_1)\subset \wt{V}_1$, see Figure \ref{fig:proofhyp}.

\begin{lemma}\label{lem:neighofcurve}
If $\theta$ and $\delta$ are small enough, there is an closed ball $\wt{W}$ in $\Omega$,
 containing $\wt{D}_1$,
such that $h$ extends to a diffeomorphism between $\wt{W}$ and
$[\delta,\frac{2}{3\sqrt{3}}]\times [-\theta,\theta]\times[-\theta,\theta]^{n-1}$ with
$F\circ h^{-1}(a,b,\vu)=b+\frac12$, sending points with $a=\frac{2}{3\sqrt{3}}$ to $Y$.
\end{lemma}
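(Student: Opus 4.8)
\emph{Overview.} The plan is to push the chart $h$ out of the small neighbourhood $\wt V_1$ of $z$ along an arc of the level set $\Sigma_{1/2}=F^{-1}(1/2)$ joining $\wt D_1$ to the boundary $Y$, by flowing the slice $\{a=\delta\}$ of $D_1$ along a vector field tangent to the fibres of $F$; the coordinate $a$ becomes the flow time, and it reaches the value $\frac{2}{3\sqrt3}$ exactly when the flow meets $Y$. The only point at which Condition~\eqref{eq:connectwithboundary} is used is in producing the arc.

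\emph{Step 1: the arc.} Put $p:=h^{-1}(\delta,0,0)$. Since $F\circ h^{-1}(a,b,\vu)=b+\frac12$, the curve $a\mapsto h^{-1}(a,0,0)$ lies in $\Sigma_{1/2}$, and in the chart $h_3$ it is the segment $\{(x,0,0)\}$, whose closure contains $z=h_3^{-1}(1,0,0)$; hence $p$ lies in the connected component $N$ of $z$ in $\Sigma_{1/2}$. Condition~\eqref{eq:connectwithboundary} says precisely that $N\cap Y\neq\emptyset$, so we may choose a smooth embedded arc $\gamma\colon[\delta,\frac{2}{3\sqrt3}]\to\Sigma_{1/2}$ with $\gamma(\delta)=p$ and $\gamma(\frac{2}{3\sqrt3})=q_1\in N\cap Y$, which avoids the (unique) critical point $z$, meets $Y$ transversally and only at $q_1$, coincides with $a\mapsto h^{-1}(a,0,0)$ for $a$ in an initial interval $[\delta,\delta']$, and lies inside $\wt V_1$ exactly for $a$ in an initial interval. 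Note $F\equiv\frac12$ along $\gamma$.

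\emph{Step 2: the vector field and the extended chart.} Along $\gamma$ the bundle $\ker dF$ is a genuine rank-$n$ distribution, since $\gamma$ misses $z$. Choose a non-vanishing smooth vector field $X$ on a neighbourhood of $\gamma([\delta,\frac{2}{3\sqrt3}])$ with $dF(X)\equiv0$, having $\gamma$ as an integral curve ($\gamma'(a)=X(\gamma(a))$), equal to $(h^{-1})_*(\partial/\partial a)$ on a neighbourhood of the part of $\gamma$ lying in $\wt V_1$, and transverse to $Y$ (pointing out of $\O$) on a neighbourhood of $q_1$; this is possible by a partition-of-unity argument, the prescribed data being sections of $\ker dF$ over the contractible arc $\gamma$, and the local prescription near $q_1$ being compatible with $dF(X)=0$ because $F|_Y$ is regular at $q_1$ (there are no boundary critical points). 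With $\phi^X$ the flow of $X$, set
\[h^{-1}(a,b,\vu):=\phi^X_{\,a-\delta}\bigl(h^{-1}(\delta,b,\vu)\bigr),\qquad (a,b,\vu)\in[\delta,\tfrac{2}{3\sqrt3}]\times(-\theta,\theta)\times(-\theta,\theta)^{n-1},\]
where the right-hand $h$ is the one already built. After shrinking $\theta$ (and $\delta$), this is well defined by compactness of $\gamma$ and is an embedding onto a half-disc $\wt W$ (tubular neighbourhood theorem). On $[\delta,\delta']\times(-\theta,\theta)^n$ the flow is translation in $a$, so the new map agrees there with the old $h$, and since $X=(h^{-1})_*(\partial/\partial a)$ near $\gamma\cap\wt V_1$ this persists on all of $\wt W\cap\wt V_1$ for $\theta$ small; thus the map genuinely extends $h$ and $\wt W\supset h^{-1}(D_1)=\wt D_1$. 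As $dF(X)\equiv0$, $F$ is constant on flow lines, whence $F\circ h^{-1}(a,b,\vu)=F(h^{-1}(\delta,b,\vu))=b+\frac12$. Finally, transversality of $X$ to $Y$ near $q_1$ shows that in the box coordinates $\wt W\cap Y$ is a graph $\{a=g(b,\vu)\}$ with $g$ close to $\frac{2}{3\sqrt3}$; composing with the box diffeomorphism $(a,b,\vu)\mapsto(a+\beta(a)(\frac{2}{3\sqrt3}-g(b,\vu)),b,\vu)$ (where $\beta$ is a cutoff, $\equiv1$ near $a=\frac{2}{3\sqrt3}$ and $\equiv0$ on $[\delta,\delta']$, so that it is a diffeomorphism for $\theta$ small and preserves $b,\vu$ and hence the formula $F=b+\frac12$) makes $\{a=\frac{2}{3\sqrt3}\}$ equal to $\wt W\cap Y$, as required.

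\emph{Where the difficulty is.} The delicate step is Step~1. Near $z$ the punctured level set $\Sigma_{1/2}\setminus\{z\}$ need not be connected: its germ at $z$ is the zero set of a non-degenerate quadratic form of index $k$, whose link $S^{k-1}\times S^{n-k}$ is disconnected precisely when $k=1$ or $k=n$. So, although $p\in N$ and $N\cap Y\neq\emptyset$, one must know that $N$ reaches $Y$ through the \emph{same} local branch at $z$ on which $p$ sits — otherwise the arc $\gamma$ cannot both start along $a\mapsto h^{-1}(a,0,0)$ and avoid $z$. This is exactly what the analysis of Condition~\eqref{eq:connectwithboundary} provides; without it the lemma, and Theorem~\ref{thm:handsplit}, would be false. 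Everything else is a routine flow-box and tubular-neighbourhood argument.
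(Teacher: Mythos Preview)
Your proof is correct and follows the same strategy as the paper: produce an arc $\gamma$ in $\Sigma_{1/2}$ from $\wt D_1$ to $Y$ avoiding $z$ (the paper isolates this as Lemma~\ref{lem:curve} and carries out the case analysis you allude to in your ``difficulty'' paragraph), then extend the chart along $\gamma$ by a flow. The only technical difference is in how the flow is organised. The paper works in two stages: first a vector field $\tau$ tangent to the single level set $\Sigma_{1/2}$, rescaled so that every trajectory from $\wt R$ reaches $Y$ in the same time, and then the gradient $\nabla F$ to push these coordinates off the level set. You instead use a single vector field $X$ with $dF(X)\equiv 0$, tangent to \emph{all} fibres at once, and flow the whole slice $\{a=\delta\}$. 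Your version is a little more direct but requires the final straightening diffeomorphism to make $\{a=\tfrac{2}{3\sqrt3}\}$ coincide with $Y$; the paper's version gets that for free because $\nabla F$ is already tangent to $Y$, so once the level-set trajectories are normalised the boundary condition propagates automatically. Both are routine once the arc exists.
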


In the proof we shall use the following result.
\begin{lemma}\label{lem:curve}
There exists a smooth curve
$\gamma\colon[\delta,\frac{2}{3\sqrt{3}}]\to\O$, such that
\begin{itemize}
\item $\gamma(\frac{2}{3\sqrt{3}})\in Y$;
\item $\gamma(t)\in\Sigma_{1/2}$;
\item $\gamma(t)\in \wt{D}_1$ if and only if $t\in[\delta,\theta]$;
\item $h(\gamma(t))=(t,0,\dots,0)$
\item $\gamma$ omits $\wt{V}\setminus\wt{V}_1$.
\item $\gamma$ is transverse to $Y$.
\end{itemize}
\end{lemma}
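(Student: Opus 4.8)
The plan is to build $\gamma$ in three stages: a rigid initial portion that is completely determined by the chart $h=\Psi_1\circ h_3$ already at our disposal, a prolongation of it out of $\wt{V}$ along the level set, and a global arc through $\O$ joining it to $Y$, whose existence is exactly what the connectedness hypothesis \eqref{eq:connectwithboundary} provides. On $[\delta,\theta]$ the requirement $h(\gamma(t))=(t,0,\dots,0)$ forces $\gamma(t)=h^{-1}(t,0,\dots,0)$; this arc lies in $\wt{D}_1$ by the choice $D_1\subset C_1$ and in $\Sigma_{1/2}$ since $F\circ h^{-1}(a,b,\vu)=b+\tfrac12$. In the original chart of $V$ it is a sub-arc of the segment $L=\{y=0,\ \vu=0\}$, and $L\subset\Sigma_{1/2}$ because $B(x,0)\equiv 0$ and $F\circ h_3^{-1}=B(x,y)+\vu^2+\tfrac12$. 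I would continue $\gamma$ past $t=\theta$ along $L$ towards decreasing $x$ (equivalently increasing $a=A(x,0)$), staying inside $\wt{V}_1=h_3^{-1}\bigl(V\cap\{x<1,\ A\geq\delta\}\bigr)$, until $L$ first meets $\partial\wt{V}$ at a point $q$. This portion lies in $\Sigma_{1/2}\cap\wt{V}_1$, hence never enters $\wt{V}\setminus\wt{V}_1$; it meets $\wt{D}_1$ exactly over $[\delta,\theta]$, since $h$ carries it to the ray $\{b=\vu=0,\ a\geq\delta\}$, which lies in $D_1$ only for $a\leq\theta$.

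Next I would use \eqref{eq:connectwithboundary}. Since $\gamma(\delta)=h^{-1}(\delta,0,\dots,0)=h_3^{-1}(x_\delta,0,\dots,0)$ is joined to $z=h_3^{-1}(1,0,\dots,0)$ inside $\Sigma_{1/2}$ by the remaining segment $\{y=0,\ \vu=0,\ x_\delta\leq x\leq 1\}$ of $L$, the point $q$ lies in the connected component $K$ of $\Sigma_{1/2}$ containing $z$, and by hypothesis $K\cap Y\neq\emptyset$. Shrink $\wt{V}$ so that $\overline{\wt{V}}\cap Y=\emptyset$. Away from the single point $z$, near which it is a cone, $\Sigma_{1/2}$ is a smooth manifold with boundary $\Sigma_{1/2}\cap Y$, so $K$ is path connected; choose a path in $K$ from $q$ to a point $y_0\in K\cap Y$ and replace its terminal part by the arc after it last leaves $\wt{V}$, obtaining a path from some $q'\in\partial\wt{V}\cap\Sigma_{1/2}$ to $y_0$ lying in $\Sigma_{1/2}\setminus\wt{V}$, hence avoiding both $z$ and $\wt{V}\setminus\wt{V}_1$. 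What remains is to join $q$ to $q'$ inside $\Sigma_{1/2}$ without re-entering $\wt{V}\setminus\wt{V}_1$, and this is where the real content lies: by the Morse lemma applied to $F$ at $z$, the germ of $\Sigma_{1/2}$ at $z$ is the zero cone of a non-degenerate quadratic form of signature $(n-k+1,k)$. When $2\leq k\leq n-1$ this cone minus its vertex is connected, so $q$ and $q'$ are joined within $\Sigma_{1/2}\cap\wt{V}_1\setminus\{z\}$; when $k\in\{1,n\}$ the cone splits into two nappes and one must arrange, using the freedom in the choice of the chart in Proposition~\ref{prop:localcoordinates} and the fact that $L\subset\Sigma_{1/2}$ runs through $z$ meeting both nappes, that the prescribed direction $\{b=\vu=0,\ a\geq\delta\}$ of $\gamma$ lies on the nappe whose global component in $\Sigma_{1/2}\setminus\{z\}$ reaches $Y$.

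Concatenating the rigid arc, a connecting arc from $q$ to $q'$ in $\Sigma_{1/2}\cap\wt{V}_1$, and the arc from $q'$ to $y_0$ produces a continuous arc from $\gamma(\delta)$ to $y_0$ in $\Sigma_{1/2}$ which is rigid on $\wt{D}_1$. Since $\nabla F$ is tangent to $Y$, at a non-critical point $y\in Y$ one has $dF|_{T_yY}\neq 0$ (as $\nabla F(y)\in T_yY$ and $\langle\nabla F,\nabla F\rangle>0$), so $\Sigma_{1/2}$ is transverse to $Y$ and $\Sigma_{1/2}\cap Y$ is a codimension-one submanifold of $\Sigma_{1/2}$; note also that $F|_Y$ has no critical points, so $y_0$ automatically lies away from $M_0\cup M_1$. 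A $C^\infty$-small perturbation of the concatenated arc, rel endpoints, equal to $\gamma|_{[\delta,\theta]}$ on $\wt{D}_1$ and kept away from $z$ and from $\wt{V}\setminus\wt{V}_1$, makes it a smooth embedded arc meeting $Y$ transversally and only at $y_0$; smoothing the corners and reparametrising over $[\delta,\tfrac{2}{3\sqrt{3}}]$ with the prescribed behaviour on $[\delta,\theta]$ yields $\gamma$.

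The main obstacle is Stage two: turning the abstract path furnished by \eqref{eq:connectwithboundary} into one that simultaneously avoids the singular point $z$ of the critical level and never strays into $\wt{V}\setminus\wt{V}_1$. Everything else is routine general-position bookkeeping, but this matching of the global connectivity with the prescribed local shape of $\gamma$ near $z$ is delicate precisely in the indices $k\in\{1,n\}$, where the germ of $\Sigma_{1/2}$ at $z$ fails to be connected after removal of the vertex.
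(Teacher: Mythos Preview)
Your overall strategy differs from the paper's, and the central step has a gap. The paper's argument is more direct: it sets $p=h^{-1}(\theta,0,\dots,0)$, deletes a single neighbourhood $B$ of $z$ in $\Sigma_{1/2}$ with $p\in\partial B$, argues that $\Sigma'\setminus B$ is path-connected (with a separate elementary treatment of the exceptional case, which the paper identifies as $n=k=1$, where $\Sigma'$ is an interval), and then uses hypothesis \eqref{eq:connectwithboundary} to produce a single path $\tilde\gamma$ from $p$ to $Y$ inside $\Sigma'\setminus B$. The desired $\gamma$ is the rigid arc on $[\delta,\theta]$ followed by $\tilde\gamma$, with the corner at $p$ smoothed. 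There is no patching of local and global pieces.

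Your plan instead manufactures auxiliary points $q,q'\in\partial\wt{V}\cap\Sigma_{1/2}$ and then tries to join them. The gap is the assertion that ``$q$ and $q'$ are joined within $\Sigma_{1/2}\cap\wt{V}_1\setminus\{z\}$''. Nothing in your construction forces $q'$ to lie in $\overline{\wt{V}_1}$: the last exit point of the global path from $\wt{V}$ may well lie on the face $\{x=1+\xi\}$ of $V$, or in the strip $\{A<\delta\}$ --- i.e.\ in $\overline{\wt{V}\setminus\wt{V}_1}$. Connectedness of the cone minus its vertex only tells you that $q$ and $q'$ can be joined in $\Sigma_{1/2}\cap\wt{V}\setminus\{z\}$, but any such arc may run through $\wt{V}\setminus\wt{V}_1$, precisely the set $\gamma$ must avoid. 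By splitting the problem into an ``outside $\wt{V}$'' piece (from $q'$ to $y_0$) and an ``inside $\wt{V}_1$'' piece (from $q$ to $q'$) you have created a matching problem that the paper's single-deletion argument never faces. A repair within your framework would be to connect $q$ and $q'$ along the \emph{link} $\Sigma_{1/2}\cap\partial\wt{V}$, which is disjoint from the open set $\wt{V}\setminus\wt{V}_1$; this works exactly when the link $S^{k-1}\times S^{n-k}$ is connected, i.e.\ for $2\le k\le n-1$.

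Your treatment of $k\in\{1,n\}$ is also incomplete. Invoking ``freedom in the choice of the chart in Proposition~\ref{prop:localcoordinates}'' is circular here: the present lemma is a step \emph{within} the proof of that proposition, and the chart $h_3$ on $\wt{V}$ --- hence $\wt{V}_1$, $\wt{D}_1$, and the rigid arc itself --- is already fixed by the time the lemma is invoked. You cannot retroactively change which nappe the rigid arc sits on without redoing the preceding construction.
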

\begin{proof}[Proof of Lemma~\ref{lem:curve}]
Let $p=h^{-1}(\theta,0,\dots,0)\in \Sigma_{1/2}$. Let
$B\subset\Sigma_{1/2}$ be an open ball with centre $z$ and
$p\in\partial B$. Let $\S'$ be the connected component of
$\S_{1/2}$ containing $p$. We consider two cases.

\underline{Case 1.} If $\S'\setminus B$ is connected, it is also path connected. By
\eqref{eq:connectwithboundary}, there exists a path
$\wt{\gamma}\subset\S'\setminus B$ joining $p$ with a point on the
boundary. We can assume that $\wt{\gamma}$ is transverse to $Y$.
We choose
$\gamma=h^{-1}([\delta,\theta]\times\{0,\dots,0\})\cup\wt{\gamma}$
(and we smooth a possible corner at $p$). It is clear that
$\gamma$ omits $\wt{V}\setminus\wt{V}_1$ and that we can find a
parametrization of $\gamma$ by the interval
$[\delta,\frac{2}{3\sqrt{3}}]$.

\underline{Case 2.} If $\S'\setminus B$ is not connected, then as $\S'$ is connected,
by a homological argument we have $n=1$ and $k=1$. Since $\S'$ is connected
and has boundary, then $\S'$ is an interval and $B$ is an interval
too. Then $\S'\setminus B$ consists of two intervals, each
intersecting $Y$. One of these intervals contains $p$. So $p$ is
connected to $Y$ by an interval, which omits $B$. We conclude
the proof by the same argument as in the above case, when
$\S'\setminus B$ was connected.
\end{proof}

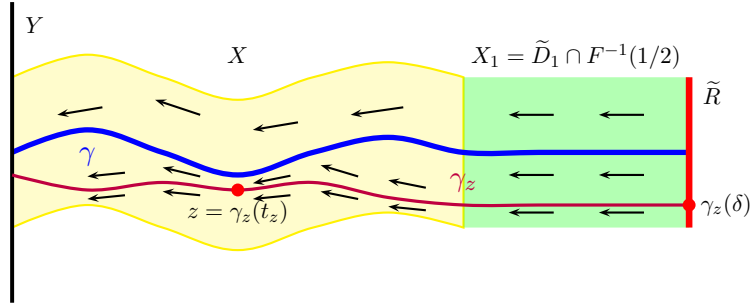
\begin{figure}
\begin{pspicture}(-5,-3)(5,3)
\pscustom[fillstyle=solid,linestyle=solid,linecolor=yellow,opacity=0.2, fillcolor=yellow]{%
\pscurve(-5,1)(-4,1.3)(-3,1)(-2,0.7)(-1,1)(0,1.2)(1,1)%
\pscurve[liftpen=1](1,-1)(0,-0.8)(-1,-1)(-2,-1.3)(-3,-1)(-4,-0.7)(-5,-1)}
\pspolygon[linestyle=none,fillstyle=solid,fillcolor=green,
opacity=0.3](1,-1)(4,-1)(4,1)(1,1)
\pscurve[linecolor=blue,linewidth=2pt](-5,0)(-4,0.3)(-3,0)(-2,-0.3)(-1,0)(0,0.2)(1,0)(2,0)(3,0)(4,0)
\rput(-4,-0.05){{\blue $\mathbf{\gamma}$}}
\rput(2.5,1.3){\psscalebox{0.8}{$X_1=\wt{D}_1\cap F^{-1}(1/2)$}}
\rput(-2,1.3){\psscalebox{0.8}{$X$}}
\psline[linecolor=black,linewidth=1.5pt](-5,-2)(-5,2)\rput(-4.7,1.7){\psscalebox{0.8}{$Y$}}
\psline[linecolor=red,linewidth=2.5pt](4,-1)(4,1)\rput(4.3,0.8){\psscalebox{0.8}{$\wt{R}$}}
\psline{<-}(1.6,0.5)(2.2,0.5) \psline{<-}(1.6,-0.3)(2.2,-0.3)
\psline{<-}(1.6,-0.8)(2.2,-0.8) \psline{<-}(2.8,0.5)(3.4,0.5)
\psline{<-}(2.8,-0.3)(3.4,-0.3) \psline{<-}(2.8,-0.8)(3.4,-0.8)
\psline{<-}(-4.4,0.5)(-3.8,0.6) \psline{<-}(-3.1,0.7)(-2.5,0.5)
\psline{<-}(-1.8,0.3)(-1.2,0.4) \psline{<-}(-0.5,0.5)(0.2,0.6)
\pscurve[linecolor=purple,linewidth=1.2pt](-5,-0.3)(-4,-0.5)(-3,-0.4)(-2,-0.5)(-1,-0.4)(0,-0.6)(1,-0.7)(2,-0.7)(3,-0.7)(4,-0.7)
\psline[linewidth=0.8pt]{->}(-3.5,-0.57)(-4,-0.62)
\psline[linewidth=0.8pt]{->}(-2.5,-0.57)(-3,-0.52)
\psline[linewidth=0.8pt]{->}(-1.3,-0.57)(-1.8,-0.62)
\psline[linewidth=0.8pt]{->}(-0.4,-0.62)(-0.9,-0.52)
\psline[linewidth=0.8pt]{->}(0.5,-0.77)(0,-0.72)
\psline[linewidth=0.8pt]{->}(-3.5,-0.27)(-4,-0.32)
\psline[linewidth=0.8pt]{->}(-2.5,-0.32)(-3,-0.20)
\psline[linewidth=0.8pt]{->}(-1.3,-0.31)(-1.8,-0.41)
\psline[linewidth=0.8pt]{->}(-0.4,-0.32)(-0.9,-0.17)
\psline[linewidth=0.8pt]{->}(0.5,-0.47)(0,-0.37)
\pscircle[linestyle=none,fillstyle=solid,fillcolor=red](-2,-0.5){0.1}
\rput(-2,-0.8){\psscalebox{0.8}{$z=\gamma_z(t_z)$}}
\rput(1,-0.4){{\purple $\mathbf{\gamma}_z$}}
\pscircle[linestyle=none,fillstyle=solid,fillcolor=red](4,-0.7){0.1}\rput(4.5,-0.7){\psscalebox{0.8}{$\gamma_z(\delta)$}}
\end{pspicture}
\caption{Proof of Lemma~\ref{lem:neighofcurve}. Construction of the vector field $\tau$. Picture on $F^{-1}(1/2)$. The parallel vector field from the region
on the right is extended to the whole $X$ so that it is tangent to $\gamma$.
}
\label{fig:neighofcurve}
\end{figure}

\begin{proof}[Proof of Lemma~\ref{lem:neighofcurve}]
Given Lemma~\ref{lem:curve}, let us choose a tubular neighbourhood $X$
of $\gamma$ in $F^{-1}(1/2)\setminus (\wt{V}\setminus \wt{V}_1)$. Shrinking $X$
if needed we can assume that it is a disk and $X_1:=X\cap\wt{V}=\wt{D}_1\cap F^{-1}(1/2)$.
Now let $\tau$ be the vector field on $\wt{D}_1$ given
by $(Dh)^{-1}(1,0,\dots,0)$, where $Dh$ denotes the derivative of $h$. This vector field is everywhere tangent to $X_1$ and
\begin{equation}\label{eq:vongamma}
\tau|_{\gamma\cap\wt{D}_1}=\frac{d}{dt}\gamma(t)
\end{equation}
by definition of $\gamma$.
We extend $\tau$ to a smooth vector field on the of whole
$X$, such that \eqref{eq:vongamma} holds on the whole of $\gamma$.
For any point $z\in\gamma$, the trajectory of $\tau$ (which is
$\gamma$) eventually hits $Y$ and, on the other end, it hits the
`right wall'
\[\wt{R}=h^{-1}(\{\delta\}\times\{0\}\times (-\theta,\theta)^{n-1}).\]
(compare Figure~\ref{fig:neighofcurve}; note that the horizontal
coordinate there increases from right to left for consistency with
Figure~\ref{fig:proofhyp}). Since $\gamma$ is transverse to
$\wt{R}$ and to $Y$, by the implicit function theorem trajectories
close to $\tau$ also start at $\wt{R}$ and end up at $Y$.
Shrinking $X$ if necessary we may assume that each point of $X$
lies on the trajectory of $\tau$ which connects a point of
$\wt{R}$ to some point of $Y$, and all the trajectories are
transverse to both $Y$ and $\wt{R}$.

We can now rescale $\tau$ (that is multiply by a suitable smooth
function constant on trajectories) so that all the trajectories go
from $\wt{R}$ to $Y$ in time $\frac{2}{3\sqrt{3}}-\delta$, i.e.
the same time as $\gamma$ does. The rescaled vector field allows
us to introduce coordinates on $X$ in the following way. For any
point $z\in X$, let $\gamma_z$ be the trajectory of $\tau$, going
through $z$.  We can assume that $\gamma_z(\delta)\in \wt{R}$. Let
$t_z=\gamma_z^{-1}(z)$, i.e. the moment when $\gamma_z$ passes
through $z$. Since we normalized $\gamma_z$, we know that
$t_z\in[\delta,\frac{2}{3\sqrt{3}}]$ and $t_z=\frac{2}{3\sqrt{3}}$
if and only if $z\in Y\cap X$.

Let $\vu_z$ be such that $h(\gamma_z(\delta))=(\delta,0,\vu_z)$. The vector $\vu_z$ might be thought of as a coordinate on $\wt{R}$. We define now
\[h(z)=(t_z,0,\vu_z).\]
This maps clearly extends $h$ to the whole of $X$.

Now let $\wt{W}$ be a tubular neighbourhood of $X$ in $\O\setminus(\wt{V}\setminus\wt{V}_1)$. We use the flow of $\nabla F$ to extend coordinates
from $X$ to $\wt{W}$. More precisely, shrinking $\wt{W}$ if needed we may assume that
for each $w\in\wt{W}$ the trajectory of $\nabla F$ intersects $X$. This intersection is necessarily transverse and it is in one point, which
we denote by $z_w\in X$. We define now
\[h(w)=(t_{z_w},F(w)-F(z_w),\vu_{z_w}).\]
As $h$ is a local diffeomorphism on $X$ (because $\nabla F$ is transverse to $X$), it is also a local diffeomorphism near $X$. We put $W=h(\wt{W})$.
Clearly both definitions of $h$ on $\wt{V}$ and $\wt{W}$ agree. We may now
decrease $\theta$ and shrink $W$ so that
\[W=[\delta,\frac{2}{3\sqrt{3}}]\times (-\theta,\theta)\times(-\theta,\theta)^{n-1}.\]
We have $F\circ h^{-1}(a,b,\vu)=b+\frac12$. We now extend $h_3$
over $\wt{W}$ by the formula $h_3=\Psi_1^{-1}\circ h$.
\end{proof}

Consider now
\[V_2:=V\cap\{x>1,\ A(x,y)\ge\delta\}.\]
Let $\Psi_2\colon V\to\R^{n+1}$ be given by
$\Psi_2(x,y,\vu)=(a,b,\vu)=(A(x,y),B(x,y)+\vu^2,\vu)$, provided by
the same formula as $\Psi_1$ in (\ref{eq:wtPsi}) but the image now
satisfies $a\geqslant \delta$, cf. Lemma \ref{lem:psiisiso}.

Let $C_2=\Psi_2(V_2)$, and let us choose $\theta'$ sufficiently
small such that
\[D_2:=[\delta,\theta']\times (-\theta',\theta')\times(-\theta',\theta')^{n-1}\subset C_2.\]
We shall denote $h=\Psi_2\circ h_3$ and $\wt{D}_2=h^{-1}(D_2)$.

Let us now fix $M>0$ large enough and consider a map
$h_4\colon\R^{n+1}\to\R^{n+1}$ of the form
\[h_4(a,b,\vu)=(\phi(a),b,\vu),\]
where $\phi\colon[\delta,\theta']\cong[\delta,M]$ is a strictly
increasing smooth function, which is an identity near $\delta$.
Consider the map $h_3'\colon \Psi_2^{-1}\circ h_4\circ
h\colon\wt{D}_2\to\R^{n+1}$. Since $h$ is an identity for $a$
close to $\delta$, this map agrees with $h_3$ for $a$ close to
$\delta$. Furthermore $F\circ h_4^{-1}(a,b,\vu)=F\circ h^{-1}\circ
h_4^{-1}(a,b,\vu)=b+\frac{1}{2}$ by a straightforward computation.
On the other hand, the point
$h^{-1}(\theta',0,\dots,0)\in\wt{D}_2$ is mapped  by $h_3'$
to $(M,0,\dots,0)\in\R^{n+1}$, where $M$ can be arbitrary large,
e.g. $M>3$.

\smallskip
Having gathered all the necessary maps, we now conclude the proof. Let
\[\wt{U}=\wt{W}\cup(\wt{V}\setminus h_3^{-1}(V_1\cup V_2)\cup\wt{D}_2.\]
The map $h_3\colon\wt{U}\to[0,\infty)\times\R^n$ is given by $h_3$
on $\wt{W}$ and  on $\wt{V}\setminus h_3^{-1}(V_2)$,  and by
$h_3'$ on $\wt{D}_2$. This map is a diffeomorphism onto
its image, so it is a chart near $z$. By construction
$F\circ h_3^{-1}$ is equal to $y^3-yx^2+y+\vu^2+1/2$ and
$h_3(\wt{W})$ contains the segment with endpoints $(0,0,\dots,0)$
and $(3,0,\dots,0)$. Since it is an open subset, it contains
$[0,3+\eta)\times(-\eta,\eta)\times(-\eta,\eta)^{n-1}$ for
$\eta>0$ small enough. The inverse image of this cube gives the
required chart.

\vspace{1mm}

This ends the proof of Theorem \ref{thm:handsplit} which moves a
single   interior critical point to the boundary. Section~\ref{S:rear}
generalizes this fact for multiple points; one of the needed
tools will be  the rearrangements of the critical values/points.

\subsection{Condition~\eqref{eq:connectwithboundary} revisited.}\label{ss:step1}
We will provide two sufficient conditions which imply Condition \eqref{eq:connectwithboundary}. One is valid for arbitrary $n\geqslant 1$,
the other one holds only in the case $n=1$.
We shall keep
the notation from previous subsections, in particular $(\O,Y)$ is a cobordism between $(\S_0,M_0)$ and $(\S_1,M_1)$,
$F\colon\O\to[0,1]$ is a Morse function with a single critical point $z$ in the interior of $\Omega$, and $F(z)=1/2$.
Let $\S_{1/2}=F^{-1}(1/2)$ and $\S'$ be the connected component of $\S_{1/2}$ such that $z\in\S'$.

\begin{proposition}\label{prop:noconnect}
If $\S_0$, $\S_1$ and $\Omega$ have no closed connected components, then $\S'\cap Y\neq\emptyset$.
In particular, in Theorem~\ref{thm:handsplit} we can assume that $\S_0,\S_1$ and $\Omega$ have no closed connected
components instead of \eqref{eq:connectwithboundary}.
\end{proposition}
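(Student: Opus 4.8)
The plan is to argue by contradiction. Assume $\Sigma'\cap Y=\emptyset$ and produce a closed connected component of $\Sigma_0$ (which already contradicts the hypothesis). Fix $\e>0$ small enough that $1/2$ is the only critical value of $F$ in $(1/2-\e,1/2]$, and put $a=1/2-\e$. Since $F$ has no critical points on $F^{-1}([0,a])$, Lemma~\ref{lem:novanishnoproblem}, applied (after the obvious rescaling) to the critical-point-free cobordism $F^{-1}([0,a])$, yields a diffeomorphism $F^{-1}(a)\cong\S_0$ carrying $F^{-1}(a)\cap Y$ onto $M_0$. In particular a connected component of $F^{-1}(a)$ has empty boundary if and only if the corresponding component of $\S_0$ is closed, so it suffices to produce a component of $F^{-1}(a)$ that misses $Y$.

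The geometric heart of the argument is the identification of the singular level set $\S_{1/2}=F^{-1}(1/2)$ with $F^{-1}(a)$ after collapsing the descending sphere of $z$. Let $S^-=W^s_z\cap F^{-1}(a)$; as $\e$ is small and $\ind_zF=k\in\{1,\dots,n\}$, $S^-$ is a nonempty $(k-1)$-sphere, and since $z\notin Y$ while $Y$ is closed and flow-invariant, $S^-\subset F^{-1}(a)\setminus Y$. Define $\pi\colon F^{-1}(a)\to\S_{1/2}$ by letting $\pi(y)$ be the first point where the forward $\nabla F$-trajectory through $y$ meets $F^{-1}(1/2)$, with $\pi(y)=z$ exactly when that trajectory converges to $z$, i.e. when $y\in S^-$. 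Using the local Morse normal form at $z$ one checks that $\pi$ is continuous (the only delicate point being continuity along $S^-$, where it follows from the explicit flow $(x^-,x^+)\mapsto(e^{-2t}x^-,e^{2t}x^+)$), that $\pi$ is surjective (flow backward from any $w\in\S_{1/2}\setminus\{z\}$: this trajectory must meet $F^{-1}(a)$, since in backward time it cannot converge to $z$ because $W^u_z\cap F^{-1}(1/2)=\{z\}$), and that $\pi^{-1}(z)=S^-$ while $\pi$ is injective on $F^{-1}(a)\setminus S^-$. Finally, $\pi$ sends trajectories inside $Y$ to points of $Y$ and trajectories off $Y$ to points off $Y$, so $\pi^{-1}(Y)=F^{-1}(a)\cap Y$, and $\pi$ is injective on $F^{-1}(a)\cap Y$.

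Now a short bookkeeping with connected components finishes the proof. Since $\pi$ collapses $S^-$ and is otherwise injective, for any component $V$ of $F^{-1}(a)$ the image $\pi(V)$ is a (clopen, connected) component of $\S_{1/2}$ when $V\cap S^-=\emptyset$, and contains $z$ precisely when $V\cap S^-\neq\emptyset$; consequently $\pi^{-1}(\S')=\bigcup_{V\cap S^-\neq\emptyset}V$, where the union is over components of $F^{-1}(a)$ meeting $S^-$. Hence $\pi^{-1}(\S'\cap Y)=\pi^{-1}(\S')\cap\pi^{-1}(Y)=\bigcup_{V\cap S^-\neq\emptyset}(V\cap Y)$, and since $\pi$ is surjective this set is empty if and only if $\S'\cap Y=\emptyset$. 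So our assumption forces $V\cap Y=\emptyset$ for every component $V$ of $F^{-1}(a)$ meeting $S^-$; as $S^-$ is nonempty, at least one such $V$ exists, and it is then a compact manifold with $\p V=V\cap(F^{-1}(a)\cap Y)=\emptyset$, i.e. a closed component of $F^{-1}(a)\cong\S_0$ — a contradiction. (When $k=1$ the "sphere'' $S^-=S^0$ may meet two distinct components of $F^{-1}(a)$, but the argument, phrased via all components meeting $S^-$, is unaffected; one could equally run the mirror argument from the upper side to obtain a closed component of $\S_1$.) The final sentence of the Proposition follows, since the hypothesis on $\S_0,\S_1,\Omega$ then implies \eqref{eq:connectwithboundary}.

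The main obstacle is the middle paragraph: verifying rigorously that $\S_{1/2}$ is $F^{-1}(a)$ with the descending sphere collapsed — concretely, the continuity of $\pi$ along $S^-$ and its surjectivity, both of which need the local normal form near $z$ together with a little care about the backward dynamics (the fact that $W^u_z$ meets the critical level only at $z$). Once $\pi$ and these properties are in hand, everything else is routine manipulation of connected components and a single application of Lemma~\ref{lem:novanishnoproblem}.
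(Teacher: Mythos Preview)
Your proof is correct and is in fact cleaner than the paper's. Both arguments rest on the same flow-theoretic identification of the regular level $F^{-1}(a)$ with the singular level $\Sigma_{1/2}$ up to collapsing the descending sphere, but you package this as a single continuous surjection $\pi\colon F^{-1}(a)\to\Sigma_{1/2}$ and then run a uniform connected-component argument, whereas the paper works with the diffeomorphism $\Sigma_{1/2}\setminus B\cong\Sigma_0\setminus B_0$ and then splits into cases according to the codimension of the attaching sphere $T$ in $\Sigma_0$ (the cases $k<n$, $k=n>1$ via reversing the cobordism, and $k=n=1$ with a further subcase analysis). Your formulation avoids all of this case analysis; as a bonus, it shows that only the hypothesis on $\Sigma_0$ is actually needed (the assumptions on $\Sigma_1$ and $\Omega$ are never invoked in your argument), which is a mild strengthening of the stated result. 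The one thing the paper's more hands-on argument buys is that it simultaneously produces the explicit path from a prescribed point $p\in\partial B$ to $Y$ avoiding $B$, which is the form in which the result feeds into Lemma~\ref{lem:curve}; your argument yields only $\Sigma'\cap Y\neq\emptyset$, which is precisely the statement of the Proposition and is what condition~\eqref{eq:connectwithboundary} requires.
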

\begin{proof}
Let
$p=h^{-1}(\theta,0,\dots,0)\in \wt{D}_1\subset \O$ and let $B$ be
an open ball in $\S'$ near $z$, such that $p\in\partial B$. It is enough to
show that $p$ can be connected to $Y$ by a path in $\Sigma_{1/2}$,
which misses $B$ (compare Lemma~\ref{lem:curve}).

Let us choose a Riemannian metric on $\O$. Let $W^s_z$ be the
stable manifold of $z$ and let $T$ be the intersection of $W^s_z$ and
$\S_0$. This is a $(k-1)$--dimensional sphere. The flow of $\nabla
F$ induces a diffeomorphism $\Phi\colon \S_{1/2} \setminus B
\cong\S_0\setminus B_0$, where $B_0$ is a tubular neighbourhood of
$T$ in $\S_0$ (here we tacitly use the fact that $\delta$ and
$\theta$ are small enough), see Figure~\ref{fig:crit2}. Let
$p_0=\Phi(p)$. Let $\S_0'$ be the connected component of $\S_0$
which contains $B_0$.

Now we will analyze several cases. Recall that
$k=\ind_zF\in\{1,\ldots, n\}$. First we assume that $k<n$. Then
$\S'_0\setminus T$ is connected, so $p_0$ can be connected to the
boundary of $\S'_0$ --- which is non-empty by the assumptions of
the proposition --- by a path $\gamma_0$. Now the
inverse image $\Phi^{-1}(\gamma_0)$ is the required path.

\begin{figure}
\begin{pspicture}(-5,-3)(5,0)
\pspolygon[fillcolor=lightgreen,fillstyle=solid](-5.5,-3)(4,-3)(5,0)(-4,0)
\psellipse[fillcolor=blue,opacity=0.3,linestyle=none,fillstyle=solid](0.5,-1.5)(2.5,1.2)
\psellipse[linewidth=2pt](0.5,-1.5)(2.3,1.1)
\psellipse[fillcolor=lightgreen,fillstyle=solid,linestyle=none](0.5,-1.5)(2.1,1.0)
\rput(2.3,-0.5){\psscalebox{0.8}{$T$}}
\rput(-2.1,-1.3){\psscalebox{0.8}{$p_0$}}
\rput(-3.5,-1.2){\psscalebox{0.8}{$\gamma_0$}}
\pscurve(-4.75,-1.5)(-3.5,-1.5)(-2.8,-2.5)(-2,-1.5)
\pscircle[fillcolor=black,fillstyle=solid](-4.75,-1.5){0.05}
\pscircle[fillcolor=black,fillstyle=solid](-2,-1.5){0.05}
\end{pspicture}
\caption{Notation on $\Sigma_0$.}\label{fig:crit2}
\end{figure}
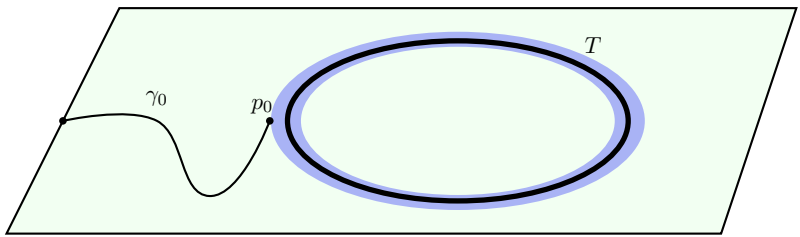

If $k=n>1$ then  we reverse the cobordism and look at $-F$, hence
this case is covered by the previous one (since $k=n$ will be
replaced by $k=1<n$).

Finally, it remains to deal with the situation $k=n=1$. Then
$\dim\S_0=1$. $T$ consists of two points. Assume first that they
lie in a single connected component $\S_0'$ of $\S_0$. We shall
show that this is impossible. As $\S_0'$ is connected with
non-trivial boundary, it is an interval. The situation is like on
Figure~\ref{fig:crit3}.
Now as $F$ has precisely one Morse critical point of index $1$,
$\S_1$ is the result of a surgery on $\S_0$. This surgery consists
of removing two inner segments from $\S_0$ and gluing back two
other segments, which in Figure~\ref{fig:crit3} are drawn as
dashed arc. But then $\S_1$ has a closed connected component,
which contradicts assumptions of Theorem~\ref{thm:handsplit}.

Therefore, $T$ lies in two connected components of $\S_0$. The situation is drawn in Figure~\ref{fig:crit4}, and it is straightforward to
see that $p_0$ (either $p_0'$ or $p_0''$ in Figure~\ref{fig:crit4}) can be connected to $M_0$ by a segment omitting $B_0$.
\end{proof}

\begin{figure}
\begin{pspicture}(-6,-1)(6,4)
\psarc[linestyle=dashed,linecolor=darkgreen](0,0){3.6}{0}{180}
\psarc[linestyle=dashed,linecolor=darkgreen](0,0){1.2}{0}{180}
\psline[linewidth=1.4pt](-6,0)(6,0)
\pscircle[fillstyle=solid,fillcolor=black](6,0){0.1}\rput(6,0.3){\psscalebox{0.8}{$M_0$}}
\pscircle[fillstyle=solid,fillcolor=black](-6,0){0.1}\rput(-6,0.3){\psscalebox{0.8}{$M_0$}}
\psline[linewidth=2.2pt,linecolor=blue](-3.6,0)(-1.2,0)\rput(-3.1,0.3){\psscalebox{0.8}{$B_0$}}
\psline[linewidth=2.2pt,linecolor=blue](3.6,0)(1.2,0)\rput(-3.1,0.3){\psscalebox{0.8}{$B_0$}}
\pscircle[fillstyle=solid,fillcolor=blue,linestyle=none](-2.4,0){0.1}\rput(-2.4,0.3){\psscalebox{0.8}{$T$}}
\pscircle[fillstyle=solid,fillcolor=blue,linestyle=none](2.4,0){0.1}\rput(2.4,0.3){\psscalebox{0.8}{$T$}}
\pscircle[fillstyle=solid,fillcolor=red,linestyle=none](3.6,0){0.1}\rput(3.6,-0.3){\psscalebox{0.8}{$p'_0$}}
\pscircle[fillstyle=solid,fillcolor=red,linestyle=none](1.2,0){0.1}\rput(1.2,-0.3){\psscalebox{0.8}{$p''_0$}}

\end{pspicture}
\caption{Proof of Proposition~\ref{prop:noconnect}. Case $k=1$ and $n=1$ and $T$ lies in two components of $\S$.
$\S_0$ is the horizontal segment. The points $p_0'$ and $p''_0$ are the two possible positions of the point $p_0$.}\label{fig:crit3}
\end{figure}
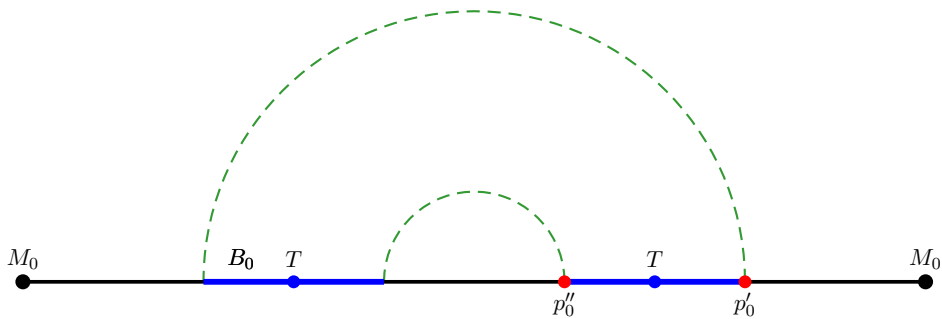
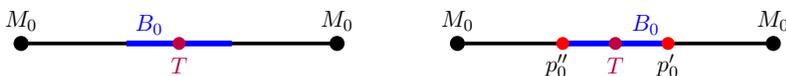
\begin{figure}
\begin{pspicture}(-6,-1)(6,2)
\psline[linewidth=1.4pt](-5,0)(-0.8,0)
\psline[linewidth=1.4pt](5,0)(0.8,0)
\pscircle[fillstyle=solid,fillcolor=black](5,0){0.1}\rput(5,0.3){\psscalebox{0.8}{$M_0$}}
\pscircle[fillstyle=solid,fillcolor=black](-5,0){0.1}\rput(-5,0.3){\psscalebox{0.8}{$M_0$}}
\pscircle[fillstyle=solid,fillcolor=black](0.8,0){0.1}\rput(0.8,0.3){\psscalebox{0.8}{$M_0$}}
\pscircle[fillstyle=solid,fillcolor=black](-0.8,0){0.1}\rput(-0.8,0.3){\psscalebox{0.8}{$M_0$}}
\psline[linewidth=2.2pt,linecolor=blue](-2.2,0)(-3.6,0)\rput(-3.3,0.25){\psscalebox{0.8}{{\blue $B_0$}}}
\psline[linewidth=2.2pt,linecolor=blue](2.2,0)(3.6,0)\rput(3.3,0.25){\psscalebox{0.8}{{\blue $B_0$}}}
\pscircle[fillstyle=solid,fillcolor=purple,linestyle=none](2.9,0){0.1}\rput(2.9,-0.3){\psscalebox{0.8}{{\purple $T$}}}
\pscircle[fillstyle=solid,fillcolor=purple,linestyle=none](-2.9,0){0.1}\rput(-2.9,-0.3){\psscalebox{0.8}{{\purple $T$}}}
\pscircle[fillstyle=solid,fillcolor=red,linestyle=none](3.6,0){0.1}\rput(3.6,-0.3){\psscalebox{0.8}{$p'_0$}}
\pscircle[fillstyle=solid,fillcolor=red,linestyle=none](2.2,0){0.1}\rput(2.12,-0.3){\psscalebox{0.8}{$p''_0$}}
\end{pspicture}
\caption{Proof of Proposition~\ref{prop:noconnect}. Case $k=1$ and $n=1$ and
$T$ lies in two components of $\S_0$. The points $p_0'$ and
$p''_0$ are the two possible positions of the point $p_0$. Both
can be connected to the boundary $M_0$.}\label{fig:crit4}
\end{figure}

The proof of Proposition~\ref{prop:noconnect} suggests that the case $n=1$ is different from case $n>1$. We shall provide now a full characterization
of the failure to \eqref{eq:connectwithboundary}.

\begin{proposition}\label{prop:boundarysurface}
Assume that $k=n=1$ and $\Omega$ is connected. If
\eqref{eq:connectwithboundary} does not hold, then $\O$ is a pair
of pants, $\S_0$ is a circle and $\S_1$ is a disjoint union of two
circles; or vice versa:  $\S_1=S^1$ and $\S_0$ is a disjoint union
of two circles. In particular, $Y=\emptyset$.
\end{proposition}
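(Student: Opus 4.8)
The plan is to show that, under the stated hypotheses, the failure of \eqref{eq:connectwithboundary} leaves only the pair–of–pants picture. First I would record two elementary facts: since $\nabla F$ is everywhere tangent to $Y$, both $Y$ and $\O\setminus Y$ are invariant under the gradient flow; and $\partial\bigl(F^{-1}(c)\bigr)=F^{-1}(c)\cap Y$ for $0<c<1$. Hence the failure of \eqref{eq:connectwithboundary} says $\S'\cap Y=\emptyset$, i.e.\ $\S'$ has empty boundary. Because $z$ is the only critical point of $F$, the set $\S'\setminus\{z\}$ is a smooth $1$--manifold, while near $z$ the level set $\S_{1/2}=F^{-1}(1/2)$ is the cone $\{x_1=\pm x_2\}$, that is, four half--branches issuing from $z$. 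Thus $\S'$ is a graph with a single $4$--valent vertex $z$; being connected and boundaryless, each half--branch must return to $z$, so $\S'$ is a wedge of two circles (a "figure eight").

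Next I would pass to the nearby regular levels $\S_-:=F^{-1}(1/2-\e)$ and $\S_+:=F^{-1}(1/2+\e)$ and exploit the flow. The stable manifold $W^s_z$ meets $\S_-$ in two points $p_1,p_2$, and the gradient flow yields a diffeomorphism $\S_-\setminus\{p_1,p_2\}\cong\S_{1/2}\setminus\{z\}$ (and symmetrically from $\S_+$ via $W^u_z$). The crucial deduction is that, since $\S'$ is a figure eight, the component(s) $\Gamma_-\subset\S_-$ containing $p_1,p_2$ must consist of one or two \emph{circle} components disjoint from $Y$: if one of them were an arc component, one of its endpoints would lie on $\partial\S_-\subset Y$, and transporting that end through the flow–diffeomorphism (using invariance of $Y$) would produce an arc of $\S'$ with an endpoint on $Y$, which the figure eight has not. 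Cutting a circle through $p_1,p_2$ produces exactly the right number of loop–arcs to match the two loops of $\S'$, confirming consistency. Running the mirror argument from $\S_+$ shows $\Gamma_+$ is likewise one or two circles disjoint from $Y$; and since an index–$1$ handle in an oriented surface realizes a $0$--surgery on a $1$--manifold, $\{\#\Gamma_-,\#\Gamma_+\}=\{1,2\}$.

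With this in hand, the component $P_0$ of $F^{-1}([1/2-\e,1/2+\e])$ containing $z$ is $\Gamma_-\times[\,\cdot\,]$ together with the $1$--handle attached at $z$; as $\O$ is oriented, $P_0$ is a pair of pants with $\partial P_0=\Gamma_-\sqcup\Gamma_+$, in particular $\partial P_0\cap Y=\emptyset$. Finally I would use connectedness of $\O$: below $\S_-$ and above $\S_+$ the Morse function $F$ has no critical points, so $F^{-1}([0,1/2-\e])\cong\S_0\times[0,1]$ and $F^{-1}([1/2+\e,1])\cong\S_1\times[0,1]$ by Lemma~\ref{lem:novanishnoproblem}; attaching these collars to $P_0$ along $\Gamma_-$ and $\Gamma_+$ gives a pair of pants whose remaining boundary is the union $\S_0'\sqcup\S_1'$ of the circle components of $\S_0,\S_1$ lying under $\Gamma_-$ and over $\Gamma_+$. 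Since every other component of $\O$ is disjoint from $P_0$, connectedness forces $\S_0=\S_0'$ and $\S_1=\S_1'$. Hence $\O$ is a pair of pants, $\{\S_0,\S_1\}$ equals $\{S^1,\;S^1\sqcup S^1\}$ in one order or the other, $\partial\O$ consists of three circles, and therefore $Y=\emptyset$.

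I expect the main obstacle to be the second paragraph: making rigorous the implication "figure–eight level set $\Rightarrow$ the adjacent level components are circles disjoint from $Y$". The cleanest route I see is precisely the flow–diffeomorphism $\S_\pm\setminus(\text{descending/ascending sphere})\cong\S_{1/2}\setminus\{z\}$ combined with invariance of $Y$, which turns the question into a bookkeeping of arcs: a circle component of $\S_-$ through $p_i$ is cut into the right number of loop–arcs matching the two loops of $\S'$, whereas an arc component would contribute an arc with a free end on $Y$, which $\S'$ does not possess.
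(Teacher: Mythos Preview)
Your argument is correct. The route you take is close in spirit to the paper's, but the presentations differ enough to be worth contrasting.

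The paper argues in one stroke via the identification $(\Sigma',z)\cong(\Sigma_0/A_0,\,A_0/A_0)$, where $A_0\subset\Sigma_0$ is the attaching region of the $1$--handle: since collapsing $A_0$ does not create or destroy boundary points, $\Sigma'\cap Y=\emptyset$ is equivalent to $\Sigma_0$ being disjoint from $Y$, i.e.\ a union of circles. A short component count (using that an index--$1$ handle changes the number of components by $\pm 1$ and that $\Omega$ is connected) then pins down the pair of pants. Your proof unpacks the same mechanism by hand: you first recognise $\Sigma'$ as a figure eight, then transport this through the flow diffeomorphism $\Sigma_{\pm}\setminus\{\text{descending/ascending sphere}\}\cong\Sigma_{1/2}\setminus\{z\}$, using invariance of $Y$ to rule out arc components in $\Gamma_\pm$, and finally assemble the pair of pants from $P_0$ and the two product collars. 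What you gain is a fully self--contained argument that does not invoke the quotient description of the singular level; what the paper gains is brevity. Your Euler--characteristic style check that $\{\#\Gamma_-,\#\Gamma_+\}=\{1,2\}$ (equivalently, that $P_0$ has exactly three boundary circles because $\chi(P_0)=-1$ and both $\Gamma_\pm$ are nonempty) is a clean way to nail down the orientable case, and your use of connectedness of $\Omega$ at the end to absorb the collars is exactly the right finishing move.
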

\begin{proof}
A one-handle attached to a surface changes the number of boundary
components by $\pm 1$. Let us assume that $\S_1$ has fewer
components than $\S_0$, if not we can reverse the cobordism. As
$\O$ is connected, $\S_0$ has two components
 and $\S_1$ only one. Let $A_0\subset\S_0$
be the attaching region, i.e. the union of two closed intervals to
which the one-handle is attached. With the notation of
Section~\ref{ss:step1} we  have $(\Sigma',z)\cong(\S_0/A_0,A_0/A_0)$, where the quotient
denotes collapsing a space to a point.  In particular $z$ cannot be joined to $Y$
by a path in $\S'$ if and only if $\S_0$ is disjoint from $Y$.
Hence $\S_0$ is closed, that is,  it is a union of two circles.
\end{proof}

\section{Rearrangements of boundary handles}\label{S:rear}

\subsection{Preliminaries}

Let $(\O,Y)$ be a cobordism between two $n$-di\-men\-sio\-nal
manifolds with boundary $(\Sigma_0,M_0)$ and $(\Sigma_1,M_1)$. Let
$F$ be a Morse function, with critical points
$w_1,\dots,w_k\in \Int \O$ and $y_1,\dots,y_l\in Y$. In the
classical theory (that is, when $Y=\emptyset$), the
Thom--Milnor--Smale theorem (see \cite[Section 4]{Mi-hcob}) says
that we can alter $F$ without introducing new critical  points
such that if $\ind w_i<\ind w_j$, then $F(w_i)<F(w_j)$ as well. We
want to prove similar results in our more general case.

In this section we rely very strongly on \cite[Section 4]{Mi-hcob}.

\subsection{Elementary rearrangement theorems}\label{sub:ert}
We shall begin with the case $k+l=2$, i.e. $F$ has two critical
points. For a critical point $p$ we shall denote by $K_p$ the
union $W^s_p\cup \{p\}\cup W^u_p$, i.e. the set of all points
$x\in \O$, such that the trajectory $\phi_t(x)$ ($t\in\R$), of the
gradient vector field $\nabla F$ contains $p$ in its limit set.
Elementary rearrangement theorems deal with the case when the two
sets $K_{p_1}$ and $K_{p_2}$ for the two critical points are
disjoint.

\begin{proposition}[Rearrangement of critical points]\label{prop:rearii}
Let $p_1$ and $p_2$ be two critical points, and assume that
$K_1:=K_{p_1}$ and $K_2:=K_{p_2}$ are disjoint. Let us choose
$a_1,a_2\in(0,1)$. Then, there exist a Morse function $G\colon
\O\to[0,1]$, with critical points exactly at $p_1$ and $p_2$, such
that $G(p_i)=a_i$, $i=1,2$; furthermore, near $p_1$ and $p_2$, the
difference $F-G$ is a locally constant function.
\end{proposition}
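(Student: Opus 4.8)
The plan is to adapt the classical rearrangement argument of Milnor (\cite[Section~4]{Mi-hcob}, especially Theorem~4.1 there) to the boundary setting, the main point being that all the constructions can be carried out in a way compatible with the tangency condition along $Y$. First I would fix a gradient-like vector field $\xi$ for $F$ (here it is technically convenient to use a gradient-like vector field rather than the gradient, since the flow-box arguments are cleaner, and by Lemma~\ref{lem:linktwo} this is harmless; alternatively one works with $\nabla F$ directly and invokes Lemma~\ref{lem:altermet} at the end to restore tangency). The key geometric input is that $K_1$ and $K_2$ are disjoint compact sets, each a union of a stable manifold, the critical point, and an unstable manifold; moreover each $K_i$ is invariant under the flow and, because $\xi$ is tangent to $Y$, the portions of $K_i$ lying on $Y$ stay on $Y$. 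Since $F(\S_0)=0$ and $F(\S_1)=1$ and critical points are interior to the cobordism, every trajectory either limits to a critical point or runs from $\S_0$ to $\S_1$.

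The core construction is an auxiliary function $\mu\colon\O\to[0,1]$ which is constant along the flow lines of $\xi$, equals $0$ on all trajectories through $K_1$ and equals $1$ on all trajectories through $K_2$ (such a $\mu$ exists because $K_1,K_2$ are disjoint, closed, and flow-saturated; one builds it by choosing disjoint flow-saturated neighbourhoods of the two ``bundles of trajectories'' and taking a smooth function separating them, then checking it can be made constant on trajectories). The condition $\xi\cdot\mu=0$ is the crucial one. Then, following Milnor, one picks a smooth function $G$ of the form $G = \Phi(F,\mu)$ for a suitable $\Phi\colon[0,1]\times[0,1]\to[0,1]$ chosen so that: $\Phi(\cdot,m)$ is an increasing diffeomorphism of $[0,1]$ for each fixed $m$; $\Phi(v,m)=v$ for $(v,m)$ near the boundary values corresponding to $\S_0$ and $\S_1$ (so $G$ agrees with $F$ near $\S_0\cup\S_1$); $\Phi$ is the identity in $v$ near $v=F(p_1)$ on the slice $m=0$ and near $v=F(p_2)$ on the slice $m=1$ (so $F-G$ is locally constant near the critical points); and $\Phi(F(p_1),0)=a_1$, $\Phi(F(p_2),1)=a_2$. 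One computes $\xi\cdot G = (\partial_1\Phi)(F,\mu)\,(\xi\cdot F) + (\partial_2\Phi)(F,\mu)\,(\xi\cdot\mu) = (\partial_1\Phi)\,(\xi\cdot F)$, which is positive away from the critical points since $\partial_1\Phi>0$; hence $G$ has exactly the same critical points as $F$, with the same local models (as $F-G$ is constant near each), so $G$ is again a Morse function. The tangency of $\nabla G$ (or of a gradient-like field) to $Y$ is inherited: $\xi$ is still gradient-like for $G$ and still tangent to $Y$, and if one insists on the gradient formulation one invokes Lemma~\ref{lem:linktwo} to realise $\xi$ as $\nabla_h G$ for a metric $h$ that is a product near $Y$.

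The step I expect to be the main obstacle is the construction of the flow-saturated separating function $\mu$ with $\xi\cdot\mu=0$, and making sure everything respects the corners/boundary $Y$. In the classical closed case this is handled by observing that each trajectory hits $\S_0$ (or limits to a critical point), so one can define $\mu$ first on $\S_0$ (minus the stable spheres) and transport it by the flow; in our case $\S_0$ has boundary $M_0$ and the flow on $Y$ is an honest flow of the boundary, so one must define $\mu$ on $\S_0$ compatibly and check that it extends smoothly across the stable/unstable manifolds and is genuinely smooth on all of $\O$ (the trajectories limiting to the critical points require a little care, exactly as in Milnor). The disjointness of $K_1$ and $K_2$ is what makes this possible: the set of trajectories meeting $K_1$ and the set meeting $K_2$ have disjoint closures in $\O$, so a smooth Urysohn-type function separating them exists, and saturating it along the flow (averaging / using that the flow gives a fibration away from the critical loci) produces $\mu$. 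Once $\mu$ is in hand, the choice of $\Phi$ and the verification of the three bulleted requirements in the statement are routine and essentially identical to \cite[Section~4]{Mi-hcob}; I would simply indicate the modifications and refer to Milnor for the analytic details, as the proposition is stated as ``elementary.''
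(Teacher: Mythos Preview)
Your proposal is correct and follows the same classical Milnor-style argument as the paper: construct a flow-invariant separating function $\mu$ (the paper states this as Lemma~\ref{lem:improvedMilnor}), then set $G=\Psi(F,\mu)$ for a suitable reparametrizing function $\Psi$ with $\partial_1\Psi>0$, and verify $G$ has the same critical set via the chain rule.

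The one place you diverge slightly is in handling the tangency of $\nabla G$ to $Y$. The paper works throughout with gradients and makes a case distinction: when at least one critical point is interior it arranges (property~(M4)) that $\mu$ is constant on $Y$, so that $\nabla\mu|_Y=0$ and \eqref{eq:chainrule} shows $\nabla G$ is parallel to $\nabla F$ on $Y$; when both critical points are boundary, $\mu$ cannot be constant on $Y$, and the paper instead observes $dG(\nabla F)>0$ so $\ker dG\not\subset TY$, then invokes Lemma~\ref{lem:altermet} to change the metric away from $K_1\cup K_2$. Your route via a gradient-like field $\xi$ sidesteps this case distinction, since $\xi$ is automatically gradient-like for $G$ and already tangent to $Y$; you then appeal to the Smale equivalence to produce a metric. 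This is arguably cleaner, though note that Lemma~\ref{lem:linktwo} as stated covers only the noncritical region---near the critical points you should say explicitly that the local Morse chart from condition~(b') supplies the metric there, and then patch. Either way the content is the same, and you already flag Lemma~\ref{lem:altermet} as the alternative, which is exactly what the paper uses.
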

\begin{remark}
If both critical points are on the boundary, in order to guarantee
the above existence, we need even to change the Riemannian metric
away from $K_1$ and $K_2$.
\end{remark}
\begin{proof}
Similarly to \cite[Section 4]{Mi-hcob} we  will use  an
auxiliary result. Its proof is postponed after the end of proof of Proposition~\ref{prop:rearii}.

\begin{lemma}\label{lem:improvedMilnor}
There exists a smooth function $\mu\colon \O\to[0,1]$ with the following properties:
\begin{itemize}
\item[(M1)] $\mu\equiv 0$ in a neighbourhood of $K_1$;
\item[(M2)] $\mu\equiv 1$ in a neighbourhood of $K_2$;
\item[(M3)] $\mu$ is constant on trajectories of $\nabla F$.
\end{itemize}
Furthermore, if at least one of the critical points is interior, we have
\begin{itemize}
\item[(M4)] $\mu$ is constant on $Y$.
\end{itemize}
\end{lemma}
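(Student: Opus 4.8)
The plan is to reproduce the proof of the analogous lemma in \cite[Section~4]{Mi-hcob}, adding the extra bookkeeping needed to control $\mu$ on the invariant boundary $Y$. Write $\phi_t$ for the flow of $\nabla F$; recall that $\phi_t$ preserves $Y$, that $F$ is strictly increasing along every non-constant trajectory, and that by hypothesis $K_1,K_2$ are disjoint compact sets, each invariant under $\phi_t$. First I would record one elementary observation: if $p_i$ is an interior critical point then $K_i\cap Y=\emptyset$, since a trajectory meeting the invariant set $Y$ stays in $Y$ and hence cannot have the interior point $p_i$ in its limit set.

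The heart of the argument is a separation statement: \emph{there exist disjoint open $\phi_t$-invariant sets $\Omega_1\supset K_1$ and $\Omega_2\supset K_2$, which moreover may be chosen disjoint from $Y$ whenever $p_i$ is interior.} To prove it I would fix disjoint closed neighbourhoods $N_i\supset K_i$ (disjoint also from $Y$ in the interior case, using the observation above), choose a small open neighbourhood $U_i\supset K_i$, and set $\Omega_i:=\{x\in\O:\phi_t(x)\in U_i\text{ for some }t\in\R\}$, the saturation of $U_i$. This is open, $\phi_t$-invariant, and contains $K_i$ because $K_i$ is invariant. The real content is that $\Omega_i\subset N_i$ once $U_i$ is small enough, which is the absence of recurrence of gradient flows: if some trajectory met $U_i$ yet escaped $N_i$, then (using that $F$ is bounded while $\tfrac{d}{dt}F(\phi_t x)=|\nabla F(\phi_t x)|^2$) a compactness argument produces a limiting trajectory that spends unbounded time near the critical points, and since $K_1\cap K_2=\emptyset$ there is no trajectory running from one of $p_1,p_2$ to the other, so such a trajectory would have to lie entirely in $K_i$ --- contradicting that it escapes $N_i$. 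This is precisely Milnor's argument, which I would quote once the picture is set up.

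Given the separation, I would conclude as follows. Choose a smooth $\mu_0\colon\S_0\to[0,1]$ equal to $0$ on $\overline{\Omega_1\cap\S_0}$ and to $1$ on $\overline{\Omega_2\cap\S_0}$ (disjoint closed subsets of $\S_0$); if $p_i$ is interior then $\overline{\Omega_i}\cap M_0=\emptyset$, so I may in addition require $\mu_0$ to be a suitable constant near $M_0$ --- namely the value that $\mu$ is required to take near the other critical point if that one lies on $Y$, and an arbitrary constant if both are interior. The set $\O^\circ:=\O\setminus(W^u_{p_1}\cup W^u_{p_2}\cup\{p_1,p_2\})$ is open (there are no broken trajectories here, there being only two critical points with $K_1\cap K_2=\emptyset$, so each $W^u_{p_i}\cup\{p_i\}$ is closed), and every $x\in\O^\circ$ has its backward $\nabla F$-trajectory meeting $\S_0$ in a single point $\pi(x)$, with $\pi$ smooth (cf.\ Lemma~\ref{lem:novanishnoproblem}). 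Now define $\mu$ to equal $0$ on $\Omega_1$, $1$ on $\Omega_2$, and $\mu_0\circ\pi$ on $\O^\circ$; these three open sets cover $\O$ because the ascending sets lie inside $K_1\cup K_2\subset\Omega_1\cup\Omega_2$, and the invariance of $\Omega_1,\Omega_2$ together with the choice of $\mu_0$ makes the three formulas agree on overlaps, so $\mu$ is a well-defined smooth function. It is constant along trajectories, since $\pi$ is and the exceptional (ascending) trajectories are confined to $\Omega_1$ or $\Omega_2$. Properties (M1)--(M3) follow at once; for (M4), if say $p_1$ is interior then $\Omega_1\cap Y=\emptyset$, so on $Y$ the function $\mu$ equals $\mu_0\circ\pi$ off the ascending set of $p_2$ and a single constant on the rest, and by the choice of $\mu_0|_{M_0}$ these agree to one constant on all of $Y$.

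I expect the separation step to be the main obstacle --- making the no-recurrence compactness argument airtight in the presence of boundary critical points and of the invariant face $Y$. Everything after it (smoothness of the three-piece $\mu$, constancy along trajectories, and the behaviour on $Y$) is routine and follows \cite[Section~4]{Mi-hcob} almost verbatim.
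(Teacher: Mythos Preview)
Your proposal is correct and follows essentially the same route as the paper: both define $\mu$ first on the bottom slice $\Sigma_0$ (constant near $T_i=K_i\cap\Sigma_0$, and constant on $M_0$ when one critical point is interior), then propagate along trajectories of $\nabla F$, exactly as in \cite[Section~4]{Mi-hcob}. The only difference is that you make explicit the separation step (flow-invariant open neighbourhoods $\Omega_i\supset K_i$) needed to see that the piecewise definition glues smoothly, whereas the paper's short proof leaves this implicit in the citation to Milnor; your treatment of (M4) via the observation $K_i\cap Y=\emptyset$ for interior $p_i$ and the invariance $\pi(Y)\subset M_0$ is identical to the paper's.
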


We continue with the proof of Proposition~\ref{prop:rearii}. We choose a smooth function
$\Psi\colon[0,1]\times[0,1]\to[0,1]$ with
\begin{itemize}
\item[(PS1)] $\frac{\p\Psi}{\p x}(x,y)>0$ for all $(x,y)\in[0,1]\times[0,1]$;
\item[(PS2)] there exists $\delta>0$ such that $\Psi(x,y)=x$ for all $x\in[0,\delta]\cup[1-\delta,1]$ and $y\in [0,1]$;
\item[(PS3)] for any $s\in(-\delta,\delta)$ we have $\Psi(F(p_1)+s,0)=a_1+s$ and
$\Psi(F(p_2)+s,1)=a_2+s$.
\end{itemize}

For any $\eta\in\Omega$ we define
$G(\eta)=\Psi(F(\eta),\mu(\eta))$. From the properties (PS3), (M1)
and (M2) we see that near $p_i$, $G$ differs from $F$ by a
constant. The property (PS2) ensures that $G$ agrees with $F$ in a
neighbourhood of $\S_0$ and $\S_1$. Let us show that $\nabla G$
does not vanish away from $p_i$. By the chain rule we have
\begin{equation}\label{eq:chainrule}
\nabla G=\frac{\p\Psi}{\p x}\nabla F+\frac{\p\Psi}{\p y}\nabla\mu.
\end{equation}
Since $\mu$ is constant on all trajectories of $\nabla F$, the
scalar product $\langle\nabla F,\nabla\mu\rangle=0$. Then the
property (PS1) guarantees  that $\langle\nabla G,\nabla
F\rangle>0$ away from $p_1$ and $p_2$.

We need to show that $\nabla G$ is everywhere tangent to $Y$. If
one of the points is interior, by (M4) $\nabla\mu$ vanishes on
$Y$, hence $\nabla G$ is parallel on $Y$ to $\nabla F$ and we are
done. Next assume that  both critical points are on the boundary.
Let us choose an open subset $U$ of $Y$ such that
$\nabla\mu|_{U}=0$ and $K_1\cup K_2\subset U$. This is possible,
because of the properties (M1) and (M2). Then let us choose a
neighbourhood $W$ in $\O$ of $Y\setminus U$, disjoint from $K_1$
and $K_2$. Observe that $dG(\nabla F)=\langle \nabla G,\nabla
F\rangle>0$. As $\nabla F\in TY$ one has $TY\not\subset\ker dG$,
so by Lemma~\ref{lem:altermet} we can change the metric in $W$ so
that $\nabla G$ is everywhere tangent to $Y$.
\end{proof}

\begin{proof}[Proof of Lemma~\ref{lem:improvedMilnor}]
Let us define $T_1=K_1\cap\Sigma_0$ and $T_2=K_2\cap\Sigma_0$. Assume that $T_1$ and $T_2$ are not empty.
For each $\eta\in \O\setminus K_{1}\cup K_{2}$, let $\pi(\eta)$ be the intersection of
the trajectory of $\eta$ under $\nabla F$ with $\S_0$. This gives a map
$\pi\colon \O\setminus (K_1\cup K_2)\to \S_0\setminus (T_1\cup T_2)$.

Let us define $\mu$ first on $\S_0$ by the following conditions:
$\mu\equiv 1$ in a neighbourhood of $T_2$, $\mu\equiv 0$ in a
neighbourhood of $T_1$. Furthermore, if either $T_1$ or $T_2$ is
disjoint from the boundary $M_0$ we extend $\mu$ to a constant
function on $M_0$.
Finally,  we extend $\mu$ to the whole $\O$ by picking
$\mu(\eta)=\mu(\pi(\eta))$ if $\eta\not\in K_{1}\cup K_{2}$, and
$\mu|_{K_i}(\eta)=i-1$, $i=1,2$.

If $T_1=\emptyset$, then $\ind_F p_1=0$ and the proof of the rearrangement theorem is completely straightforward.
\end{proof}

\subsection{Morse--Smale condition on manifolds with boundary}\label{ss:msc}

In the classical theory, the Morse--Smale condition imposed on a
Morse function $F\colon M\to\R$ means that for each pair of two
critical points $p_1,p_2$ of $M$ the intersection of stable
manifold $W^s_{p_1}$ with the unstable manifold of $W^u_{p_2}$ is
transverse. (Note that this Morse--Smale condition also depends on
the choice of Riemannian metric on $M$.) Following
\cite[Definition~2.4.2]{KM}, we reformulate the Morse--Smale
condition in the following way

\begin{definition}\label{def:KMMS}
The function $F$ is called \emph{Morse--Smale} if for any two
critical points $p_1$ and $p_2$, the intersection of $\Int\O\cap
W^s_{p_1}$ with $\Int\O\cap W^u_{p_2}$ is transverse (as the
intersection in the $(n+1)$-dimensional manifold $\O$) and the
intersection of $Y\cap W^s_{p_1}$ with $Y\cap W^u_{p_2}$ is
transverse (as an intersection in the $n$-dimensional manifold
$Y$).
\end{definition}

The Morse--Smale functions form an open-dense subset of all $C^2$ smooth functions
satisfying the condition~\eqref{eq:kerdfnot}. The proof is the same as in the case
of the Morse functions on manifolds without boundary; see for example \cite[Theorem 2.27]{Nic}.

Assume now that $F$ is Morse--Smale. Given two critical points of
$F$, $p_1$ and $p_2$, we want to check whether $W^s_{p_1}\cap
W^u_{p_2}=\emptyset$. This depends not only on the indices, but also
on whether either of the two points are boundary stable. We show this
in a tabulated form in Table~\ref{tab:morsesmale}, where $\ind
p_1=k$ and $\ind p_2=l$. In studying the intersection, we remark
that $W^s_{p_1}\cap W^u_{p_2}$ is formed from trajectories, so if
for dimensional reasons we have $\dim W^s_{p_1}\cap W^u_{p_2}<1$,
it immediately follows that this intersection is
empty.

\begin{table}[t]
\begin{tabular}{|c|c|c|c|c|c|c|}\hline
type of $p_1$ & type of $p_2$ & $\dim_\O W^s_{p_1}$ & $\dim_Y
W^s_{p_1}$ & $\dim_\O W^u_{p_2}$ & $\dim_Y W^u_{p_2}$ & empty if
\\ \hline
interior & interior & $k$ & $\emptyset$ & $n+1-l$ & $\emptyset$ & $k\leqslant l$ \\
interior & b. stable & $k$ & $\emptyset$ & $\emptyset$ & $n+1-l$ & always \\
interior & b. unstable & $k$ & $\emptyset$ & $n+1-l$ & $n-l$ & $k\leqslant l$ \\ \hline
b. stable & interior & $k$ & $k-1$ & $n+1-l$ & $\emptyset$ & $k\leqslant l$ \\
b. stable & b. stable & $k$ & $k-1$ & $\emptyset$ & $n-l$ & $k\leqslant l$ \\
b. stable & b. unstable & $k$ & $k-1$ & $n+1-l$ & $n-l$ & $k\leqslant l$ \\ \hline
b. unstable & interior & $\emptyset$ & $k$ & $n+1-l$ & $\emptyset$ & always\\
b. unstable & b. stable & $\emptyset$ & $k$ & $\emptyset$ & $n+1-l$ & $\mathbf{k<l}$ \\
b. unstable & b. unstable & $\emptyset$ & $k$ & $n+1-l$ & $n-l$ & $k\leqslant l$.\\ \hline
\end{tabular}
\smallskip
\caption{Under the Morse--Smale condition, the last column shows,
whether there might exist trajectories from $z_2$ to $z_1$.
We write $\dim_\O W^s=\dim (W^s\cap \Int\O)$ and $\dim_YW^s=\dim (W^s\cap Y)$.
$\emptyset$ means that the manifold in question is empty.}
\label{tab:morsesmale}
\end{table}

\subsection{Global rearrangement theorem}\label{ss:grt}

Let us combine the rearrangement theorems from Section~\ref{sub:ert} with the computations in Table~\ref{tab:morsesmale}.

\begin{proposition}\label{prop:grt}
Let $F$ be a Morse function on  a cobordism $(\O,Y)$ between
$(\S_0,M_0)$ and $(\S_1,M_1)$. Let $w_1,\dots,w_m$ be the interior
critical points of $F$ and let $y_1,\dots,y_{k+l}$ be the boundary
critical points, where $y_1,\dots,y_k$ are boundary stable and
$y_{k+1},\dots,y_{k+l}$ are boundary unstable. Let us choose real
numbers $0<c_0<c_1<\dots<c_{n+1}<1$, $0<c_1^s<\dots<c_{n+1}^s<1$,
$0<c_0^u<\dots<c_n^u<1$ satisfying
\begin{equation}\label{eq:ineq}
\begin{split}
c_{i-1}^s<c_i<c_{i+1}^s\\
c_{i-1}^u<c_i<c_{i+1}^u\\
c_{i-1}^u<c_i^s<c_i^u
\end{split}
\end{equation}
for all $i\in\{0,\dots,n+1\}$ (we can assume that $c_{-1}=c_0^s=c_{-1}^u=0$,
$c_{n+2}=c_{n+2}^s=c_{n+1}^u=1$ so that \eqref{eq:ineq} makes sense for
all $i$).

Then there exists another Morse function $G$ on the  cobordism
$(\O,Y)$ with critical points $w_1,\dots,w_m$ in the interior,
$y_1,\dots,y_{k+l}$ on the boundary, such that if $\ind w_j=l$,
then $G(w_j)=c_l$, if $\ind y_i=l$ and $y_i$ is boundary stable
then $G(y_i)=c_l^s$, and if $\ind y_i=l$ and $y_i$ is boundary
unstable, then $G(y_i)=c_l^u$.
\end{proposition}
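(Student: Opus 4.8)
The plan is to follow the classical rearrangement argument of \cite[Section~4]{Mi-hcob}, with Table~\ref{tab:morsesmale} playing the role that the bare comparison of indices plays when $Y=\emptyset$. Introduce the \emph{target-value function} $\nu$ on the critical set of $F$: put $\nu(w)=c_l$ for an interior critical point $w$ of index $l$, and $\nu(y)=c^s_l$ (resp.\ $c^u_l$) for a boundary critical point $y$ of index $l$ that is boundary stable (resp.\ boundary unstable). We must produce a Morse function $G$ on $(\O,Y)$, equal to $F$ near $\S_0\cup\S_1$, with the same critical points, and with $G=\nu$ on the critical set. First, after perturbing the Riemannian metric within the class for which $\nabla F$ stays tangent to $Y$, we may assume $F$ is Morse--Smale (Definition~\ref{def:KMMS}); this leaves $F$, hence its critical points and their indices and types, unchanged, and any later metric change (forced by Proposition~\ref{prop:rearii} or Lemma~\ref{lem:altermet}) may afterwards be followed by a small perturbation restoring the Morse--Smale property without affecting the function built so far.

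The heart of the argument is the claim that \emph{if $\nabla F$ admits a trajectory running from a critical point $q$ to a critical point $p$, then $\nu(q)<\nu(p)$}; equivalently, $\nu(p)\le\nu(q)$ forces $W^u_q\cap W^s_p=\emptyset$, and if $\nu(p)=\nu(q)$ then $K_p\cap K_q=\emptyset$. This is verified row by row in Table~\ref{tab:morsesmale}: a trajectory from $q$ to $p$ can occur only if the ``empty if'' entry of the appropriate row \emph{fails}, and in each informative row that failure is an explicit inequality between $k=\ind p$ and $l=\ind q$ which, by the three chains of inequalities \eqref{eq:ineq} and the monotonicity of $(c_l),(c^s_l),(c^u_l)$, is precisely $\nu(q)<\nu(p)$. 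For instance, a trajectory from a boundary unstable $q$ of index $l$ to a boundary stable $p$ of index $k$ forces $k>l$ (the boldfaced entry of the table), and then $\nu(q)=c^u_l\le c^u_{k-1}<c^s_k=\nu(p)$, the middle inequality being one of the chains in \eqref{eq:ineq}; the remaining informative rows are handled the same way, and the two rows reading ``always'' make the implication vacuous.

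Granting the claim, the rest is the standard banding argument. Let $t_1<\dots<t_r$ be the distinct values of $\nu$ on the critical set and let $C_j=\nu^{-1}(t_j)$. For every $j$ there is no $\nabla F$-trajectory from $C_{j+1}\cup\dots\cup C_r$ to $C_1\cup\dots\cup C_j$, because by the claim any such trajectory would run from a critical point of smaller $\nu$ to one of larger $\nu$. Hence a routine extension of Proposition~\ref{prop:rearii} --- whose proof is the same, taking in Lemma~\ref{lem:improvedMilnor} a single function $\mu$ that near $C_j$ is a constant depending only on $j$ and is constant along trajectories (with the index-$0$ sets $C_j$ treated as in the case $T_1=\emptyset$ of that proof, and Lemma~\ref{lem:altermet} invoked when only boundary critical points occur) --- replaces $F$ by a Morse function whose critical values on each $C_j$ all lie in a prescribed open band $(s_{j-1},s_j)$, with $0=s_0<\dots<s_r=1$. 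After restoring Morse--Smale, one restricts to the cobordism slab over one such band: its critical points all share the target value $t_j$ and, by the claim, the sets $K_p$ for $p$ in that slab are pairwise disjoint, so one further rearrangement pushes them all to $t_j$, leaving the function unchanged near $\S_0\cup\S_1$ and near the critical points of the other slabs. The resulting $G$ is the desired Morse function.

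The genuinely new input, where I expect to spend most of the effort, is the row-by-row matching of Table~\ref{tab:morsesmale} with \eqref{eq:ineq} in the claim. Everything after it is the familiar bookkeeping of \cite[Section~4]{Mi-hcob} --- arranging that each modification is supported away from the sets $K_p$ of the critical points whose values one wishes to preserve, and keeping $\nabla G$ tangent to $Y$ via Lemma~\ref{lem:altermet} --- and is affected by the boundary setting only through that table.
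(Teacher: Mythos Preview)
Your proposal is correct and follows essentially the same approach as the paper: reduce to Milnor's standard rearrangement argument via Proposition~\ref{prop:rearii}, with Table~\ref{tab:morsesmale} supplying the needed disjointness of the $K_p$'s, and the inequalities \eqref{eq:ineq} ensuring target values respect possible trajectories. The paper's own proof is a two-line pointer to \cite[Theorem~4.8]{Mi-hcob}, singling out only the constraint $c_i^s<c_i^u$ (forced by the possibility of a trajectory from a boundary stable to a boundary unstable point of the \emph{same} index); your write-up simply unpacks that pointer.

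One small slip to fix: the example you label ``the boldfaced entry of the table'' is actually the row (b.\ stable $p_1$, b.\ unstable $p_2$), i.e.\ a trajectory from a boundary \emph{unstable} source to a boundary \emph{stable} target, which forces $k>l$ and uses $c^u_{k-1}<c^s_k$. The genuinely bold row is the reverse direction---boundary \emph{stable} source to boundary \emph{unstable} target---where the condition is only $k\geqslant l$, so the equal-index case $k=l$ survives and is handled precisely by $c^s_k<c^u_k$. This is the one case the paper bothers to mention, so you should display it rather than its mirror.
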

\begin{proof}
Given the elementary rearrangement result
(Proposition~\ref{prop:rearii}), the proof is completely standard
(see the proof of Theorem~4.8 in \cite{Mi-hcob}). Note only that
we need to have $c_i^s<c_i^u$ in the statement, because there
might be a trajectory from a boundary stable critical point to a
boundary unstable of the same index. However, we are free to
choose $c_i<c_i^s$ or $c_i\in (c_i^s,c_i^u)$ or $c_i>c_i^u$.
\end{proof}

\subsection{Moving more handles to the boundary at once}\label{ss:fhsproof}

Before we formulate Theorem~\ref{thm:fullhandsplit}, let us
introduce the following technical notion.

\begin{definition}\label{def:techgood}
The Morse function $F$ on the cobordism $(\O,Y)$ is called
\emph{technically good} if it has the following properties.
\begin{itemize}
\item[(TG1)] If $p_1$, $p_2$ are (interior or boundary)
critical points of $F$ then $\ind p_1<\ind p_2$ implies
$F(p_1)<F(p_2)$;
\item[(TG2)] There exist regular values of $F$, say $c,d\in[0,1]$, with $c<d$ such that $F^{-1}[0,c]$ contains those and only those critical points
which have index $0$ or which are boundary stable critical points of index $1$, and $F^{-1}[d,1]$ contains those and only those critical points
which have index $n+1$ and boundary unstable critical points of index $n$.
\item[(TG3)] There are no pairs of $0$ and $1$ (interior) handles of $F$ that can be cancelled (in the sense of Section~\ref{S:bhc});
\item[(TG4)] There are no pairs of $n$ and $n+1$ (interior) handles of $F$ that can be cancelled.
\end{itemize}
\end{definition}

\begin{lemma}\label{lem:techgood}
Each function $F$ can be made technically good without introducing
new critical points.
\end{lemma}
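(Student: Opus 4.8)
The plan is to achieve the four conditions (TG1)–(TG4) in sequence, starting from an arbitrary Morse function $F$ on $(\O,Y)$, using only operations that do not create new critical points: the global rearrangement theorem (Proposition~\ref{prop:grt}) and the handle cancellation results of Section~\ref{S:bhc}. First I would apply Proposition~\ref{prop:grt} to reorder all critical points so that their critical values are strictly increasing in the index. Concretely, pick real numbers $0<c_0<c_1^s<c_1<c_1^u<c_2^s<c_2<c_2^u<\dots<1$ interleaved so as to satisfy the inequalities \eqref{eq:ineq} and additionally so that all of $c_j, c_j^s, c_j^u$ with a given index $j$ are smaller than any with index $j+1$; this is clearly possible. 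Proposition~\ref{prop:grt} then produces a Morse function $G$ with $G(w)$ determined only by $\ind w$ and (for boundary points) by the stable/unstable type, and (TG1) holds since all values with index $j$ are below all values with index $j+1$.

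Next I would address (TG3) and (TG4). By (TG1), any $0$-handle lies below any $1$-handle. If there is a cancelling pair of an interior $0$- and interior $1$-handle (in the sense of Section~\ref{S:bhc}), cancel it; this removes two critical points and introduces none. Repeat until no such pair remains; the process terminates since the number of critical points strictly decreases. Apply Proposition~\ref{prop:grt} again to restore (TG1) — note that removing handles cannot create new cancellable pairs among the remaining $0$- and $1$-handles, but to be safe one simply alternates cancellation and rearrangement until stable, which happens after finitely many steps. The same argument, applied to the top of the cobordism (equivalently, to $-F$), eliminates all cancellable pairs of interior $n$- and $(n+1)$-handles, giving (TG4). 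Throughout, rearrangement keeps the values ordered by index, so (TG1) survives.

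Finally I would arrange (TG2). After (TG1), choose a regular value $c$ strictly between the (common) critical value of the boundary stable $1$-handles and the next critical value up — namely the interior $1$-handles (if any), then the boundary unstable $1$-handles, then index-$2$ critical points, and so on. Recalling from the discussion after Definition~\ref{def:bstab} that there are no boundary stable critical points of index $0$ and that an index-$0$ critical point is necessarily interior, while boundary stable critical points of index $1$ sit (by our interleaving $c_0<c_1^s<c_1$) below all index-$1$ interior and boundary unstable handles, the sublevel set $F^{-1}[0,c]$ contains exactly the interior index-$0$ handles and the boundary stable index-$1$ handles. Dually, pick $d$ just above the highest critical value below the boundary unstable $n$-handles and the interior $(n+1)$-handles — using that there are no boundary unstable critical points of index $n+1$, and that boundary unstable $n$-handles sit above interior $n$-handles and boundary stable $n$-handles in our ordering — so that $F^{-1}[d,1]$ contains exactly the interior $(n+1)$-handles and the boundary unstable $n$-handles. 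Since (TG2) only constrains which critical points lie in the two extreme sublevel sets, and the ordering from (TG1) already places them correctly, no further modification of $F$ is needed; $c$ and $d$ exist simply because there are only finitely many critical values.

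The main obstacle is the interaction between the cancellation steps for (TG3)–(TG4) and the ordering condition (TG1): one must make sure that cancelling a low pair (or a high pair) does not disturb the index-monotonicity of the remaining critical points, and that the alternation of cancellation and rearrangement terminates. This is handled by observing that each cancellation strictly decreases the number of critical points while rearrangement changes no critical points at all, so the loop runs finitely often; moreover, since handle cancellation is a local modification near a single connecting trajectory, it can always be performed within a thin slab containing only the two handles in question, so after re-applying Proposition~\ref{prop:grt} all four conditions hold simultaneously.
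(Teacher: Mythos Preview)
Your proposal is correct and follows essentially the same approach as the paper's (very brief) proof: use Proposition~\ref{prop:grt} for the rearrangement giving (TG1) and (TG2), and the handle cancellation theorem for (TG3) and (TG4). One small slip: you assert that ``an index-$0$ critical point is necessarily interior,'' but index-$0$ critical points can be boundary unstable (only boundary \emph{stable} index-$0$ points are excluded); correspondingly $c_0^u$ should appear in your ordering, below $c_1^s$ as forced by \eqref{eq:ineq}, and symmetrically $c_{n+1}^s$ at the top. This does not affect the argument, since (TG2) asks that $F^{-1}[0,c]$ contain \emph{all} index-$0$ points together with the boundary stable index-$1$ points, and with $c_0^u<c_1^s$ your choice of $c\in(c_1^s,c_1)$ still works.
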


\begin{proof}
By Proposition~\ref{prop:grt} we can rearrange the critical
points of $F$, proving (TG1) and (TG2). The properties (TG3) and (TG4) can
be guaranteed, using the handle cancellation theorem (e.g. \cite[Theorem 5.4]{Mi-hcob}.
We refer to the beginning of Section~\ref{S:bhc} for an explanation that one
can use the handle cancellation theorem if the manifold in question has boundary.
\end{proof}

\begin{theorem}\label{thm:fullhandsplit}
Let $(\O,Y)$ be a cobordism between $(\S_0,M_0)$ and $(\S_1,M_1)$.
Let $F$ be a technically good Morse function on that
cobordism, which has critical points $y_1,\dots,y_k$ on the
boundary $Y$, $z_1,\dots,z_{l+m}$ in the interior $\Int\O$, of
which $z_{m+1},\dots,z_{l+m}$ have index $0$ or $n+1$ and the
indices of $z_1,\dots,z_m$ are in $\{1,\dots,n\}$. Suppose
furthermore the following properties are satisfied:
\begin{itemize}
\item[(I1)] $\S_0$ and $\S_1$ have no closed connected components;
\item[(I2)] $\O$ has no closed connected component.
\end{itemize}
Then there exists a Morse function $G\colon\O\to[0,1]$, on the
cobordism $(\O,Y)$, with critical points $y_1,\dots,y_k\in Y$,
$z_{m+1},\dots,z_{l+m}$ and $z_1^s,z_1^u,\dots,z_m^s,z_m^u$ such
that:
\begin{itemize}
\item[$\bullet$] $\ind_G y_i=\ind_F y_i$ for $i=1,\dots,k$ and for $j=m+1,\dots,m+l$ we have $\ind_G z_j=\ind_F z_j$;
\item[$\bullet$] for $j=1,\dots,m$, $\ind_G z_j^s=\ind_G z_j^u=\ind_F z_j$;
\item[$\bullet$] for $j=1,\dots,m$, $z_j^s$ and $z_j^u$ are on the boundary $Y$, furthermore $z_j^s$ is boundary stable, $z_j^u$ is boundary unstable
and $G(z_j^s)<G(z_j^u)$.
\end{itemize}
\end{theorem}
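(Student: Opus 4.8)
The plan is to apply Theorem~\ref{thm:handsplit} once for each of the interior critical points $z_1,\dots,z_m$ of index in $\{1,\dots,n\}$, treating them one at a time and leaving the interior handles of index $0$ or $n+1$ and all boundary critical points untouched. First I would use the rearrangement results of Subsection~\ref{ss:grt} (Proposition~\ref{prop:grt} applied repeatedly, together with the freedom to perturb critical values) to make all critical values distinct and to choose regular values $b_0<b_1<\dots<b_m$ with $b_{j-1}<F(z_j)<b_j$ so that the slab $\O_j:=F^{-1}([b_{j-1},b_j])$, with $Y_j:=Y\cap\O_j$, contains $z_j$ as its only critical point. After linearly reparametrising $F|_{\O_j}$ to a map onto $[0,1]$ this is a cobordism with a Morse function having a single interior critical point of index $k_j:=\ind_F z_j$. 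Provided Condition~\eqref{eq:connectwithboundary} holds for $\O_j$, Theorem~\ref{thm:handsplit} replaces $F|_{\O_j}$ by a function agreeing with $F$ near $\p\O_j$, with $\nabla$ tangent to $Y$, whose only critical points are a boundary stable $z_j^s$ and a boundary unstable $z_j^u$, both of index $k_j$ and, by the explicit construction in its proof, at critical values just below and just above $F(z_j)$, so that $G(z_j^s)<G(z_j^u)$. Since this modification is supported in $\Int\O_j$ and preserves tangency to $Y$, the level sets $F^{-1}(b_0),\dots,F^{-1}(b_m)$, as well as $\S_0$ and $\S_1$, are never changed, so the $m$ modifications do not interact; carrying them all out yields the required $G$.

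Thus the whole argument reduces to verifying~\eqref{eq:connectwithboundary} for each slab, and by Proposition~\ref{prop:noconnect} it is enough to check that $F^{-1}(b_{j-1})$, $F^{-1}(b_j)$ and $\O_j$ have no closed connected components. For $\O_j$ this is clear, since $z_j$ has index in $\{1,\dots,n\}$ and is hence neither a maximum nor a minimum of $F$, so no component of $\O_j$ can be closed. The essential point is the absence of closed (``bubble'') components in the level sets, and this is where technical goodness is used. By (TG2) the point $z_j$ lies in the middle region $F^{-1}([c,d])$, which contains no interior critical point of index $0$ or $n+1$; moreover any half-handle attachment is glued along a subset of the moving boundary $M$ (Definitions~\ref{def:rhh}--\ref{def:lhh}), so it cannot meet a closed component of a level set. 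Hence no closed component of a level set is born or dies between levels $c$ and $d$. A closed component present at level $b_{j-1}$ must therefore have been created by an interior $0$-handle below $c$; if it were never merged, by some later interior $1$-handle, with a component meeting $\S_0\cup\S_1\cup Y$, it would either survive to $\S_1$, contradicting (I1), or be capped off by an interior $(n+1)$-handle above $d$ --- in which case the union of the trajectories of $\nabla F$ joining its birth to its death is a compact boundaryless submanifold of $\O$, i.e.\ a closed component of $\O$, contradicting (I2). So every such component is merged below its death with a component meeting the boundary, and, using (TG1) and the freedom to interchange critical values of equal index, the responsible $1$-handle can be brought below level $F(z_j)$, so that the component $\S'_j$ of $F^{-1}(F(z_j))$ through $z_j$ already meets $Y$. (For $n=1$, $k_j=1$, Proposition~\ref{prop:boundarysurface} shows directly that a failure of~\eqref{eq:connectwithboundary} would force a pair-of-pants component with $Y=\emptyset$ and a closed component of $\S_0$ or $\S_1$, excluded by (I1).)

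I expect the main obstacle to be exactly this connectedness bookkeeping: showing that, under (I1), (I2) and technical goodness, no interior handle of index in $\{1,\dots,n\}$ can be trapped inside a closed component of the level sets, and that rearrangement of equal-index critical values is enough to expose, below each $z_j$, the handle absorbing any offending bubble. Everything else --- the iteration of Theorem~\ref{thm:handsplit}, the preservation of the other critical points, and the bookkeeping of indices --- is then routine.
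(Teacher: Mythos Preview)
Your overall strategy --- isolate each $z_j$ in its own slab and apply Theorem~\ref{thm:handsplit}, reducing everything to Condition~\eqref{eq:connectwithboundary} --- is exactly the paper's, and you are right that this connectedness bookkeeping is the whole difficulty. The gap is in your verification of it. You assert that no closed level-set component is born or dies between $c$ and $d$, hence any such component must originate from an interior $0$-handle below $c$. But the interior critical points $z_1,\dots,z_m$ themselves lie in $F^{-1}[c,d]$, and an interior critical point of index $n$ can split a level-set component into two, one of which may be closed. Your follow-up --- that an unmerged bubble either reaches $\S_1$ or is capped by an $(n{+}1)$-handle and so sweeps out a closed component of $\O$ --- then fails when the bubble is born at an index-$n$ point: it is attached there to the rest of $\O$, so its worldtube is not a closed component even if later capped. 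The paper handles this via Proposition~\ref{prop:mtl}: for $n>1$ one uses (TG1) to choose a level $x\in[c,d]$ with all index-$n$ critical points above $x$ and all others below, then applies Lemma~\ref{lem:mtl-lemma4} on $[c,x]$ and its symmetric counterpart (Remark~\ref{rem:sym2}) on $[x,d]$; the supporting Lemmas~\ref{lem:mtl-lemma1}--\ref{lem:mtl-lemma3} are where (TG3) and the hypotheses (I1), (I2) actually enter.

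Your $n=1$ argument is circular. Applying Proposition~\ref{prop:boundarysurface} to the slab $\O_j$ only tells you that failure of~\eqref{eq:connectwithboundary} forces $F^{-1}(b_{j-1})$ or $F^{-1}(b_j)$ to contain a circle --- but those are intermediate level sets, not $\S_0$ or $\S_1$, so (I1) does not exclude this and you are back where you started. Here the paper (Proposition~\ref{prop:mtl}(b)) does something genuinely different: it first moves \emph{all} interior index-$1$ points to a common low level $c_1$, where Lemma~\ref{lem:mtl-lemma4} still guarantees no closed components, picks one that can be joined to $Y$ avoiding the others, pushes it up to its final level, and iterates.
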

In other words, we can move all critical points to the boundary at
once. To prove Theorem~\ref{thm:fullhandsplit} we use
Theorem~\ref{thm:handsplit} independently for each critical point $z_1,\dots,z_m$. We need to
ensure that Condition~\ref{eq:connectwithboundary} holds. This is done in
Proposition~\ref{prop:mtl} stated below. Given these two ingredients the proof is straightforward.

\subsection{Topological ingredients needed in  the proof of Theorem~\ref{thm:fullhandsplit}}\label{ss:mtl}

\begin{proposition}\label{prop:mtl}
Let $F$ be a technically good Morse function on the cobordism
$(\O,Y)$. Assume that $\S_0$, $\S_1$ and $\O$ have no closed connected components.
Let $c,d$ be as in Definition~\ref{def:techgood}. Then

\begin{itemize}
\item[(a)] If $n>1$, then for any $y\in [c,d]$, the inverse image
$F^{-1}(y)$ has no closed connected component.
\item[(b)] If $n=1$, then after possibly rearranging the critical values of the interior critical points of index $1$, for any interior
critical point $z\in\O$ of $F$ of index $1$, $z$ can be connected with $Y$ by a curve lying entirely in $F^{-1}(F(z))$; and furthermore
all the critical points are on different levels.
\end{itemize}
\end{proposition}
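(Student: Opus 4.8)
The plan is to control how the connected components of the level sets $F^{-1}(y)$ change as $y$ increases through a critical value, and to show that a closed component would be incompatible with the hypotheses. By Theorem~\ref{thm:morsehalfhandle} together with Lemmas~\ref{lem:leftonsigma} and \ref{lem:whathappensonboundary}, crossing a critical point affects the level set as follows: an interior critical point of index $j$ produces a surgery along an embedded $S^{j-1}$ lying in the \emph{interior} of $F^{-1}(y)$; a boundary-stable critical point of index $j\geqslant 2$, or a boundary-unstable critical point of any index $j$, produces a modification supported near a piece of the boundary $M=\partial F^{-1}(y)\subset Y$; and a boundary-stable critical point of index $1$ deletes an interior disc $D^{n}$ from $F^{-1}(y)$, its boundary sphere being attached to $Y$. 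In particular a \emph{closed} component $P$ of a level set is disjoint from $Y$, none of these operations can affect $P$ while keeping it closed except the interior surgeries, and, by property (TG2), every interior critical point of index $0$ lies below $c$ and every interior critical point of index $n+1$ lies above $d$.

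For part (a) I would argue by contradiction: assume $F^{-1}(y_0)$, $y_0\in[c,d]$, has a closed component $P$. Flowing $P$ up and down by $\nabla F$ gives a family of closed components of the level sets; since $\Sigma_0$ and $\Sigma_1$ have no closed component this family never reaches $\Sigma_0$ or $\Sigma_1$, so, tracking births and deaths, it must be born at an interior critical point of index $0$ (hence below $c$) and die at an interior critical point of index $n+1$ (hence above $d$). This is where $n>1$ and the technically-good hypotheses (TG1)--(TG4) enter: an interior surgery of index $2,\dots,n$ is internal to a single component and so cannot delete or merge a closed one, an interior index-$1$ surgery that would cancel the index-$0$ handle giving birth to $P$ is excluded by (TG3), and (going down) (TG4) together with the absence of closed components of $\O$ prevents $P$'s ancestry from terminating inside $[c,d]$. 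Consequently the union $R$ of all $\nabla F$-trajectories through $P$, together with the interior critical points in its closure, is a compact, $(n+1)$-dimensional, flow-invariant subset of $\O$ disjoint from $\partial\O$; a local flow-box analysis at the regular and the index-$0$, index-$(n+1)$ critical points shows it is open, hence a union of connected components of $\O$, and being boundaryless each of them is a closed manifold --- contradicting the hypothesis on $\O$.

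For part (b) the scheme breaks down because an interior index-$1$ critical point performs a surgery along $S^0$, which may join two different components: a circle component of a level set can then be born by splitting and, crucially, \emph{destroyed} by being connect-summed with an arc component meeting $Y$. Thus a level set over $[c,d]$ may legitimately contain circles, and an interior index-$1$ critical point $z$ is ``bad'' exactly when its (singular) level-set component is such a circle. First I would perturb $F$, without introducing critical points, so that all critical values are distinct. Then, by the trace argument of part (a) --- still valid unless the circle in question is connect-summed with a $Y$-meeting arc or capped by an index-$2$ critical point above $d$ --- every circle appearing in a level set over $[c,d]$ must be destroyed inside $(c,d)$ by merging with a $Y$-meeting component at an interior index-$1$ critical point whose own level-set component therefore meets $Y$. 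Exploiting this, one rearranges the critical values of the interior index-$1$ critical points one at a time (via Proposition~\ref{prop:rearii}, and reversing the cobordism $F\mapsto 1-F$ when convenient), pushing each index-$1$ value out of the ``lifetime'' of every circle it does not itself kill; an obstruction to completing this would be a circle born and killed only by index-$1$ critical points of split-or-merge-of-circles type, whose trace is again a closed component of $\O$, which is excluded.

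The step I expect to be the main obstacle is the bookkeeping in part (a): turning the intuitive picture ``a closed component of a level set sweeps out a closed component of $\O$'' into a rigorous argument --- in particular verifying, from the surgery descriptions and the hypotheses (TG1)--(TG4), that the swept-out set $R$ is genuinely open and genuinely disjoint from $Y$ --- and, in part (b), organising the rearrangements of the index-$1$ critical values so that making one index-$1$ critical point good does not spoil another.
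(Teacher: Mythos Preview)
Your outline for part~(a) has the right ingredients --- in particular, the way (TG3) and (TG4) are meant to exclude a closed level-set component from merging with a $Y$--meeting one via an index--$1$ (resp.\ index--$n$) interior critical point is exactly the mechanism the paper exploits in its Lemma~\ref{lem:mtl-lemma2}. The gap is in the final step. Your set $R$ is \emph{not} open at an interior critical point $z$ of intermediate index $1\leqslant j\leqslant n$ that lies in $\overline R$: such a $z$ appears whenever the traced closed component splits (going down past index~$1$, or going up past index~$n$), and then the trajectories forming $W^s_z$ (resp.\ $W^u_z$) leave $z$ on the side \emph{away} from $P$ and never meet $F^{-1}(y_0)$, so they are not in $R$; yet every neighbourhood of $z$ meets them. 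Restricting the local flow-box analysis to index~$0$ and index~$n{+}1$ points is therefore not enough. The paper sidesteps this by never forming a swept-out set at all: it works with \emph{sub}level sets $F^{-1}[0,x]$, proves (Lemma~\ref{lem:mtl-lemma2}) that no connected component of $F^{-1}[0,x]$ can miss $Y$, and then, using (TG1) to place $y$ either below all index--$n$ critical values or (symmetrically, via $1-F$) above all index--$1$ critical values, deduces via Lemmas~\ref{lem:mtl-lemma3}--\ref{lem:mtl-lemma4} that a closed component of $F^{-1}(y)$ would sit in such a forbidden component.

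For part~(b) your scheme of nudging each bad index--$1$ critical value ``out of the lifetime of every circle it does not itself kill'' is not an algorithm: the lifetimes themselves change as you move critical values, and nothing in your outline prevents oscillation. The paper's device is much more economical and avoids this bookkeeping entirely: first push \emph{all} interior index--$1$ critical points to a \emph{single} common level $c_1$; then by Lemma~\ref{lem:mtl-lemma4} the (singular) level set $F^{-1}(c_1)$ has no closed component, so every $z_j$ is joined to $Y$ inside it, and at least one $z_j$ is joined by an arc missing the others. Move that one up to its own level $c_m$ and repeat with the remaining $m-1$ points still at $c_1$. This terminates in $m$ steps and at no stage does one have to reason about how moving one critical value affects the circles seen by the others.
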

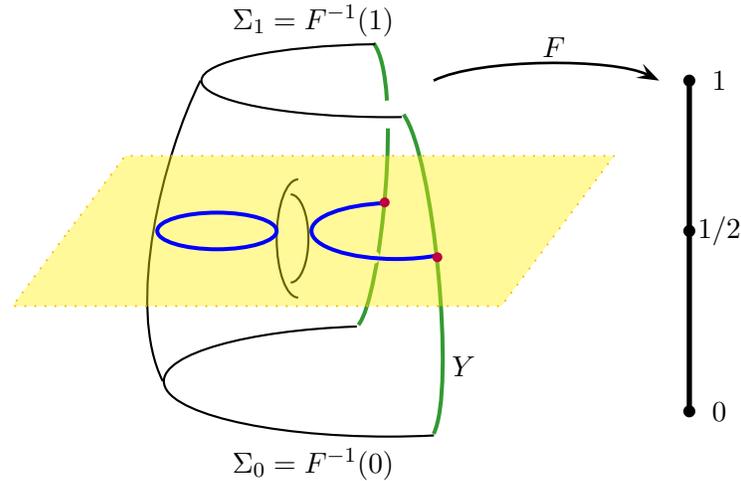
\begin{figure}
\begin{pspicture}(-5,-3.5)(5,3.5)
\psbezier[linecolor=darkgreen,linewidth=1.5pt](0.8,2.5)(1.2,2.0)(0.9,-0.9)(0.6,-1.25)
\pscircle[fillstyle=solid,fillcolor=white,linestyle=none](1,1.55){0.2}
\pscircle[fillstyle=solid,fillcolor=white,linestyle=none](0.8,-0.35){0.1}
\psellipticarc(1,-2)(3,0.75){120}{310}
\psellipticarc(1,2)(2.5,0.5){110}{290}
\psellipticarc(-0.2,-0.1)(0.3,0.8){90}{270}
\psellipticarc(-0.3,-0.1)(0.25,0.6){270}{90}
\psbezier(-1.5,2)(-2,1)(-2.5,-1)(-2,-2)
\psbezier[linecolor=darkgreen,linewidth=1.5pt](1.2,1.55)(1.6,1)(1.9,-2.1)(1.6,-2.7)
\pspolygon[linestyle=dotted,linecolor=orange,fillstyle=solid,fillcolor=yellow,opacity=0.4](-2.5,1)(4,1)(2.5,-1)(-4,-1)
\psellipse[linecolor=blue,linewidth=1.5pt](-1.28,0)(0.82,0.27)
\psellipticarc[linecolor=blue,linewidth=1.5pt](1.1,0)(1.15,0.4){110}{330}
\rput(0,-3.1){$\S_0=F^{-1}(0)$}
\rput(0,2.8){$\S_1=F^{-1}(1)$}
\psline[linewidth=2pt](5,-2.4)(5,2)\rput(5.4,2){$1$}\rput(5.4,-2.4){$0$}\rput(5.4,0){$1/2$}
\rput(2,-1.8){$Y$}
\psbezier[linewidth=1pt,arrowsize=6pt]{->}(1.6,2)(2.2,2.3)(4.0,2.3)(4.6,2)
\pscircle[fillcolor=black,fillstyle=solid](5,-2.4){0.08}
\pscircle[fillcolor=black,fillstyle=solid](5,2){0.08}
\pscircle[fillcolor=black,fillstyle=solid](5,0){0.08}
\pscircle[fillcolor=purple,fillstyle=solid,linestyle=none](0.95,0.38){0.08}
\pscircle[fillcolor=purple,fillstyle=solid,linestyle=none](1.65,-0.35){0.08}
\rput(3.2,2.45){$F$}
\end{pspicture}
\caption{The statement of Proposition~\ref{prop:mtl}(a) does not hold if $n=1$. Here, $F$ is the height function. The level set $F^{-1}(1/2)$,
drawn on the picture, has two connected components, one of which is closed.}\label{fig:puncturedtorus}
\end{figure}
\begin{remark}
The distinction between the cases $n>1$ and $n=1$ is necessary. The conclusion of point (a) of Proposition~\ref{prop:mtl} is not necessarily valid if $n=1$,
see Figure~\ref{fig:puncturedtorus} for a simple counterexample.
\end{remark}

First let us prove several lemmas, which are simple consequences of the assumptions of Proposition~\ref{prop:mtl}. We use
assumptions and notation of Proposition~\ref{prop:mtl}.

\begin{lemma}\label{lem:mtl-lemma1}
Let $x,y\in[0,1]$ with $x<y$. If $\O'$ is a connected component of $F^{-1}[x,y]$ then either $\O'\cap Y=\emptyset$, or for any $u\in[x,y]\cap[c,d]$
we have $F^{-1}(u)\cap\O'\cap Y\neq\emptyset$.
\end{lemma}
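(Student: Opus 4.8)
The plan is to pass to the restriction $f:=F|_Y$, which is a Morse function on the $n$--dimensional manifold with boundary $Y$ whose critical points are exactly the boundary critical points of $F$. Since $\nabla F$ is everywhere tangent to $Y$, the set $Y$ is invariant under the gradient flow, and the restricted flow on $Y$ is a gradient flow of $f$. The first step is a piece of index bookkeeping. A local minimum of $f$ is a boundary critical point which is either boundary unstable of index $0$ or boundary stable of index $1$ (recall that $\ind_f z=\ind_F z$ for boundary unstable $z$, and $\ind_f z=\ind_F z-1$ for boundary stable $z$); by (TG2) any such point lies in $F^{-1}[0,c]$, and since $c$ is a regular value its $F$--value is strictly smaller than $c$. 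Dually, a local maximum of $f$ is boundary unstable of index $n$ or boundary stable of index $n+1$, hence lies in $F^{-1}[d,1]$ and has $F$--value strictly bigger than $d$. Consequently, for every $u\in[c,d]$ there is \emph{no} local maximum of $f$ at an $F$--level $\leqslant u$ and \emph{no} local minimum of $f$ at an $F$--level $\geqslant u$.

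Now suppose $\O'\cap Y\neq\emptyset$, fix $u\in[x,y]\cap[c,d]$, and let us produce a point of $\O'\cap Y$ on the level $F^{-1}(u)$. Choose $q\in\O'\cap Y$. Flowing $q$ slightly up or down inside $\O'\cap Y$ (which is possible because $q$ cannot be a local maximum of $f$ if $F(q)\leqslant d$, nor a local minimum if $F(q)\geqslant c$), we may assume $F(q)\in(x,y)$, so that $\O'\cap Y$ is a neighbourhood of $q$ in $Y$. If $F(q)=u$ we are done; assume $F(q)<u$, the case $F(q)>u$ being symmetric. The stable manifolds in $Y$ (for $\nabla f$) of the finitely many critical points of $f$ lying at $F$--levels below $u$ all have positive codimension in $Y$ — this is exactly where the absence of local maxima of $f$ below level $u$ is used, since the codimension of such a stable manifold is $n-\ind_f z$. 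Hence a small perturbation $q'\in\O'\cap Y$ of $q$ has its forward $\nabla f$--trajectory $\gamma$ disjoint from all of them. Since $Y$ is compact and $F$ is monotone along $\gamma$, $F(\gamma(t))$ tends to a limit $L$, and the $\omega$--limit set of $\gamma$ consists of critical points of $f$, all lying on the level $F^{-1}(L)$; the choice of $q'$ forbids $L<u$, so $L\geqslant u$. Therefore $\gamma$ meets $F^{-1}(u)$: either $F(\gamma(t))=u$ for some finite $t$, or $L=u$ and the limiting critical point lies on that level. In either case the resulting point lies in $Y$, on the level $u$, and in $\O'$, because the trajectory together with its limit set is connected and stays inside $F^{-1}[F(q'),u]\subseteq F^{-1}[x,y]$, hence inside the component $\O'$ containing $q'$.

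The symmetric downward argument handles the case $F(q)>u$, flowing along $-\nabla f$, replacing ``stable manifold / local maximum'' by ``unstable manifold / local minimum'', and using that all local minima of $f$ lie at $F$--levels below $c\leqslant u$ (when $u=0$ one simply flows down to $M_0$, since then $f$ has no critical points of index $0$ at all). This yields $F^{-1}(u)\cap\O'\cap Y\neq\emptyset$ in all cases. The only genuinely non-routine point is the index bookkeeping of the first paragraph, which converts (TG2) into the usable statement ``no local maxima of $f$ below $d$ and no local minima of $f$ above $c$''; the rest is a standard compactness-and-genericity argument about gradient trajectories on the compact manifold $Y$, of the type used repeatedly in \cite{Mi-hcob}.
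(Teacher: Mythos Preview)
Your argument is correct. The index bookkeeping in your first paragraph is exactly right and is the heart of the matter; the flow-and-genericity argument in the second paragraph works as stated (with the understanding that if $q$ happens to be a non-extremal critical point of $f$ you first nudge it to a nearby non-critical point in $\O'\cap Y$ before flowing, which is harmless since $\O'\cap Y$ is open in $Y\cap F^{-1}[x,y]$).

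The paper's proof, however, is considerably shorter and avoids the genericity step entirely. It argues by contradiction: if $F^{-1}(u)\cap\O'\cap Y=\emptyset$ for some $u\in[x,y]\cap[c,d]$ while $\O'\cap Y\neq\emptyset$, then one of the compact sets $\O'\cap Y\cap F^{-1}[x,u]$ or $\O'\cap Y\cap F^{-1}[u,y]$ is nonempty, say the first. The restriction $f$ attains a maximum on it; since the ``top'' $F^{-1}(u)\cap\O'\cap Y$ is empty, this maximum is automatically a local maximum of $f$ on all of $Y$, hence an index-$n$ critical point of $f$. Your own bookkeeping then finishes: such a point must have $F$--value $>d\geqslant u$, contradiction. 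So where you perturb off stable manifolds and chase a trajectory up to level $u$, the paper simply takes the maximum on a compact set and reads off its Morse index. Both arguments rest on the same translation of (TG2) into ``no local extrema of $f$ in the band $[c,d]$''; the paper's packaging is just more economical.
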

\begin{proof}
Assume that for some $u\in[x,y]\cap[c,d]$ the intersection $F^{-1}(u)\cap\O'\cap Y=\emptyset$ and $\O'\cap Y\neq\emptyset$. Then either $\O'\cap Y\cap F^{-1}[0,u]$
or $\O'\cap Y\cap F^{-1}[u,1]$ is not empty. Assume the first possibility (the other one is symmetric) and let $Y'=\O'\cap Y\cap F^{-1}[0,u]$. Let $f=F|_{Y'}$
be the restriction. Then $Y'$ is compact and $f$ has a local maximum on $Y'$. This maximum corresponds to a critical point of $f$ of index $n$,
so either a boundary stable critical point of $F$ of index $n+1$, or a boundary unstable critical point of index $n$. But the corresponding
critical value is smaller
than $u$, so smaller than $d$, which contradicts property (TG2).
\end{proof}

\begin{lemma}\label{lem:mtl-lemma2}
For any $x\in[c,1]$ the set $F^{-1}[0,x]$ cannot have a connected component disjoint from $Y$.
\end{lemma}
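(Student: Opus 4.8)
The plan is to argue by contradiction, extracting from a putative bad component an interior $0$--handle that can be cancelled against an interior $1$--handle, in violation of (TG3). So suppose $\O'$ is a connected component of $F^{-1}[0,x]$ with $\O'\cap Y=\emptyset$, for some $x\in[c,1]$.

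First I would localize near $\S_0$ and at the bottom. Since the level sets of $F$ are products near $\S_0$, the set $\O'\cap\S_0$ is a union of connected components of $\S_0$; any such component has boundary in $M_0\subset Y\cap\O'=\emptyset$, hence is closed, which is excluded. Thus $\O'\cap\S_0=\emptyset$, so $\partial\O'=\O'\cap F^{-1}(x)$; if $x=1$ this would be a closed component of $\S_1$, again excluded, so $x<1$, $\O'$ is $(n+1)$--dimensional and compact, and $\min_{\O'}F\in(0,x)$ is attained at an interior critical point $p$ of index $0$, with $F(p)\leqslant c$ by (TG2). Write $h_p$ for the corresponding $0$--handle, and for $t\in[0,1]$ let $D_t$ be the connected component of $F^{-1}[0,t]$ containing $p$. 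For $t\leqslant x$ we have $D_t\subset\O'$, hence $D_t$ is disjoint from $Y$, and therefore also from $\S_0$ by the same closed--component argument. The component $\hat\O:=D_1$ of $\O$ is not closed, and if it missed $Y$ then $\hat\O\cap\S_0$ and $\hat\O\cap\S_1$ (whose boundaries lie in $M_0\cup M_1\subset Y$) would be closed, so $\hat\O$ would contain a closed component of $\S_0$, of $\S_1$, or be closed itself --- all impossible. Hence $\hat\O\cap Y\neq\emptyset$, and there is a smallest value $\tau$, necessarily a critical value with $\tau>x\geqslant c$, at which $D_t$ acquires a part of $Y$; write $D_{\tau^-}$ for the component just below level $\tau$, which is still disjoint from $Y$ and $\S_0$.

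Next I would identify the critical point $q$ at level $\tau$. It is not of index $0$ and not a boundary--stable critical point of index $1$, since these lie in $F^{-1}[0,c]$ by (TG2) and $\tau>c$; every remaining boundary half--handle is attached along a nonempty piece of $M_1$, the boundary of the current top level set, hence only to a component whose top level set already meets $M_1\subset Y$ --- not $D_{\tau^-}$. (Here one uses that every component merged into $D_{\tau^-}$ below level $\tau$ is itself $Y$--free, by minimality of $\tau$, hence also $\S_0$--free.) Thus $q$ is an interior critical point. A single interior handle of index $\geqslant 2$ performs surgery on a connected sphere and cannot merge two components of a level set, while an interior $1$--handle with both feet on $D_{\tau^-}$ leaves it $Y$--free, so $q$ must be an interior critical point of index $1$ whose attaching sphere $S^0$ has one foot in $D_{\tau^-}$ and the other in a distinct component $D'$ (which then meets $Y$); call its handle $h_q$.

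Finally I would count $0$--handles and cancel. Since $D_{\tau^-}$, and every component merged into it below level $\tau$, is disjoint from $Y$ and $\S_0$, it is built from handles over the empty set, and all of its $0$--handles are interior (boundary--stable critical points of index $0$ do not exist, and a boundary--unstable $0$--handle is the disc $D^{n+1}_+$, which carries the part $D^n_0$ of $Y$). If $D_{\tau^-}$ had two interior $0$--handles, then at some level below $\tau$ an interior $1$--handle would have joined two distinct components each carrying an interior $0$--handle; after isotoping a foot of that $1$--handle so that it flows down to a $0$--handle on its side, that pair of interior handles of indices $0$ and $1$ satisfies the hypothesis of the handle cancellation theorem, contradicting (TG3). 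Hence $D_{\tau^-}$ carries a unique interior $0$--handle, necessarily $h_p$, and $h_q$ joins $D_{\tau^-}$ --- which carries only $h_p$ and otherwise handles of index $\geqslant 1$ --- to a distinct component; by the same cancellation argument (the standard fact that a connected piece admits a handle decomposition with a single $0$--handle, cf. \cite[Section 5]{Mi-hcob}, valid in the presence of boundary as explained at the start of Section~\ref{S:bhc}) the interior pair $(h_p,h_q)$ can be cancelled. This contradicts (TG3) and proves the lemma. I expect this last paragraph to be the delicate part: it requires tracking how connected components of the sublevel sets $F^{-1}[0,t]$ evolve across critical values, in the spirit of \cite[Sections 4--5]{Mi-hcob}, together with a careful application of the boundary handle cancellation theorem and the innocuous isotopies of attaching spheres used above.
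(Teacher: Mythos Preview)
Your overall strategy matches the paper's: locate an interior index-$1$ critical point that first merges the $Y$-free piece with the rest, and from it produce a cancellable interior $0$--$1$ pair to contradict (TG3). Your first two paragraphs are correct and in places more explicit than the paper (you establish $\O'\cap\S_0=\emptyset$ at the outset, whereas the paper allows the stable curve to land on $\S_0\cap\O'$ and derives a closed component of $\S_0$ in that case).

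The weak point is the last paragraph. Condition (TG3) is a statement about the given $F$ and its gradient-like flow: a pair ``can be cancelled in the sense of Section~\ref{S:bhc}'' precisely when there is a \emph{single} trajectory between them for that flow. Your argument ``after isotoping a foot of that $1$--handle so that it flows down to a $0$--handle'' modifies the flow; a cancellable pair for the modified flow does not contradict (TG3) for the original one. The same objection applies to your final claim that $(h_p,h_q)$ can be cancelled ``by the same cancellation argument''.

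The fix is both simpler than your detour and is exactly what the paper does. Once $q$ is found, follow the branch of the $1$-dimensional stable manifold $W^s_q$ that enters $D_{\tau^-}$. This trajectory cannot end on $\S_0$ (since $D_{\tau^-}\cap\S_0=\emptyset$) nor on $Y$, so by the Morse--Smale condition it terminates at an interior index-$0$ critical point $p'\in D_{\tau^-}$. Since the other branch of $W^s_q$ lies in the distinct component $D'$, there is exactly one trajectory from $p'$ to $q$, contradicting (TG3) directly. No isotopy, no uniqueness of $0$-handles, and no identification of $p'$ with your minimum $p$ is needed.
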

\begin{proof}
Assume the contrary, and let $\O'$ be a connected component of $F^{-1}[0,x]$ disjoint from $Y$. Let $\O_1$ be the connected
component of $\O$ containing $\O'$. Suppose that $\O_1\cap Y=\emptyset$. Then either $\O_1\cap(\S_0\cup\S_1)=\emptyset$
or $\O_1\cap(\S_0\cup\S_1)\neq\emptyset$. In the first case $\O_1$ is a closed connected component of $\O$, in the second either $\O_1\cap\S_0$,
or $\O_1\cap\S_1$ is not empty, so either $\S_0$ or $\S_1$ has a closed connected component. The contradiction implies that $\O_1\cap Y\neq\emptyset$.

By Lemma~\ref{lem:mtl-lemma1} we have then $F^{-1}(x)\cap\O_1\cap Y\neq\emptyset$, hence $\O'':=(F^{-1}[0,x]\cap\O_1)\setminus\O'$ is not empty and
is disjoint from $\O'$. As $\O'$ and $\O''$ both belong to $\O_1$ which is connected, there must be a critical point $z\in\O_1$
of index $1$, which joins $\O'$ to $\O''$. We have $F(z)>x$ and $\O',\O''$ belong to two different connected components of $F^{-1}[0,F(z))$
and to a single connected component of $F^{-1}[0,F(z)]$. The connected component of $F^{-1}[0,F(z))$ containing $\O'$ has empty intersection with $Y$
(by Lemma~\ref{lem:mtl-lemma1}) hence $z$ must be an interior critical point of index $1$. We also remind the reader that all critical points of $F$ on $\O'$
are interior critical points, because $\O'\cap Y=\emptyset$.

Let $W^s$ be the stable manifold of $z$ of the vector field $\nabla F$. Then $W^s\cap\O_1$ must be a connected curve, with non-empty
intersection with $\O'$.
One of its boundaries is either on $\S_0\cap\O'$ or it is a critical point of $F$ in $\O'$, necessarily interior and by the Morse--Smale condition,
its index is $0$.
In the first case, $\S_0\cap\O'$ is not empty and since it is disjoint from $Y$, $\S_0$ has a closed connected component. In the other case, we have in $\O_1$
a single trajectory between a critical point of index $0$ and a critical point of index $1$. This contradicts (TG3).
\end{proof}

\begin{lemma}\label{lem:mtl-lemma3}
Assume that for some $y\in[c,d]$ $\S_1$ and $\S_2$ are two disjoint connected components of $F^{-1}(y)$.
If $F$ has no interior or boundary unstable
critical points of index $n$ with critical value in $[c,y)$, then $\S_1$ and $\S_2$ belong to two different
connected components of $F^{-1}[0,y]$.
\end{lemma}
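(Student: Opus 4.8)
\textit{Proof sketch.}
The plan is to argue by contradiction: suppose $\S_1$ and $\S_2$ lie in one and the same connected component $\O'$ of $F^{-1}[0,y]$. We may assume that $y$ is a regular value, since a $C^\infty$-small perturbation of $F$ supported away from the critical points and from $Y$ affects neither the hypothesis nor the conclusion. First I would follow the components of the sublevel sets from the top downwards: for $t\in[0,y]$ let $N_1(t)$, resp. $N_2(t)$, be the connected component of $F^{-1}[t,y]$ containing $\S_1$, resp. $\S_2$. Then $N_1(y)=\S_1\ne\S_2=N_2(y)$, while $N_1(0)=N_2(0)=\O'$, and $N_1(t),N_2(t)$ can change only as $t$ crosses a critical value. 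Hence there is a critical value $t^*\in(0,y)$ and a critical point $p$ on $F^{-1}(t^*)$ such that, on crossing $t^*$ downwards, the two distinct components of $F^{-1}[t^*+\e,y]$ containing $\S_1$ and $\S_2$ become a single component of $F^{-1}[t^*-\e,y]$; equivalently, adjoining the elementary cobordism $W:=F^{-1}[t^*-\e,t^*+\e]$, which contains only $p$, along its top to $F^{-1}[t^*+\e,y]$ identifies these two components.

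The core of the argument is then to determine which critical points $p$ can do this and to locate $t^*$. I would read ``crossing downwards'' as ``crossing upwards for $-F$'', so that $W$ is attached along its bottom; an interior critical point of index $k$ contributes an ordinary handle of index $n+1-k$, while a boundary critical point contributes, by Theorem~\ref{thm:morsehalfhandle}, a left or a right half-handle, which by Lemma~\ref{lem:leftistrivial} leaves the diffeomorphism type of the cobordism unchanged in one case and by Lemma~\ref{lem:rightislikenormal} behaves like an ordinary handle of index $n+1-k$ in the other. Going through the cases, attaching $W$ can identify two previously separate components of the piece above it only when the relevant handle has index $1$, i.e. only when $p$ has index $n$ and is of the type excluded in the hypothesis. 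It remains to see that $t^*\in[c,y)$: below level $c$, property (TG2) of Definition~\ref{def:techgood} leaves only critical points of index $0$ and boundary stable critical points of index $1$, and for each of these the corresponding slab $W$ (in the $-F$ picture, an $(n{+}1)$-handle or a half-handle that either is trivial or attaches along a connected sphere) cannot merge two components of what sits above it — here, if needed, Lemma~\ref{lem:mtl-lemma1} is used to dispose of the low-dimensional boundary subtleties, and Lemmas~\ref{lem:mtl-lemma2} and property (TG3) to exclude any spurious $0$--$1$ identification. Thus $p$ is a critical point of index $n$ of the excluded type lying at level $t^*\in[c,y)$, contradicting the hypothesis, and the lemma follows.

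The main obstacle I anticipate is precisely the case analysis in the second step: one must run systematically through all types and indices of $p$ (interior, boundary stable, boundary unstable; indices $0,\dots,n+1$) and check, using the half-handle descriptions of Section~\ref{s:hh-sec}, that descending past $p$ fuses two components of $F^{-1}[t,y]$ only in the cases named in the hypothesis, and then dovetail this with properties (TG1)--(TG4) and with Lemmas~\ref{lem:mtl-lemma1}--\ref{lem:mtl-lemma2} to rule out the remaining configurations. Passing to $-F$ and quoting Lemmas~\ref{lem:rightislikenormal} and~\ref{lem:leftistrivial} — so that ``descending past a boundary critical point'' becomes either a diffeomorphism-trivial modification of the cobordism or an ordinary handle attachment — should keep this bookkeeping manageable.
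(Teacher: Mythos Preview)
Your approach is the same as the paper's: track the connected components of $F^{-1}(x,y]$ as $x$ decreases from $y$, locate the first critical value $x_0$ at which the components containing $\S_1$ and $\S_2$ merge, and argue about the type of the critical point at that level. The paper does this in a few sentences --- it notes that at $x_0$ the number of components of the level set must jump, identifies the critical point as having index $n$ of one of the named types, and then invokes (TG2) to force $x_0\geqslant c$, giving the contradiction. Your sketch adds several unnecessary layers: the perturbation to make $y$ regular, the explicit passage to $-F$ with the half-handle lemmas, and the appeals to Lemmas~\ref{lem:mtl-lemma1}, \ref{lem:mtl-lemma2} and (TG3) for the region below $c$. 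The paper needs none of these; it just observes that below $c$ there are no critical points of the type it has just named.

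One genuine caution about your case analysis. If you run it carefully in the $-F$ picture, a boundary \emph{stable} critical point of $F$ becomes boundary unstable for $-F$ and hence contributes a right half-handle (like an ordinary handle, by Lemma~\ref{lem:rightislikenormal}), while a boundary \emph{unstable} critical point of $F$ becomes a left half-handle for $-F$, trivial by Lemma~\ref{lem:leftistrivial}. So the boundary critical points that can merge two components of $F^{-1}[t,y]$ as $t$ decreases are the boundary stable ones of index $n$, not the boundary unstable ones named in the hypothesis --- your phrase ``of the type excluded in the hypothesis'' is therefore not what your own argument actually yields. The paper's one-line identification contains the same apparent slip. This does not affect the applications, since in Proposition~\ref{prop:mtl} the level $y$ is always taken below all index-$n$ critical points of every type; but it means you should not gloss over this step.
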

\begin{proof}
For $x<y$ and close to $y$ the sets $\S_1$ and $\S_2$ lie in two different connected components of $F^{-1}(x,y]$. Let $x_0\geqslant 0$
be the smallest $x$ with that property. Assume $x_0>0$. Then $x_0$ is a critical value of $F$.
The number of connected components of $F^{-1}(x)$ increases as $x$ crosses $x_0$. Thus the corresponding critical point is either
an interior critical point of index $n$, or a boundary unstable critical point of index $n$.
But then $x_0\geqslant c$ because of (TG2), so we have $x_0\in[c,y]$ which contradicts the assumptions of the lemma.

It follows that $x_0=0$. As $F$ has no critical points on $\S_0$, it follows that $\S_1$ and $\S_2$ belong
to different components of $F^{-1}[0,y]$.
\end{proof}

\begin{lemma}\label{lem:mtl-lemma4}
Let $y\in[c,d]$ be chosen so that there are no interior or boundary unstable critical points of index $n$ with critical values in $[c,y)$.
Then $F^{-1}(y)$ has no closed connected components.
\end{lemma}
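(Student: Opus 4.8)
The plan is to argue by contradiction, chaining together the three preceding lemmas. Suppose $F^{-1}(y)$ had a closed connected component $\S'$. Since the boundary of the level set is $\partial F^{-1}(y)=F^{-1}(y)\cap Y$, closedness of $\S'$ means precisely $\S'\cap Y=\emptyset$. The first step is to pass to the connected component $\O'$ of $F^{-1}[0,y]$ that contains $\S'$, and to show that $\O'$ must meet the lateral boundary.

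This is where Lemma~\ref{lem:mtl-lemma2} enters: since $y\in[c,d]\subset[c,1]$, that lemma guarantees $\O'\cap Y\neq\emptyset$, i.e. $\O'$ is not one of the components of $F^{-1}[0,y]$ disjoint from $Y$. Then Lemma~\ref{lem:mtl-lemma1}, applied with the interval $[0,y]$ and the value $u=y$ (admissible because $y\in[0,y]\cap[c,d]$), upgrades this to $F^{-1}(y)\cap\O'\cap Y\neq\emptyset$. Now $F^{-1}(y)\cap\O'$ is open and closed in $F^{-1}(y)$, hence a union of connected components of $F^{-1}(y)$; pick one of them, say $\S''$, that meets $Y$. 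Because $\S'\cap Y=\emptyset$ while $\S''\cap Y\neq\emptyset$ we have $\S''\neq\S'$, so $\S'$ and $\S''$ are two disjoint connected components of $F^{-1}(y)$ both contained in the single connected set $\O'$, hence in a single connected component of $F^{-1}[0,y]$.

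Finally I would invoke Lemma~\ref{lem:mtl-lemma3}: the standing hypothesis on $y$ is exactly that $F$ has no interior or boundary unstable critical point of index $n$ with critical value in $[c,y)$, so that lemma forces the two disjoint components $\S'$ and $\S''$ of $F^{-1}(y)$ to lie in \emph{different} connected components of $F^{-1}[0,y]$. This contradicts $\S',\S''\subset\O'$, and therefore $F^{-1}(y)$ admits no closed connected component.

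I do not anticipate any serious obstacle here, since the real content has already been isolated in the three lemmas; the only points requiring a moment's care are the bookkeeping observation that $F^{-1}(y)\cap\O'$ is genuinely a union of connected components of $F^{-1}(y)$ (so that the component $\S''$ meeting $Y$ is well defined and distinct from $\S'$), and the routine verification that $y\in[c,d]$ together with the standing index hypothesis satisfies the precise range conditions in the statements of Lemmas~\ref{lem:mtl-lemma1}, \ref{lem:mtl-lemma2} and \ref{lem:mtl-lemma3}.
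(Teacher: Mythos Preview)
Your argument is correct and uses exactly the same three lemmas as the paper's proof; the only difference is the order in which you invoke them. You run Lemma~\ref{lem:mtl-lemma2} $\to$ Lemma~\ref{lem:mtl-lemma1} $\to$ Lemma~\ref{lem:mtl-lemma3} to reach the contradiction, whereas the paper sets $\S''=F^{-1}(y)\setminus\S'$, uses Lemma~\ref{lem:mtl-lemma3} to get $\O'\cap F^{-1}(y)=\S'$, then the contrapositive of Lemma~\ref{lem:mtl-lemma1} to conclude $\O'\cap Y=\emptyset$, contradicting Lemma~\ref{lem:mtl-lemma2}---the same chain read backwards.
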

\begin{proof}
Assume that $\S'$ is a closed connected component of $F^{-1}(y)$. Let $\S''=F^{-1}(y)\setminus\S'$, it is not empty by Lemma~\ref{lem:mtl-lemma1}
(applied for $x=0$, $y=1$), for
otherwise $\O\cap Y=\emptyset$. Let $\O'$ be the connected component of $F^{-1}[0,y]$ containing $\S'$. By Lemma~\ref{lem:mtl-lemma3}, $\O'$ and $\S''$
are disjoint, in particular $\O'\cap F^{-1}(y)\cap Y=\S'\cap Y=\emptyset$. By Lemma~\ref{lem:mtl-lemma1}, $\O'\cap Y=\emptyset$. But this contradicts
Lemma~\ref{lem:mtl-lemma2}.
\end{proof}

\begin{remark}\label{rem:sym2}
There exists a  symmetric formulation of the last three lemmas, which can be obtained by considering the function $1-F$ instead of $F$. For instance,
in Lemma~\ref{lem:mtl-lemma4}, the symmetric
assumption is that there are no interior or
boundary stable critical points of index $1$ in $(y,d]$. The statement is the same.
\end{remark}

\begin{proof}[Proof of Proposition~\ref{prop:mtl}. Case $n>1$.]
Let $x\in[c,d]$ be a non-critical value such that all the critical points of $F$ with index $n$ have critical value greater than $x$,
and all critical points with index smaller than $n$ have critical values smaller than $x$. Such $x$ exists because of (TG1).
If $y\leqslant x$, then Lemma~\ref{lem:mtl-lemma4} guarantees that $F^{-1}(y)$ has no closed connected components. If $y>x$, then $F^{-1}[y,d]$
has no critical points of index $1$ (as $n>1$), so we apply the symmetric counterpart of
Lemma~\ref{lem:mtl-lemma4}.
\end{proof}

\smallskip
\begin{proof}[Proof of Proposition~\ref{prop:mtl}. Case $n=1$.]
The property (TG2) implies that the only critical points of $F|_{[c,d]}$ are the interior critical points of index $1$. Let
us call them $z_1,\dots,z_m$. Since they are all of the same index, by Proposition~\ref{prop:grt} we are able to rearrange the values $F(z_1),\dots,F(z_m)$
at will. Let us fix $c_1,\dots,c_m$ with the property that $c<c_1<\dots<c_m<d$. Let us first rearrange the points $z_1,\dots,z_m$
so that $F(z_1)=\dots=F(z_m)=c_1$.

The singular level set $F^{-1}(c_1)$ is a singular manifold with $m$ singular points $z_1,\dots,z_m$, which are double points.
By Lemma~\ref{lem:mtl-lemma4}, $F^{-1}(c_1)$ has no closed connected components. In particular each of the points $z_1,\dots,z_m$, can be
connected to $Y$ by a curve lying in $F^{-1}(c_1)$. At least one of those points can be connected to $Y$ by a curve $\gamma$, which misses all the other
critical points. We relabel the critical points so that this point is $z_m$. We rearrange the critical points so that $F(z_m)=c_m$ and the value
$F(z_1)=\dots=F(z_{m-1})=c_1$. By construction, $z_m$ can be connected to $Y$ by a curve lying in $F^{-1}(c_m)$.

The procedure now is repeated, i.e. assume that we have already moved $z_{k+1},\dots,z_m$ to levels $c_{k+1},\dots,c_m$ respectively. Then
$F^{-1}(c_1)$ still has no closed connected components by Lemma~\ref{lem:mtl-lemma4}. We assume that $z_k$ can be connected to $Y$ by a curve in $F^{-1}(c_1)$
omitting all the other critical points. Then we rearrange the critical values so that $F(z_k)=c_k$. The proof is accomplished by an inductive argument.
\end{proof}

\subsection{Splitting of cobordisms}

We have now all the ingredients needed to prove our main theorem about
splitting cobordisms. We slightly change the notation in this
subsection; the cobordism will be between $(\S,M)$ and $(\S',M')$.
\begin{theorem}\label{thm:cobsplit}
Let $(\O,Y)$ be a cobordism between $(\S,M)$ and $(\S',M')$. If the following conditions are satisfied
\begin{itemize}
\item $\S$ and $\S'$ have no closed connected components;
\item $\O$ has no closed connected component;
\end{itemize}
Then the relative cobordism can be expressed as a union
\[\O=\O_{0}\cup \O_{1/2}\cup \O_1\cup \O_{3/2}\cup\dots\cup \O_{n+1/2}\cup \O_{n+1}\]
such that $\p \O_s=\S_s\cup\S_{s+1/2}\cup Y_s$ with $\S_0=\S$, $\S_{n+3/2}=\S'$, $Y=Y_{0}\cup\dots\cup Y_{n+1}$. In other words $(\O_s,Y_s)$
is a cobordism between $(\S_s,M_s)$ and $(\S_{s+1/2},M_{s+1/2})$, where $M_s=\p\S_s=\S_s\cap Y_s$. Furthermore
\begin{itemize}
\item[$\bullet$] $(\O_{0},Y_0)$ is a cobordism given by a
    sequence of index $0$ handle attachments;
\item[$\bullet$] for $k=1,\dots,n+1$, $(\O_{k-1/2},Y_{k-1/2})$ is a left product cobordism, given by a sequence of index $k$ left half-handle attachments;
\item[$\bullet$] for $k=1,\dots,n$, $(\O_k,Y_k)$ is a right product cobordism, given by a sequence of index $k$ right half-handle attachments;
\item[$\bullet$] $(\O_{n+1},Y_{n+1})$ is a cobordism provided by a sequence of index $(n+1)$ handle attachments.
\end{itemize}
\end{theorem}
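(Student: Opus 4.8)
The strategy is to push all interior handles of indices $1,\dots,n$ to the boundary, split each into a boundary stable and a boundary unstable one, rearrange all critical values so that the function is ``levelwise sorted'' by index and by stable/unstable type, and then cut $\O$ along regular level sets. First I would take any Morse function $F$ on $(\O,Y)$ (Lemma~\ref{lem:morseexist}) and make it technically good (Lemma~\ref{lem:techgood}). The hypotheses of the present theorem are precisely conditions (I1) and (I2) of Theorem~\ref{thm:fullhandsplit}, so that theorem applies and produces a Morse function all of whose interior critical points have index $0$ or $n+1$ and all of whose other critical points are boundary stable or boundary unstable; after a generic arbitrarily small perturbation I may additionally assume it is Morse--Smale (Definition~\ref{def:KMMS}) without changing its critical points. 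Call this function $G$.

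Next I would use the global rearrangement theorem, Proposition~\ref{prop:grt}, to place the critical values of $G$ so that all interior index-$0$ critical points sit at a common small value $c_0$, all interior index-$(n+1)$ critical points at a common value $c_{n+1}$ close to $1$, each boundary stable critical point of index $j$ at $c_j^s$, and each boundary unstable critical point of index $j$ at $c_j^u$, with
\[ c_0 < c_1^s < c_1^u < c_2^s < c_2^u < \dots < c_n^s < c_n^u < c_{n+1}^s < c_{n+1}. \]
The point requiring care is that this ordering be admissible, i.e.\ that there be no gradient trajectory from a critical point sent to a higher value to one sent to a lower value; since $G$ is Morse--Smale this is read off from Table~\ref{tab:morsesmale}, and one checks case by case that the only trajectories it permits run ``upward'' in the chosen ordering. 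The single subtle case is a trajectory from a boundary stable to a boundary unstable critical point of the \emph{same} index $k$, which does exist in general and is exactly why $c^s_k<c^u_k$ is imposed; this inequality is already part of Proposition~\ref{prop:grt}. Finally, within each block of critical points of equal index and equal type I would further perturb the values to make them pairwise distinct: two such critical points have disjoint sets $K_{p_1},K_{p_2}$ (Table~\ref{tab:morsesmale} with $k=l$), so Proposition~\ref{prop:rearii} applies.

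Now I would choose regular values $0=s_0<s_1<\dots<s_{2n+3}=1$ so that the critical points of the $i$-th block lie in $G^{-1}(s_{i-1},s_i)$, and set $\O_0=G^{-1}[s_0,s_1]$, $\O_{1/2}=G^{-1}[s_1,s_2]$, $\O_1=G^{-1}[s_2,s_3]$, $\dots$, up to $\O_{n+1}=G^{-1}[s_{2n+2},1]$, with the intermediate level sets as the $\S_s$ (so $\S_0=\S$, $\S_{n+3/2}=\S'$), $Y_s=Y\cap\O_s$ and $M_s=\S_s\cap Y_s$. The restriction of $G$ to $\O_0$ has only index-$0$ critical points, so $\O_0$ is a sequence of index-$0$ handle attachments, and dually $\O_{n+1}$ is a sequence of index-$(n+1)$ handle attachments. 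For $1\leqslant k\leqslant n+1$, the restriction of $G$ to $\O_{k-1/2}$ has only boundary stable critical points of index $k$, hence is a left product cobordism by Proposition~\ref{prop:lpc}(b); between consecutive critical levels Lemma~\ref{lem:novanishnoproblem} gives a trivial cobordism and Theorem~\ref{thm:morsehalfhandle} identifies the passage of the single critical point with an index-$k$ left half-handle attachment, so $\O_{k-1/2}$ is a sequence of such. Applying the same reasoning to $1-G$, which interchanges left and right half-handles, for $1\leqslant k\leqslant n$ the piece $\O_k$ has only boundary unstable critical points of index $k$, is a right product cobordism (Proposition~\ref{prop:lpc}(b)), and is a sequence of index-$k$ right half-handle attachments. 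The substantive content is carried by Theorem~\ref{thm:fullhandsplit}; within the present argument the only delicate step is verifying the admissibility of the block ordering against Table~\ref{tab:morsesmale}, everything else being bookkeeping with the rearrangement and half-handle theorems already proved.
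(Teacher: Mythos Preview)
Your proposal is correct and follows essentially the same route as the paper: make the Morse function technically good, apply Theorem~\ref{thm:fullhandsplit} to push the interior critical points of intermediate index to the boundary, then use Proposition~\ref{prop:grt} to arrange the critical values in the order $c_0<c_1^s<c_1^u<\dots<c_{n+1}^s<c_{n+1}$ and slice along regular level sets, identifying the pieces via Proposition~\ref{prop:lpc} and Theorem~\ref{thm:morsehalfhandle}. Your presentation is in fact slightly more explicit than the paper's (you spell out the Morse--Smale perturbation and the separation of critical values within each block so that Theorem~\ref{thm:morsehalfhandle} applies one critical point at a time), but the argument is the same.
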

\begin{proof}
Let us begin with a Morse function $F$ on the cobordism
which has only boundary stable critical points (see
Remark~\ref{rem:allarestab}). Assume that $w_1,\dots,w_m$ are the
interior critical points and $y_1,\dots,y_k$ are the boundary
critical points. After a rearrangement of critical points and the cancellation of
pairs of critical points as in
Lemma~\ref{lem:techgood} we can make $F$ technically good. After
applying Theorem~\ref{thm:fullhandsplit} we get that $F$ can have
only $0$ handles and $n+1$ handles as interior handles.
Let us write $\theta=1/(4n+6)$ and choose
$c_0=\theta$, $c_1^s=3\theta$,
$c_1^u=5\theta$, \dots, $c_k^s=(4k-1)\theta$,
$c_k^u=(4k+1)\theta$,\dots, $c_{n+1}^s=1-3\theta$,
$c_{n+1}=1-\theta$. We rearrange the function $F$ according to
Proposition~\ref{prop:grt}. Then we define for $k=0,1/2,1,\dots,n+1$
the manifold $\O_k=F^{-1}[4k\theta,(4k+2)\theta]$, $Y_k=\O_k\cap Y$
and $\S_k=F^{-1}(4k\theta)$.

By construction, each part $(\O_k,Y_k)$ contains
critical points only of one type: for $k=0$ and $n+1$ they are interior critical points,
for $k=1,\dots,n$ they are boundary unstable of index $k$ and for $k=1/2,\dots,n+1/2$,
they are boundary stable of index $k+1/2$ and we conclude the proof by Proposition~\ref{prop:lpc}.
\end{proof}

\begin{remark}
If the cobordism is a product on the boundary, i.e. $Y=M\times
[0,1]$, we can choose the initial Morse function to have no
critical points on the boundary. Then all the critical points of
$F$ come in pairs, $z_j^s$ and $z_j^u$ with $z_j^s$ boundary
stable, $z_j^u$ boundary unstable and there is a single trajectory
of $\nabla F$ going from $z_j^s$ do $z_j^u$.
\end{remark}

The strength of Theorem~\ref{thm:cobsplit} is that it is much easier to study the difference
between the intersection forms on $(\S_{k},M_k)$ and on $(\S_{k\pm 1/2},M_{k\pm 1/2})$. We refer to \cite{BNR2} for an application of this fact.

\section{The cancellation of boundary handles}\label{S:bhc}

In this section we assume that $F$ is a Morse function on the
cobordism $(\O,Y)$ satisfying the Kronheimer--Mrowka--Morse--Smale
regularity condition (Definition~\ref{def:KMMS}). We assume that $F$ has
precisely two critical points $z$ and $w$, with $\ind z=k$ and
$\ind w=k+1$ and that there exists a single
trajectory $\gamma$ of $\nabla F$ going from $z$ to $w$. If $z$
and $w$ are both interior critical points, then
\cite[Theorem~5.4]{Mi-hcob} implies that $(\O,Y)$ is a product
cobordism. In fact, Milnor's proof modifies $F$ only in a small
neighbourhood of $\gamma$, which avoids the boundary $Y$. Hence
it does not matter that in our case the cobordism has a boundary.

We want to extend this result to the case of boundary critical points. In some cases an analogue of the Milnor's theorem holds, in other cases
we can show that it cannot hold.

\subsection{Elementary cancellation theorems}

The following generalization of Milnor's theorem was first obtained in \cite[Theorem 1]{Haj}.

\begin{theorem}\label{thm:elementarycanc}
Let $z$ and $w$ be a boundary critical points of index $k$ and $k+1$, respectively. Assume that
$\gamma$ is a single trajectory joining $z$ and $w$. Furthermore, assume that both $z$ and $w$ are boundary stable, or both boundary unstable.
Then $(\O,Y)$ is a product cobordism.
\end{theorem}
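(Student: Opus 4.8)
The plan is to deduce this from Milnor's classical first cancellation theorem \cite[Theorem 5.4]{Mi-hcob} applied to the restriction $f=F|_Y$, and then to thicken the resulting cancellation back to $\O$.

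First I would reduce to the boundary stable case: replacing $F$ by $-F$ interchanges boundary stable and boundary unstable critical points, turns the pair of indices $k,k+1$ into the (still consecutive) pair $n-k,n+1-k$, reverses $\gamma$, and preserves both hypotheses and the conclusion; so I may assume $z$ and $w$ are both boundary stable. Since $F$ increases along $\gamma$ we have $F(z)<F(w)$, so $\ind_F z=k<k+1=\ind_F w$ is the ``lower'' point. Because $z$ is boundary stable, its entire unstable manifold $W^u_z$ lies in $Y$ (Definition~\ref{def:bstab} together with the local normal form of Lemma~\ref{lem:boundarymorselemma}); as $\gamma\subset W^u_z$ the connecting trajectory lies in $Y$, and in fact every trajectory of $\nabla F$ running from $z$ to $w$ lies in $W^u_z\subset Y$. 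Hence $f=F|_Y$ is a Morse function on the cobordism $(Y;M_0,M_1)$ whose only critical points are $z$ and $w$, of indices $\ind_f z=k-1$ and $\ind_f w=k$, and $\gamma$ is its unique connecting trajectory inside $Y$.

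Next I would run the cancellation on $Y$. By the Kronheimer--Mrowka--Morse--Smale condition (Definition~\ref{def:KMMS}) the manifolds $Y\cap W^s_w$ and $Y\cap W^u_z$ meet transversally in $Y$; since this intersection is exactly $\gamma$, the descending sphere of $w$ and the ascending sphere of $z$ on an intermediate level of $f$ meet transversally in a single point. Milnor's first cancellation theorem then provides a gradient-like vector field $\eta$ for a Morse function $g$ on $(Y;M_0,M_1)$ with \emph{no} critical points, with $g=f$ and $\eta=\nabla f$ outside an arbitrarily small neighbourhood $V$ of $\gamma$ in $Y$.

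The last, and hardest, step is to lift this to $\O$. I would first arrange that $F$ has the product normal form $F(y,t)=f(y)-t^2$ on a whole collar neighbourhood $\mathcal W=W_Y\times[0,\e)$ of $\gamma$ in $\O$, with $t$ the collar coordinate and $V\subset W_Y$. Near $z$ and near $w$ this already holds, because each is boundary stable, so the cutting coordinate in the boundary Morse chart (Lemma~\ref{lem:boundarymorselemma}) is one of the negative directions; along the rest of $\gamma$ the function is regular with $\p F/\p t=0$ on $Y$, and the only obstruction is that the normal Hessian $Q(y)=\tfrac12\,\p^2F/\p t^2|_{t=0}$, which equals $-1$ near $z$ and $w$, need not be negative in between. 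This is repaired by subtracting from $F$ a term $t^2\rho(y)\chi(t)$ supported near $\gamma$, vanishing near $z,w$, with $\rho$ large enough that $Q-\rho<0$ along $\gamma$ and $\chi$ a cutoff in $t$: for $\e$ small this keeps $\nabla F$ tangent to $Y$ and introduces no new critical points, after which a parametrized version of the Hadamard argument in Lemma~\ref{lem:boundarymorselemma} (in the variable $t$) straightens $F$ to $f(y)-t^2$ on $\mathcal W$, the change of chart being the identity near $z$ and $w$. Then I would set $F'=g(y)-t^2$ on $\mathcal W$ and $F'=F$ elsewhere, and $\xi'=\eta-2t\,\p_t$ on $\mathcal W$ and $\xi'=\nabla F$ elsewhere; these match on the overlap (where $g=f$, $\eta=\nabla f$), so $F'$ is smooth, equals $F$ near $\S_0\cup\S_1$, and $\xi'$ is tangent to $Y$, has no zeros, and satisfies $\xi'\cdot F'=\eta\cdot g+4t^2>0$, i.e.\ $\xi'$ is gradient-like for the critical-point-free $F'$. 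By Lemma~\ref{lem:linktwo} we may take $\xi'=\nabla_hF'$ for a suitable metric $h$, and Lemma~\ref{lem:novanishnoproblem} then gives $(\O,Y)\cong(\S_0,M_0)\times[0,1]$, as desired. The main obstacle is precisely this thickening: Milnor's modification is intrinsically a modification of the gradient-like field near $\gamma$, and making it compatible with the ambient $\O$ forces one to first normalize $F$ near $\gamma$, the sign of the transverse Hessian along $\gamma$ being the one genuinely new point compared with the interior case.
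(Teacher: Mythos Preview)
Your approach—cancel on $Y$ first and then thicken to $\Omega$—is different from the paper's, and the thickening step has a genuine gap. When you set $F'=g(y)-t^2$ on $\mathcal W=W_Y\times[0,\e)$ and $F'=F$ elsewhere, these do \emph{not} agree on all of $\partial\mathcal W$: for $y\in V$ (where $g(y)\neq f(y)$) and $t\to\e^-$ you get $F'(y,t)\to g(y)-\e^2$, whereas just outside $\mathcal W$ the value is $F(y,\e)=f(y)-\e^2$, since your normal form $F=f(y)-t^2$ persists up to $t=\e$. The ``overlap where $g=f$'' that you invoke is only $(W_Y\setminus V)\times[0,\e)$, the \emph{lateral} part of $\partial\mathcal W$; it misses the top $V\times\{\e\}$, and the same jump afflicts $\xi'$. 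Damping $g$ back to $f$ as $t$ grows, say $F'=\psi(t)g+(1-\psi(t))f-t^2$, does not obviously help: then $\partial_tF'=\psi'(t)(g-f)-2t$, and ruling out zeros requires an estimate relating $\e$ to $\sup_V|g-f|$ that Milnor's output $g$ gives no control over and that you do not supply.

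The paper avoids this by working directly in $\Omega$. It reduces to the boundary \emph{unstable} case and proves a half-space version of Milnor's Preliminary Hypothesis (Proposition~\ref{prop:prelhip}): coordinates $(x_1,\dots,x_{n+1})$ on a neighbourhood $U$ of $\gamma$ in $\Omega$ with $Y=\{x_1=0\}$ and a gradient-like field in the explicit form $(x_1,v(x_2),-x_3,\dots)$, together with a half-space version of Milnor's isotopy lemma (Proposition~\ref{prop:isot}). The modification of $v$ is then compactly supported inside $U$, so no gluing issue arises. Your normalisation $F=f(y)-t^2$, combined with Milnor's Preliminary-Hypothesis chart for $f$ on $Y$, would in fact \emph{produce} such $\Omega$-coordinates (with $x_1=t$ appearing with the opposite sign, since you chose the stable reduction); the point is that one must then perform the vector-field modification in these $(n{+}1)$-dimensional coordinates—damping in all transverse directions, $t$ included—rather than importing the finished function $g$ from $Y$ and hoping it extends.
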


As usual, it is enough to prove the result for
boundary unstable critical
points, the other case is covered if we change $F$ to $1-F$.
Note also, cf.  Section~\ref{ss:ncr}, the assumption that both
critical points are boundary stable, or both boundary unstable is
essential.

A careful reading of Milnor \cite[pages 46--66]{Mi-hcob} shows
that the proof there applies to this situation with only small
modifications. Below we present only three steps of that
proof, adjusted to our situation.
We refer to \cite{Mi-hcob} for
all the missing details.

Let $\xi$ be the gradient vector field of $F$. The proof relies on
the following proposition (see the Preliminary Hypothesis 5.5 in
\cite{Mi-hcob}, proved on pages 55--66).
\begin{proposition}\label{prop:prelhip}
There exist an open neighbourhood $ U$ of $\gamma$ and a
coordinate map $g\colon U\to\R_{\geqslant 0}\times \R^{n}$ and a
gradient-like vector field $\xi'$ agreeing with $\xi$ away from
$U$ such that
\begin{itemize}
\item $g(Y)\subset\{x_1=0\}$, and $g(U)\subset\{x_1\geqslant
    0\}$;
\item $g(z)=(0,0,\dots,0)$;
\item $g(w)=(0,1,\dots,0)$;
\item $g_*\xi'=\eta=(x_1,v(x_2),-x_3,\dots,-x_k,x_{k+1},\dots,x_{n+1})$, where $v$ is a smooth function positive in $(0,1)$, zero at $0$ and $1$
and negative elsewhere. Moreover $\modfrac{dv}{dx_2}=1$ near $0$ and $1$.
\end{itemize}
Furthermore, $U$ can be made arbitrary small (around $\gamma$).
\end{proposition}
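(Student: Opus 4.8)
The plan is to follow Milnor's proof of the Preliminary Hypothesis \cite[pp.~55--66]{Mi-hcob} step by step, carrying the boundary coordinate $x_1$ along at every stage. By the reduction noted above we may assume that $z$ and $w$ are both boundary unstable; then $W^s_w\subseteq Y$ by Table~\ref{tab:morsesmale}, so the connecting trajectory $\gamma$ lies entirely in $Y$. First I would install, near $z$ and near $w$, the standard charts furnished by the gradient--like structure of $\xi$ together with the boundary Morse lemma (Lemma~\ref{lem:boundarymorselemma}): local coordinates with $Y=\{x_1=0\}$, $x_1\geqslant 0$, in which $\xi$ is the prescribed diagonal linear field, its $x_1$--component being $+x_1$ precisely because $z$ and $w$ are boundary unstable. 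These are the charts that $g$ must match near the two endpoints of $\gamma$.

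Next I would choose a regular value $a$ with $F(z)<a<F(w)$ and put $V=F^{-1}(a)$, a compact manifold with boundary $V\cap Y$ which the flow of $\xi$ crosses transversally; the trajectory $\gamma$ meets $V$ in a single point $p\in V\cap Y$. Let $S_z$ be the trace on $V$ of the unstable set of $z$ and $S_w$ the trace on $V$ of the stable set of $w$; these are submanifolds of $V$ with $S_w\subseteq V\cap Y$. By the Kronheimer--Mrowka--Morse--Smale condition (Definition~\ref{def:KMMS}) the submanifolds $S_z$ and $S_w$ meet transversally in $V$ and $S_z\cap Y$ meets $S_w$ transversally in $V\cap Y$, while the hypothesis of a unique trajectory forces $S_z\cap S_w=\{p\}$. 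The core step is then the boundary analogue of Milnor's ``standard position'' lemmas: I would construct a chart of a neighbourhood of $p$ in $V$ taking $V\cap Y$ to a coordinate hyperplane and straightening $S_z$ and $S_w$ simultaneously into transverse coordinate (half--)subspaces adapted to that hyperplane. Carrying this out amounts to rerunning Milnor's general--position and isotopy--extension arguments relative to $V\cap Y$, i.e. with every ambient diffeomorphism and isotopy preserving the boundary; since $V\cap Y$ has a collar in $V$ (and $Y$ a collar in $\O$), each of his steps has such a boundary--respecting refinement.

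I would then transport this chart off $V$ along the flow of $\xi$, forward toward $w$ and backward toward $z$, exactly as in \cite{Mi-hcob}. This produces a chart $g$ on a neighbourhood $U$ of $\gamma$ with $g(Y)\subseteq\{x_1=0\}$, $g(z)=(0,0,\dots,0)$, $g(w)=(0,1,0,\dots,0)$, in which $g_*\xi$ already equals $\eta$ near $z$ and $w$ and has all the required components in between except possibly along the $x_2$--axis, where it is a function of $x_2$ that is positive on $(0,1)$, vanishes at $0$ and $1$, and has derivative of absolute value $1$ at the endpoints. A final modification, supported in $U$ and fixing neighbourhoods of $z$ and $w$, then interpolates this component to $v$ and yields a gradient--like vector field $\xi'$ with $g_*\xi'=\eta$; the adjustment changes only the $x_2$--component, so tangency to $Y=g^{-1}\{x_1=0\}$ is preserved automatically, and $\xi'=\xi$ outside $U$. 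Since everything was done in a tube about $\gamma$, $U$ can be shrunk at will.

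The step I expect to be the main obstacle is exactly this boundary bookkeeping in the middle: verifying that Milnor's straightening of $S_z$ and $S_w$ on the level set $V$ can be carried out equivariantly for the splitting $V=(V\cap Y)\cup\Int V$, so that the resulting chart genuinely satisfies $g(Y)\subseteq\{x_1=0\}$ and the field stays tangent to $Y$ at every stage. The remaining ingredients --- the transversality and the point count, the flow transport, and the final interpolation of the vector field --- are furnished by the Morse--Smale hypothesis or are identical to the closed case.
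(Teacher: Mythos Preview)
Your outline is correct and is essentially the same approach as the paper's: the paper also reduces the proposition to Milnor's argument on pages 55--58, and identifies as the crucial point precisely the boundary analogue of the ``standard position'' step, which it formalizes as Proposition~\ref{prop:isot} (the half-space version of \cite[Theorem~5.6]{Mi-hcob}). The one place where the paper does more than you is the analogue of \cite[Lemma~5.7]{Mi-hcob}: since $h$ preserves $\{0\}\times\R^{n-1}$, its linearization at the origin has upper-left entry $a_{11}>0$ and zeros in the rest of the first row, so after an obvious homotopy one splits off the $x_1$--direction and reduces to Milnor's original $h_{00}$; this block--matrix observation is the concrete content behind your phrase ``boundary--respecting refinement,'' and is worth writing out.
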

Given the proposition, we argue in the same way as in
the classical case, cf. \cite[pages 50--55]{Mi-hcob}: we improve the vector field $\xi'$ in $U$ so that it becomes a gradient like vector
field of a function $F'$, which has no critical points
at all. Then the cobordism is a product cobordism.

The proof of Proposition~\ref{prop:prelhip} is a natural modification of Milnor's proof. After applying arguments
as in \cite[pages 55-58]{Mi-hcob} the proof boils down to the following result.

\begin{proposition}[compare \expandafter{\cite[Theorem 5.6]{Mi-hcob}}]\label{prop:isot}
Let $a+b={n}$, $a\ge 1$ and $b\ge 0$ and write a point $x\in \R_{\geqslant
0}\times\R^{a-1}\times\R^b$ as $(x_a,x_b)$ with
$x_a\in\R_{\geqslant 0}\times\R^{a-1}$ and $x_b\in\R^b$. Assume
that $h\colon(\R_{\geqslant 0}\times\R^{n-1},\{0\}\times\R^{n-1})\to
(\R_{\geqslant 0}\times\R^{n-1},\{0\}\times\R^{n-1})$ is an orientation
preserving diffeomorphism such that $h(0)=0$. Suppose that
$h(\R_{\geqslant 0}\times\R^{a-1}\times\{0\})$ intersects
$\{0\}\times\{0\}\times\R^b$ only at the origin and the
intersection is transverse and the intersection index is $+1$. Then, given any neighbourhood $N$ of
$0\in\R_{\geqslant 0}\times\R^{n-1}$, there exists a smooth isotopy
$h'_t$ for $t\in[0,1]$ of diffeomorphisms from $(\R_{\geqslant
0}\times\R^{n-1},\{0\}\times\R^{n-1})$ to itself with $h'_0=h$ such that
\begin{itemize}
\item[(I)] $h'_t(x)=h(x)$ away from $N$;
\item[(II)] $h_1'(x)=x$ in some small neighbourhood $N_1$ of $0$ such that $\ol{N_1}\subset N$;
\item[(III)] $h_1'(\R_{\geqslant 0}\times\R^{a-1}\times\{0\})\cap
    \{0\}\times\{0\}\times\R^b=\{0\}\in\R_{\geqslant
    0}\times\R^n$.
\end{itemize}
\end{proposition}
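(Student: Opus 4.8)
\smallskip
\noindent\emph{Proof idea.} Proposition~\ref{prop:isot} is the manifold-with-boundary analogue of \cite[Theorem~5.6]{Mi-hcob}, and the plan is to run Milnor's argument from \cite[pp.~50--55]{Mi-hcob} verbatim, carrying along the extra ``cutting coordinate'' $x_1\in\R_{\geqslant 0}$ and insisting that every intermediate diffeomorphism and isotopy preserve the half-space $\R_{\geqslant 0}\times\R^{n-1}$ together with its boundary $\{x_1=0\}$. When $b=0$ the subspace $\{0\}\times\{0\}\times\R^b$ is just the origin, condition~(III) becomes automatic, and the claim reduces to isotoping $h$ rel $\O\setminus N$ to a map equal to the identity near $0$; so I assume $b\geqslant 1$ from now on.

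\emph{Step 1: linearize $h$ at the origin.} The conical family $h_s(x)=\tfrac1s h(sx)$, $s\in(0,1]$, consists of orientation preserving diffeomorphisms of $\R_{\geqslant 0}\times\R^{n-1}$ fixing $0$ (since $h$ does and $h(0)=0$), and it converges, in $C^\infty$ on compact sets, to the linear map $A:=Dh(0)$. Splicing $h_s$ to $h$ by a radial cut-off supported in $N$ yields an isotopy, rel $\O\setminus N$, from $h$ to a diffeomorphism agreeing with $A$ on a small half-ball $B_\rho$; for $\rho$ small this spliced map is again an embedding, by the $C^1$-convergence $h_s\to A$. Since $h$ preserves $\{x_1=0\}$ and orientation, $A$ lies in the connected group $G$ of linear automorphisms of $\R^n$ that are block lower triangular for the splitting $\R^n=\R\oplus\R^{n-1}$, have positive $(1,1)$-entry, and have lower-right block in $\mathrm{GL}^+(n-1)$. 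Writing $A$ instead in the block decomposition for $\R^n=\R^a\oplus\R^b$, and denoting by $A_{aa}$ the leading $a\times a$ block, the hypotheses on $h$ --- that $h(\R_{\geqslant 0}\times\R^{a-1}\times\{0\})$ meets $\{0\}\times\{0\}\times\R^b$ transversally at $0$ with intersection index $+1$, together with $\{0\}\times\{0\}\times\R^b\subset\{x_1=0\}$ --- translate into the single algebraic condition $\det A_{aa}>0$.

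\emph{Step 2: straighten the ascending disc and simplify $h$ near $0$.} Set $\mathcal U=\{L\in G:\det L_{aa}>0\}$; block elimination using the invertible corner $L_{aa}$ together with a Schur-complement retraction onto block-diagonal maps $L=\operatorname{diag}(C,D)$, with $C$ a block-lower-triangular orientation preserving automorphism of $\R^a$ preserving $\{x_1\geqslant 0\}$ and $D\in\mathrm{GL}^+(b)$, shows that $\mathcal U$ is path connected and contains both $A$ and $\mathrm{id}$. I would pick a smooth path $L_s$ in $\mathcal U$ from $L_0=A$ to $L_1=\mathrm{id}$, and then, exactly as in \cite[pp.~50--55]{Mi-hcob}, realize the path $s\mapsto L_sA^{-1}$ (running from $\mathrm{id}$ to $A^{-1}$) as an ambient isotopy $\Phi_s$ of $\R_{\geqslant 0}\times\R^{n-1}$ which equals the identity outside $B_\rho$ and equals $L_sA^{-1}$ on a smaller half-ball --- for instance by flowing the radially cut-off, boundary-tangent, time-dependent linear vector field $\beta(|x|)\,\dot L_s L_s^{-1}(x)$, whose flow is automatically by diffeomorphisms of the half-space --- arranging along the way that the linear subspace $L_s(\R^a\times\{0\})$ (the span of the translated ascending disc) stays transverse to $\{0\}\times\{0\}\times\R^b$ for every $s$. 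Putting $h'_s:=h\circ\Phi_s$ and concatenating with the isotopy of Step~1 produces the required $h'_t$: property~(I) holds because every stage is spliced to $h$ by a cut-off supported in $N$; property~(II) holds because near $0$ one has $h'_1=A\circ(L_1A^{-1})=\mathrm{id}$, so $h'_1$ is the identity on a half-ball $N_1$ with $\ol{N_1}\subset N$; and property~(III) holds because near $0$ the image $h'_1(\R_{\geqslant 0}\times\R^{a-1}\times\{0\})$ equals $\R_{\geqslant 0}\times\R^{a-1}\times\{0\}$ itself (which meets $\{0\}\times\{0\}\times\R^b$ only at $0$), outside $N$ it equals $h(\R_{\geqslant 0}\times\R^{a-1}\times\{0\})$ (no further intersection, by hypothesis), and on the collar $N\setminus N_1$ the transversality maintained throughout Step~2 prevents new intersections.

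I expect the real work to be the analytic bookkeeping of Step~2: checking that the cut-off flow genuinely upgrades the linear path to an ambient isotopy of the half-space that keeps the ascending disc off $\{0\}\times\{0\}\times\R^b$ over the \emph{whole} collar, not merely near $0$. This is also exactly where the index $+1$ hypothesis is indispensable: along any path of linear maps the corner block $L_{s,aa}$ must stay invertible for $L_s(\R^a\times\{0\})$ to remain transverse to $\{0\}\times\{0\}\times\R^b$, so $\operatorname{sign}\det L_{s,aa}$ is constant along the path and must equal $\operatorname{sign}\det(\mathrm{id})_{aa}=+1$; with index $-1$ no such path from $Dh(0)$ to the identity exists and properties (II) and (III) cannot be achieved simultaneously. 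Everything else is carried out essentially verbatim as in \cite[pp.~50--55]{Mi-hcob}, the only new feature being the passive coordinate $x_1$, which is harmless since $A$ and every $L_s$ keep the $(1,1)$-entry positive and hence preserve $\{x_1\geqslant 0\}$.
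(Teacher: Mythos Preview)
Your approach is correct and essentially the same as the paper's: both linearize $h$ at the origin via the conical family $\tfrac{1}{s}h(sx)$, then path-connect $Dh(0)$ to the identity through linear maps that preserve the half-space structure and the transversality condition $\det L_{aa}>0$, and finally invoke Milnor's localization machinery. The paper only spells out the linear-algebraic step (the boundary analogue of \cite[Lemma~5.7]{Mi-hcob}) via an explicit three-block reduction---kill the first column below $a_{11}$, then apply Milnor's classical lemma to the remaining $(n-1)\times(n-1)$ block---whereas you package the same content as path-connectedness of the set $\mathcal U$; note that the relevant pages in \cite{Mi-hcob} for this proposition are 59--66, not 50--55.
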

\begin{remark}
The transversality assumption from the assumption of Proposition~\ref{prop:isot} is equivalent to the flow of $\xi$ being Morse--Smale.
\end{remark}
The proof of Proposition~\ref{prop:isot} in Milnor's book is given on pages 59--66. We prove here only the analogue of \cite[Lemma~5.7]{Mi-hcob}.
For all other results we refer to Milnor's book.

\begin{lemma}
Let $h$ be as in the hypothesis of Proposition~\ref{prop:isot}. Then
there exists a smooth isotopy $h_t\colon\R_{\geqslant
0}\times\R^{n-1}\to\R_{\geqslant 0}\times\R^{n-1}$, with $h_0$ the
identity map and $h_1=h$, such that for each $t$ we have
$h_t(\R_{\geqslant 0}\times\R^{a-1})\cap\R^b=0$.
\end{lemma}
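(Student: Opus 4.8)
This is the boundary analogue of \cite[Lemma~5.7]{Mi-hcob}, and the plan is to follow Milnor's argument, building the isotopy in two stages: first linearise $h$ at the origin, then deform the resulting linear map to the identity.

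For the first stage I would use the shrinking homotopy. For $t\in(0,1]$ set $h_t(x)=\tfrac1t\,h(tx)$, and put $A:=Dh(0)$. Writing $h(y)=\sum_{i=1}^{n}y_i\,g_i(y)$ with $g_i$ smooth and $g_i(0)=\partial h/\partial y_i(0)$ (Hadamard's lemma, using $h(0)=0$) gives $h_t(x)=\sum_i x_i\,g_i(tx)$, which is smooth in $(t,x)\in[0,1]\times(\R_{\geqslant 0}\times\R^{n-1})$ and equals $A$ at $t=0$. Each $h_t$ with $t>0$ is the conjugate of $h$ by the dilation $x\mapsto tx$, hence a self-diffeomorphism of $\R_{\geqslant 0}\times\R^{n-1}$ preserving $\{x_1=0\}$, and the linear isomorphism $h_0=A$ shares this property. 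Moreover $h_t(\R_{\geqslant 0}\times\R^{a-1})=\tfrac1t\,h(\R_{\geqslant 0}\times\R^{a-1})$ for $t>0$, so if $z$ lies in $h_t(\R_{\geqslant 0}\times\R^{a-1})\cap\R^b$ then, since $\R^b$ is a linear subspace, $tz\in h(\R_{\geqslant 0}\times\R^{a-1})\cap\R^b=\{0\}$ by the hypothesis of Proposition~\ref{prop:isot}, hence $z=0$. Thus $\{h_t\}_{t\in[0,1]}$ is a smooth isotopy from $A$ to $h$ keeping the intersection condition throughout.

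For the second stage I would connect $A$ to $\mathrm{id}$ inside the set $\mathcal L$ of linear automorphisms of $\R^n$ that preserve $\{x_1\geqslant 0\}$ and $\{x_1=0\}$ and carry $\R_{\geqslant 0}\times\R^{a-1}$ onto an $a$-plane complementary to $\R^b$. With respect to the decomposition $\R^n=\R e_1\oplus\{x_1=0\}$, such a map has a positive scalar $\lambda$ in the $(1,1)$ entry, zeros in the rest of the first row, and an invertible block $M\in\mathrm{GL}(n-1)$ in the remaining corner, and the complementarity condition is equivalent to invertibility of the top-left $(a-1)\times(a-1)$ sub-block $M_{11}$ of $M$. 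Thus $\mathcal L$ is diffeomorphic to $\R_{>0}\times\R^{n-1}\times\{M\in\mathrm{GL}(n-1):\det M_{11}\neq 0\}$, and Gaussian elimination with pivot $M_{11}$ identifies the last factor with $\mathrm{GL}(a-1)\times\mathrm{GL}(b)\times\R^{2(a-1)b}$, under which $\det M=\det M_{11}\cdot\det S$ with $S$ the Schur complement. Consequently two elements of $\mathcal L$ lie in the same connected component exactly when the signs of $\det M_{11}$ and of $\det S$ agree. Now $h$ is orientation preserving, so $\det A>0$ and hence $\det M>0$; and by the hypothesis of Proposition~\ref{prop:isot} the intersection index of $A(\R_{\geqslant 0}\times\R^{a-1})$ with $\R^b$ is $+1$, which under the standard orientations is exactly the sign of $\det M_{11}$, so $\det M_{11}>0$. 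These two facts give $\det S=\det M/\det M_{11}>0$ as well, so $A$ and $\mathrm{id}$ lie in the same component of $\mathcal L$. Choosing a smooth path in $\mathcal L$ from $\mathrm{id}$ to $A$, concatenating it with the isotopy of the first stage, and reparametrising so that the family is constant near the parameter endpoints and near the junction, I obtain the required smooth isotopy $h_t$ with $h_0=\mathrm{id}$, $h_1=h$, and $h_t(\R_{\geqslant 0}\times\R^{a-1})\cap\R^b=\{0\}$ for all $t$.

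The step I expect to need genuine care is the component analysis in the second stage: one must check that the index-$+1$ hypothesis of Proposition~\ref{prop:isot}, together with orientation preservation and the half-space constraint, really singles out the component of $\mathcal L$ containing $\mathrm{id}$ rather than one of the others. This is the boundary refinement of Milnor's corresponding computation, and it is the only point at which the index-$+1$ assumption is used.
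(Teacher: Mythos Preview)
Your proof is correct, and the first stage (the shrinking homotopy $h_t(x)=\tfrac1t h(tx)$ to linearise) is identical to the paper's. The second stage differs: the paper writes the linearisation in block form with respect to $\R\oplus\R^{a-1}\oplus\R^b$, first straight-line homotopes the sub-diagonal entries of the first column to zero, and then observes that the remaining $(n-1)\times(n-1)$ block $h_{00}$ satisfies the hypotheses of Milnor's original Lemma~5.7 (it has positive determinant since $\det h_0=a_{11}\det h_{00}$ with $a_{11}>0$, and the index condition passes to the block), thereby reducing to the closed-manifold case. You instead analyse the space $\mathcal L$ of admissible linear maps directly via the Schur decomposition and identify its components by the signs of $\det M_{11}$ and $\det S$; your computation that the intersection index equals $\mathrm{sign}(\lambda\det M_{11})$ and that $\det S=\det M/\det M_{11}>0$ is exactly the verification the paper leaves implicit when it defers to Milnor. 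Your route is more self-contained; the paper's is shorter by quoting Milnor as a black box. Both arrive at the same conclusion for the same reason.
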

\begin{proof}
We follow the proof of \cite[Lemma 5.7]{Mi-hcob}.
We shall construct the required isotopy in two steps. First we isotope $h$ by $h_t(x)=\frac{1}{t}h(tx)$.
Then $h_1=h$ and $h_0$ (defined to be the limit as $t\to 0$) is a linear map, the derivative of $h$ at $0$.
If this linear map is an identity, we are done. Otherwise $h_0$ is just a nondegenerate
linear map and clearly it maps $\R_{\geqslant 0}\times\R^{a-1}\times\R^b$
diffeomorphically onto itself. It means that under the decomposition $\R^{n}=\R\oplus\R^{a-1}\oplus\R^b$, $h_0$ has the
following block structure
\[h_0=\begin{pmatrix}
a_{11}&0&0\\
*&A&B\\
*&C&D
\end{pmatrix},\]
where $a_{11}>0$, and stars denote unimportant terms. As $h_0$ is orientation-preserving, $\det h_0>0$.
We can apply a homotopy of linear maps which changes the first
column of $h_0$ to $(a_{11},0,\dots,0)$ and preserves all the
other entries of $h_0$.
We do not change the
determinant and the condition $h_0(\R_{\geqslant 0}\oplus\R^{a-1})\cap\R^b=0$ is
preserved (it means that $a_{11}\det A>0$). Let
\[h_{00}=\begin{pmatrix} A & B\\ C & D\end{pmatrix}.\]
We have $\det h_{0}=a_{11}\det h_{00}$, so $\det h_{00}>0$.
We use the same reasoning as in Milnor's proof to find a homotopy of $h_{00}$ to the identity matrix, finishing the proof.
\end{proof}

\subsection{Non-cancellation results}\label{ss:ncr}

The two results below have completely obvious proofs, we state them to contrast with Theorem~\ref{thm:elementarycanc}.
\begin{lemma}
Assume that a Morse function $F$ on the cobordism $(\O,Y)$ between $(\S_0,M_0)$ and $(\S_1,M_1)$
has two critical points $z$ and $w$. Suppose $z$ is an interior critical point
and $w$ is a boundary critical point. Then $(\O,Y)$ is not a product cobordism.
\end{lemma}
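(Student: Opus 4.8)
The plan is to argue by contradiction, by restricting $F$ to the boundary. Suppose $(\O,Y)$ were a product cobordism, i.e. $(\O,Y)\cong(\S_0,M_0)\times[0,1]$ as cobordisms. Restricting this diffeomorphism to $Y$ would give $Y\cong M_0\times[0,1]$, the trivial cobordism between the closed manifolds $M_0$ and $M_1$; in particular $H_*(Y,M_0;\Z)=0$.

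Next I would check that $f:=F|_Y$ is a Morse function on the cobordism $Y$ in the classical (boundaryless) sense. Indeed $f^{-1}(0)=M_0$, $f^{-1}(1)=M_1$, and since $\nabla F$ is everywhere tangent to $Y$ the critical points of $f$ are exactly the boundary critical points of $F$, hence only $w$. By the Boundary Morse Lemma (Lemma~\ref{lem:boundarymorselemma}) there are coordinates in which $F=\pm x_1^2\pm\dots\pm x_{n+1}^2+F(w)$ with $Y=\{x_1=0\}$, so near $w$ one has $f=\pm x_2^2\pm\dots\pm x_{n+1}^2+F(w)$; thus $w$ is a non-degenerate critical point of $f$, of some index $\ell'\in\{0,\dots,n\}$ (equal to $\ind_F w$ if $w$ is boundary unstable and to $\ind_F w-1$ if it is boundary stable).

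Finally I would invoke the standard computation for an elementary cobordism: a cobordism carrying a Morse function with a single critical point of index $\ell'$ is the trace of an $\ell'$-surgery on $M_0$, so that $H_*(Y,M_0;\Z)\cong\Z$ is concentrated in degree $\ell'$, and in particular is nonzero. This contradicts $H_*(Y,M_0)=0$ and completes the proof. I do not anticipate any genuine difficulty here; the only point that warrants a sentence of care is the verification that $F|_Y$ is a Morse function on $Y$ with exactly one non-degenerate critical point, which is precisely where the tangency of $\nabla F$ to $Y$ and Lemma~\ref{lem:boundarymorselemma} enter. Note that the interior critical point $z$ is used only to ensure that $w$ is the sole boundary critical point; the same argument shows that $(\O,Y)$ is never a product cobordism as soon as $F|_Y$ has a single critical point.
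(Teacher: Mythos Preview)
Your proof is correct and follows exactly the paper's approach: restrict $F$ to $Y$, observe that $f=F|_Y$ is a Morse function on the closed cobordism $Y$ with a single critical point $w$, and conclude that $Y$ cannot be the trivial cobordism $M_0\times[0,1]$. The paper's proof is this one-line observation, while you have (correctly) spelled out the details---why $f$ is Morse with exactly one non-degenerate critical point, and why this forces $H_*(Y,M_0)\neq 0$---that the paper leaves implicit.
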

\begin{proof}
$F$ restricted to $Y$ has a single critical point, so the cobordism between $M_0$ and $M_1$ cannot be trivial.
\end{proof}

\begin{lemma}
Suppose that $F$ has two critical points $z$ and $w$. Assume that $z$ is boundary stable and $w$ is boundary unstable. Then $(\O,Y)$
is not a product.
\end{lemma}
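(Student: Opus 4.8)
The strategy is to exhibit an explicit topological obstruction to $(\Omega,Y)$ being a product cobordism by restricting the Morse function to the boundary $Y$ and analyzing the induced cobordism between $M_0$ and $M_1$. First I would recall from the discussion in Section~\ref{s:hh-sec} that a boundary stable critical point $z$ of index $k$ restricts to a critical point of $f=F|_Y$ of index $k-1$, while a boundary unstable critical point $w$ of index $k+1$ restricts to a critical point of $f$ of index $k+1$. Since these two indices are different ($k-1\neq k+1$), the restricted Morse function $f\colon Y\to[0,1]$ has exactly two critical points of distinct indices, so by the classical theory $f$ is \emph{not} a Morse function without critical points, and the cobordism $(Y;M_0,M_1)$ cannot be a trivial product $M_0\times[0,1]$. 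If $(\Omega,Y)$ were a product cobordism $(\Sigma_0,M_0)\times[0,1]$, then in particular $Y\cong M_0\times[0,1]$ as a cobordism between $M_0$ and $M_1$, a contradiction.

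More explicitly, the key steps in order are: (1) observe that $\nabla F$ is everywhere tangent to $Y$, so the gradient flow restricts to a flow on $Y$ whose rest points are exactly the boundary critical points $z$ and $w$, and $f = F|_Y$ is a Morse function on $Y$ with these as its only critical points; (2) compute the indices of $z$ and $w$ as critical points of $f$ using Definition~\ref{def:bstab} and the index formula recorded after it ($\ind_f z = \ind_F z - 1 = k-1$ for the boundary stable point, $\ind_f w = \ind_F w = k+1$ for the boundary unstable point); (3) invoke Lemma~\ref{lem:novanishnoproblem} (classical case, $Y$ empty boundary component) in contrapositive form: since $f$ has critical points, $(Y;M_0,M_1)$ is not diffeomorphic to $M_0\times[0,1]$; (4) conclude that $(\Omega,Y)$ is not a product cobordism, since a product cobordism $(\Sigma_0,M_0)\times[0,1]$ has $Y = M_0\times[0,1]$.

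There is essentially no hard part here — this is why the paper labels the proof "completely obvious." The only point requiring a moment's care is step (2), making sure the index bookkeeping is consistent with the orientation conventions fixed in the paper (the authors use $\nabla F$ rather than $-\nabla F$, unlike Kronheimer–Mrowka), but this is exactly the content already spelled out in the paragraph following Definition~\ref{def:bstab}. One could also phrase the obstruction homologically: a product cobordism induces an isomorphism $H_*(\Sigma_0,M_0)\to H_*(\Sigma_1,M_1)$ compatible with the two boundary inclusions, whereas the handle attachments associated with $z$ (a left half-handle of index $k$, changing $M$ by a $(k-1)$-surgery) and $w$ (a right half-handle of index $k+1$, changing $M$ by a $(k+1)$-surgery) genuinely alter $M_0$; but the restriction-to-$Y$ argument is the cleanest and is all that is needed.

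\begin{proof}
Since $\nabla F$ is everywhere tangent to $Y$, its restriction is a gradient-like vector field for $f := F|_Y \colon Y \to [0,1]$, and the critical points of $f$ are precisely the boundary critical points of $F$, namely $z$ and $w$. As recalled after Definition~\ref{def:bstab}, a boundary stable critical point of index $k$ has index $k-1$ as a critical point of $f$, while a boundary unstable critical point of index $k+1$ has index $k+1$ as a critical point of $f$. Thus $f$ is a Morse function on the cobordism $(Y; M_0, M_1)$ possessing exactly two critical points, of indices $k-1$ and $k+1$; in particular $f$ has at least one critical point. By the classical case of Lemma~\ref{lem:novanishnoproblem}, $(Y; M_0, M_1)$ is therefore not diffeomorphic to $M_0 \times [0,1]$. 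If $(\Omega, Y)$ were a product cobordism, we would have $(\Omega, Y) \cong (\Sigma_0, M_0) \times [0,1]$, whence $Y \cong M_0 \times [0,1]$ as a cobordism between $M_0$ and $M_1$, a contradiction.
\end{proof}
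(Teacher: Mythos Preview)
Your approach via restriction to $Y$ is different from the paper's, which instead computes $H_*(\Omega,\Sigma_0)$ directly: using Proposition~\ref{prop:rearii} to arrange $F(z)<F(w)$, the paper observes that the left half-handle at $z$ leaves $F^{-1}[0,c]\simeq\Sigma_0$ (Lemma~\ref{lem:leftistrivial}), while the right half-handle at $w$ contributes nontrivial $H_*(H,B)$ (Lemma~\ref{lem:natiso}), so $H_*(\Omega,\Sigma_0)\neq 0$.

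Your argument, however, has a genuine logical gap. In step (3) you write ``invoke Lemma~\ref{lem:novanishnoproblem} \dots\ in contrapositive form: since $f$ has critical points, $(Y;M_0,M_1)$ is not diffeomorphic to $M_0\times[0,1]$.'' But Lemma~\ref{lem:novanishnoproblem} says \emph{no critical points $\Rightarrow$ product}; its contrapositive is \emph{not a product $\Rightarrow$ some critical point}. What you are using is the \emph{converse}, and that is false in general: a product cobordism certainly admits Morse functions with critical points (any cancelling pair will do). The mere existence of critical points of $f$ does not show $Y$ is not a product.

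Your approach is easily salvaged, precisely because you already computed the indices. On $Y$ the two critical points of $f$ have indices $k-1$ and $k+1$, which differ by $2$. Hence in the Morse (CW) chain complex for $(Y,M_0)$ there are no generators in consecutive degrees, the differential vanishes, and $H_{k-1}(Y,M_0)\cong H_{k+1}(Y,M_0)\cong\Z\neq 0$. (Equivalently, $\chi(Y,M_0)=(-1)^{k-1}+(-1)^{k+1}=2(-1)^{k-1}\neq 0$.) This genuinely shows $Y$ is not a product, and the rest of your argument then goes through. With this fix your route is a perfectly good alternative to the paper's; it has the mild advantage of needing no rearrangement of critical values, at the cost of appealing to the Morse chain complex on $Y$ rather than the half-handle homology lemmas already established in Section~\ref{ss:elem}.
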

\begin{proof}
If it were a product, we would have $H_*(\O,\S_0)=0$. We shall show that this is not the case.

If $F(z)=F(w)$, then there are no trajectories between $z$ and $w$, so by Proposition~\ref{prop:rearii} we can ensure that $F(z)<F(w)$.
So we can always assume that $F(z)\neq F(w)$. For simplicity assume that $F(z)<F(w)$. Let $c$ be a regular value such that
$F(z)<c<F(w)$.

By Lemma~\ref{lem:leftistrivial} $F^{-1}[0,c]\sim
\S_0\times[0,c]$. Then $H_*(\O,\S_0)\cong H_*(\O,F^{-1}[0,c])$.
Now $\O$ arises from $F^{-1}[0,c]$ by a right half-handle
addition, hence $H_*(\O,F^{-1}[0,c])\cong H_*(H,B)$, where $(H,B)$
is the corresponding right half-handle. But $H_*(H,B)$ is not
trivial by Lemma~\ref{lem:natiso} (or
Lemma~\ref{lem:rightislikenormal}).
\end{proof}


\begin{thebibliography}{piglet}
\bibitem[AGV]{AGV} V.~I.~Arnold, S.~Gusein-Zade, A.~Varchenko, \emph{Singularities of differentiable maps. Vol. I. The classification of critical points,
caustics and wave fronts} Monographs in Mathematics, \textbf{82}. Birkh\"auser Boston, Inc., Boston, 1985.

\bibitem[Blo]{Blo} J.~Bloom, \textit{The combinatorics of Morse theory with boundary}, preprint, arxiv:1212.6467.

\bibitem[BNR1]{BNR2} M.~Borodzik, A.~N\'emethi, A.~Ranicki, \emph{Codimension 2 embeddings, algebraic surgery and Seifert forms}, preprint 2012,
arxiv: 1211.5964.

\bibitem[BNR2]{BNR3} M.~Borodzik, A.~N\'emethi, A.~Ranicki, \emph{On the semicontinuity of the mod 2 spectrum of hypersurface singularities},
arxiv: 1210.0798, to appear in J. of Alg. Geom.

\bibitem[Bo]{Bot} R.~Bott, \textit{Morse theory indomitable},
Inst. Hautes \'Etudes Sci. Publ. Math. No. \textbf{68} (1988), 99--114.

\bibitem[Br]{Br} D.~Braess,
\textit{Morse-Theorie f\"ur berandete Mannigfaltigkeiten},
Math. Ann. \textbf{208} (1974), 133--148.

\bibitem[GM]{GMcP} M.~Goresky, R.~McPherson, \emph{Stratified Morse
    theory}, Ergebnisse der Mathematik und ihrer Grenzgebiete (3),
    \textbf{14}. Springer-Verlag, Berlin,

\bibitem[Haj]{Haj} B.~Hajduk, \emph{Minimal $m$--functions}, Fund. Math. \textbf{111}(1981), 179--200.

\bibitem[JR]{JR} A.~Jankowski, R.~Rubinsztein,
\textit{Functions with non-degenerate critical points on manifolds with boundary},
Comment. Math. Prace Mat. \textbf{16} (1972), 99--112.

\bibitem[KM]{KM} P.~Kronheimer, T.~Mrowka, \emph{Monopoles and three manifolds}, New Mathematical Monographs, \textbf{10}.
Cambridge University Press, Cambridge, 2007.

\bibitem[La]{Lau} F.~Laudenbach, \emph{A Morse complex on manifolds with boundary}, Geom. Dedicata \textbf{153} (2011), 47--57.

\bibitem[Mi1]{Mi-morse} J.~Milnor, \emph{Morse Theory}, Annals of Math. Studies \textbf{51}. Princeton Univ. Press, Princeton (1963)

\bibitem[Mi2]{Mi-hcob} J.~Milnor, \emph{Lectures on the $h$-cobordism theorem}, Princeton University Press, Princeton, N.J. 1965.

\bibitem[Ni]{Nic} L.~Nicolaescu, \emph{An invitation to Morse theory},
Universitext. Springer, New York, 2007.

\bibitem[Ra]{Ra} A.~Ranicki, \emph{High-dimensional knot theory. Algebraic surgery in codimension 2.} Springer-Verlag, New York, 1998.

\bibitem[Sa]{Sa} D.~Salamon, \emph{Morse theory, the Conley index and Floer homology},
Bull. London Math. Soc. \textbf{22} (1990), no. 2, 113--140.

\bibitem[Sm1]{Sm1} S.~Smale, \emph{On gradient dynamical systems}, Ann. of Math. (2) \textbf{74} (1961), 199--206.

\bibitem[Sm2]{Sm2} S.~Smale, \emph{On the structure of manifolds}, Amer. J. Math.  \textbf{84} (1962), 387--399.

\bibitem[Wi]{Wi} E.~Witten, \emph{Supersymmetry and Morse theory},
J. Diff. Geo. \textbf{17} (1982), 661--692.

\end{thebibliography}
\end{document}